\theoremstyle{definition}
\newtheorem{Definition}{Definition}[subsection]
\theoremstyle{plain}
\newtheorem{Theorem}[Definition]{Theorem}
\newcounter{mainthm}
\newtheorem{maintheorem}[mainthm]{Theorem}
\newtheorem{maincorollary}[mainthm]{Corollary}
\theoremstyle{plain}
\newtheorem{Proposition}[Definition]{Proposition}
\theoremstyle{plain}
\newtheorem{Lemma}[Definition]{Lemma}
\theoremstyle{plain}
\newtheorem{Corollary}[Definition]{Corollary}
\theoremstyle{definition}
\newtheorem{Example}[Definition]{Example}
\theoremstyle{definition}
\newtheorem{Conjecture}[Definition]{Conjecture}
\theoremstyle{definition}
\theoremstyle{definition}
\theoremstyle{remark}
\newtheorem{Remark}[Definition]{Remark}
\title{Relative Invertibility and Full Dualizability of Finite Braided Tensor Categories}
\author{Thibault D. D\'ecoppet}
\date{June 2025}
\begin{document}

\bibliographystyle{alpha}

\maketitle

\begin{center}
\textit{D'apr\`es des vieux cahiers.}
\end{center}

    \hspace{1cm}
    \begin{abstract}
        Fix a finite symmetric tensor category $\mathcal{E}$ over an algebraically closed field. We derive an $\mathcal{E}$-enriched version of Shimizu's characterizations of non-degeneracy for finite braided tensor categories.
        In order to do so, we consider, associated to any $\mathcal{E}$-enriched finite braided tensor category $\mathcal{A}$ satisfying a mild technical assumption, a Hopf algebra $\mathbb{F}_{\mathcal{E}/\mathcal{A}}$ in $\mathcal{A}$.
        This is a generalization of Lyubashenko's universal Hopf algebra $\mathbb{F}_{\mathcal{A}}$ in $\mathcal{A}$. In fact, we show that there is a short exact sequence $\mathbb{F}_{\mathcal{E}}\rightarrow\mathbb{F}_{\mathcal{A}}\rightarrow\mathbb{F}_{\mathcal{E}/\mathcal{A}}$ of Hopf algebras in $\mathcal{A}$, and that the canonical pairing on $\mathbb{F}_{\mathcal{A}}$ descends to a pairing $\omega_{\mathcal{E}/\mathcal{A}}$ on $\mathbb{F}_{\mathcal{E}/\mathcal{A}}$.
        We prove that $\mathcal{A}$ is $\mathcal{E}$-non-degenerate, i.e.\ its symmetric center is exactly $\mathcal{E}$, if and only if the pairing $\omega_{\mathcal{E}/\mathcal{A}}$ is non-degenerate.
        
        We then use the above characterization to show that an $\mathcal{E}$-enriched finite braided tensor category is invertible in the Morita 4-category of $\mathcal{E}$-enriched pre-tensor categories if and only if it is $\mathcal{E}$-non-degenerate.
        As an application of our relative invertibility criterion, we extend the full dualizability result of Brochier-Jordan-Snyder by showing that a finite braided tensor category is fully dualizable as an object of the Morita 4-category of braided pre-tensor categories if its symmetric center is separable.
    \end{abstract}

\tableofcontents

\section*{Introduction}
\addcontentsline{toc}{section}{Introduction}

We continue the study of the dualizability of the higher Morita category of braided pre-tensor categories initiated in \cite{BJS}. A symmetric monoidal $n$-category is fully dualizable if its objects have duals and its $k$-morphisms have adjoints for all $k<n$. More generally, an object of such a category is called fully dualizable if it lies in a fully dualizable monoidal subcategory. Our interest in this notion stems from the cobordism hypothesis \cite{BD,L}, which proposes an equivalence between fully extended framed $n$-dimensional topological field theories valued in a target symmetric monoidal $n$-category and fully dualizable objects therein. Here fully extended refers to topological field theories that can be evaluated on all $k$-manifolds, possibly with corners, for $k\leq n$. The cobordism hypothesis asserts more precisely that such a topological field theory is determined by its value on the point, a fully dualizable object. This motivates the quest for interesting examples of fully dualizable objects.

Of particular interest are the higher Morita categories of $E_m$-algebras in a symmetric monoidal $n$-category constructed in \cite{JFS, Hau}. In fact, it is already instructive to investigate the dualizability properties of the classical Morita 2-category of associative algebras, bimodules, and bimodule morphisms. It is a folklore result that an algebra $A$ is fully dualizable in the aforementioned Morita 2-category if and only if it is \textit{separable}, i.e.\ $A$ is projective as an $A$-$A$-bimodule. For our purposes, it is illuminating to express this last condition differently. Namely, every separable algebra is in particular finite semisimple. Moreover, a finite semisimple algebra $A$ is separable if and only if its center $Z(A)$ is separable. Altogether, we find that an algebra $A$ is fully dualizable in the Morita 2-category of algebras if and only if it is finite semisimple and its center is separable.

The Morita 3-category of finite tensor categories in the sense of \cite{EGNO} was studied in \cite{DSPS:dualizable}. In particular, it was established that a finite tensor category is fully dualizable if and only if it satisfies an algebraic condition called separability:\ A finite tensor category $\mathcal{C}$ is said to be \textit{separable} if it is semisimple, and its Drinfeld center $\mathcal{Z}(\mathcal{C})$ is also semisimple. Over an algebraically closed field, this is equivalent to requiring that $\mathcal{C}$ has non-zero global, also called categorical or quantum, dimension. Strikingly, over fields of characteristic zero, every finite semisimple tensor category has non-zero global dimension \cite{ENO1}, and is therefore separable. This explains why this condition is often omitted. We emphasize that this does not hold over fields of positive characteristic.

We will be mainly interested in the Morita 4-category of braided pre-tensor categories introduced in \cite{BJS}. For technical reasons, (finite) braided tensor categories cannot be assembled into a Morita 4-category, which explains the need for a more general notion. They show that every separable braided tensor category is a fully dualizable object of the Morita 4-category of pre-tensor categories. Notably, separability is not a necessary condition for full dualizability. Namely, following \cite{Bru, Mu}, we say that a finite braided tensor category $\mathcal{A}$ is non-degenerate if its symmetric center $\mathcal{Z}_{(2)}(\mathcal{A})$ is trivial, i.e.\ equivalent to the category vector spaces -- and therefore separable. It was observed in \cite{BJSS} that every non-degenerate finite braided tensor categories is an invertible, and a fortiori fully dualizable, object in the Morita 4-category of braided pre-tensor categories.

The invertibility result of \cite{BJSS} relies heavily on the characterizations of non-degeneracy for finite braided tensor categories obtained in \cite{Shi:nondeg}, which we now review. Let $\mathcal{A}$ be a finite braided tensor category. We say that $\mathcal{A}$ is factorizable if the canonical braided tensor functor $\mathcal{A}\boxtimes\mathcal{A}^{\mathrm{rev}}\rightarrow\mathcal{Z}(\mathcal{A})$ into the Drinfeld center of $\mathcal{A}$ is an equivalence. This notion of factorizability for finite braided tensor categories was introduced in \cite{ENO0} as a generalization of a factorizability criterion for quasi-triangular Hopf algebras. In a different direction, recall from \cite{Lyu:squared}, based on the earlier work \cite{Maj, Lyu:ribbon}, that, associated to $\mathcal{A}$, there is a universal Hopf algebra $\mathbb{F}_{\mathcal{A}}$ in $\mathcal{A}$. Moreover, it was shown in \cite{Lyu:modular} that the Hopf algebra $\mathbb{F}_{\mathcal{A}}$ carries a canonical pairing $\omega_{\mathcal{A}}$, whose non-degeneracy plays a key role in applications \cite{Lyu:invariant, KL}. Remarkably, it was proven in \cite{Shi:nondeg} that $\mathcal{A}$ is non-degenerate if and only if it is factorizable if and only if the canonical pairing $\omega_{\mathcal{A}}$ is non-degenerate.

We improve on the dualizability results of \cite{BJS, BJSS} by showing that any finite braided tensor category whose symmetric center is separable is a fully dualizable object in the Morita 4-category of braided pre-tensor categories. In order to do so, fixing a finite symmetric tensor category, we consider the Morita 4-category of $\mathcal{E}$-enriched pre-tensor categories, and study invertible objects therein. This is achieved by developing an enriched or relative version of the characterizations of non-degeneracy established in \cite{Shi:nondeg}.

\subsection*{Results}
\addcontentsline{toc}{subsection}{Results}

Throughout, we work over a fixed algebraically closed field $\mathbbm{k}$. Our finite tensor categories are the $\mathrm{ind}$-completions of the finite multitensor categories of \cite{EGNO}.

\subsubsection*{The Relative Universal Hopf Algebra}

Let us now fix a finite symmetric tensor category $\mathcal{E}$. An $\mathcal{E}$-enriched finite braided tensor category is a finite braided tensor category $\mathcal{A}$ equipped with a symmetric tensor functor $F:\mathcal{E}\rightarrow\mathcal{Z}_{(2)}(\mathcal{A})$ to the symmetric center of $\mathcal{A}$. This notion can be traced back to \cite{DNO}. The relative Deligne tensor product $\mathcal{A}\boxtimes_{\mathcal{E}}\mathcal{A}^{\mathrm{mop}}$ is always a finite pre-tensor category, but arbitrary compact objects typically fail to have duals. In order to remedy this issue, we introduce the following notion:\ We say that an $\mathcal{E}$-enriched braided tensor category $\mathcal{A}$ is \textit{faithfully flat} if $F:\mathcal{E}\hookrightarrow\mathcal{Z}_{(2)}(\mathcal{A})$ is fully faithful. This ensures that $\mathcal{A}\boxtimes_{\mathcal{E}}\mathcal{A}^{\mathrm{mop}}$ is a finite tensor category.

Let $\mathcal{A}$ be a faithfully flat $\mathcal{E}$-enriched finite braided tensor category. It follows from Tannakian reconstruction, see for instance \cite{LM}, that there exists a Hopf algebra $\mathbb{F}_{\mathcal{E}/\mathcal{A}}$ in $\mathcal{A}$ such that $$\mathrm{Comod}_{\mathcal{A}}(\mathbb{F}_{\mathcal{E}/\mathcal{A}})\simeq\mathcal{A}\boxtimes_{\mathcal{E}}\mathcal{A}^{\mathrm{mop}}$$ as finite tensor categories. We will refer to $\mathbb{F}_{\mathcal{E}/\mathcal{A}}$ as the relative universal Hopf algebra of $\mathcal{A}$. In the special case $\mathcal{E} = \mathrm{Vec}$, this recovers precisely the universal Hopf algebra $\mathbb{F}_{\mathcal{A}}$ of \cite{Lyu:squared}. Additionally, recall from \cite{Lyu:modular} that $\mathbb{F}_{\mathcal{A}}$ carries a canonical pairing $\omega_{\mathcal{A}}$, which plays a key role in applications \cite{KL, Shi:nondeg}. It is natural to wonder about the precise relation between $\mathbb{F}_{\mathcal{A}}$ and $\mathbb{F}_{\mathcal{E}/\mathcal{A}}$, and whether the latter Hopf algebra sports a pairing.

\begin{maintheorem}
Let $\mathcal{A}$ be a faithfully flat $\mathcal{E}$-enriched braided tensor category. There is an exact sequence of Hopf algebras in $\mathcal{A}$ $$\mathbb{F}_{\mathcal{E}}\hookrightarrow\mathbb{F}_{\mathcal{A}}\twoheadrightarrow\mathbb{F}_{\mathcal{E}/\mathcal{A}}.$$ Moreover, the canonical pairing $\omega_{\mathcal{A}}$ descends to a pairing $\omega_{\mathcal{E}/\mathcal{A}}$ on $\mathbb{F}_{\mathcal{E}/\mathcal{A}}$.
\end{maintheorem}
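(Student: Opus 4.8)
The plan is to realize both halves of the statement through the equivalence $\mathrm{Comod}_{\mathcal{A}}(\mathbb{F}_{\mathcal{E}/\mathcal{A}})\simeq\mathcal{A}\boxtimes_{\mathcal{E}}\mathcal{A}^{\mathrm{mop}}$ together with the analogous equivalences $\mathrm{Comod}_{\mathcal{A}}(\mathbb{F}_{\mathcal{A}})\simeq\mathcal{A}\boxtimes\mathcal{A}^{\mathrm{mop}}$ and $\mathrm{Comod}_{\mathcal{E}}(\mathbb{F}_{\mathcal{E}})\simeq\mathcal{E}\boxtimes\mathcal{E}^{\mathrm{mop}}$, transporting every algebraic statement about Hopf algebras to a statement about the corresponding tensor categories of comodules. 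Under Tannakian reconstruction, an exact sequence of Hopf algebras $\mathbb{F}_{\mathcal{E}}\hookrightarrow\mathbb{F}_{\mathcal{A}}\twoheadrightarrow\mathbb{F}_{\mathcal{E}/\mathcal{A}}$ in $\mathcal{A}$ corresponds precisely to an exact sequence of finite tensor categories, in the sense of Brugui\`eres--Natale / the relative version thereof. So the first thing I would do is identify the three relevant tensor-category functors: the inclusion $\mathcal{E}\boxtimes\mathcal{E}^{\mathrm{mop}}\hookrightarrow\mathcal{A}\boxtimes\mathcal{A}^{\mathrm{mop}}$ induced by $F$, and the quotient functor $\mathcal{A}\boxtimes\mathcal{A}^{\mathrm{mop}}\twoheadrightarrow\mathcal{A}\boxtimes_{\mathcal{E}}\mathcal{A}^{\mathrm{mop}}$ given by the universal property of the relative Deligne product. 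The key is that these assemble into an exact sequence of tensor categories whose Tannakian dual is exactly the asserted sequence of Hopf algebras.

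Concretely, I would verify the three defining properties of an exact sequence of tensor categories for the composite $\mathcal{E}\boxtimes\mathcal{E}^{\mathrm{mop}}\to\mathcal{A}\boxtimes\mathcal{A}^{\mathrm{mop}}\to\mathcal{A}\boxtimes_{\mathcal{E}}\mathcal{A}^{\mathrm{mop}}$: that the first functor is a fully faithful embedding (which is where faithful flatness of $\mathcal{A}$ enters, guaranteeing $F$ is fully faithful so $\mathcal{E}\boxtimes\mathcal{E}^{\mathrm{mop}}\hookrightarrow\mathcal{A}\boxtimes\mathcal{A}^{\mathrm{mop}}$ is as well), that the quotient functor is dominant/surjective, and that the kernel of the quotient — the objects sent to multiples of the unit — is exactly the image of $\mathcal{E}\boxtimes\mathcal{E}^{\mathrm{mop}}$. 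This last point is the heart of the matter: $\mathcal{A}\boxtimes_{\mathcal{E}}\mathcal{A}^{\mathrm{mop}}$ is by construction the quotient of $\mathcal{A}\boxtimes\mathcal{A}^{\mathrm{mop}}$ that identifies the two copies of $\mathcal{E}$, so the objects killed by the balancing are precisely those coming from $\mathcal{E}$ acting through $F$ and $F^{\mathrm{mop}}$. Translating exactness of tensor categories back through Tannakian duality then yields that $\mathbb{F}_{\mathcal{E}}$ is a normal Hopf subalgebra of $\mathbb{F}_{\mathcal{A}}$ with quotient $\mathbb{F}_{\mathcal{E}/\mathcal{A}}$, giving the exact sequence of Hopf algebras.

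For the second assertion, I would recall that the pairing $\omega_{\mathcal{A}}:\mathbb{F}_{\mathcal{A}}\otimes\mathbb{F}_{\mathcal{A}}\to\mathbbm{1}$ encodes the double braiding, and that its restriction to $\mathbb{F}_{\mathcal{E}}$ is trivial precisely because $\mathcal{E}$ lands in the symmetric center of $\mathcal{A}$, so the double braiding with any object of $\mathcal{E}$ is the identity. To descend $\omega_{\mathcal{A}}$ along the surjection $\pi:\mathbb{F}_{\mathcal{A}}\twoheadrightarrow\mathbb{F}_{\mathcal{E}/\mathcal{A}}$, I would show $\omega_{\mathcal{A}}$ annihilates $\mathbb{F}_{\mathcal{E}}$ from both sides — i.e.\ $\omega_{\mathcal{A}}(\iota(x)\otimes y)=\varepsilon(x)\varepsilon(y)$ and symmetrically — where $\iota:\mathbb{F}_{\mathcal{E}}\hookrightarrow\mathbb{F}_{\mathcal{A}}$ is the inclusion. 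Vanishing on the ideal generated by $\ker(\varepsilon|_{\mathbb{F}_{\mathcal{E}}})$ then lets $\omega_{\mathcal{A}}$ factor through $\pi\otimes\pi$, defining $\omega_{\mathcal{E}/\mathcal{A}}$ uniquely. I anticipate that the \textbf{main obstacle} is precisely this compatibility with the coalgebra (not just algebra) structure of the quotient: one must check that the two-sided annihilation is robust enough that the induced map respects all the structure morphisms, which amounts to unwinding the coend definition of $\omega_{\mathcal{A}}$ and tracking how the symmetric-center condition on $\mathcal{E}$ makes the relevant braiding morphisms collapse. Keeping the whole argument at the level of the categorical (Tannakian) side, rather than manipulating string diagrams for the pairing directly, is what I expect to make this tractable.
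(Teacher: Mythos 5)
Your overall strategy --- realize the Hopf-algebra sequence as the Tannakian dual of a sequence of tensor categories, and descend the pairing using that $\mathcal{E}$ lands in the symmetric center --- matches the paper's in spirit, but the key correspondence you invoke is not available in the form you need it, and one of your identifications is wrong. The Brugui\`eres--Natale dictionary between exact sequences of Hopf algebras and exact sequences of tensor categories is stated for Hopf algebras in $\mathrm{Vec}$ with fiber functors to $\mathrm{Vec}$. Here all three Hopf algebras live in the braided category $\mathcal{A}$ and the fiber functors land in $\mathcal{A}$; what you would need is a correspondence \emph{relative to} $\mathcal{A}$, which the paper explicitly declines to formalize (see Remark \ref{rem:exactsequencecomod}) and instead proves the required pieces by hand. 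Concretely, normality of $\mathbb{F}_{\mathcal{E}}$ in $\mathbb{F}_{\mathcal{A}}$ is verified in the proof of Theorem \ref{thm:exactsequenceHopf} by an explicit coend computation with the adjoint action, using transparency of objects of $\mathcal{E}$, and the identification of the categorical cokernel $\mathbb{F}_{\mathcal{A}}/\mathbb{F}_{\mathcal{A}}\mathbb{F}_{\mathcal{E}}^+$ with $\mathbb{F}_{\mathcal{E}/\mathcal{A}}$ is a Takeuchi-style argument via the cotensor product $(-)\Box^{\pi}\mathbb{F}_{\mathcal{A}}$; neither falls out of the categorical picture for free, so invoking the dictionary as a black box is essentially circular. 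Relatedly, your sequence of tensor categories is not the right one: the comodule category of $\mathbb{F}_{\mathcal{E}}$ \emph{in} $\mathcal{A}$ is $\mathcal{A}\boxtimes\mathcal{E}^{\mathrm{mop}}$, not $\mathcal{E}\boxtimes\mathcal{E}^{\mathrm{mop}}$ (the latter is $\mathrm{Comod}_{\mathcal{E}}(\mathbb{F}_{\mathcal{E}})$), and your kernel characterization fails as stated --- the quotient $\mathcal{A}\boxtimes\mathcal{A}^{\mathrm{mop}}\rightarrow\mathcal{A}\boxtimes_{\mathcal{E}}\mathcal{A}^{\mathrm{mop}}$ sends $E\boxtimes E'$ to $E\otimes E'\in\mathcal{E}$, which is not a multiple of the unit; the correct statement is that $\mathcal{A}\boxtimes\mathcal{E}^{\mathrm{mop}}$ is carried into the image of $\mathcal{A}$.

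For the descent of the pairing your idea is correct and is what the paper does: one shows that $\mathbb{F}_{\mathcal{A}}\mathbb{F}_{\mathcal{E}}^+$ lies in both kernels of $\omega_{\mathcal{A}}$ because the double braiding with objects of $\mathcal{E}$ is trivial. Note, however, that your displayed check ($\omega_{\mathcal{A}}$ vanishes on $\mathbb{F}_{\mathcal{E}}^+$ itself) must be upgraded to vanishing on the ideal $\mathbb{F}_{\mathcal{A}}\mathbb{F}_{\mathcal{E}}^+$, either by invoking the multiplicativity of the Hopf pairing or, as in Proposition \ref{prop:inducedpairing}, by directly proving $\omega_{\mathcal{A}}\circ(\mathrm{id}\otimes m)\circ(\mathrm{id}\otimes\mathrm{id}\otimes i)=\omega_{\mathcal{A}}\otimes\epsilon$. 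Either way this step is unavoidably a computation with the universal dinatural maps $\iota_A$ of the coend and the braiding, so the hope of staying entirely on the Tannakian side does not survive here.
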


\subsubsection*{Relative Non-Degeneracy and Relative Invertibility}

Using the universal relative Hopf algebra $\mathbb{F}_{\mathcal{E}/\mathcal{A}}$, and more specifically its pairing $\omega_{\mathcal{E}/\mathcal{A}}$, we obtain the following generalization of \cite[Theorem 1.1]{Shi:nondeg} and \cite[Theorem 1.6]{BJSS}.

\begin{maintheorem}\label{thm:mainnondegeneracy}
Let $\mathcal{A}$ be an $\mathcal{E}$-enriched finite braided tensor category such that $F:\mathcal{E}\rightarrow\mathcal{Z}_{(2)}(\mathcal{A})$ is faithful. The following conditions on $\mathcal{A}$ are equivalent:
\begin{enumerate}
    \item It is $\mathcal{E}$-non-degenerate, i.e.\ the natural functor $\mathcal{E}\rightarrow \mathcal{Z}_{(2)}(\mathcal{A})$ is an equivalence.
    \item It is $\mathcal{E}$-factorizable, i.e.\ the natural functor $\mathcal{A}\boxtimes_{\mathcal{E}}\mathcal{A}^{\mathrm{rev}}\rightarrow \mathcal{Z}(\mathcal{A},\mathcal{E})$ is an equivalence.
    \item It is $\mathcal{E}$-cofactorizable, i.e.\ the natural functor $\mathrm{HC}_{\mathcal{E}}(\mathcal{A})\rightarrow\mathrm{End}_{\mathcal{E}}(\mathcal{A})$ is an equivalence.
    \item It is faithfully flat and the canonical pairing $\omega_{\mathcal{E}/\mathcal{A}}$ on $\mathbb{F}_{\mathcal{E}/\mathcal{A}}$ is non-degenerate.
\end{enumerate}
\end{maintheorem}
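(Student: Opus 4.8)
The plan is to prove the equivalences $(1)\Leftrightarrow(2)$, $(2)\Leftrightarrow(3)$ and $(2)\Leftrightarrow(4)$, using the relative universal Hopf algebra $\mathbb{F}_{\mathcal{E}/\mathcal{A}}$ and Theorem~A as the main tools. As a preliminary reduction I would show that each condition forces $F$ to be faithfully flat: this is immediate for (1) and built into (4), while for (2) and (3) it follows because an equivalence would exhibit $\mathcal{A}\boxtimes_{\mathcal{E}}\mathcal{A}^{\mathrm{rev}}$ (respectively $\mathrm{HC}_{\mathcal{E}}(\mathcal{A})$) as a finite tensor category, which forces $F$ to be fully faithful. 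Granting this, I may assume $\mathcal{A}$ faithfully flat throughout, so that $\mathbb{F}_{\mathcal{E}/\mathcal{A}}$ and $\omega_{\mathcal{E}/\mathcal{A}}$ are defined. A deliberate feature of this route is that it avoids any classification of $\mathcal{E}$: over an arbitrary algebraically closed field $\mathcal{E}$ need not be Tannakian, so a reduction to the absolute case of \cite{Shi:nondeg} via de-equivariantization is unavailable, and I would argue directly with $\mathbb{F}_{\mathcal{E}/\mathcal{A}}$ and its pairing.

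For $(1)\Leftrightarrow(2)$, I would observe that the comparison functor $\mathcal{A}\boxtimes_{\mathcal{E}}\mathcal{A}^{\mathrm{rev}}\to\mathcal{Z}(\mathcal{A},\mathcal{E})$ is always a braided tensor functor between finite tensor categories, and that a relative Frobenius--Perron count gives both sides equal dimension $\mathrm{FPdim}(\mathcal{A})^{2}/\mathrm{FPdim}(\mathcal{E})$. A tensor functor between finite tensor categories of equal dimension is an equivalence as soon as it is fully faithful, so the content is to identify the obstruction to full faithfulness with the failure of $\mathcal{E}\to\mathcal{Z}_{(2)}(\mathcal{A})$ to be essentially surjective: the objects of the symmetric center not lying in the image of $\mathcal{E}$ are exactly what the comparison functor cannot separate. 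This is the relative analogue of the non-degenerate $\Leftrightarrow$ factorizable equivalence of \cite{Shi:nondeg}, which I would prove by adapting that argument to the $\mathcal{E}$-relative center, using the module-categorical description of $\mathcal{Z}(\mathcal{A},\mathcal{E})$ in the spirit of \cite{DNO}.

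The equivalence $(2)\Leftrightarrow(3)$ I would treat as a Morita duality. Through the reconstruction $\mathrm{Comod}_{\mathcal{A}}(\mathbb{F}_{\mathcal{E}/\mathcal{A}})\simeq\mathcal{A}\boxtimes_{\mathcal{E}}\mathcal{A}^{\mathrm{mop}}$ recorded before Theorem~A, the relative Harish--Chandra category $\mathrm{HC}_{\mathcal{E}}(\mathcal{A})$ and the category $\mathrm{End}_{\mathcal{E}}(\mathcal{A})$ of $\mathcal{E}$-linear right exact endofunctors appear as the two sides of a single duality pairing between module categories over $\mathcal{A}\boxtimes_{\mathcal{E}}\mathcal{A}^{\mathrm{mop}}$. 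Under this duality the factorizability functor of (2) and the cofactorizability functor of (3) are mutual transposes, so one is an equivalence precisely when the other is. The work here is to present both functors as instances of the same relative-center construction and to check that transposition preserves equivalences, which is formal once the module-category dictionary is set up.

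Finally, for $(2)\Leftrightarrow(4)$---now that faithful flatness is in force---I would show that non-degeneracy of $\omega_{\mathcal{E}/\mathcal{A}}$ is equivalent to the comparison functor of (2) being an equivalence. The pairing induces a morphism of Hopf algebras $\mathbb{F}_{\mathcal{E}/\mathcal{A}}\to\mathbb{F}_{\mathcal{E}/\mathcal{A}}^{\vee}$ which, under Tannakian reconstruction, corepresents precisely this comparison functor; the pairing is non-degenerate exactly when the morphism is an isomorphism, hence exactly when the functor is an equivalence. The crux of the whole theorem, and the step I expect to be hardest, is controlling the descent of the pairing along the surjection $\mathbb{F}_{\mathcal{A}}\twoheadrightarrow\mathbb{F}_{\mathcal{E}/\mathcal{A}}$ of Theorem~A: I must show that the radical of the absolute pairing $\omega_{\mathcal{A}}$ is carried onto the image of the sub-Hopf-algebra $\mathbb{F}_{\mathcal{E}}$, so that $\omega_{\mathcal{E}/\mathcal{A}}$ is non-degenerate if and only if the radical of $\omega_{\mathcal{A}}$ is no larger than $\mathbb{F}_{\mathcal{E}}$. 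Transporting Shimizu's computation of the radical of $\omega_{\mathcal{A}}$ in terms of $\mathcal{Z}_{(2)}(\mathcal{A})$ to this relative setting---where the exact sequence and the descended pairing of Theorem~A carry the essential information---is the heart of the argument.
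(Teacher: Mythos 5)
Your overall architecture matches the paper's: reduce to the faithfully flat case, prove $(1)\Leftrightarrow(2)$ by a Frobenius--Perron dimension count in the style of \cite{Shi:nondeg}, and tie $(2)$ and $(3)$ to $(4)$ through the pairing on $\mathbb{F}_{\mathcal{E}/\mathcal{A}}$ (your ``transpose'' picture for $(2)\Leftrightarrow(3)$ is exactly the paper's device of identifying both comparison functors with the two functors $\omega^{\flat}_{\mathcal{E}/\mathcal{A}}$ and $\omega^{\sharp}_{\mathcal{E}/\mathcal{A}}$ induced by the single pairing). However, two of your steps have genuine gaps. First, your reduction to faithful flatness for $(2)$ and $(3)$ rests on the claim that if $\mathcal{A}\boxtimes_{\mathcal{E}}\mathcal{A}^{\mathrm{rev}}$ (resp.\ $\mathrm{HC}_{\mathcal{E}}(\mathcal{A})$) is a finite tensor category then $F$ must be fully faithful. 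That implication is false: take $\mathcal{A}=\mathrm{Vec}$ and $\mathcal{E}=\mathrm{Rep}(G)$ for a nontrivial finite group $G$, with $F$ the forgetful functor (faithful, symmetric, landing in $\mathcal{Z}_{(2)}(\mathrm{Vec})=\mathrm{Vec}$). Then $\mathrm{Vec}\boxtimes_{\mathrm{Rep}(G)}\mathrm{Vec}\simeq\mathrm{Vec}(G)$ is a perfectly good finite tensor category, yet $F$ is not full. The paper has to work harder here (Proposition \ref{prop:faithfulfactorizablefaithfullyflat} and its cofactorizable analogue): one uses factorizability itself to show that $\mathcal{A}\boxtimes_{\mathcal{E}}\mathcal{A}^{\mathrm{rev}}\rightarrow\mathcal{A}\boxtimes_{\mathcal{F}}\mathcal{A}^{\mathrm{rev}}$ is fully faithful for $\mathcal{F}$ the image of $F$, deduces $F^R(\mathbbm{1})\otimes F^R(\mathbbm{1})\cong F^R(\mathbbm{1})$ by a Yoneda argument, and only then invokes faithfulness of $F$ to conclude $F^R(\mathbbm{1})=\mathbbm{1}$. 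You need some argument of this kind; rigidity of the relative tensor product alone does not suffice.

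Second, your ``crux'' for $(2)\Leftrightarrow(4)$ appeals to ``Shimizu's computation of the radical of $\omega_{\mathcal{A}}$ in terms of $\mathcal{Z}_{(2)}(\mathcal{A})$.'' No such computation is available as an input: \cite{Shi:nondeg} proves only that $\omega_{\mathcal{A}}$ is non-degenerate iff $\mathcal{Z}_{(2)}(\mathcal{A})$ is trivial, and the identification $\ker(\omega_{\mathcal{A}})=\mathbb{F}_{\mathcal{A}}\mathbb{F}_{\mathcal{Z}_{(2)}(\mathcal{A})}^{+}$ is Corollary \ref{cor:comparisonLyubashenko} of the present paper, deduced \emph{from} the theorem you are trying to prove; as stated your plan is circular. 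The non-circular route, which is the one the paper takes, bypasses the radical entirely: one shows directly that the canonical functor $\mathcal{A}\boxtimes_{\mathcal{E}}\mathcal{A}^{\mathrm{rev}}\rightarrow\mathcal{Z}(\mathcal{A},\mathcal{E})$ is identified with $\omega^{\flat}_{\mathcal{E}/\mathcal{A}}$ by descending Shimizu's absolute identification through the relative Deligne tensor product (the commutative diagram \eqref{eq:relativefactorizability}, where the middle square is extracted from the outer one using that the relevant natural isomorphism is $\mathcal{E}$-balanced and that $\mathcal{Z}(\mathcal{A},\mathcal{E})\hookrightarrow\mathcal{Z}(\mathcal{A})$ is fully faithful), and similarly for the cofactorizability functor using the free/cofree module bookkeeping of Lemma \ref{lem:pairing}. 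Your minor misstatement that source and target of the factorizability functor ``always'' have Frobenius--Perron dimension $\mathrm{FPdim}(\mathcal{A})^{2}/\mathrm{FPdim}(\mathcal{E})$ should also be repaired: the target's dimension is governed by $\mathcal{Z}_{(2)}(\mathcal{A})$, and the equality of the two dimensions is precisely equivalent to condition $(1)$.
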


\noindent We note that the faithfulness condition on $F$ is a very mild one. In fact, it is only necessary to exclude pathological examples. For instance, it is well-known that $F$ is automatically faithful if $\mathcal{E}$ has simple monoidal unit.

Thanks to a general criterion of \cite{BJSS}, the above theorem admits a higher categorical interpretation. More precisely, recall that we have fixed $\mathcal{E}$, a finite symmetric tensor category. We can then consider the Morita 4-category $\mathrm{Mor}^{\mathrm{pre}}_2(\mathbf{Pr}_{\mathcal{E}})$ of $\mathcal{E}$-enriched braided pre-tensor categories. This is a variant of the Morita 4-category $\mathrm{Mor}^{\mathrm{pre}}_2(\mathbf{Pr})$ of braided pre-tensor categories introduced in \cite{BJS}. In particular, objects of $\mathrm{Mor}^{\mathrm{pre}}_2(\mathbf{Pr}_{\mathcal{E}})$ are $\mathcal{E}$-enriched braided pre-tensor categories, and 1-morphisms are $\mathcal{E}$-enriched central pre-tensor categories. Thanks to the general invertibility criterion of \cite{BJSS}, we obtain the following result.

\begin{maincorollary}\label{cor:maininvertibility}
Let $\mathcal{A}$ be an $\mathcal{E}$-enriched finite braided tensor category. Then, $\mathcal{A}$ is invertible as an object of $\mathrm{Mor_2^{pre}}(\mathbf{Pr}_{\mathcal{E}})$ if and only if $\mathcal{A}$ is $\mathcal{E}$-non-degenerate.
\end{maincorollary}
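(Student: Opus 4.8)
The plan is to deduce Corollary~\ref{cor:maininvertibility} from Theorem~\ref{thm:mainnondegeneracy} together with the general invertibility criterion of \cite{BJSS}. The key observation is that invertibility in a Morita $n$-category of $E_m$-algebras admits an intrinsic characterization: an object is invertible precisely when the canonical pairing and copairing $1$-morphisms witnessing the duality can be completed to an equivalence, which by \cite{BJSS} reduces to a \emph{non-degeneracy} condition expressed in terms of the relevant relative tensor products. More concretely, for $E_2$-algebras the criterion states that a braided object $\mathcal{A}$ is invertible in $\mathrm{Mor}^{\mathrm{pre}}_2(\mathbf{Pr}_{\mathcal{E}})$ if and only if the natural central functor $\mathcal{A}\boxtimes_{\mathcal{E}}\mathcal{A}^{\mathrm{rev}}\rightarrow\mathcal{Z}(\mathcal{A},\mathcal{E})$ exhibiting $\mathcal{A}$ as a dualizable $1$-morphism is in fact an equivalence. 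Thus the first step is to identify precisely which condition of Theorem~\ref{thm:mainnondegeneracy} the abstract criterion of \cite{BJSS} produces in the $\mathcal{E}$-enriched setting.

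First I would recall the relevant invertibility criterion from \cite{BJSS} and verify that it applies verbatim in the $\mathcal{E}$-enriched Morita $4$-category $\mathrm{Mor}^{\mathrm{pre}}_2(\mathbf{Pr}_{\mathcal{E}})$. Since this category is constructed as a variant of $\mathrm{Mor}^{\mathrm{pre}}_2(\mathbf{Pr})$ by internalizing everything over the fixed finite symmetric tensor category $\mathcal{E}$, the formal structure underlying the criterion of \cite{BJSS}---namely the identification of the relevant duality data among $E_2$-algebras and the reduction of invertibility to a relative non-degeneracy statement---carries over once the relative Deligne product $\boxtimes_{\mathcal{E}}$ is used in place of $\boxtimes$. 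I would then match the output of that criterion to condition~(2) of Theorem~\ref{thm:mainnondegeneracy}, i.e.\ $\mathcal{E}$-factorizability: the functor $\mathcal{A}\boxtimes_{\mathcal{E}}\mathcal{A}^{\mathrm{rev}}\rightarrow\mathcal{Z}(\mathcal{A},\mathcal{E})$ being an equivalence is exactly the statement that the canonical duality $1$-morphism is invertible.

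With that identification in hand, the proof concludes by invoking the equivalence of conditions in Theorem~\ref{thm:mainnondegeneracy}. The criterion of \cite{BJSS} gives that $\mathcal{A}$ is invertible if and only if it is $\mathcal{E}$-factorizable, and Theorem~\ref{thm:mainnondegeneracy} tells us that $\mathcal{E}$-factorizability is equivalent to $\mathcal{E}$-non-degeneracy; chaining these two equivalences yields the desired statement. One subtlety to address is that Theorem~\ref{thm:mainnondegeneracy} assumes only that $F$ is faithful, whereas some of its equivalent formulations (condition~(4)) additionally require faithful flatness; however, since $\mathcal{E}$-non-degeneracy forces $F$ to be an equivalence and hence fully faithful, faithful flatness is automatic whenever any of the equivalent conditions holds, so no extra hypothesis is needed in the corollary.

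The main obstacle I expect is the careful bookkeeping needed to confirm that the abstract invertibility criterion of \cite{BJSS}, originally formulated for $\mathrm{Mor}^{\mathrm{pre}}_2(\mathbf{Pr})$, transports correctly to the relative setting $\mathrm{Mor}^{\mathrm{pre}}_2(\mathbf{Pr}_{\mathcal{E}})$, and in particular that the duality data the criterion singles out really corresponds to the functor $\mathcal{A}\boxtimes_{\mathcal{E}}\mathcal{A}^{\mathrm{rev}}\rightarrow\mathcal{Z}(\mathcal{A},\mathcal{E})$ appearing in condition~(2). Once this correspondence is pinned down, the remainder is a formal application of the already-established equivalences, so the genuinely new content of the corollary lies entirely in this matching step rather than in any further categorical computation.
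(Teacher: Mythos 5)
Your overall strategy---feed the invertibility criterion of \cite{BJSS} into the equivalences of Theorem~\ref{thm:mainnondegeneracy}---is the same as the paper's, but the step you single out as the crux is carried out incorrectly, and this is a genuine gap. The criterion of \cite{BJSS}, specialized to $\mathrm{Mor_2^{pre}}(\mathbf{Pr}_{\mathcal{E}})$ (Theorem~\ref{thm:relativeinvertibilityBJSS} in the paper), does \emph{not} say that $\mathcal{A}$ is invertible if and only if it is $\mathcal{E}$-factorizable. It says that $\mathcal{A}$ is invertible if and only if the \emph{conjunction} of three conditions holds: $\mathcal{E}$-non-degeneracy, $\mathcal{E}$-factorizability, \emph{and} $\mathcal{E}$-cofactorizability (the functor $\mathrm{HC}_{\mathcal{E}}(\mathcal{A})\rightarrow\mathrm{End}_{\mathcal{E}}(\mathcal{A})$ being an equivalence). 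Invertibility of the evaluation/coevaluation $1$-morphism in this Morita $4$-category is itself characterized by several simultaneous conditions, not by the single functor $\mathcal{A}\boxtimes_{\mathcal{E}}\mathcal{A}^{\mathrm{rev}}\rightarrow\mathcal{Z}(\mathcal{A},\mathcal{E})$ being an equivalence. So your chain ``invertible $\Leftrightarrow$ factorizable $\Leftrightarrow$ non-degenerate'' does not type-check against the actual criterion: establishing factorizability alone does not yield invertibility, and the cofactorizability condition---whose equivalence with the others is precisely the content of Proposition~\ref{prop:cofactorizablenondegeneratepairing} and is why the paper develops $\omega_{\mathcal{E}/\mathcal{A}}^{\sharp}$ in addition to $\omega_{\mathcal{E}/\mathcal{A}}^{\flat}$---never enters your argument.

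There is a secondary problem with your handling of the faithfulness hypothesis. For the direction ``invertible $\Rightarrow$ non-degenerate,'' your route passes through ``factorizable $\Rightarrow$ non-degenerate,'' which in Theorem~\ref{thm:mainnondegeneracy} is only available under the standing assumption that $F$ is faithful (factorizability only sees the image of $F$, as Proposition~\ref{prop:faithfulfactorizablefaithfullyflat} and the surrounding discussion make clear). The corollary imposes no such hypothesis. The correct criterion resolves this automatically: non-degeneracy is literally one of the three conditions, so invertibility implies it with no detour. The repair of your argument is therefore: invoke the three-condition criterion of Theorem~\ref{thm:relativeinvertibilityBJSS}; the ``only if'' direction is then immediate; for the ``if'' direction, note that $\mathcal{E}$-non-degeneracy makes $F$ an equivalence, hence fully faithful, so Theorem~\ref{thm:mainnondegeneracy} applies and delivers $\mathcal{E}$-factorizability and $\mathcal{E}$-cofactorizability as well, whence all three conditions hold. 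Your observation that faithful flatness is automatic once $F$ is an equivalence is correct and is the right way to dispense with the hypothesis in that direction.
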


A conceptually related but largely orthogonal result was obtained in \cite{Kin}. Over a field of characteristic zero, it was proven that the (non-finite) braided tensor categories of representations of Lusztig's divided power quantum group at certain roots of unity are invertible when enriched over their (non-finite) symmetric centers. We suspect that our last theorem above can be used to remove the restrictions on the root of unity.

In light of the above results, it is natural to wonder about the structure of the Picard group of the symmetric monoidal 4-category $\mathrm{Mor_2^{pre}}(\mathbf{Pr}_{\mathcal{E}})$, that is, its group of equivalence classes of invertible objects. In the case $\mathcal{E} = \mathrm{Vec}$, the Picard group of $\mathrm{Mor_2^{pre}}(\mathbf{Pr})$ was put forward in \cite{BJSS} as a generalization of the (quantum) Witt group of non-degenerate separable braided tensor categories introduced in \cite{DMNO}. This Witt group has been the subject of much interest because it offers an alternative to the essentially inaccessible problem of classifying all non-degenerate separable braided tensor categories. More generally, associated to any separable symmetric tensor category $\mathcal{E}$, a relative Witt group of $\mathcal{E}$-non-degenerate separable braided tensor categories was constructed in \cite{DNO}. The structure of these groups offers some insight into the classification of all separable braided tensor categories. We view the Picard group of $\mathrm{Mor_2^{pre}}(\mathbf{Pr}_{\mathcal{E}})$, for any finite symmetric tensor category $\mathcal{E}$, as a generalization of these relative Witt groups. These Picard groups are particularly intriguing in positive characteristic owing to the existence of exotic finite symmetric tensor categories \cite{BEO}.

\subsubsection*{Full Dualizability}

Recall that a finite tensor category is called separable if it is semisimple and its Drinfeld center is also semisimple. This condition played a key role in the study of the dualizability of finite tensor categories undertaken in \cite{DSPS:dualizable}. Namely, they showed that separable tensor categories are fully dualizable objects in the Morita 3-category of finite tensor categories. Subsequently, it was shown in \cite[Theorem 1.10]{BJS} that every separable finite braided tensor category is a fully dualizable object of the Morita 4-category $\mathrm{Mor_2^{pre}}(\mathbf{Pr})$ of braided pre-tensor categories. In a different direction, it was also shown in \cite[Theorem 1.1]{BJSS} that every non-degenerate finite braided tensor category is an invertible, and a fortiori fully dualizable, object of $\mathrm{Mor_2^{pre}}(\mathbf{Pr})$. As an application of Corollary \ref{cor:maininvertibility}, we obtain a common generalization of these last two results pertaining to the dualizability of $\mathrm{Mor_2^{pre}}(\mathbf{Pr})$, thereby answering a question raised in \cite[Remark 3.11]{BJSS}.

\begin{maintheorem}\label{main:fulldualizability}
A finite braided tensor category $\mathcal{A}$ is fully dualizable as an object of $\mathrm{Mor_2^{pre}}(\mathbf{Pr})$ if its symmetric center $\mathcal{Z}_{(2)}(\mathcal{A})$ is separable.
\end{maintheorem}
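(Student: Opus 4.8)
The plan is to deduce this theorem from \Cref{cor:maininvertibility} by a suitable choice of the enriching category $\mathcal{E}$, using the general principle that an object which is invertible in a Morita category obtained by enriching over a fully dualizable base is itself fully dualizable in the unenriched Morita category. Given a finite braided tensor category $\mathcal{A}$ whose symmetric center $\mathcal{E} := \mathcal{Z}_{(2)}(\mathcal{A})$ is separable, the tautological symmetric tensor functor $F : \mathcal{E} \hookrightarrow \mathcal{Z}_{(2)}(\mathcal{A})$ is the identity, so $\mathcal{A}$ is automatically $\mathcal{E}$-non-degenerate and faithfully flat. By \Cref{cor:maininvertibility}, $\mathcal{A}$ is therefore an invertible object of $\mathrm{Mor_2^{pre}}(\mathbf{Pr}_{\mathcal{E}})$.

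The crucial remaining step is to transfer this invertibility statement across the forgetful functor from the $\mathcal{E}$-enriched Morita $4$-category to the unenriched one $\mathrm{Mor_2^{pre}}(\mathbf{Pr})$. First I would observe that the enriching category $\mathcal{E}$, being separable, is itself a fully dualizable object of the relevant lower Morita category, and that the base change or forgetful symmetric monoidal functor $\mathrm{Mor_2^{pre}}(\mathbf{Pr}_{\mathcal{E}}) \to \mathrm{Mor_2^{pre}}(\mathbf{Pr})$ sends $\mathcal{A}$, viewed as an $\mathcal{E}$-enriched object, to the underlying braided tensor category $\mathcal{A}$. Symmetric monoidal functors preserve dualizability data: they carry duals to duals and adjoints of $k$-morphisms to adjoints. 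Consequently, an invertible object of the source, which is in particular fully dualizable there, maps to a fully dualizable object of the target, \emph{provided} the target retains enough adjoints. The separability of $\mathcal{E}$ is exactly what guarantees that the structural morphisms witnessing full dualizability in $\mathrm{Mor_2^{pre}}(\mathbf{Pr}_{\mathcal{E}})$ remain adjointible after forgetting the enrichment.

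The main obstacle will be making precise the relationship between full dualizability in the enriched and unenriched settings, since invertibility in $\mathrm{Mor_2^{pre}}(\mathbf{Pr}_{\mathcal{E}})$ a priori only provides duals and adjoints relative to the monoidal unit $\mathcal{E}$, whereas full dualizability in $\mathrm{Mor_2^{pre}}(\mathbf{Pr})$ demands them relative to $\mathrm{Vec}$. I would address this by factoring the comparison through the separable symmetric tensor category $\mathcal{E}$ itself: the unit $1$-morphism of the enriched category is $\mathcal{E}$ regarded as a pre-tensor category over $\mathrm{Vec}$, and separability of $\mathcal{E}$ ensures that $\mathcal{E}$ together with its regular bimodule structure supplies the missing adjunctions one level down. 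Concretely, one shows that each of the finitely many pieces of dualizability data (the dual object, the unit and counit $1$-morphisms, their unit and counit $2$-morphisms, and so on up through the relevant $3$-morphisms) admits the required adjoint in $\mathrm{Mor_2^{pre}}(\mathbf{Pr})$ because the corresponding enriched datum is built from $\mathcal{E}$-module structures that are themselves dualizable when $\mathcal{E}$ is separable.

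Finally, I would assemble these observations: invertibility of $\mathcal{A}$ in the enriched $4$-category yields full dualizability there for free, and the adjointibility furnished by the separability of $\mathcal{E}$ lets the forgetful functor carry this full dualizability down to $\mathrm{Mor_2^{pre}}(\mathbf{Pr})$, giving full dualizability of the underlying braided tensor category $\mathcal{A}$. This both recovers and unifies the prior results that separable braided tensor categories and non-degenerate braided tensor categories are fully dualizable, since the former corresponds to $\mathcal{E}$ separable with $\mathcal{A}$ arbitrary over it and the latter to $\mathcal{E} = \mathrm{Vec}$, answering the question of \cite[Remark 3.11]{BJSS}.
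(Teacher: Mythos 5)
Your overall strategy --- obtain invertibility of $\mathcal{A}$ in $\mathrm{Mor_2^{pre}}(\mathbf{Pr}_{\mathcal{E}})$ for $\mathcal{E}=\mathcal{Z}_{(2)}(\mathcal{A})$ and then combine it with the full dualizability of the separable category $\mathcal{E}$ --- is the same as the paper's, but the transfer step as you describe it has a genuine gap. The functor $\mathrm{Mor_2^{pre}}(\mathbf{Pr}_{\mathcal{E}})\rightarrow\mathrm{Mor_2^{pre}}(\mathbf{Pr})$ forgetting the enrichment is \emph{not} symmetric monoidal: it sends the unit $\mathcal{E}$ to $\mathcal{E}$ rather than to $\mathrm{Vec}$, and it sends $\boxtimes_{\mathcal{E}}$ to $\boxtimes_{\mathcal{E}}$ rather than to $\boxtimes$. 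So the principle ``symmetric monoidal functors preserve dualizability data'' is not available here; what survives the forgetful functor is only the invertibility of the individual morphisms, since invertibility --- unlike adjointibility relative to a fixed ambient monoidal structure --- is preserved by an arbitrary functor. You partly acknowledge this when you note the mismatch of units, but your proposed remedy accounts for only one of the two discrepancies.

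Concretely, the coevaluation witnessing $\mathcal{A}^{\mathrm{rev}}$ as the dual of $\mathcal{A}$ in $\mathrm{Mor_2^{pre}}(\mathbf{Pr})$ is the $1$-morphism $\mathcal{A}:\mathcal{A}\boxtimes\mathcal{A}^{\mathrm{rev}}\nrightarrow\mathrm{Vec}$, and it factors as a composite of \emph{three} $1$-morphisms
$$\mathcal{A}\boxtimes\mathcal{A}^{\mathrm{rev}}\nrightarrow\mathcal{A}\boxtimes_{\mathcal{E}}\mathcal{A}^{\mathrm{rev}}\nrightarrow\mathcal{E}\nrightarrow\mathrm{Vec}.$$
Your argument supplies adjoints for the middle piece (it is the image under the forgetful functor of an invertible $1$-morphism of $\mathrm{Mor}_2(\mathbf{Pr}_{\mathcal{E}})$, by Theorem \ref{thm:relativeinvertibility}) and for the right piece (separability of $\mathcal{E}$ and \cite[Theorem 5.16]{BJS}), but says nothing about the left piece $\mathcal{A}\boxtimes\mathcal{A}^{\mathrm{rev}}\nrightarrow\mathcal{A}\boxtimes_{\mathcal{E}}\mathcal{A}^{\mathrm{rev}}$, which is neither invertible nor governed by the dualizability of $\mathcal{E}$ as an object. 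This is where most of the technical work in the paper's proof lies: one identifies the adjunction units and counits of this $1$-morphism with bimodule categories of modules over the commutative algebra $T^R_{\mathcal{E}}(\mathbbm{1})$ (and over $T^R_{\mathcal{E}}(\mathbbm{1})\otimes T^R_{\mathcal{E}}(\mathbbm{1})$), and then uses the separability of this algebra --- equivalent to the separability of $\mathcal{E}$ by \cite[Corollary 2.5.9]{DSPS:dualizable} --- to prove that the relevant free and forgetful module functors are exact, hence that these $2$-morphisms have all adjoints (Lemma \ref{lem:separablebimodule}). Without an argument for this piece, the proof does not close.
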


\noindent Our proof can roughly be summarized as follows:\ If $\mathcal{A}$ is a finite braided tensor category with separable symmetric center $\mathcal{E}:=\mathcal{Z}_{(2)}(\mathcal{A})$, then we have seen that it defines an invertible object of $\mathrm{Mor_2^{pre}}(\mathbf{Pr}_{\mathcal{E}})$. There is a (non-monoidal) functor $\mathrm{Mor_2^{pre}}(\mathbf{Pr}_{\mathcal{E}})\rightarrow \mathrm{Mor_2^{pre}}(\mathbf{Pr})$ forgetting the enrichment. As $\mathcal{A}$ is invertible as an $\mathcal{E}$-enriched tensor category, its invertibility data is preserved by this functor. The key observation is that the full dualizability data for the finite braided tensor category $\mathcal{A}$ can be expressed using this invertibility data together with the full dualizability data for $\mathcal{E}$, which exists thanks to \cite{BJS} as $\mathcal{E}=\mathcal{Z}_{(2)}(\mathcal{A})$ is separable by assumption.

In characteristic zero, we also show that the converse of the above theorem holds, i.e.\ we prove that a finite braided tensor category is fully dualizable in $\mathrm{Mor_2^{pre}}(\mathbf{Pr})$ if and only if its symmetric center is separable. We expect that this holds irrespective of the characteristic. In a different direction, let us also point out that a specific case in which our theorem applies, but neither \cite{BJS} nor \cite{BJSS} does, is when $\mathcal{A}$ is a slightly degenerate finite braided tensor category, that is, we have $\mathcal{Z}_{(2)}(\mathcal{A})=\mathrm{sVec}$. In characteristic zero, concrete examples are provided by the finite braided tensor categories constructed in \cite{Neg:arbitrary} for specific pairs of simple algebraic group and root of unity.

\subsubsection*{Relation to Topological Field Theories}

By way of the cobordism hypothesis \cite{BD,L}, our last theorem shows that, associated to any finite braided tensor category $\mathcal{A}$ with separable symmetric center, there is a fully extended framed 4-dimensional topological field theory. As in \cite{BJS,BJSS}, we view these topological field theories as non-separable and framed analogues of the Crane-Yetter-Kauffman theories \cite{CKY}, which are associated to any separable braided tensor category. In fact, the cobordism hypothesis also affords a canonical action of the group $\mathrm{SO}(4)$ on the space of fully dualizable objects in $\mathrm{Mor_2^{pre}}(\mathbf{Pr})$. Then, in order for the finite braided tensor category $\mathcal{A}$ to bestow a fully extended oriented 4-dimensional topological field theory, it is enough to provide $\mathcal{A}$ with the data of fixed point for this action by $\mathrm{SO}(4)$. Following \cite{BJS}, we speculate that any ribbon structure on $\mathcal{A}$ yields such a fixed point structure. This point is also discussed in more detail in \cite{H}. We therefore expect that any finite ribbon tensor category $\mathcal{A}$ with separable symmetric center yields a fully extended oriented 4-dimensional topological field theory. When $\mathcal{A}$ is separable, we believe that this recovers the (non-extended) Crane-Yetter-Kauffman theory associated to $\mathcal{A}$.

In a different direction, skein theory was used in \cite{CGHPM} to associate to any finite ribbon tensor category $\mathcal{A}$ a partially defined (non-extended) oriented 4-dimensional topological field theory. This construction is expected to recover the invariant of 4-dimensional 2-handlebodies obtained in \cite{BDR} using the universal Hopf algebra of \cite{Lyu:invariant,KL}. Now, in \cite{CGHPM}, a skein theoretic condition on $\mathcal{A}$ called chromatic compactness was shown to guarantee that the corresponding (non-extended) oriented 4-dimensional topological field theory is completely defined. The discussion of the previous paragraph suggests that if $\mathcal{A}$ has separable symmetric center, then it must be chromatic compact. This observation is relevant to the question of when the invariant of oriented 4-manifolds constructed in \cite{CGHPM} from the finite ribbon tensor category $\mathcal{A}$ may be able to detect exotic smooth structures. Namely, when $\mathcal{A}$ has separable symmetric center, we surmise that the corresponding topological field theory is not only completely defined but is also fully extended and therefore cannot detect exotic smooth structure by \cite{Reu}. Heuristically, this suggests that finite ribbon tensor categories with exotic symmetric centers should be used as the input for the above invariants of oriented 4-manifolds.

\subsection*{Acknowledgments}

I would like to thank both Patrick Kinnear and Maksymilian Manko for providing feedback on a preliminary version of this manuscript, as well as Noah Snyder for answering various questions on the categorical setup of \cite{BJS}. This work was supported in part by the Simons Collaboration on Global Categorical Symmetries.

\section{Preliminaries}\label{sec:prelim}

\subsection{Locally Finitely Presentable Categories}

We recall various facts and definitions from \cite[Section 2.1]{BJS} to which we refer the reader for additional details.

Let us fix a field $\mathbbm{k}$. We say that a $\mathbbm{k}$-linear category is locally finitely presentable if it has all small colimits and is generated under filtered colimits by a small subcategory of compact objects. A $\mathbbm{k}$-linear category has enough compact projectives if it has all small colimits and is generated under small colimits by a small subcategory of compact-projective objects. If the subcategory of compact-projective objects can be chosen to consists of a single object, we say that the $\mathbbm{k}$-linear category has a projective generator. A locally finitely presentable $\mathbbm{k}$-linear category is called artinian, if it is abelian, $\mathrm{Hom}$-spaces between compact objects are finite dimensional, and compact objects have finite length. Finally, a locally finitely presentable $\mathbbm{k}$-linear category is finite if it is artinian and has a projective generator.

We write $\mathbf{Pr}$ for the 2-category of locally finitely presentable $\mathbbm{k}$-linear categories and cocontinous $\mathbbm{k}$-linear functors. In particular, it follows from the special adjoint functor theorem that every cocontinuous functor between locally finitely presentable $\mathbbm{k}$-linear categories has a (not necessarily cocontinuous) right adjoint. The 2-category $\mathbf{Pr}$ admits a symmetric monoidal structure given by the Deligne-Kelly tensor product $\boxtimes$. Let us also recall that the Deligne-Kelly tensor product preserves locally finitely presentable $\mathbbm{k}$-linear categories that have enough compact projectives, resp.\ are artinian, resp.\ are finite  \cite{Del:Tannakian, Lor}.

It follows from the definitions that there is a strong relationship between a locally finitely presentable $\mathbbm{k}$-linear category and its subcategory of compact objects. More precisely, it was shown in \cite[Sections 3.1]{BZBJ} that there is an equivalence \begin{equation}\label{eq:equivalencePrcRex}(-)^{\mathbf{c}}:\mathbf{Pr^c}\leftrightarrow \mathbf{Rex}:\mathrm{ind}\end{equation} between the 2-category $\mathbf{Pr^c}$ of locally finitely presentable $\mathbbm{k}$-linear categories and compact-preserving cocontinuous $\mathbbm{k}$-linear functors and the 2-category $\mathbf{Rex}$ of small finitely cocomplete $\mathbbm{k}$-linear categories and right exact $\mathbbm{k}$-linear functors. Explicitly, the equivalence associates to a locally finitely presentable category $\mathcal{C}$ its full subcategory $\mathcal{C}^{\mathbf{c}}$ on the compact objects, and to a small finitely cocomplete category $\mathcal{D}$ its ind-completion $\mathrm{ind}(\mathcal{D})$. Furthermore, this equivalence is compatible with the Deligne-Kelly tensor product \cite[Sections 3.2]{BZBJ}. We can, and will therefore almost always, conflate an artinian, respectively finite, $\mathbbm{k}$-linear category with its category of compact objects, which is artinian, respectively finite, in the sense of \cite[Section 1.8]{EGNO}.

\subsection{(Pre-)Tensor Categories}\label{sub:pretensor}

We now review various definitions and results from \cite[Section 2.2]{BJS} to which we refer the reader for additional details. We emphasize that our terminology differs from theirs.

We will be interested in $E_1$-algebras in $\mathbf{Pr}$. Explicitly, an $E_1$-algebra in $\mathbf{Pr}$ is a locally finitely presentable category $\mathcal{C}$ equipped with a monoidal structure whose monoidal product $\otimes:\mathcal{C}\times\mathcal{C}\rightarrow \mathcal{C}$ is $\mathbbm{k}$-bilinear and cocontinuous in each variable. These conditions ensure that $\otimes:\mathcal{C}\times\mathcal{C}\rightarrow \mathcal{C}$ factors as $T_{\mathcal{C}}:\mathcal{C}\boxtimes\mathcal{C}\rightarrow \mathcal{C}$ through the Deligne-Kelly tensor product. If $\mathcal{C}$ is an $E_1$-algebra as above, then we write $\mathcal{C}^{\mathrm{mop}}$ for the $E_1$-algebra whose underlying category is $\mathcal{C}$ and equipped with the opposite monoidal structure $\otimes^{\mathrm{mop}}$, i.e.\ $C\otimes^{\mathrm{mop}} D := D\otimes C$ for every objects $C$ and $D$ of $\mathcal{C}$.

A pre-tensor category is an $E_1$-algebra in $\mathbf{Pr}$ whose underlying category has enough compact-projective objects and such that all of its compact-projective objects have right and left duals. Such monoidal categories are called ``cp-rigid tensor categories'' in \cite{BJS}. We have preferred to reserve the term tensor category to a more restrictive notion, which will be introduced shortly. Thanks to \cite[Definition-Proposition 4.1]{BJS}, an $E_1$-algebra $\mathcal{C}$ in $\mathbf{Pr}$ whose underlying category enough compact projectives is a pre-tensor category if and only if $T_{\mathcal{C}}$ has a cocontinuous right adjoint $T^R_{\mathcal{C}}$, and the canonical lax $\mathcal{C}$-$\mathcal{C}$-bimodule structure on $T^R_{\mathcal{C}}$ is strong. Given $\mathcal{C}$ and $\mathcal{D}$ two pre-tensor categories, a tensor functor $F:\mathcal{C}\rightarrow\mathcal{D}$ is a map of $E_1$-algebras in $\mathbf{Pr}$, i.e.\ it is a monoidal cocontinuous $\mathbbm{k}$-linear functor.

We will focus our attention on the more restrictive class of tensor categories.

\begin{Definition}
A tensor category is an artinian category with enough compact-projective objects that is equipped with a $\mathbbm{k}$-bilinear monoidal structure for which every compact object has a right and a left dual.
\end{Definition}

\noindent Every tensor category is in particular an artinian pre-tensor category. Our tensor categories are also ``compact-rigid tensor categories'' in the sense of \cite{BJS}. It follows from our definitions that, under the equivalence of equation \eqref{eq:equivalencePrcRex}, tensor categories correspond to the ``multitensor categories'' of \cite{EGNO} that have enough projectives. In particular, our finite tensor categories are exactly the ind-completion of the finite multitensor categories of \cite{EGNO}. This is the main motivation behind our choice of nomenclature. For use below, we also record the following classical observation (see for instance \cite[Section 1.3]{BN}).

\begin{Lemma}\label{lem:adjointtensor}
Any tensor functor between finite tensor categories is left-exact and compact preserving. Consequently, it has cocontinuous and compact preserving left and right adjoints.
\end{Lemma}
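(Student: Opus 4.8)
The plan is to prove Lemma \ref{lem:adjointtensor} by establishing the three assertions in sequence: first that any tensor functor $F:\mathcal{C}\rightarrow\mathcal{D}$ between finite tensor categories is left-exact, then that it is compact-preserving, and finally deducing the existence of the adjoints with their stated properties. Since $F$ is by definition a map of $E_1$-algebras in $\mathbf{Pr}$, it is already cocontinuous and $\mathbbm{k}$-linear, so by the special adjoint functor theorem recalled above it automatically possesses a right adjoint $F^R$. The substance of the lemma lies in showing left-exactness of $F$ and the compact-preservation that will feed the right adjoint's own cocontinuity.

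First I would establish left-exactness. The key input is rigidity: on the subcategory of compact objects, every object has left and right duals. I would argue that $F$ preserves duals, i.e.\ $F(X^\vee)\cong F(X)^\vee$ canonically, since $F$ is monoidal and duals are characterized by the evaluation and coevaluation morphisms, which $F$ preserves. From this one sees that on compacts $F$ commutes with the operation $(-)^\vee$ up to natural isomorphism. The standard trick is then that a cocontinuous functor which also commutes with duals must preserve limits of the relevant finite shape: taking duals interchanges right exactness and left exactness, so the composite $X\mapsto F(X^\vee)^\vee\cong F(X)$ expresses the right-exact (cocontinuous) functor $F$ as a dual-conjugate of another right-exact functor, forcing $F$ to be left-exact as well on compacts, hence on all of $\mathcal{C}$ by passing to ind-completions via the equivalence of equation \eqref{eq:equivalencePrcRex}.

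Next I would show $F$ is compact-preserving. Here I would use that in a finite tensor category the compact objects are exactly the objects of finite length, and that $F$, being additive and monoidal, sends the compact generator (or more concretely the finitely many simples, or the projective generator) to a compact object; since compacts are generated under finite colimits from duals of generators and $F$ preserves finite colimits and duals, $F$ carries compacts to compacts. Equivalently, because $\mathcal{C}$ and $\mathcal{D}$ are finite, every compact object is dualizable, and $F$ preserving duals together with preserving finite colimits forces $F(\mathcal{C}^{\mathbf{c}})\subseteq\mathcal{D}^{\mathbf{c}}$.

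Finally, the adjoints: once $F$ is compact-preserving it lands in $\mathbf{Pr^c}$, and by the equivalence \eqref{eq:equivalencePrcRex} corresponds to a right-exact functor between the finite categories $\mathcal{C}^{\mathbf{c}},\mathcal{D}^{\mathbf{c}}$. A right-exact, i.e.\ finite-colimit-preserving, $\mathbbm{k}$-linear functor between finite $\mathbbm{k}$-linear categories automatically has both a left and a right adjoint by finiteness (an exact-in-the-appropriate-sense functor on a finite category admits both adjoints), and these adjoints, being themselves functors between finite categories, are compact-preserving; their ind-completions then supply cocontinuous compact-preserving left and right adjoints to $F$ on all of $\mathbf{Pr}$. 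The main obstacle I anticipate is the left-exactness step: one must argue carefully that rigidity upgrades a merely right-exact monoidal functor to a two-sided exact one, and the cleanest route is the duality-conjugation argument sketched above, after which compact-preservation and the adjoints follow formally from finiteness and the $\mathbf{Pr^c}\simeq\mathbf{Rex}$ dictionary.
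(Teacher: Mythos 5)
The paper records this lemma as a classical observation and gives no proof, deferring to \cite[Section 1.3]{BN}; your argument is precisely the standard one behind that citation --- duality-conjugation to upgrade right-exactness to left-exactness, preservation of dualizable (hence compact, since the unit of the target is compact) objects, and the passage through the $\mathbf{Pr^c}\simeq\mathbf{Rex}$ dictionary to produce ind-extended adjoints --- and it is correct. The only phrasing to tighten is in the final step: a right-exact $\mathbbm{k}$-linear functor between finite abelian categories has only a \emph{right} adjoint; it is the left-exactness you established earlier that supplies the left adjoint, and you should also make explicit that dualizable objects of a finite tensor category are compact because $\mathrm{Hom}(F(X),-)\cong\mathrm{Hom}(\mathbbm{1},F(X)^{\vee}\otimes-)$ commutes with filtered colimits.
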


On one hand, pre-tensor categories are closed under the Deligne-Kelly tensor product by \cite[Proposition 4.6]{BJS}. More precisely, the proof does not use any assumption on the field $\mathbbm{k}$. On the other hand, tensor categories are not closed under the Deligne-Kelly tensor product (see the discussion before \cite[Theorem 2.2.18]{DSPS:dualizable}). In order for this to hold, it is enough to assume that $\mathbbm{k}$ is a perfect field by \cite[Proposition 5.17]{Del:Tannakian}. From now on, we shall therefore always assume that $\mathbbm{k}$ is perfect.

\subsection{Braided Tensor Categories and Centers}

Many of the (pre-)tensor categories that we will consider below are in fact $E_2$-algebras in $\mathbf{Pr}$. Explicitly, an $E_2$-algebra in $\mathbf{Pr}$ is an $E_1$-algebra $\mathcal{A}$ in $\mathbf{Pr}$ equipped a braiding $\beta$, that is, coherent natural isomorphisms $\beta_{A,B}:A\otimes B\cong B\otimes A$ for every pair of objects $A$ and $B$ in $\mathcal{A}$. Given an $E_2$-algebra $\mathcal{A}$, we write $\mathcal{A}^{\mathrm{rev}}$ for the $E_2$-algebra whose underlying $E_1$-algebra is $\mathcal{A}$ and whose braiding is given by $\beta^{\mathrm{rev}}_{A,B} := \beta_{B,A}^{-1}:A\otimes B\cong B\otimes A$.

Let $\mathcal{A}$ be an $E_2$-algebra in $\mathbf{Pr}$ with braiding $\beta$. The most fundamental invariant of $\mathcal{A}$ is its symmetric, also called M\"uger, center $\mathcal{Z}_{(2)}(\mathcal{A})$, which is the full subcategory of $\mathcal{A}$ on those objects $A$ such that $\beta_{B,A}\circ \beta_{A,B} = \mathrm{id}_{A\otimes B}$ for every object $B$ of $\mathcal{A}$. It follows from the definition that $\mathcal{Z}_{(2)}(\mathcal{A})$ is a symmetric monoidal category, so that it yields an $E_{\infty}$-algebra in $\mathbf{Pr}$. If $\mathcal{A}$ is a braided pre-tensor category, then $\mathcal{Z}_{(2)}(\mathcal{A})$ is a symmetric pre-tensor category by \cite[Theorem 5.13]{BJS}. If $\mathcal{A}$ is a (finite) braided tensor category, it follows easily from the definitions that $\mathcal{Z}_{(2)}(\mathcal{A})$ is a (finite) symmetric tensor category.

To any $E_1$-algebra $\mathcal{C}$ in $\mathbf{Pr}$, one can associate an $E_2$-algebra $\mathcal{Z}(\mathcal{C})$, called the Drinfeld center. The objects of $\mathcal{Z}(\mathcal{C})$ are pairs consisting of an object $Z$ of $\mathcal{C}$ together with a half-braiding $\gamma_{Z}$, that is suitably coherent natural isomorphisms $\gamma_{Z,C}:Z\otimes C\cong C\otimes Z$ for every object $C$ of $\mathcal{C}$. The morphisms of $\mathcal{Z}(\mathcal{C})$ are the morphisms of $\mathcal{C}$ that commute with the half-braidings. The Drinfeld center of a pre-tensor category is a pre-tensor category by \cite[Theorem 5.13]{BJS}. The Drinfeld center of a finite tensor category is a finite tensor category by \cite[Proposition 7.13.8]{EGNO}.

\subsection{The Relative Framework}

Let us now fix $\mathcal{E}$ an $E_{\infty}$-algebra in $\mathbf{Pr}$. In particular, $\mathcal{E}$ is a symmetric monoidal category. We write $\mathbf{Pr}_{\mathcal{E}}$ for the 2-category of $\mathcal{E}$-modules in $\mathbf{Pr}$. Explicitly, an $\mathcal{E}$-module in $\mathbf{Pr}$ is a locally finitely presentable $\mathbbm{k}$-linear category $\mathcal{M}$ equipped with a coherent (left) $\mathcal{E}$-module structure (see, for instance, \cite[Definition 7.1.1]{EGNO} for the coherence axioms), whose action functor $\otimes:\mathcal{E}\times\mathcal{M}\rightarrow \mathcal{M}$ is $\mathbbm{k}$-bilinear and cocontinuous in both variables. Morally, we think of $\mathbf{Pr}_{\mathcal{E}}$ as the 2-category of $\mathcal{E}$-enriched locally finitely presentable categories.

As $\mathcal{E}$ is symmetric, we will make little distinction between left and right $\mathcal{E}$-module structures. In particular, we can consider the relative Deligne-Kelly tensor product $\boxtimes_{\mathcal{E}}$ of any two $\mathcal{E}$-modules in $\mathbf{Pr}$. Its existence at the present level of generality follows from the discussion in \cite[Definition 3.13 \& Remark 3.14]{BZBJ}. Moreover, the relative Deligne-Kelly tensor product $\boxtimes_{\mathcal{E}}$ endows $\mathbf{Pr}_{\mathcal{E}}$ with a symmetric monoidal structure. When $\mathcal{E}$ is a finite symmetric tensor category, the relative Deligne tensor product of finite $\mathcal{E}$-module categories was studied in detail in \cite{DSPS:dualizable}.

An $E_1$-algebra in $\mathbf{Pr}_{\mathcal{E}}$ is an $E_1$-algebra $\mathcal{C}$ in $\mathbf{Pr}$ equipped with an $\mathcal{E}$-central structure, that is, a braided monoidal cocontinuous $\mathbbm{k}$-linear functor $\mathcal{E}\rightarrow \mathcal{Z}(\mathcal{C})$. This follows by inspection (see also \cite[Definition-Proposition 3.2]{BJS}). Provided that $\mathcal{E}$ is a (pre-)tensor category, an $\mathcal{E}$-enriched (pre-)tensor category is an $E_1$-algebra in $\mathbf{Pr}_{\mathcal{E}}$ whose underlying $E_1$-algebra in $\mathbf{Pr}$ is a (pre-)tensor category.\footnote{\label{ft:enriched}Our choice of terminology is motivated by our desire to avoid having to chose between calling such structures either algebras over $\mathcal{E}$, following the representation theory oriented literature, or algebras under $\mathcal{E}$, following the category theory oriented literature. The soundness of our nomenclature is justified further by \cite[Theorem 1.2]{MPP}.}

An $E_2$-algebra in $\mathbf{Pr}_{\mathcal{E}}$ is an $E_2$-algebra $\mathcal{A}$ in $\mathbf{Pr}$ equipped with a symmetric monoidal cocontinuous $\mathbbm{k}$-linear functor $\mathcal{E}\rightarrow \mathcal{Z}_{(2)}(\mathcal{A})$. This follows by inspection (see also \cite[Proposition 2.40]{Kin}). Let us now assume that $\mathcal{E}$ is a symmetric pre-tensor category.

\begin{Definition}
An $\mathcal{E}$-enriched braided (pre-)tensor category is a braided (pre-)tensor category $\mathcal{A}$ equipped with a symmetric tensor functor $F:\mathcal{E}\rightarrow \mathcal{Z}_{(2)}(\mathcal{A})$.
\end{Definition}

\noindent By definition, $\mathcal{E}$-enriched braided (pre-)tensor categories are $E_2$-algebras in $\mathbf{Pr}_{\mathcal{E}}$ whose underlying $E_1$-algebra in $\mathbf{Pr}$ is a (pre-)tensor category.\cref{ft:enriched}

\subsection{Higher Morita Categories}

The symmetric monoidal 2-category $\mathbf{Pr}$ has geometric realizations and the Deligne-Kelly tensor product commutes with them by \cite[Proposition 3.5]{BZBJ}. These properties are therefore also inherited by $\mathbf{Pr}_{\mathcal{E}}$, for any fixed $E_{\infty}$-algebra $\mathcal{E}$ in $\mathbf{Pr}$. We may therefore appeal to the construction of \cite{JFS, Hau}, so as to consider the Morita 4-category $\mathrm{Mor_2}(\mathbf{Pr}_{\mathcal{E}})$ of $E_2$-algebras in $\mathbf{Pr}_{\mathcal{E}}$.

Morphisms in the 4-category $\mathrm{Mor_2}(\mathbf{Pr})$ are described in detail in \cite[Section 3]{BJS}. Morphisms in the 4-category $\mathrm{Mor_2}(\mathbf{Pr}_{\mathcal{E}})$ admit analogous descriptions. Compendiously, objects of $\mathrm{Mor_2}(\mathbf{Pr}_{\mathcal{E}})$ are $\mathcal{E}$-enriched $E_2$-algebras $\mathcal{A}$ in $\mathbf{Pr}$, that is, $E_2$-algebras equipped with a symmetric monoidal cocontinuous $\mathbbm{k}$-linear functor $\mathcal{E}\rightarrow \mathcal{Z}_{(2)}(\mathcal{A})$. Given two $\mathcal{E}$-enriched $E_2$-algebras $\mathcal{A}$ and $\mathcal{B}$ in $\mathbf{Pr}$, a 1-morphism $\mathcal{C}:\mathcal{A}\nrightarrow \mathcal{B}$ is an $\mathcal{E}$-enriched $\mathcal{A}$-$\mathcal{B}$-central $E_1$-algebra, i.e.\ the $E_1$-algebra $\mathcal{C}$ comes equipped with a braided monoidal cocontinuous $\mathbbm{k}$-linear functor $F_{\mathcal{C}}:\mathcal{A}\boxtimes_{\mathcal{E}}\mathcal{B}^{\mathrm{rev}}\rightarrow\mathcal{Z}(\mathcal{C})$. Observe that $\mathcal{C}$ is in particular an $\mathcal{E}$-central $E_1$-algebra via the canonical braided monoidal functor $\mathcal{E}\rightarrow \mathcal{A}\boxtimes_{\mathcal{E}}\mathcal{B}^{\mathrm{rev}}\rightarrow\mathcal{Z}(\mathcal{C})$. Given $\mathcal{C}$ and $\mathcal{D}$ two $\mathcal{E}$-enriched $\mathcal{A}$-$\mathcal{B}$-central $E_1$-algebra, a 2-morphism $\mathcal{M}:\mathcal{C}\nRightarrow\mathcal{D}$ is an $\mathcal{E}$-enriched $\mathcal{A}$-$\mathcal{B}$-centered $\mathcal{C}$-$\mathcal{D}$-bimodule $\mathcal{M}$ in $\mathbf{Pr}$, that is, the $\mathcal{C}$-$\mathcal{D}$-bimodule $\mathcal{M}$ comes equipped with natural isomorphisms $F_{\mathcal{C}}(X)\otimes M \cong M\otimes F_{\mathcal{D}}(X)$ for every $X$ in $\mathcal{A}\boxtimes_{\mathcal{E}}\mathcal{B}^{\mathrm{rev}}$ and $M$ in $\mathcal{M}$ satisfying the obvious compatibility conditions. Then, 3-morphisms are bimodule functors compatible with the centered structures, and 4-morphisms are bimodule natural transformations.

Recall from \cite[Definition-Proposition 4.7]{BJS} that there is a symmetric monoidal sub-4-category $\mathrm{Mor_2^{pre}}(\mathbf{Pr})$ of $\mathrm{Mor_2}(\mathbf{Pr})$ whose objects are braided pre-tensor categories, 1-morphisms are central pre-tensor categories, 2-morphisms are centered bimodules with enough compact projective objects, and both 3- and 4-morphisms are as in $\mathrm{Mor_2}(\mathbf{Pr})$. There can be no such 4-category whose 1-morphisms are all (finite) tensor categories as the next example shows.

\begin{Example}\label{ex:pretensorcounterexample}
Let $\mathbbm{k}$ be a field of characteristic $p>0$. Write $\mathrm{Vec}(\mathbb{Z}/p)$ for the finite tensor category of $\mathbb{Z}/p$-graded vector spaces over $\mathbbm{k}$. Then, there is an equivalence of pre-tensor categories $$\mathrm{Vec}\boxtimes_{\mathrm{Vec}(\mathbb{Z}/p)}\mathrm{Vec}\simeq \mathrm{Mod}(\mathbbm{k}[\mathbb{Z}/p]),$$ where the monoidal structure on the right hand-side arises from the commutative algebra structure on $\mathbbm{k}[\mathbb{Z}/p]$. The compact projective generator $\mathbbm{k}[\mathbb{Z}/p]$ of $\mathrm{Mod}(\mathbbm{k}[\mathbb{Z}/p])$ has duals by virtue of the fact that it is the monoidal unit. However, is easy to check that $\mathbbm{k}$, the trivial $\mathbbm{k}[\mathbb{Z}/p]$-module, does not.
\end{Example}

\subsection{Dualizability in Higher Morita Categories}

Recall from \cite{L} that a symmetric monoidal $k$-category is fully dualizable if every object has a dual and every $i$-morphism for $1\leq i < k$ has a right and a left adjoint. An object of a symmetric monoidal $n$-category is $k$-dualizable if it lies in a symmetric monoidal sub-$k$-category that is fully dualizable.

For instance, it was shown in \cite{BCJF} that every locally finitely presentable category that has enough compact projectives is 1-dualizable in $\mathbf{Pr}$. This explains the appearance of this condition. Moreover, it is known that a locally finite presnetable category is fully dualizable in $\mathbf{Pr}$ if it is the $\mathrm{ind}$-completion of a finite semisimple category \cite{Til,BDSPV}.

The dualizability of the symmetric monoidal Morita 4-category $\mathrm{Mor_2}(\mathbf{Pr})$ was extensively studied in \cite{BJS}. In particular, they show that the symmetric monoidal sub-4-category $\mathrm{Mor_2^{pre}}(\mathbf{Pr})$ is 3-dualizable. We record an enriched version of this statement. Let $\mathcal{E}$ be a symmetric pre-tensor category. We can consider the symmetric monoidal sub-4-category $\mathrm{Mor_2^{pre}}(\mathbf{Pr}_{\mathcal{E}})$ of $\mathrm{Mor_2}(\mathbf{Pr}_{\mathcal{E}})$ whose objects are $\mathcal{E}$-enriched braided pre-tensor categories, 1-morphisms are $\mathcal{E}$-enriched central pre-tensor categories, 2-morphisms are $\mathcal{E}$-enriched centered bimodule categories with enough compact projectives, and both 3- and 4-morphisms are as in $\mathrm{Mor_2}(\mathbf{Pr}_{\mathcal{E}})$. That $\mathrm{Mor_2^{pre}}(\mathbf{Pr}_{\mathcal{E}})$ is indeed a symmetric monoidal sub-4-category follows from \cite[Definition-Proposition 4.7]{BJS}.

\begin{Proposition}\label{prop:3dualizability}
Every object of $\mathrm{Mor_2^{pre}}(\mathbf{Pr}_{\mathcal{E}})$ has left and right duals. Every 1-morphism of $\mathrm{Mor_2^{pre}}(\mathbf{Pr}_{\mathcal{E}})$ has left and right adjoints. Every 2-morphism of $\mathrm{Mor_2^{pre}}(\mathbf{Pr}_{\mathcal{E}})$ has left and right adjoints. In particular, every object of $\mathrm{Mor_2^{pre}}(\mathbf{Pr}_{\mathcal{E}})$ is 3-dualizable.
\end{Proposition}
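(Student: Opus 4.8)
The plan is to show that the proof of the corresponding $3$-dualizability statement for $\mathrm{Mor_2^{pre}}(\mathbf{Pr})$ given in \cite{BJS} transports \emph{mutatis mutandis} to the relative setting, since every structural feature of the symmetric monoidal $2$-category $\mathbf{Pr}$ used there has an exact analogue for $\mathbf{Pr}_{\mathcal{E}}$. Concretely, $\mathbf{Pr}_{\mathcal{E}}$ is symmetric monoidal under $\boxtimes_{\mathcal{E}}$, it admits geometric realizations, and $\boxtimes_{\mathcal{E}}$ commutes with them by \cite[Proposition 3.5]{BZBJ}, so the Morita $4$-category $\mathrm{Mor_2}(\mathbf{Pr}_{\mathcal{E}})$ and its sub-$4$-category $\mathrm{Mor_2^{pre}}(\mathbf{Pr}_{\mathcal{E}})$ enjoy precisely the formal features that \cite{BJS} exploits to produce duals and adjoints. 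In addition, the special adjoint functor theorem applies verbatim in $\mathbf{Pr}_{\mathcal{E}}$: any cocontinuous $\mathcal{E}$-linear functor between $\mathcal{E}$-modules in $\mathbf{Pr}$ admits a right adjoint, which is canonically lax $\mathcal{E}$-linear. These are the only inputs needed.

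I would then treat the three levels in turn. For objects, the dual of an $\mathcal{E}$-enriched braided pre-tensor category $\mathcal{A}$ is furnished by the formal duality of $E_2$-algebras in a Morita $4$-category, with evaluation and coevaluation $1$-morphisms built from $\mathcal{A}$ itself regarded as an $\mathcal{E}$-enriched central algebra; these witnessing $1$-morphisms remain pre-tensor categories because $\mathcal{A}$ is, so the duality lives inside $\mathrm{Mor_2^{pre}}(\mathbf{Pr}_{\mathcal{E}})$, and left and right duals coincide by symmetry. For $1$-morphisms, an $\mathcal{E}$-enriched central pre-tensor category $\mathcal{C}\colon\mathcal{A}\nrightarrow\mathcal{B}$ acquires left and right adjoints precisely because it is cp-rigid: by \cite[Definition-Proposition 4.1]{BJS} the multiplication $T_{\mathcal{C}}$ has a cocontinuous right adjoint $T^R_{\mathcal{C}}$ whose canonical lax bimodule structure is strong, and this is exactly the dualizability datum producing the adjoint $1$-morphisms. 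For $2$-morphisms, an $\mathcal{E}$-enriched centered bimodule $\mathcal{M}$ with enough compact projectives is dualizable as a bimodule: its right adjoint functor exists by the special adjoint functor theorem, and its left adjoint exists because $\mathcal{M}$ has enough compact projectives, exactly as in the absolute case of \cite{BCJF, BJS}.

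The main obstacle will be, at each level, to upgrade the duality or adjunction datum produced above to a genuine morphism of $\mathrm{Mor_2^{pre}}(\mathbf{Pr}_{\mathcal{E}})$, that is, to verify compatibility with the $\mathcal{E}$-enrichment and with the relevant centered structures. The crux is the $2$-morphism adjoints: one must check that the right adjoint furnished by the special adjoint functor theorem is not merely $\mathbbm{k}$-linear but an $\mathcal{E}$-module functor compatible with the $\mathcal{A}$-$\mathcal{B}$-centered structure, and that the unit and counit are centered bimodule natural transformations. This follows because the right adjoint of a strong $\mathcal{E}$-module functor is canonically lax $\mathcal{E}$-linear, and the centering isomorphisms transport along the adjunction; the corresponding verifications at the object and $1$-morphism levels are routine once $\boxtimes_{\mathcal{E}}$ is used in place of $\boxtimes$. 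Assembling the three statements then yields $3$-dualizability by definition.
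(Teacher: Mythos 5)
Your proposal is correct in outline but takes a genuinely different, and heavier, route than the paper. You propose to re-run the entire argument of \cite{BJS} inside $\mathbf{Pr}_{\mathcal{E}}$, verifying at each level that duals, adjoints, units and counits are compatible with the $\mathcal{E}$-enrichment; you correctly identify the 2-morphism level as the crux, but you then defer the enrichment-compatibility checks to a ``routine'' verification, which in practice would require enriched analogues of a fair amount of the supporting machinery of \cite[Theorem 5.16]{BJS}. The paper avoids all of this with one observation you miss: an $\mathcal{E}$-enriched $\mathcal{A}$-$\mathcal{B}$-centered $\mathcal{C}$-$\mathcal{D}$-bimodule is, by definition of the morphisms in $\mathrm{Mor_2}(\mathbf{Pr}_{\mathcal{E}})$, exactly an $\mathcal{A}\boxtimes_{\mathcal{E}}\mathcal{B}^{\mathrm{rev}}$-centered $\mathcal{C}$-$\mathcal{D}$-bimodule, and $\mathcal{A}\boxtimes_{\mathcal{E}}\mathcal{B}^{\mathrm{rev}}$ is itself an ordinary (non-enriched) braided pre-tensor category by \cite[Definition-Proposition 4.7]{BJS}; hence the \emph{absolute} statement of \cite[Theorem 5.16]{BJS} applies verbatim and no enriched bookkeeping is needed. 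The assertions about objects and 1-morphisms are, in both your argument and the paper's, formal consequences of general properties of Morita categories of $E_2$-algebras \cite{GS}, so there the two approaches agree. One further caution about your sketch: the adjoint of a 2-morphism $\mathcal{M}:\mathcal{C}\nRightarrow\mathcal{D}$ in this Morita 4-category is a \emph{dual bimodule category}, not the right or left adjoint \emph{functor} of $\mathcal{M}$; what must be verified is dualizability of $\mathcal{M}$ as a centered bimodule with enough compact projectives (this is what \cite{BCJF} and \cite[Theorem 5.16]{BJS} supply), so your phrasing ``its right adjoint functor exists by the special adjoint functor theorem'' conflates two different notions of adjoint and would need to be repaired if you carried out your route in full. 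Your approach, if completed, buys nothing over the paper's except independence from the repackaging of the centered structure, at the cost of substantially more verification.
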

\begin{proof}
This follows from the proof of \cite[Theorem 5.16]{BJS}. More precisely, the assertions about objects and 1-morphisms are consequences of general properties of Morita categories of $E_2$-algebras \cite{GS}. The statement about 2-morphisms unpacks as follows:\ One has to show that every $\mathcal{E}$-enriched $\mathcal{A}$-$\mathcal{B}$-centered $\mathcal{C}$-$\mathcal{D}$ bimodule $\mathcal{M}$, whose underlying category has enough compact projectives, has right and left adjoints. It is enough to show that every $\mathcal{A}\boxtimes_{\mathcal{E}}\mathcal{B}^{\mathrm{rev}}$-centered $\mathcal{C}$-$\mathcal{D}$ bimodule $\mathcal{M}$, whose underlying category has enough compact projectives, has right and left adjoints. This follows from \cite[Theorem 5.16]{BJS}.
\end{proof}

We will also be specifically interested in a fully dualizable symmetric monoidal sub-4-category of $\mathrm{Mor_2^{pre}}(\mathbf{Pr}_{\mathcal{E}})$. It is based on the following definition.

\begin{Definition}
A finite tensor category $\mathcal{C}$ is separable if it is semisimple and its Drinfeld center $\mathcal{Z}(\mathcal{C})$ is semisimple.
\end{Definition}

\noindent This notion played a key role in the study of the dualizability of the Morita 3-category of finite tensor categories undergone in \cite{DSPS:dualizable}. More precisely, they showed that a finite tensor category is fully dualizbale if and only if it is separable. It is therefore not surprising that this condition appears in the analysis of the dualizability of the Morita 4-category of braided pre-tensor categories. Let us also recall that separability admits the following equivalent characterization:\ Over an algebraically closed field, a finite semisimple tensor category is separable if and only if its quantum, or categorical, dimension is non-zero in $\mathbbm{k}$. In particular, it follows from the main theorem of \cite{ENO1} that, over a field of characteristic zero, every finite semisimple tensor category is separable.

Let $\mathcal{E}$ be a separable symmetric tensor category. We write $\mathrm{Mor}_2^{\mathrm{sep}}(\mathbf{Pr}_{\mathcal{E}})$ for the symmetric monoidal sub-4-category of $\mathrm{Mor}_2^{\mathrm{pre}}(\mathbf{Pr}_{\mathcal{E}})$ whose objects are $\mathcal{E}$-enriched separable braided tensor categories, 1-morphisms are $\mathcal{E}$-enriched central separable tensor categories, 2-morphisms are $\mathcal{E}$-enriched centered finite semisimple bimodule categories, 3-morphisms are compact preserving centered bimodule functors, and 4-morphisms are bimodule natural transformations. That this indeed defines a symmetric monoidal sub-4-category follows from \cite[Definition-Proposition 4.8 \& Remark 4.9]{BJS} (see also \cite[Section 2.5]{DSPS:dualizable}).

\begin{Theorem}\label{thm:enrichedfullydualizbale}
Let $\mathcal{E}$ be a separable symmetric tensor category. We have that $\mathrm{Mor}_2^{\mathrm{sep}}(\mathbf{Pr}_{\mathcal{E}})$ is a fully dualizable symmetric monoidal sub-4-category of $\mathrm{Mor_2^{pre}}(\mathbf{Pr}_{\mathcal{E}})$.
\end{Theorem}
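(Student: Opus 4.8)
The plan is to verify directly the four conditions that constitute full dualizability of a symmetric monoidal $4$-category: that objects admit duals, and that $k$-morphisms admit both a left and a right adjoint for $k = 1, 2, 3$. Since $\mathrm{Mor}_2^{\mathrm{sep}}(\mathbf{Pr}_{\mathcal{E}})$ is already known to be a symmetric monoidal sub-$4$-category of $\mathrm{Mor_2^{pre}}(\mathbf{Pr}_{\mathcal{E}})$, the strategy splits into two parts: first produce the requisite duals and adjoints inside the ambient $4$-category, and then check that, when the input datum is separable, the output remains within the separable locus. The bulk of the existence statements will be imported, and the genuinely new content is the treatment of $3$-morphisms together with the closure verifications.

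For objects and for $1$- and $2$-morphisms I would invoke Proposition \ref{prop:3dualizability}, which already supplies their duals and adjoints in $\mathrm{Mor_2^{pre}}(\mathbf{Pr}_{\mathcal{E}})$. It then remains only to observe that these operations preserve separability. The dual of a braided tensor category is its braided reverse $\mathcal{A}^{\mathrm{rev}}$, which is separable exactly when $\mathcal{A}$ is; the adjoint of a central separable tensor category $\mathcal{C}$ is the same underlying tensor category with its $\mathcal{E}$-central structure transported across the relevant braided reverse, so separability is unaffected; and the adjoint of a finite semisimple centered bimodule category is its opposite bimodule category, which is again finite semisimple. The one place where the hypothesis on $\mathcal{E}$ is essential is that all of these constructions are formed relative to $\mathcal{E}$, so that their separability rests on the separability of $\mathcal{E}$ together with the stability of the separable sub-$4$-category under the relative Deligne--Kelly tensor product $\boxtimes_{\mathcal{E}}$, which is precisely what makes $\mathrm{Mor}_2^{\mathrm{sep}}(\mathbf{Pr}_{\mathcal{E}})$ a sub-$4$-category.

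The new step is the existence of adjoints for $3$-morphisms. A $3$-morphism is a compact-preserving centered bimodule functor $G$ between finite semisimple bimodule categories. Because source and target are finite semisimple, $G$ is exact, and its cocontinuous right adjoint (which exists by the special adjoint functor theorem) is again exact, hence compact preserving; the same applies to the left adjoint. It then remains to promote these adjoints to centered bimodule functors, which I would achieve by the standard mate construction: the centering isomorphisms of $G$ are transported across the unit and counit of the adjunction, using the rigidity and semisimplicity of the categories involved to guarantee that the resulting natural isomorphisms are again coherent. This exhibits both adjoints as $3$-morphisms of $\mathrm{Mor}_2^{\mathrm{sep}}(\mathbf{Pr}_{\mathcal{E}})$.

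The hard part will be the bookkeeping required to confirm that the centered bimodule structure genuinely descends to the adjoint $3$-morphisms, and that the full adjunction data (units and counits) consists of $4$-morphisms of the separable sub-$4$-category rather than merely of the ambient one; this is where the interaction between the enrichment over $\mathcal{E}$ and the semisimplicity hypotheses must be tracked carefully. As in the proof of Proposition \ref{prop:3dualizability}, I expect none of these verifications to exceed the arguments of \cite[Theorem 5.16]{BJS}: the $2$-category $\mathbf{Pr}_{\mathcal{E}}$ has geometric realizations and $\boxtimes_{\mathcal{E}}$ commutes with them, so it shares the formal properties of $\mathbf{Pr}$ on which the dualizability arguments of \cite{BJS} rely, and those arguments apply \emph{mutatis mutandis} once separability is checked to be preserved by the relevant relative constructions.
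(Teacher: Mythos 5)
Your proposal follows essentially the same route as the paper: the lower-dimensional dualizability data is imported from Proposition \ref{prop:3dualizability}, and the only genuinely new point is that compact-preserving centered bimodule functors between finite semisimple centered bimodule categories admit adjoints as centered bimodule functors, which the paper handles by citing \cite[Theorem 5.2.1]{BJS} (together with \cite[Section 3.4]{DSPS:dualizable}) and which you instead sketch directly via exactness and the mate construction. Your additional explicit checks that duals and adjoints preserve separability are implicit in the paper's definition of $\mathrm{Mor}_2^{\mathrm{sep}}(\mathbf{Pr}_{\mathcal{E}})$ as a symmetric monoidal sub-4-category, so the two arguments coincide in substance.
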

\begin{proof}
That $\mathrm{Mor}_2^{\mathrm{sep}}(\mathbf{Pr}_{\mathcal{E}})$ is 3-dualizable follows via the argument used in the proof of Proposition \ref{prop:3dualizability}. It only remains to check that every 3-morphisms has left and right adjoints. In order to do so, it suffices to show that every compact preserving centered bimodule functor between central finite semisimple tensor categories has adjoints as a centered bimodule functor. This is exactly the content of \cite[Theorem 5.2.1]{BJS} (see also \cite[Section 3.4]{DSPS:dualizable}).
\end{proof}

\section{The Relative Universal Hopf Algebra}

Throughout, we work over a fixed perfect field $\mathbbm{k}$. Recall that a finite braided tensor category $\mathcal{A}$ is non-degenerate if its symmetric center is trivial, i.e.\ $\mathcal{Z}_{(2)}(\mathcal{A})\simeq\mathrm{Vec}$. It was shown in \cite{BJSS} that every non-degenerate finite braided tensor category $\mathcal{A}$ is an invertible object of $\mathrm{Mor_2^{pre}}(\mathbf{Pr})$. This gives a higher categorical perspective on the characterizations of the non-degeneracy of $\mathcal{A}$ obtained in \cite{Shi:nondeg}. These results rely crucially on the universal Hopf algebra $\mathbb{F}_{\mathcal{A}}$ in $\mathcal{A}$ introduced in \cite{Maj, Lyu:ribbon}, which is uniquely characterized by the equivalence of tensor categories $$\mathrm{Comod}_{\mathcal{A}}(\mathbb{F}_{\mathcal{A}})\simeq \mathcal{A}\boxtimes\mathcal{A}^{\mathrm{mop}}.$$ A predominant role is played by the canonical pairing $\omega_{\mathcal{A}}$ on $\mathbb{F}_{\mathcal{A}}$ constructed in \cite{Lyu:modular}. More precisely, the key observation is that $\mathcal{A}$ is non-degenerate if and only if the pairing $\omega_{\mathcal{A}}$ is non-degenerate. In order to obtain an enriched, or relative, version of the results of \cite{Shi:nondeg, BJSS}, we will consider a relative universal Hopf algebra.

Fix a finite symmetric tensor category $\mathcal{E}$ as well as an $\mathcal{E}$-enriched finite braided tensor category $\mathcal{A}$ whose enrichment is provided by the symmetric tensor functor $F:\mathcal{E}\rightarrow\mathcal{Z}_{(2)}(\mathcal{A})$. We begin by giving a sufficient condition on $F$ for $\mathcal{A}\boxtimes_{\mathcal{E}}\mathcal{A}^{\mathrm{mop}}$ to be a finite tensor category. We then study the Hopf algebra $\mathbb{F}_{\mathcal{E}/\mathcal{A}}$ in $\mathcal{A}$ corresponding to $\mathcal{A}\boxtimes_{\mathcal{E}}\mathcal{A}^{\mathrm{mop}}$ under Tannaka reconstruction. In particular, we compare $\mathbb{F}_{\mathcal{E}/\mathcal{A}}$ with the universal Hopf algebra $\mathbb{F}_{\mathcal{A}}$ in $\mathcal{A}$, and show that the canonical pairing $\omega_{\mathcal{A}}$ on $\mathbb{F}_{\mathcal{A}}$ descends to a pairing $\omega_{\mathcal{E}/\mathcal{A}}$ on $\mathbb{F}_{\mathcal{E}/\mathcal{A}}$.

\subsection{Faithfully Flat Enriched Finite Tensor Categories}

Let $\mathcal{A}$ be an $\mathcal{E}$-enriched finite braided tensor category. It follows from \cite[Definition-Proposition 4.7]{BJS} that $\mathcal{A}\boxtimes_{\mathcal{E}}\mathcal{A}^{\mathrm{mop}}$ is a pre-tensor category. It also follows from the property of the relative Deligne tensor product that it is also finite (see for instance \cite[Theorem 3.3]{DSPS:balanced}). There is however not guarantee that $\mathcal{A}\boxtimes_{\mathcal{E}}\mathcal{A}^{\mathrm{mop}}$ is a finite tensor category (see Example \ref{ex:pretensorcounterexample}). As a consequence, there is in general no hope to identify $\mathcal{A}\boxtimes_{\mathcal{E}}\mathcal{A}^{\mathrm{mop}}$ as the tensor category of comodules over a Hopf algebra in $\mathcal{A}$. In order to remedy this issue, we make the following definition.

\begin{Definition}
An $\mathcal{E}$-enriched finite braided tensor category $\mathcal{A}$ is faithfully flat if $F:\mathcal{E}\rightarrow\mathcal{Z}_{(2)}(\mathcal{A})$ is fully faithful.
\end{Definition}

\noindent We will see shortly that faithful flatness of $\mathcal{A}$ is sufficient to guarantee that $\mathcal{A}\boxtimes_{\mathcal{E}}\mathcal{A}^{\mathrm{mop}}$ is a finite tensor category. In order to do so, we will use an alternative description of the finite pre-tensor category $\mathcal{A}\boxtimes_{\mathcal{E}}\mathcal{A}^{\mathrm{mop}}$.

The functor $T_{\mathcal{E}}:\mathcal{E}\boxtimes\mathcal{E}^{\mathrm{mop}}\rightarrow\mathcal{E}$ has a cocontinuous right adjoint $T_{\mathcal{E}}^R$ as an $\mathcal{E}\boxtimes\mathcal{E}^{\mathrm{mop}}$-module functor. In fact, as $\mathcal{E}$ is symmetric monoidal, $T_{\mathcal{E}}^R$ is lax symmetric monoidal, so that $T_{\mathcal{E}}^R(\mathbbm{1})$ is a commutative algebra in $\mathcal{E}\boxtimes\mathcal{E}^{\mathrm{mop}}$, whose underlying object is compact. It follows for instance from \cite[Corollary 6.7]{SY:MTC} that there is an equivalence $$\mathrm{Mod}_{\mathcal{E}\boxtimes \mathcal{E}^{\mathrm{mop}}}(T_{\mathcal{E}}^R(\mathbbm{1}))\simeq\mathcal{E}$$ of finite symmetric tensor categories. Moreover, under this equivalence the functor $T_{\mathcal{E}}$ is identified with the free $T_{\mathcal{E}}^R(\mathbbm{1})$-module functor.

\begin{Lemma}\label{lem:relativetensorasmodules}
There is an equivalence $$\mathrm{Mod}_{\mathcal{A}\boxtimes \mathcal{A}^{\mathrm{mop}}}(T_{\mathcal{E}}^R(\mathbbm{1}))\simeq  \mathcal{A}\boxtimes_{\mathcal{E}} \mathcal{A}^{\mathrm{mop}}$$ that is compatible both with the $\mathcal{A}\boxtimes\mathcal{A}^{\mathrm{mop}}$-module structures and the monoidal structures. Under this equivalence, the canonical tensor functor $\mathcal{A}\boxtimes\mathcal{A}^{\mathrm{mop}}\rightarrow\mathcal{A}\boxtimes_{\mathcal{E}}\mathcal{A}^{\mathrm{mop}}$ is identified with the free $T_{\mathcal{E}}^R(\mathbbm{1})$-module functor.
\end{Lemma}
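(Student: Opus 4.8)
The plan is to deduce the equivalence from two standard pieces of base-change formalism for module categories, combined with the already-recorded identification $\mathrm{Mod}_{\mathcal{E}\boxtimes\mathcal{E}^{\mathrm{mop}}}(T_{\mathcal{E}}^R(\mathbbm{1}))\simeq\mathcal{E}$ under which $T_{\mathcal{E}}$ is the free module functor. The first piece is the categorification of the classical identity $M\otimes_R N\cong (M\otimes_{\mathbbm{k}}N)\otimes_{R\otimes_{\mathbbm{k}}R}R$ for a commutative ring $R$: for a symmetric monoidal $\mathcal{E}$ and two $\mathcal{E}$-modules $\mathcal{M},\mathcal{N}$ in $\mathbf{Pr}$ there is a natural equivalence
$$\mathcal{M}\boxtimes_{\mathcal{E}}\mathcal{N}\simeq (\mathcal{M}\boxtimes\mathcal{N})\boxtimes_{\mathcal{E}\boxtimes\mathcal{E}^{\mathrm{mop}}}\mathcal{E},$$
where $\mathcal{M}\boxtimes\mathcal{N}$ carries the evident $\mathcal{E}\boxtimes\mathcal{E}^{\mathrm{mop}}$-action and $\mathcal{E}$ is regarded as an $\mathcal{E}\boxtimes\mathcal{E}^{\mathrm{mop}}$-module via the multiplication $T_{\mathcal{E}}$. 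The second piece is the base-change equivalence for a commutative algebra: for a commutative algebra $A$ in a symmetric monoidal $\mathcal{V}$ and a $\mathcal{V}$-module category $\mathcal{C}$, one has $\mathcal{C}\boxtimes_{\mathcal{V}}\mathrm{Mod}_{\mathcal{V}}(A)\simeq\mathrm{Mod}_{\mathcal{C}}(A_{\mathcal{C}})$, where $A_{\mathcal{C}}$ is the image of $A$ under the action and the free $A$-module functor is carried to the free $A_{\mathcal{C}}$-module functor.

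I would apply the first equivalence with $\mathcal{M}=\mathcal{A}$ and $\mathcal{N}=\mathcal{A}^{\mathrm{mop}}$, using that $\mathcal{A}$ is an $\mathcal{E}$-module through $F:\mathcal{E}\rightarrow\mathcal{Z}_{(2)}(\mathcal{A})\rightarrow\mathcal{A}$, to obtain
$$\mathcal{A}\boxtimes_{\mathcal{E}}\mathcal{A}^{\mathrm{mop}}\simeq (\mathcal{A}\boxtimes\mathcal{A}^{\mathrm{mop}})\boxtimes_{\mathcal{E}\boxtimes\mathcal{E}^{\mathrm{mop}}}\mathcal{E}.$$
Substituting $\mathcal{E}\simeq\mathrm{Mod}_{\mathcal{E}\boxtimes\mathcal{E}^{\mathrm{mop}}}(T_{\mathcal{E}}^R(\mathbbm{1}))$ and then invoking the second equivalence with $\mathcal{V}=\mathcal{E}\boxtimes\mathcal{E}^{\mathrm{mop}}$, $A=T_{\mathcal{E}}^R(\mathbbm{1})$ and $\mathcal{C}=\mathcal{A}\boxtimes\mathcal{A}^{\mathrm{mop}}$ yields
$$(\mathcal{A}\boxtimes\mathcal{A}^{\mathrm{mop}})\boxtimes_{\mathcal{E}\boxtimes\mathcal{E}^{\mathrm{mop}}}\mathcal{E}\simeq\mathrm{Mod}_{\mathcal{A}\boxtimes\mathcal{A}^{\mathrm{mop}}}(T_{\mathcal{E}}^R(\mathbbm{1})),$$
where on the right $T_{\mathcal{E}}^R(\mathbbm{1})$ now denotes its image under the central functor $F\boxtimes F^{\mathrm{mop}}:\mathcal{E}\boxtimes\mathcal{E}^{\mathrm{mop}}\rightarrow\mathcal{A}\boxtimes\mathcal{A}^{\mathrm{mop}}$. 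Composing the two displays produces the asserted equivalence, and tracing the free module functor through both steps identifies it with the canonical tensor functor $\mathcal{A}\boxtimes\mathcal{A}^{\mathrm{mop}}\rightarrow\mathcal{A}\boxtimes_{\mathcal{E}}\mathcal{A}^{\mathrm{mop}}$, the latter being exactly the functor corresponding to the free module functor under the first equivalence.

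It then remains to match the two claimed structures. The $\mathcal{A}\boxtimes\mathcal{A}^{\mathrm{mop}}$-module structures agree by construction, since throughout $\mathcal{A}\boxtimes\mathcal{A}^{\mathrm{mop}}$ acts through the distinguished tensor factor, which the base-change constructions leave untouched, so both equivalences above are equivalences of $\mathcal{A}\boxtimes\mathcal{A}^{\mathrm{mop}}$-modules. For the monoidal structures, the point is that $F$ lands in the symmetric center $\mathcal{Z}_{(2)}(\mathcal{A})$, so that $F\boxtimes F^{\mathrm{mop}}$ is a central tensor functor and the image of the commutative algebra $T_{\mathcal{E}}^R(\mathbbm{1})$ is a commutative (central) algebra in $\mathcal{A}\boxtimes\mathcal{A}^{\mathrm{mop}}$. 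Consequently $\mathrm{Mod}_{\mathcal{A}\boxtimes\mathcal{A}^{\mathrm{mop}}}(T_{\mathcal{E}}^R(\mathbbm{1}))$ inherits a monoidal structure making the free module functor monoidal, and this agrees with the monoidal structure on $\mathcal{A}\boxtimes_{\mathcal{E}}\mathcal{A}^{\mathrm{mop}}$ for which the canonical functor is monoidal, both being characterized by that property together with cocontinuity in each variable.

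The main obstacle is establishing the two base-change equivalences with full control of the monoidal data at the level of $\mathbf{Pr}$, rather than merely of the underlying $\mathbbm{k}$-linear categories; these facts are folklore and follow the pattern of the analysis of relative Deligne tensor products in \cite{BZBJ, DSPS:balanced, DSPS:dualizable}, but carrying the monoidal structure through the geometric realization defining $\boxtimes_{\mathcal{E}}$, and checking that the identification is an equivalence of monoidal and not merely linear categories, is the delicate step. An alternative that packages all the compatibilities at once is to argue by monadicity: the canonical cocontinuous tensor functor $\Phi:\mathcal{A}\boxtimes\mathcal{A}^{\mathrm{mop}}\rightarrow\mathcal{A}\boxtimes_{\mathcal{E}}\mathcal{A}^{\mathrm{mop}}$ has a right adjoint $\Phi^R$, and one shows that $\Phi^R$ is monadic with monad $\Phi^R\Phi$ naturally isomorphic to tensoring with the image of $T_{\mathcal{E}}^R(\mathbbm{1})$; here the crux is the computation of this monad via the projection formula for $\Phi$.
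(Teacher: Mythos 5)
Your proposal is correct and follows essentially the same route as the paper: both decompose the equivalence as $\mathcal{A}\boxtimes_{\mathcal{E}}\mathcal{A}^{\mathrm{mop}}\simeq(\mathcal{A}\boxtimes\mathcal{A}^{\mathrm{mop}})\boxtimes_{\mathcal{E}\boxtimes\mathcal{E}^{\mathrm{mop}}}\mathcal{E}$, substitute $\mathcal{E}\simeq\mathrm{Mod}_{\mathcal{E}\boxtimes\mathcal{E}^{\mathrm{mop}}}(T_{\mathcal{E}}^R(\mathbbm{1}))$, and apply base change for module categories, citing the universal property of the relative Deligne tensor product. Your extra care with the monoidal data and the alternative monadicity sketch go beyond what the paper records, but the core argument is identical.
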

\begin{proof}
Using the observation recalled above, we obtain equivalences of $\mathcal{A}\boxtimes\mathcal{A}^{\mathrm{mop}}$-module categories \begin{align*}\mathcal{A}\boxtimes_{\mathcal{E}}\mathcal{A}^{\mathrm{mop}} &\simeq (\mathcal{A}\boxtimes\mathcal{A}^{\mathrm{mop}})\boxtimes_{\mathcal{E}\boxtimes\mathcal{E}^{\mathrm{mop}}}\mathcal{E}\\ & \simeq (\mathcal{A}\boxtimes\mathcal{A}^{\mathrm{mop}})\boxtimes_{\mathcal{E}\boxtimes\mathcal{E}^{\mathrm{mop}}}\mathrm{Mod}_{\mathcal{E}\boxtimes\mathcal{E}^{\mathrm{mop}}}(T_{\mathcal{E}}^R(\mathbbm{1}))\\ &\simeq \mathrm{Mod}_{\mathcal{A}\boxtimes \mathcal{A}^{\mathrm{mop}}}(T_{\mathcal{E}}^R(\mathbbm{1})).\end{align*} The first and the third equivalences follow from the universal properties of the relative Deligne tensor product \cite{DSPS:balanced}. In particular, we view $T_{\mathcal{E}}^R(\mathbbm{1})$ as a commutative algebra in $\mathcal{A}\boxtimes \mathcal{A}^{\mathrm{mop}}$ via $F\boxtimes F^{\mathrm{mop}}$. Tracing through these equivalences, we find that the canonical functor $\mathcal{A}\boxtimes\mathcal{A}^{\mathrm{mop}}\rightarrow\mathcal{A}\boxtimes_{\mathcal{E}}\mathcal{A}^{\mathrm{mop}}$ can be identified with the free $T_{\mathcal{E}}^R(\mathbbm{1})$-module functor $\mathcal{A}\boxtimes_{\mathcal{E}}\mathcal{A}^{\mathrm{mop}}\rightarrow\mathrm{Mod}_{\mathcal{A}\boxtimes \mathcal{A}^{\mathrm{mop}}}(T_{\mathcal{E}}^R(\mathbbm{1}))$. It readily follows that the monoidal structure are identified as claimed, which concludes the proof.
\end{proof}

We now prove our main technical result about faithfully flat enriched finite tensor categories.

\begin{Proposition}\label{prop:relativefinite}
If $\mathcal{A}$ is a faithfully flat $\mathcal{E}$-enriched finite braided tensor category, then $\mathcal{A}\boxtimes_{\mathcal{E}} \mathcal{A}^{\mathrm{mop}}$ is a finite tensor category.
\end{Proposition}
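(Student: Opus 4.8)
The plan is to combine Lemma~\ref{lem:relativetensorasmodules} with a rigidity criterion. Write $\mathcal{C}:=\mathcal{A}\boxtimes\mathcal{A}^{\mathrm{mop}}$, a finite tensor category (here one uses that $\mathbbm{k}$ is perfect), and $A:=T^R_{\mathcal{E}}(\mathbbm{1})$, regarded as a commutative algebra in $\mathcal{C}$ via $F\boxtimes F^{\mathrm{mop}}$. By Lemma~\ref{lem:relativetensorasmodules} I may identify $\mathcal{A}\boxtimes_{\mathcal{E}}\mathcal{A}^{\mathrm{mop}}$ with $\mathrm{Mod}_{\mathcal{C}}(A)$ as a monoidal category, with unit $A$, monoidal product the relative tensor product $\otimes_A$, and with the canonical functor $\mathcal{C}\to\mathrm{Mod}_{\mathcal{C}}(A)$ being the strong monoidal free-module functor $\mathrm{Free}=A\otimes(-)$. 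We already know from the discussion preceding the definition of faithful flatness that $\mathcal{A}\boxtimes_{\mathcal{E}}\mathcal{A}^{\mathrm{mop}}$ is a finite pre-tensor category; in particular it is finite abelian with enough compact projectives, and its compact projectives are rigid. It therefore remains only to upgrade this to rigidity of every compact object.

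First I would record that free modules on dualizable objects are dualizable: since $\mathrm{Free}$ is strong monoidal and every compact object of $\mathcal{C}$ has duals, each $A\otimes X$ with $X$ compact has left and right duals in $\mathrm{Mod}_{\mathcal{C}}(A)$, and a short computation identifies $\mathrm{Free}(X)\otimes_A(-)$ with the exact functor $X\otimes U(-)$ through the forgetful functor $U$, so that free modules are moreover flat for $\otimes_A$. The key structural input I would then isolate is that the relative tensor product $\otimes_A$ on $\mathrm{Mod}_{\mathcal{C}}(A)$ is biexact. Granting this, one concludes by the general principle that a finite pre-tensor category whose monoidal product is biexact is a finite tensor category: given a compact $M$ with a presentation $F_1\to F_0\to M\to 0$ by compact free (hence rigid) modules, the object $[M,A]=\ker(F_0^{\vee}\to F_1^{\vee})$ built from the internal hom into the unit is a candidate dual, and biexactness is exactly what is needed to propagate the evaluation and coevaluation of the $F_i$ through the presentation and verify the zig-zag identities. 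I would either cite this criterion from the pre-tensor formalism of \cite{BJS} or supply the diagram chase separately.

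The crux of the argument, and the step where faithful flatness is indispensable, is the biexactness of $\otimes_A$, equivalently that every object of $\mathrm{Mod}_{\mathcal{C}}(A)$ is flat. This cannot follow from formal properties of $A$ alone: Example~\ref{ex:pretensorcounterexample} exhibits, with $\mathcal{A}=\mathrm{Vec}$ and $\mathcal{E}=\mathrm{Vec}(\mathbb{Z}/p)$, an algebra $A=\mathbbm{k}[\mathbb{Z}/p]$ for which $\otimes_A$ is not exact, the obstruction being precisely that $F$ fails to be fully faithful there. The mechanism I would use is that full faithfulness of $F$ makes $\mathcal{A}$ a faithfully flat $\mathcal{E}$-module, in the sense that the base-change functor $\mathcal{A}\boxtimes_{\mathcal{E}}(-)$ is exact and conservative; since the monoidal product of $\mathcal{A}\boxtimes_{\mathcal{E}}\mathcal{A}^{\mathrm{mop}}$ is assembled from the biexact product of $\mathcal{A}$ and the relative product $\boxtimes_{\mathcal{E}}$, exactness of the latter yields biexactness of $\otimes_A$. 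Concretely, I would pass through the base case: over $\mathcal{E}\boxtimes\mathcal{E}^{\mathrm{mop}}$ the algebra $T^R_{\mathcal{E}}(\mathbbm{1})$ has $\mathrm{Mod}_{\mathcal{E}\boxtimes\mathcal{E}^{\mathrm{mop}}}(T^R_{\mathcal{E}}(\mathbbm{1}))\simeq\mathcal{E}$ with relative product identified with $\otimes_{\mathcal{E}}$, which is biexact because $\mathcal{E}$ is a finite tensor category, and then transport this biexactness along the fully faithful $F\boxtimes F^{\mathrm{mop}}$ by faithfully flat descent for the relative Deligne tensor product.

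The main obstacle is exactly this last transport: turning the categorical statement that $F$ is fully faithful into the homological statement that $\otimes_A$ is left exact, i.e.\ a faithfully flat base-change statement for exactness of relative Deligne tensor products of $\mathcal{E}$-module categories. I expect this to require the detailed analysis of $\boxtimes_{\mathcal{E}}$ from \cite{DSPS:balanced}, together with a careful verification that full faithfulness of $F$ indeed upgrades $\mathcal{A}$ to a faithfully flat $\mathcal{E}$-module in the operative sense. Everything else, namely the reduction via Lemma~\ref{lem:relativetensorasmodules}, the flatness of free modules, and the passage from biexactness to rigidity, is comparatively routine.
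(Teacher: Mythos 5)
Your reduction is the same as the paper's: identify $\mathcal{A}\boxtimes_{\mathcal{E}}\mathcal{A}^{\mathrm{mop}}$ with $\mathrm{Mod}_{\mathcal{A}\boxtimes\mathcal{A}^{\mathrm{mop}}}(T^R_{\mathcal{E}}(\mathbbm{1}))$ via Lemma \ref{lem:relativetensorasmodules} and then show that this module category is rigid. But the step you yourself identify as the crux --- transporting the good behaviour of $A=T^R_{\mathcal{E}}(\mathbbm{1})$ from $\mathcal{E}\boxtimes\mathcal{E}^{\mathrm{mop}}$ to $\mathcal{A}\boxtimes\mathcal{A}^{\mathrm{mop}}$ --- is left as an acknowledged gap, and the mechanism you propose for it does not obviously work. ``Faithfully flat descent'' would let you pass properties \emph{down} along a cover, whereas here you need to pass a homological property \emph{up} along the fully faithful embedding $F\boxtimes F^{\mathrm{mop}}$; the category $\mathrm{Mod}_{\mathcal{A}\boxtimes\mathcal{A}^{\mathrm{mop}}}(A)$ contains many modules not built from $A$-modules in $\mathcal{E}\boxtimes\mathcal{E}^{\mathrm{mop}}$, so biexactness of $\otimes_A$ over the subcategory says nothing directly about biexactness over the ambient category (and the obvious attempt --- resolve by free modules, which are flat --- is circular, since vanishing of the higher terms of that resolution after tensoring is exactly the flatness one is trying to prove). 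Moreover, the assertion that full faithfulness of $F$ makes $\mathcal{A}\boxtimes_{\mathcal{E}}(-)$ exact and conservative is itself an unproved claim of comparable difficulty to the proposition.

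The paper closes precisely this gap by replacing the homological property with a lattice-theoretic one: the algebra $T^R_{\mathcal{E}}(\mathbbm{1})$ is exact in $\mathcal{E}\boxtimes\mathcal{E}^{\mathrm{mop}}$, and by \cite[Theorem 7.1]{CSZ} an exact algebra with compact underlying object is a finite direct sum of algebras with no non-trivial two-sided ideals. That condition is visibly preserved by a fully faithful tensor functor, since such a functor preserves simple objects (\cite[Proposition 6.3.1]{EGNO}) and hence induces a bijection on subobject (in particular ideal) lattices. So $T^R_{\mathcal{E}}(\mathbbm{1})$ is still exact in $\mathcal{A}\boxtimes\mathcal{A}^{\mathrm{mop}}$, and rigidity of its module category is then \cite[Proposition 5.19]{SY:MTC} --- no biexactness statement and no ``biexact pre-tensor $\Rightarrow$ tensor'' principle is needed. (That principle, which you propose to cite from \cite{BJS}, does not appear there and would itself require the presentation/internal-hom argument you sketch, including care that the resulting adjunction is one of module functors.) In short: the scaffolding of your proof is right, but the load-bearing step is missing, and the intended route to it points in the wrong direction.
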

\begin{proof}
It is clear that $\mathcal{A}\boxtimes_{\mathcal{E}} \mathcal{A}^{\mathrm{mop}}$ is a finite pre-tensor category. Now, as $\mathcal{E}$ is a finite symmetric tensor category, the underlying object of the commutative algebra $T_{\mathcal{E}}^R(\mathbbm{1})$ is compact. Moreover, the algebra $T_{\mathcal{E}}^R(\mathbbm{1})$ is exact in the sense of \cite[Definition 7.8.20]{EGNO} by inspection (see also \cite[Corollary 6.7]{SY:MTC}). It therefore follows from \cite[Theorem 7.1]{CSZ} that $T_{\mathcal{E}}^R(\mathbbm{1})$ is finite semisimple, i.e.\ it decomposes as a finite direct sum of algebras whose underlying objects are compact and that have no non-trivial two-sided ideals. But finite semisimple algebras are preserved by fully faithful tensor functors between finite tensor categories. Namely, such functors preserve simple objects by \cite[Proposition 6.3.1]{EGNO}, thence induce bijections between the lattices of subobjects. It follows that $T_{\mathcal{E}}^R(\mathbbm{1})$ is an exact commutative algebra in $\mathcal{A}\boxtimes \mathcal{A}^{\mathrm{mop}}$. Appealing to \cite[Proposition 5.19]{SY:MTC}, we find that $\mathcal{A}\boxtimes_{\mathcal{E}} \mathcal{A}^{\mathrm{mop}}$ is a finite tensor category as claimed.
\end{proof}

The tensor functor $T_{\mathcal{A}}$ induces a tensor functor $T_{\mathcal{E}/\mathcal{A}}:\mathcal{A}\boxtimes_{\mathcal{E}} \mathcal{A}^{\mathrm{mop}}\rightarrow\mathcal{A}$ . For use below, we recall from \cite[Proposition 3.13]{Kin} how to describe $T_{\mathcal{E}/\mathcal{A}}$ under the equivalence of Lemma \ref{lem:relativetensorasmodules}. In order to do so, we will use $\mathbb{F}_{\mathcal{E}}:=T_{\mathcal{E}}T^R_{\mathcal{E}}(\mathbbm{1})$, a commutative algebra in $\mathcal{E}$.

\begin{Lemma}[\cite{Kin}]\label{lem:technicalrelativetensorfunctortensorproduct}
The functor $T_{\mathcal{E}/\mathcal{A}}$ corresponds to the functor
$$\begin{tabular}{c c c}
$\mathrm{Mod}_{\mathcal{A}\boxtimes \mathcal{A}^{\mathrm{mop}}}(T_{\mathcal{E}}^R(\mathbbm{1}))$ & $\rightarrow$ & $\mathcal{A}$.\\
$M$ & $\mapsto$ & $T_{\mathcal{A}}(M)\otimes_{\mathbb{F}_{\mathcal{E}}}\mathbbm{1}$
\end{tabular}$$
\end{Lemma}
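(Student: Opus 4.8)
The plan is to pin down $T_{\mathcal{E}/\mathcal{A}}$ by the universal property that defines it. Write $P\colon\mathcal{A}\boxtimes\mathcal{A}^{\mathrm{mop}}\rightarrow\mathcal{A}\boxtimes_{\mathcal{E}}\mathcal{A}^{\mathrm{mop}}$ for the canonical tensor functor. By construction $T_{\mathcal{E}/\mathcal{A}}$ is the essentially unique cocontinuous tensor functor with $T_{\mathcal{E}/\mathcal{A}}\circ P\cong T_{\mathcal{A}}$, and under the equivalence of Lemma \ref{lem:relativetensorasmodules} the functor $P$ is identified with the free-module functor $X\mapsto X\otimes T_{\mathcal{E}}^R(\mathbbm{1})$. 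It therefore suffices to show that the candidate $G(M):=T_{\mathcal{A}}(M)\otimes_{\mathbb{F}_{\mathcal{E}}}\mathbbm{1}$ is a cocontinuous tensor functor and that $G\circ P\cong T_{\mathcal{A}}$ as tensor functors; essential uniqueness then forces $G\cong T_{\mathcal{E}/\mathcal{A}}$.

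First I would make sense of the formula. Since $F$ is a symmetric tensor functor, the square
\[
\begin{tikzcd}
\mathcal{E}\boxtimes\mathcal{E}^{\mathrm{mop}}\arrow{r}{T_{\mathcal{E}}}\arrow[swap]{d}{F\boxtimes F^{\mathrm{mop}}} & \mathcal{E}\arrow{d}{F}\\
\mathcal{A}\boxtimes\mathcal{A}^{\mathrm{mop}}\arrow{r}{T_{\mathcal{A}}} & \mathcal{A}
\end{tikzcd}
\]
commutes up to a canonical monoidal natural isomorphism, whence $T_{\mathcal{A}}$ carries the commutative algebra $T_{\mathcal{E}}^R(\mathbbm{1})$ to $F(\mathbb{F}_{\mathcal{E}})$. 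As $T_{\mathcal{A}}$ is a cocontinuous strong monoidal functor, it descends to a cocontinuous tensor functor $\widetilde{T_{\mathcal{A}}}\colon\mathrm{Mod}_{\mathcal{A}\boxtimes\mathcal{A}^{\mathrm{mop}}}(T_{\mathcal{E}}^R(\mathbbm{1}))\rightarrow\mathrm{Mod}_{\mathcal{A}}(\mathbb{F}_{\mathcal{E}})$, both sides being equipped with their relative tensor products. The counit of $T_{\mathcal{E}}\dashv T_{\mathcal{E}}^R$ at $\mathbbm{1}$ provides an augmentation $\mathbb{F}_{\mathcal{E}}\rightarrow\mathbbm{1}$, which one checks to be a homomorphism of commutative algebras, so base change yields a tensor functor $-\otimes_{\mathbb{F}_{\mathcal{E}}}\mathbbm{1}\colon\mathrm{Mod}_{\mathcal{A}}(\mathbb{F}_{\mathcal{E}})\rightarrow\mathcal{A}$. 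The functor $G$ is the composite of these two tensor functors and is therefore automatically a cocontinuous tensor functor.

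It then remains to verify $G\circ P\cong T_{\mathcal{A}}$. Evaluating on a free module and using that strong monoidal functors send free modules to free modules, together with the identification $T_{\mathcal{A}}(T_{\mathcal{E}}^R(\mathbbm{1}))\cong\mathbb{F}_{\mathcal{E}}$ from the square above, gives $\widetilde{T_{\mathcal{A}}}(P(X))\cong T_{\mathcal{A}}(X)\otimes\mathbb{F}_{\mathcal{E}}$, the free $\mathbb{F}_{\mathcal{E}}$-module on $T_{\mathcal{A}}(X)$. Applying base change we obtain
\[
G(P(X))\cong (T_{\mathcal{A}}(X)\otimes\mathbb{F}_{\mathcal{E}})\otimes_{\mathbb{F}_{\mathcal{E}}}\mathbbm{1}\cong T_{\mathcal{A}}(X),
\]
naturally and monoidally in $X$. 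Thus $G$ and $T_{\mathcal{E}/\mathcal{A}}$ are both cocontinuous tensor functors whose precomposition with $P$ recovers the balanced tensor functor $T_{\mathcal{A}}$, so the universal property of the relative Deligne tensor product \cite{DSPS:balanced} identifies $G\cong T_{\mathcal{E}/\mathcal{A}}$, as asserted.

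The main obstacle I anticipate is bookkeeping rather than conceptual. One must check carefully that $T_{\mathcal{A}}$ descends to a \emph{monoidal} functor on module categories, that is, that it is compatible with the two relative tensor products and not merely with the underlying monoidal products, and that the augmentation $\mathbb{F}_{\mathcal{E}}\rightarrow\mathbbm{1}$ is genuinely an algebra map so that base change along it is strong monoidal. Both are standard facts about algebras and modules in a cocomplete monoidal category, but verifying them requires tracking the coherence data of $T_{\mathcal{A}}$ as a strong monoidal functor throughout.
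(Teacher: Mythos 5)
Your argument is correct and follows essentially the same route as the paper: both identify the candidate functor on free modules via $T_{\mathcal{A}}(X\otimes T^R_{\mathcal{E}}(\mathbbm{1}))\otimes_{\mathbb{F}_{\mathcal{E}}}\mathbbm{1}\cong T_{\mathcal{A}}(X)$ and then invoke a uniqueness principle (the paper phrases it as cocontinuous $\mathcal{A}\boxtimes\mathcal{A}^{\mathrm{mop}}$-module functors being determined by their values on free modules, you phrase it as the universal property of the relative Deligne tensor product, which amounts to the same thing given Lemma \ref{lem:relativetensorasmodules}). The bookkeeping points you flag at the end are exactly the ones the paper also leaves as a routine check.
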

\begin{proof}
Observe that $T_{\mathcal{A}}:\mathcal{A}\boxtimes\mathcal{A}^{\mathrm{mop}}\rightarrow\mathcal{A}\boxtimes_{\mathcal{E}}\mathcal{A}^{\mathrm{mop}}\rightarrow\mathcal{A}$ is identified with $$\mathcal{A}\boxtimes\mathcal{A}^{\mathrm{mop}}\rightarrow\mathrm{Mod}_{\mathcal{A}\boxtimes\mathcal{A}^{\mathrm{mop}}}(T^R_{\mathcal{E}}(\mathbbm{1}))\rightarrow\mathcal{A}.$$ It therefore follows that this composite acts by $$X\mapsto X\otimes T^R_{\mathcal{E}}(\mathbbm{1})\mapsto T_{\mathcal{A}}(X) = T_{\mathcal{A}}(X\otimes T^R_{\mathcal{E}}(\mathbbm{1}))\otimes_{\mathbb{F}_{\mathcal{E}}}\mathbbm{1}.$$ But the functor $\mathrm{Mod}_{\mathcal{A}\boxtimes\mathcal{A}^{\mathrm{mop}}}(T^R_{\mathcal{E}}(\mathbbm{1}))\rightarrow\mathcal{A}$ is both cocontinuous and compatible with the left $\mathcal{A}\boxtimes\mathcal{A}^{\mathrm{mop}}$-module structures. It therefore follows that it must correspond to $M\mapsto T_{\mathcal{A}}(M)\otimes_{\mathbb{F}_{\mathcal{E}}}\mathbbm{1}$ as claimed. Additionally, it is straightforward to check that this correspondence preserves the monoidal structures.
\end{proof}

\subsection{Comonadic Reconstruction}\label{sub:comonadic}

We now express the category $\mathcal{A}\boxtimes_{\mathcal{E}}\mathcal{A}^{\mathrm{mop}}$ as a category of right comodules in $\mathcal{A}$. In order to do so, we will use the canonical tensor functor $$T_{\mathcal{E}/\mathcal{A}}:\mathcal{A}\boxtimes_{\mathcal{E}}\mathcal{A}^{\mathrm{mop}}\rightarrow\mathcal{A},$$ arising from the $\mathcal{E}$-enriched tensor structure on $\mathcal{A}$. Given our finiteness assumptions, this reconstruction could be carried out using \cite[Theorem 6.7]{LM}. We give a different, more direct proof, which avoids completely the theory of ``relative coends''. In the special case $\mathcal{E} = \mathrm{Vec}$, the next result is well-known \cite[Section 2.7]{Lyu:squared}.

\begin{Proposition}\label{prop:comonadicreconstruction}
Let $\mathcal{A}$ be a faithfully flat $\mathcal{E}$-enriched finite braided tensor category. Then, there is a coalgebra structure on $\mathbb{F}_{\mathcal{E}/\mathcal{A}}=T_{\mathcal{E}/\mathcal{A}} T_{\mathcal{E}/\mathcal{A}}^R(\mathbbm{1})$ for which we have an equivalence of left $\mathcal{A}$-module categories $$\mathrm{Comod}_{\mathcal{A}}(\mathbb{F}_{\mathcal{E}/\mathcal{A}})\simeq\mathcal{A}\boxtimes_{\mathcal{E}}\mathcal{A}^{\mathrm{mop}}.$$ Under this equivalence, the functor $T_{\mathcal{E}/\mathcal{A}}$ is identified with the forgetful functor.
\end{Proposition}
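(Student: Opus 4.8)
The plan is to realize $T:=T_{\mathcal{E}/\mathcal{A}}$ as a comonadic functor and to identify the associated comonad with cotensoring by a coalgebra. By Proposition~\ref{prop:relativefinite} the categories $\mathcal{A}\boxtimes_{\mathcal{E}}\mathcal{A}^{\mathrm{mop}}$ and $\mathcal{A}$ are finite tensor categories, so Lemma~\ref{lem:adjointtensor} provides a cocontinuous, compact-preserving right adjoint $T^{R}$ and guarantees that $T$ is exact. The composite $G:=TT^{R}$ is then a comonad on $\mathcal{A}$, whose comultiplication is induced by the unit $\eta\colon\mathrm{id}\Rightarrow T^{R}T$ and whose counit is the adjunction counit. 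The comparison functor $K\colon\mathcal{A}\boxtimes_{\mathcal{E}}\mathcal{A}^{\mathrm{mop}}\to\mathrm{Comod}_{\mathcal{A}}(G)$ sends $X$ to $T(X)$ equipped with the coaction $T(\eta_{X})$, and by construction the forgetful functor out of $\mathrm{Comod}_{\mathcal{A}}(G)$ composed with $K$ recovers $T$. Proving the proposition then amounts to showing that $G$ is cotensoring by a coalgebra and that $K$ is an equivalence.

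To identify $G$, I would equip $\mathcal{A}\boxtimes_{\mathcal{E}}\mathcal{A}^{\mathrm{mop}}$ with the left $\mathcal{A}$-module structure acting through the first tensor factor. Then $T$ is a strong left $\mathcal{A}$-module functor, since $T(A\triangleright(X\boxtimes_{\mathcal{E}}Y))=A\otimes X\otimes Y\cong A\otimes T(X\boxtimes_{\mathcal{E}}Y)$, so its right adjoint $T^{R}$ inherits a canonical lax $\mathcal{A}$-module structure. This lax structure is strong because every compact object of $\mathcal{A}$ is rigid:\ the comparison map $A\otimes T^{R}(B)\to T^{R}(A\otimes B)$ is invertible for dualizable $A$, and hence for all $A$ by cocontinuity. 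Setting $\mathbb{F}_{\mathcal{E}/\mathcal{A}}:=G(\mathbbm{1})=TT^{R}(\mathbbm{1})$ and taking $B=\mathbbm{1}$ yields a natural isomorphism $G(A)\cong A\otimes\mathbb{F}_{\mathcal{E}/\mathcal{A}}$ of left $\mathcal{A}$-module functors. Transporting the comultiplication and counit of $G$ across this isomorphism, and stripping off the free variable $A$ by left $\mathcal{A}$-linearity, endows $\mathbb{F}_{\mathcal{E}/\mathcal{A}}$ with a comultiplication $\mathbb{F}_{\mathcal{E}/\mathcal{A}}\to\mathbb{F}_{\mathcal{E}/\mathcal{A}}\otimes\mathbb{F}_{\mathcal{E}/\mathcal{A}}$ and a counit $\mathbb{F}_{\mathcal{E}/\mathcal{A}}\to\mathbbm{1}$ for which coassociativity and counitality are precisely the comonad axioms. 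This identifies $\mathrm{Comod}_{\mathcal{A}}(G)$ with the category $\mathrm{Comod}_{\mathcal{A}}(\mathbb{F}_{\mathcal{E}/\mathcal{A}})$ of right comodules as left $\mathcal{A}$-module categories, under which $K$ becomes the desired equivalence and $T$ the forgetful functor.

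It remains to prove that $K$ is an equivalence, for which I would invoke the dual (comonadic) Barr--Beck theorem:\ it suffices that $T$ be conservative and preserve equalizers of $T$-cosplit coreflexive pairs. The second requirement is automatic, as $T$ is exact and $\mathcal{A}\boxtimes_{\mathcal{E}}\mathcal{A}^{\mathrm{mop}}$, being a finite tensor category, has all finite limits, which $T$ therefore preserves.

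The crux, and the step I expect to be the main obstacle, is the conservativity of $T$. Since $T$ is exact between abelian categories, this is equivalent to faithfulness, and by exactness it suffices to show that $T$ does not annihilate any simple object. Here the key input is rigidity:\ for a simple object $S$ of $\mathcal{A}\boxtimes_{\mathcal{E}}\mathcal{A}^{\mathrm{mop}}$, the coevaluation exhibits the unit as a subobject $\mathbbm{1}\hookrightarrow S\otimes S^{\vee}$, and applying the exact functor $T$ gives $\mathbbm{1}\cong T(\mathbbm{1})\hookrightarrow T(S)\otimes T(S)^{\vee}$, so that $T(S)=0$ would force $\mathbbm{1}=0$ in $\mathcal{A}$, a contradiction. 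This argument is cleanest when the unit of $\mathcal{A}\boxtimes_{\mathcal{E}}\mathcal{A}^{\mathrm{mop}}$ is simple; in the general multitensor situation one runs the same argument componentwise along the block decomposition of the unit, which is the part demanding the most care. Once conservativity is secured, Barr--Beck yields the equivalence $K$, and the coalgebra structure together with the forgetful-functor identification from the second step complete the proof. As a sanity check, specializing to $\mathcal{E}=\mathrm{Vec}$ recovers the classical reconstruction of \cite{Lyu:squared}.
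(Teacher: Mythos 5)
Your overall architecture — comonadic Barr--Beck applied to $T:=T_{\mathcal{E}/\mathcal{A}}$, identification of the comonad $TT^{R}$ with $(-)\otimes T T^{R}(\mathbbm{1})$ via the left $\mathcal{A}$-module structure and rigidity of compact objects, and the reduction of the whole problem to conservativity of $T$ — is exactly the paper's. The gap is in the one step you yourself flag as the crux, and it is a genuine one: your coevaluation argument does not prove conservativity, and in particular it never uses the faithful flatness hypothesis, without which the statement is false.

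Concretely, for a simple object $S$ the coevaluation exhibits not the full unit but a single block $\mathbbm{1}_i$ of the decomposition $\mathbbm{1}=\oplus_i\mathbbm{1}_i$ as a subobject of $S\otimes S^{\vee}$. Applying $T$ therefore only shows that $T(S)=0$ forces $T(\mathbbm{1}_i)=0$ for that block; since $T(\mathbbm{1})=\mathbbm{1}_{\mathcal{A}}$ can be nonzero while individual summands $T(\mathbbm{1}_i)$ vanish, this is not a contradiction. Thus your argument reduces conservativity to the claim that $T$ kills no simple summand of the unit of $\mathcal{A}\boxtimes_{\mathcal{E}}\mathcal{A}^{\mathrm{mop}}$ — but that claim is the entire content of the step, and "running the same argument componentwise" gives nothing new. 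Note that the unit of $\mathcal{A}\boxtimes_{\mathcal{E}}\mathcal{A}^{\mathrm{mop}}$ is $T_{\mathcal{E}}^{R}(\mathbbm{1})$ regarded as a module over itself (Lemma \ref{lem:relativetensorasmodules}), which is generically decomposable even when $\mathcal{A}$ and $\mathcal{E}$ have simple units, so the "clean" case you rely on is essentially never the relevant one. The remark following the proposition in the paper gives the failure mode explicitly: for $\mathcal{E}=\mathrm{Vec}_{\mathbb{R}}$ and $\mathcal{A}=\mathrm{Vec}_{\mathbb{C}}$ (with $F$ faithful but not full), the map induced on units is $\mathbb{C}\otimes_{\mathbb{R}}\mathbb{C}\cong\mathbb{C}\times\mathbb{C}\rightarrow\mathbb{C}$, which annihilates one of the two blocks, and $T$ is not faithful. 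To close the gap you must bring in full faithfulness of $F$: by \cite[Theorem 4.3.8]{EGNO} it suffices to check faithfulness of $T$ on $\mathrm{End}(\mathbbm{1})$; the embedding $\mathcal{E}\hookrightarrow\mathcal{A}\boxtimes_{\mathcal{E}}\mathcal{A}^{\mathrm{mop}}$ is fully faithful and unit-preserving, so $\mathrm{End}(\mathbbm{1}_{\mathcal{A}\boxtimes_{\mathcal{E}}\mathcal{A}^{\mathrm{mop}}})\cong\mathrm{End}(\mathbbm{1}_{\mathcal{E}})$, and the composite $\mathcal{E}\rightarrow\mathcal{A}\boxtimes_{\mathcal{E}}\mathcal{A}^{\mathrm{mop}}\rightarrow\mathcal{A}$ is identified with $F$, whose full faithfulness then guarantees that no unit block is annihilated. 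With that inserted, the rest of your proof goes through as written.
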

\begin{proof}
This is an instance of the comonadic version of the Barr-Beck theorem. Namely, the functor $T_{\mathcal{E}/\mathcal{A}}:\mathcal{A}\boxtimes_{\mathcal{E}}\mathcal{A}^{\mathrm{mop}}\rightarrow\mathcal{A}$ has a right adjoint. Moreover, $T_{\mathcal{E}/\mathcal{A}}$ is exact as $\mathcal{A}\boxtimes_{\mathcal{E}}\mathcal{A}^{\mathrm{mop}}$ is a finite tensor category by Proposition \ref{prop:relativefinite}.

In order to appeal to the Barr-Beck theorem, it therefore only remains to argue that $T_{\mathcal{E}/\mathcal{A}}$ is conservative. By \cite[Lemma 4.3]{BZBJ}, as $\mathcal{A}\boxtimes_{\mathcal{E}}\mathcal{A}^{\mathrm{mop}}$ is abelian, this is equivalent to showing that $T_{\mathcal{E}/\mathcal{A}}$ is faithful. But as $\mathcal{A}\boxtimes_{\mathcal{E}}\mathcal{A}^{\mathrm{mop}}$ is a tensor category, it is enough to check that $$T_{\mathcal{E}/\mathcal{A}}:\mathrm{End}_{\mathcal{A}\boxtimes_{\mathcal{E}}\mathcal{A}^{\mathrm{mop}}}(\mathbbm{1})\rightarrow\mathrm{End}_{\mathcal{A}}(\mathbbm{1})$$ is faithful by \cite[Theorem 4.3.8]{EGNO}. In order to see that this holds, first observe that $\mathcal{E}\hookrightarrow \mathcal{A}\boxtimes_{\mathcal{E}}\mathcal{A}^{\mathrm{mop}}$ is fully faithful. Namely, it is identified with the fully faithful functor $\mathrm{Mod}_{\mathcal{E}\boxtimes\mathcal{E}^{\mathrm{mop}}}(T^R_{\mathcal{E}}(\mathbbm{1}))\rightarrow \mathrm{Mod}_{\mathcal{A}\boxtimes\mathcal{A}^{\mathrm{mop}}}(T^R_{\mathcal{E}}(\mathbbm{1}))$ induced by the fully faithful functor $F\boxtimes F^{\mathrm{mop}}$. Then, note that the composite $$\mathcal{E}\hookrightarrow \mathcal{A}\boxtimes_{\mathcal{E}}\mathcal{A}^{\mathrm{mop}}\rightarrow\mathcal{A}$$ is canonically identified with $F$, which is fully faithful by hypothesis. This concludes the proof of the conservativity of $T_{\mathcal{E}/\mathcal{A}}$.

Altogether, we have that $\mathcal{A}\boxtimes_{\mathcal{E}}\mathcal{A}^{\mathrm{mop}}$ is equivalent to the category of comodules over the comonad $T_{\mathcal{E}/\mathcal{A}}\circ T^R_{\mathcal{E}/\mathcal{A}}$ on $\mathcal{A}$. But the adjunction between $T_{\mathcal{E}/\mathcal{A}}$ and $T^R_{\mathcal{E}/\mathcal{A}}$ is compatible with the left $\mathcal{A}$-module structures. Namely, the tensor functor $T_{\mathcal{E}/\mathcal{A}}$ preserves duals, and therefore also compact objects, so that the claim follows from \cite[Corollary 4.3]{BJS}. It follows that the comonad $T_{\mathcal{E}/\mathcal{A}}\circ T_{\mathcal{E}/\mathcal{A}}^R$ is compatible with the left $\mathcal{A}$-module structures, so that that there is an isomorphism $T_{\mathcal{E}/\mathcal{A}} \circ T_{\mathcal{E}/\mathcal{A}}^R(-)\simeq (-)\otimes T_{\mathcal{E}/\mathcal{A}} T_{\mathcal{E}/\mathcal{A}}^R(\mathbbm{1})$ of comonads. This concludes the proof.
\end{proof}

\begin{Remark}
It is not enough to assume that the functor $F$ be merely faithful in the statement of the previous proposition. Namely, taking $\mathcal{E}=\mathrm{Vec}_{\mathbb{R}}$ and $\mathcal{A}=\mathrm{Vec}_{\mathbb{C}}$, we find that $\mathcal{A}\boxtimes_{\mathcal{E}}\mathcal{A}^{\mathrm{mop}}\rightarrow \mathcal{A}$ is not faithful as it induces the algebra map $\mathbb{C}\otimes_{\mathbb{R}}\mathbb{C}\rightarrow\mathbb{C}$ on the monoidal unit.
\end{Remark}

\subsection{The Relative Universal Hopf Algebra}

We have shown in Proposition \ref{prop:comonadicreconstruction} above that if $\mathcal{A}$ is a faithfully flat $\mathcal{E}$-enriched finite braided tensor category, then $\mathbb{F}_{\mathcal{E}/\mathcal{A}}$ is naturally a coalgebra. Given that $\mathcal{A}\boxtimes_{\mathcal{E}}\mathcal{A}^{\mathrm{mop}}$ is a finite tensor category, it is natural to expect that $\mathbb{F}_{\mathcal{E}/\mathcal{A}}$ carries a Hopf algebra structure. When $\mathcal{E} = \mathrm{Vec}$, this is well-known \cite[Section 2.7]{Lyu:squared}. For a general finite symmetric tensor category $\mathcal{E}$, this follows from \cite[Theorems 6.11 \& 6.17]{LM} (see also \cite[Theorem 3.14]{BV:Hopf}, or more precisely its dual, for a related observation). We sketch a proof for completeness.

\begin{Proposition}\label{prop:relativeuniversalHopf}
Let $\mathcal{A}$ be a faithfully flat $\mathcal{E}$-enriched finite braided tensor category. Then, $\mathbb{F}_{\mathcal{E}/\mathcal{A}}$ is a Hopf algebra in $\mathcal{A}$. Furthermore, there is an equivalence of tensor categories $$\mathrm{Comod}_{\mathcal{A}}(\mathbb{F}_{\mathcal{E}/\mathcal{A}})\simeq\mathcal{A}\boxtimes_{\mathcal{E}}\mathcal{A}^{\mathrm{mop}}.$$
\end{Proposition}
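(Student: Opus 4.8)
The plan is to upgrade the data produced by Proposition \ref{prop:comonadicreconstruction}, where $\mathbb{F}_{\mathcal{E}/\mathcal{A}}$ was shown only to be a coalgebra and the equivalence $\mathrm{Comod}_{\mathcal{A}}(\mathbb{F}_{\mathcal{E}/\mathcal{A}})\simeq\mathcal{A}\boxtimes_{\mathcal{E}}\mathcal{A}^{\mathrm{mop}}$ was established only as an equivalence of left $\mathcal{A}$-module categories, to the statement that $\mathbb{F}_{\mathcal{E}/\mathcal{A}}$ is a Hopf algebra and that the equivalence is monoidal. The entire argument is organized around the comonad $T_{\mathcal{E}/\mathcal{A}}\circ T_{\mathcal{E}/\mathcal{A}}^R\cong(-)\otimes\mathbb{F}_{\mathcal{E}/\mathcal{A}}$ on $\mathcal{A}$, together with the fact that $T_{\mathcal{E}/\mathcal{A}}$ is a tensor functor between finite tensor categories.

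First I would record that $\mathcal{A}\boxtimes_{\mathcal{E}}\mathcal{A}^{\mathrm{mop}}$ is a finite tensor category by Proposition \ref{prop:relativefinite}, and transport its monoidal structure along the equivalence of Proposition \ref{prop:comonadicreconstruction}. This makes $\mathrm{Comod}_{\mathcal{A}}(\mathbb{F}_{\mathcal{E}/\mathcal{A}})$ a monoidal category for which the forgetful functor, identified with the tensor functor $T_{\mathcal{E}/\mathcal{A}}$, is strong monoidal. By doctrinal adjunction, the strong monoidal structure on $T_{\mathcal{E}/\mathcal{A}}$ endows its right adjoint $T_{\mathcal{E}/\mathcal{A}}^R$ with a canonical lax monoidal structure; since a composite of lax monoidal functors is again lax monoidal, the comonad $T_{\mathcal{E}/\mathcal{A}}\circ T_{\mathcal{E}/\mathcal{A}}^R$ thereby becomes a monoidal comonad on $\mathcal{A}$, the comonad analogue of the bimonads of \cite{BV:Hopf}.

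Next I would translate this monoidal comonad structure into Hopf-algebraic data. Since the comonad is isomorphic to $(-)\otimes\mathbb{F}_{\mathcal{E}/\mathcal{A}}$ as a comonad compatible with the left $\mathcal{A}$-module structure (Proposition \ref{prop:comonadicreconstruction}), and $\mathcal{A}$ is braided, its lax monoidal structure is precisely the data of a multiplication $\mathbb{F}_{\mathcal{E}/\mathcal{A}}\otimes\mathbb{F}_{\mathcal{E}/\mathcal{A}}\to\mathbb{F}_{\mathcal{E}/\mathcal{A}}$ and a unit $\mathbbm{1}\to\mathbb{F}_{\mathcal{E}/\mathcal{A}}$, where the braiding of $\mathcal{A}$ is used to rewrite $X\otimes\mathbb{F}_{\mathcal{E}/\mathcal{A}}\otimes Y\otimes\mathbb{F}_{\mathcal{E}/\mathcal{A}}$ as $X\otimes Y\otimes\mathbb{F}_{\mathcal{E}/\mathcal{A}}\otimes\mathbb{F}_{\mathcal{E}/\mathcal{A}}$ before multiplying. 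Unwinding the compatibility between the lax monoidal structure and the comonad structure yields exactly the bialgebra axioms relating this multiplication to the comultiplication and counit of Proposition \ref{prop:comonadicreconstruction}. Hence $\mathbb{F}_{\mathcal{E}/\mathcal{A}}$ is a bialgebra in the braided category $\mathcal{A}$, the transported monoidal structure is the standard tensor product of $\mathbb{F}_{\mathcal{E}/\mathcal{A}}$-comodules, and the equivalence of Proposition \ref{prop:comonadicreconstruction} is an equivalence of tensor categories. Finally, to produce the antipode I would exploit rigidity: because $\mathcal{A}\boxtimes_{\mathcal{E}}\mathcal{A}^{\mathrm{mop}}$ is a finite tensor category, every compact object has left and right duals, so $\mathrm{Comod}_{\mathcal{A}}(\mathbb{F}_{\mathcal{E}/\mathcal{A}})$ is rigid. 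For a bialgebra in a rigid braided category, this rigidity is equivalent to the monoidal comonad $(-)\otimes\mathbb{F}_{\mathcal{E}/\mathcal{A}}$ being a Hopf comonad, i.e.\ to invertibility of its fusion morphisms, which in turn is equivalent to the existence of an antipode on $\mathbb{F}_{\mathcal{E}/\mathcal{A}}$; this is the Brugui\`eres--Virelizier characterization, in the form appearing in \cite{LM} and \cite{BV:Hopf}. Therefore $\mathbb{F}_{\mathcal{E}/\mathcal{A}}$ is a Hopf algebra.

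I expect the main obstacle to be the middle step: carefully matching the abstract monoidal comonad structure with genuine bialgebra data on $\mathbb{F}_{\mathcal{E}/\mathcal{A}}$ internal to the braided category $\mathcal{A}$, in particular verifying that the braiding enters exactly as required for the bialgebra compatibility axiom, and that all structure morphisms respect the left $\mathcal{A}$-module structure so that everything takes place inside $\mathcal{A}$ rather than merely at the level of the comonad on the underlying category. Since an appeal to \cite{LM} handles these verifications in the required generality, I would only sketch them, in keeping with the stated aim of recording the proof for completeness.
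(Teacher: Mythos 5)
Your argument is correct and reaches the same three milestones as the paper's proof (transport of the monoidal structure so that the forgetful functor is strong monoidal, extraction of a multiplication on $\mathbb{F}_{\mathcal{E}/\mathcal{A}}$ from the monoidal data, and an appeal to rigidity via \cite{LM}/\cite{BV:Hopf} for the antipode), but the middle step is packaged differently. The paper constructs the multiplication by hand: it first proves, via an equalizer presentation of comodules, that $\mathrm{Comod}_{\mathcal{A}}(\mathbb{F}_{\mathcal{E}/\mathcal{A}}\otimes\mathbb{F}_{\mathcal{E}/\mathcal{A}})\simeq(\mathcal{A}\boxtimes_{\mathcal{E}}\mathcal{A}^{\mathrm{mop}})\boxtimes_{\mathcal{A}}(\mathcal{A}\boxtimes_{\mathcal{E}}\mathcal{A}^{\mathrm{mop}})$ with the braided tensor coalgebra structure on $\mathbb{F}_{\mathcal{E}/\mathcal{A}}\otimes\mathbb{F}_{\mathcal{E}/\mathcal{A}}$, and then uses the commuting triangle relating $T_{\mathcal{E}/\mathcal{A}}$ to the multiplication functor $\widetilde{T}$ of $\mathcal{A}\boxtimes_{\mathcal{E}}\mathcal{A}^{\mathrm{mop}}$ to produce a morphism of comonads whose value at $\mathbbm{1}$ is the coalgebra map $m:\mathbb{F}_{\mathcal{E}/\mathcal{A}}\otimes\mathbb{F}_{\mathcal{E}/\mathcal{A}}\to\mathbb{F}_{\mathcal{E}/\mathcal{A}}$; this makes it transparent that $m$ is a map of coalgebras and that the domain carries the correct (braided) coalgebra structure. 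You instead invoke doctrinal adjunction to make $T_{\mathcal{E}/\mathcal{A}}\circ T_{\mathcal{E}/\mathcal{A}}^R$ a monoidal comonad and then read off the bialgebra data from the bimonad formalism of \cite{BV:Hopf}; this avoids the explicit identification of $\mathrm{Comod}_{\mathcal{A}}(\mathbb{F}_{\mathcal{E}/\mathcal{A}}\otimes\mathbb{F}_{\mathcal{E}/\mathcal{A}})$ at the cost of outsourcing to the general machinery the verification that the lax monoidal structure on a left-$\mathcal{A}$-linear comonad of the form $(-)\otimes\mathbb{F}_{\mathcal{E}/\mathcal{A}}$ is equivalent, via the braiding, to a multiplication and unit satisfying the bialgebra axioms. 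You correctly flag this as the delicate point, and since both routes ultimately defer to \cite{LM} for the antipode (and the paper itself only sketches the argument), your level of detail is adequate.
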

\begin{proof}
We begin by the observation that there is an equivalence of categories 
$$\mathrm{Comod}_{\mathcal{A}}(\mathbb{F}_{\mathcal{E}/\mathcal{A}}\otimes \mathbb{F}_{\mathcal{E}/\mathcal{A}})\simeq \big(\mathcal{A}\boxtimes_{\mathcal{E}}\mathcal{A}^{\mathrm{mop}}\big)\boxtimes_{\mathcal{A}}\big(\mathcal{A}\boxtimes_{\mathcal{E}}\mathcal{A}^{\mathrm{mop}}\big) = \mathcal{A}\boxtimes_{\mathcal{E}}\mathcal{A}^{\mathrm{mop}}\boxtimes_{\mathcal{E}}\mathcal{A}^{\mathrm{mop}},$$
where the coalgebra structure on $\mathbb{F}_{\mathcal{E}/\mathcal{A}}\otimes \mathbb{F}_{\mathcal{E}/\mathcal{A}}$ is given by $(\mathrm{id}\otimes\beta\otimes\mathrm{id})\circ (\Delta\otimes\Delta)$. Namely, as $T_{\mathcal{A}}$ is not only cocontinuous, but also left-exact, there is a left-exact cocontinuous functor 
$$\begin{tabular}{c c c}
$\mathrm{Comod}_{\mathcal{A}}(\mathbb{F}_{\mathcal{E}/\mathcal{A}})\boxtimes\mathrm{Comod}_{\mathcal{A}}(\mathbb{F}_{\mathcal{E}/\mathcal{A}})$ & $\rightarrow$ & $\mathrm{Comod}_{\mathcal{A}}(\mathbb{F}_{\mathcal{E}/\mathcal{A}}\otimes \mathbb{F}_{\mathcal{E}/\mathcal{A}})$\\
$(X,\rho_X)\boxtimes (Y,\rho_Y)$ & $\mapsto$ & $(X\otimes Y,\rho_{X\otimes Y}),$
\end{tabular}$$
where $\rho_{X\otimes Y}:=(\mathrm{id}\otimes \beta\otimes \mathrm{id})\circ(\rho_X\otimes \rho_Y)$.
In fact, the above functor is easily seen to be compatible with the left $\mathcal{A}$-module structures, so that we obtain a left exact and cocontinuous functor $$\mathrm{Comod}_{\mathcal{A}}(\mathbb{F}_{\mathcal{E}/\mathcal{A}})\boxtimes_{\mathcal{A}}\mathrm{Comod}_{\mathcal{A}}(\mathbb{F}_{\mathcal{E}/\mathcal{A}})\rightarrow \mathrm{Comod}_{\mathcal{A}}(\mathbb{F}_{\mathcal{E}/\mathcal{A}}\otimes \mathbb{F}_{\mathcal{E}/\mathcal{A}}).$$ In order to check that it is an equivalence, one can therefore use the fact that every right $\mathbb{F}_{\mathcal{E}/\mathcal{A}}$-comodule $X$ can be expressed as an equalizer:
$$\begin{tikzcd}
X \arrow[r, "\rho"] & X\otimes\mathbb{F}_{\mathcal{E}/\mathcal{A}} \arrow[r, "\rho\otimes\mathrm{id}", shift left] \arrow[r, "\mathrm{id}\otimes\Delta"', shift right] & X\otimes\mathbb{F}_{\mathcal{E}/\mathcal{A}}\otimes\mathbb{F}_{\mathcal{E}/\mathcal{A}}.
\end{tikzcd}$$
The claim then follows readily by a standard argument.

As a consequence of the discussion of the preceding paragraph, we find that the canonical functor $$T:\big(\mathcal{A}\boxtimes_{\mathcal{E}}\mathcal{A}^{\mathrm{mop}}\big)\boxtimes_{\mathcal{A}}\big(\mathcal{A}\boxtimes_{\mathcal{E}}\mathcal{A}^{\mathrm{mop}}\big) = \mathcal{A}\boxtimes_{\mathcal{E}}\mathcal{A}^{\mathrm{mop}}\boxtimes_{\mathcal{E}}\mathcal{A}^{\mathrm{mop}}\rightarrow \mathcal{A}$$ is identified with the forgetful functor $\mathrm{Comod}_{\mathcal{A}}(\mathbb{F}_{\mathcal{E}/\mathcal{A}}\otimes \mathbb{F}_{\mathcal{E}/\mathcal{A}})\rightarrow \mathcal{A}$. In fact, this identifies the comonad $T\circ T^R(-)$ with $(-)\otimes (\mathbb{F}_{\mathcal{E}/\mathcal{A}}\otimes \mathbb{F}_{\mathcal{E}/\mathcal{A}})$. Let us now write $$\widetilde{T}:\big(\mathcal{A}\boxtimes_{\mathcal{E}}\mathcal{A}^{\mathrm{mop}}\big)\boxtimes_{\mathcal{A}}\big(\mathcal{A}\boxtimes_{\mathcal{E}}\mathcal{A}^{\mathrm{mop}}\big)\rightarrow\mathcal{A}\boxtimes_{\mathcal{E}}\mathcal{A}^{\mathrm{mop}}$$ for the functor induced by the tensor product on $\mathcal{A}\boxtimes_{\mathcal{E}}\mathcal{A}^{\mathrm{mop}}$. By construction, the following diagram is commutative:
$$\begin{tikzcd}[sep=small]
\mathcal{A}\boxtimes_{\mathcal{E}}\mathcal{A}^{\mathrm{mop}}\boxtimes_{\mathcal{E}}\mathcal{A}^{\mathrm{mop}} \arrow[rr, "\widetilde{T}"] \arrow[rd, "T"'] &             & \mathcal{A}\boxtimes_{\mathcal{E}}\mathcal{A}^{\mathrm{mop}} \arrow[ld, "T_{\mathcal{E}/\mathcal{A}}"] \\
& \mathcal{A}. &
\end{tikzcd}$$
The functor $\widetilde{T}$ is cocontinuous and preserves compact objects, so that it admits a right adjoint compatible with the left $\mathcal{A}$-module structures by \cite[Corollary 4.3]{BJS}. In particular, there is an induced morphism of comonads $T\circ T^R \rightarrow T_{\mathcal{E}/\mathcal{A}}\circ T_{\mathcal{E}/\mathcal{A}}^R$ compatible with the  left $\mathcal{A}$-module structures. Upon evaluating this morphism at the monoidal unit $\mathbbm{1}$, we get a map of coalgebras $m:\mathbb{F}_{\mathcal{E}/\mathcal{A}}\otimes \mathbb{F}_{\mathcal{E}/\mathcal{A}}\rightarrow \mathbb{F}_{\mathcal{E}/\mathcal{A}}$. That $m$ is associative follows from the fact that $T_{\mathcal{E}/\mathcal{A}}$ is associative using a similar argument.

Finally, we refer the reader to \cite[Section 6.5]{LM} for the construction of the antipode on $\mathbb{F}_{\mathcal{E}/\mathcal{A}}$ and its inverse.
\end{proof}

\begin{Remark}\label{rem:Shimizucomparison}
In the case $\mathcal{E} = \mathrm{Vec}$, the Hopf algebra $\mathbb{F}_{\mathcal{A}}$ in $\mathcal{A}$ coincides with the Hopf algebra $\mathbb{H}_{\mathcal{A}}$ introduced in \cite{Maj, Lyu:modular}, and employed in \cite{Shi:nondeg}. Namely, it was shown in \cite{Lyu:squared} that $\mathrm{Comod}_{\mathcal{A}}(\mathbb{H}_{\mathcal{A}})\simeq \mathcal{A}\boxtimes\mathcal{A}^{\mathrm{mop}}$ as tensor categories, so that it follows immediately from \cite[Lemma 6.3 and Proposition 6.14]{LM} that $\mathbb{F}_{\mathcal{A}}\cong \mathbb{H}_{\mathcal{A}}$ as Hopf algebras. It is well-known that the Hopf algebra structure on $\mathbb{F}_{\mathcal{A}}$ can be described very explicitly. This is achieved using the fact, proven for instance in \cite[Section 3.2]{BJSS}, that the right adjoint to $T_{\mathcal{A}}$ can expressed via the coend $$T_{\mathcal{A}}^R(\mathbbm{1}) \cong \int^{A\in \mathcal{A}^{\mathbf{c}}}A^*\boxtimes A \in \mathcal{A}\boxtimes\mathcal{A}^{\mathrm{mop}},$$ where $A^*$ denotes the right dual of the compact object $A$ in $\mathcal{A}^{\mathbf{c}}$. In particular, it follows that $$\mathbb{F}_{\mathcal{A}} \cong \int^{A\in \mathcal{A}^{\mathbf{c}}}A^*\otimes A$$ as coalgebras in $\mathcal{A}$. More generally, the Hopf algebra structure on $\mathbb{F}_{\mathcal{A}}$ can be described very explicitly using this description as a coend. We refer the reader to \cite[Section 2]{Lyu:modular} for the details. For use below, we will nonetheless recall how the canonical pairing $\omega_{\mathcal{A}}:\mathbb{F}_{\mathcal{A}}\otimes \mathbb{F}_{\mathcal{A}}\rightarrow\mathbbm{1}$ is defined. Given a compact object $A$ in $\mathcal{A}$, we write $\iota_{A}:A^*\otimes A\rightarrow\mathbb{F}_{\mathcal{A}}$ for the canonical map coming from the definition of $\mathbb{F}_{\mathcal{A}}$ as a coend. The pairing $\omega_{\mathcal{A}}$ from \cite{Lyu:modular,Shi:nondeg} is then uniquely specified through the universal property of the coend by the following equality
$$\omega_{\mathcal{A}} \circ (\iota_{A}\otimes\iota_{B}) = (\mathrm{ev}_{A}\otimes \mathrm{ev}_{B}) \circ (A^*\otimes\beta^2_{A,B^*}\otimes B)\circ (\iota_{A}\otimes\iota_{B}),$$ for every compact objects $A$ and $B$ in $\mathcal{A}$. Above, we have used $\mathrm{ev}_A:A^*\otimes A\rightarrow \mathbbm{1}$ to denote the evaluation morphism witnessing that $A^*$ is right dual to $A$, and similarly for $\mathrm{ev}_B$.
\end{Remark}

\subsection{An Exact Sequence of Hopf Algebras}

We now explain the nature of the relation between the Hopf algebras $\mathbb{F}_{\mathcal{E}/\mathcal{A}}$ and $\mathbb{F}_{\mathcal{A}}$ in $\mathcal{A}$. We use a generalization of the notion of a (strictly) exact sequence of Hopf algebras in $\mathrm{Vec}$ introduced in \cite{Sch} (see also \cite[Section 2.2]{BN}). Given a Hopf algebra $H$ in a braided tensor category $\mathcal{A}$, we write $H^+$ for its augmentation ideal, that is, the kernel of its counit $\epsilon:H\rightarrow \mathbbm{1}$.

\begin{Definition}
Let $\mathcal{A}$ be a braided tensor category. A sequence $$K\xrightarrow{i} H\xrightarrow{p} Q$$ of maps of Hopf algebras in $\mathcal{A}$ is called a (strictly) exact sequence if the following three conditions hold:
\begin{enumerate}[label=(\alph*)]
    \item $K$ is a normal Hopf subalgebra of $H$, that is, $K$ is stable under the adjoint action of $H$ on itself,
    \item $H$ is faithfully flat over $K$,
    \item $p$ is a categorical cokernel of $i$, that is, $p$ is isomorphic to the quotient map $H\rightarrow H/HK^+$.
\end{enumerate}
\end{Definition}

When working with finite dimensional Hopf algebras in $\mathrm{Vec}$, it follows from the Nichols-Zoeller theorem that $H$ is free as a $K$-module. In particular, the second condition above is automatic. This is a general phenomenon.

\begin{Lemma}\label{lem:faithfulflatness}
Let $i:K\hookrightarrow H$ be an inclusion of Hopf algebras in $\mathcal{A}$ such that the underlying objects of $K$ and $H$ are both compact. Then, $H$ is faithfully flat over $K$.
\end{Lemma}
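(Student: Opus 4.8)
The plan is to deduce faithful flatness from the stronger assertion that $H$ is \emph{free} as a left $K$-module, which is the appropriate braided-categorical avatar of the Nichols--Zoeller theorem. Concretely, I would aim to construct an object $V$ of $\mathcal{A}$ together with an isomorphism $H\cong K\otimes V$ of left $K$-modules, with the natural candidate for the fiber being $V:=\mathbbm{1}\otimes_K H\cong H/i(K^+)H$, the categorical cokernel already appearing in condition (c) of the definition. Once such an isomorphism is in hand, faithful flatness is formal: the functor $(-)\otimes_K H$ is identified with $(-)\otimes V$, which is exact in each variable since $\otimes$ is biexact, giving flatness; and if $N$ is a nonzero $K$-module then $N\otimes V$ is nonzero because $V$ is a nonzero rigid object of the finite tensor category $\mathcal{A}$, giving faithfulness. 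Thus everything reduces to the freeness statement, and this is where the compactness hypothesis on $K$ and $H$ must be used in an essential way.

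To produce this freeness, I would first endow $H$ with the structure of a relative Hopf module in the category ${}_K\mathcal{M}^H$ of objects of $\mathcal{A}$ carrying both a left $K$-action and a right $H$-coaction satisfying the usual compatibility. The action is the composite $K\otimes H\xrightarrow{i\otimes\mathrm{id}}H\otimes H\to H$ and the coaction is $\Delta$; the compatibility holds precisely because $i$ is a morphism of Hopf algebras, with the braiding $\beta$ of $\mathcal{A}$ entering to make sense of the relevant composites. The strategy is then to run the Nichols--Zoeller argument internally to $\mathcal{A}$: the fundamental theorem of Hopf modules for $H$ over itself controls the restriction of any free $H$-Hopf module along $i$, and one realizes $H$ as a summand of such a free Hopf module. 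The passage from "summand of something $H$-free" to "$K$-free" is then forced by a comparison of categorical ranks, for which I would use that every object of $\mathcal{A}$ has finite length (so that Krull--Schmidt applies and the indecomposable summands are controlled) together with rigidity of $K$ and $H$ (so that duals, evaluations and coevaluations exist and the requisite Hopf-module isomorphisms can be written down). Both of these are exactly what compactness of the underlying objects of $K$ and $H$ guarantees.

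The hard part will be the rank/multiplicity comparison that upgrades the fundamental-theorem decomposition to genuine $K$-freeness, since the naive structure theorem for \emph{relative} Hopf modules presupposes the very faithful flatness we are trying to establish and so cannot be invoked directly. I would handle this by replacing the dimension count of the classical proof with a length- or Grothendieck-ring-valued rank that is well defined thanks to rigidity and finite length, and by verifying that the additive invariants of the $K$-module $H$ match those of a free module of fiber $V$. A cleaner alternative worth checking is the dualized picture: since $K$ and $H$ are rigid, $i$ dualizes to a surjection $H^{*}\twoheadrightarrow K^{*}$ of Hopf algebras, and faithful flatness of $H$ over $K$ corresponds to faithful coflatness (a normal-basis/cotensor statement) for $H^{*}$ over $K^{*}$; if the coalgebraic version of the argument is more transparent in $\mathcal{A}$, I would carry it out there and transport the conclusion back along the equivalence $\mathrm{Mod}_K(\mathcal{A})\simeq\mathrm{Comod}_{K^{*}}(\mathcal{A})$.
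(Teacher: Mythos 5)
There is a genuine gap: your entire argument funnels through a braided Nichols--Zoeller theorem, i.e.\ the assertion that $H$ is \emph{free} as a left $K$-module with fiber $V=H/HK^{+}$, and you correctly identify the rank/multiplicity comparison needed to establish this as ``the hard part'' --- but you never actually supply it. In a general finite braided tensor category this freeness statement is substantially harder than the lemma itself (and, to my knowledge, not available at this level of generality); the substitutes you sketch do not close the gap. A length- or Grothendieck-ring-valued rank cannot detect freeness, since non-isomorphic $K$-modules can share the same class in $K_0$, and the classical Nichols--Zoeller proof uses genuinely more than a dimension count. The structure theorem for relative Hopf modules in ${}_K\mathcal{M}^H$ presupposes faithful flatness, as you note, and the dualized ``coflatness'' reformulation merely restates the same problem for $H^{*}\twoheadrightarrow K^{*}$. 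So the proposal reduces the lemma to a strictly stronger, unproven statement.

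The paper avoids freeness altogether with a soft two-step argument that you should compare with. First, restriction $i^{*}:\mathrm{Mod}_{\mathcal{A}}(H)\rightarrow\mathrm{Mod}_{\mathcal{A}}(K)$ is a tensor functor between finite tensor categories, so (as in Lemma \ref{lem:adjointtensor}, via \cite[Section 1.3]{BN}) its right adjoint can be expressed as the left adjoint $(-)\otimes_{K}H$ composed with duality functors; hence $(-)\otimes_{K}H$ is left-exact as well as right-exact, which is flatness. Second, faithfulness follows because $i$ is a monomorphism: the map $\mathrm{Hom}_{K}(M,N)\rightarrow\mathrm{Hom}_{H}(M\otimes_{K}H,N\otimes_{K}H)$ factors as
$$\mathrm{Hom}_{K}(M,N)\hookrightarrow\mathrm{Hom}_{K}(M,N\otimes_{K}H)\cong\mathrm{Hom}_{H}(M\otimes_{K}H,N\otimes_{K}H),$$
the first arrow being injective since $N\cong N\otimes_{K}K\rightarrow N\otimes_{K}H$ is a monomorphism by flatness. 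Your reduction of faithful flatness to freeness (the first paragraph of your proposal) is itself fine, but to salvage the approach you would either need to prove the internal Nichols--Zoeller theorem or replace it with an exactness argument of this kind.
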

\begin{proof}
We have that $i^*:\mathrm{Mod}_{\mathcal{A}}(H)\rightarrow \mathrm{Mod}_{\mathcal{A}}(K)$ is a tensor functor between finite tensor categories. In particular, its left adjoint $$(-)\otimes_KH:\mathrm{Mod}_{\mathcal{A}}(K)\rightarrow \mathrm{Mod}_{\mathcal{A}}(H)$$ is also left-exact. Namely, the right adjoint of $i^*$ can be expressed using $(-)\otimes_KH$ and duality functors of $\mathcal{A}$ (see for instance \cite[Section 1.3]{BN}). It follows that $H$ is flat over $K$. Moreover, as $i$ is a monomorphism, then $(-)\otimes_KH$ is faithful. To see this, note that the map $\mathrm{Hom}_K(M,N)\rightarrow \mathrm{Hom}_H(M\otimes_KH,N\otimes_KH)$ can be factored as $$\mathrm{Hom}_K(M,N)\hookrightarrow \mathrm{Hom}_K(M,N\otimes_KH)\cong \mathrm{Hom}_H(M\otimes_KH,N\otimes_KH).$$ We therefore find that $H$ is faithfully flat over $K$.
\end{proof}

\begin{Theorem}\label{thm:exactsequenceHopf}
Let $\mathcal{A}$ be a faithfully flat $\mathcal{E}$-enriched finite braided tensor category. There is an exact sequence of Hopf algebras in $\mathcal{A}$ $$\mathbb{F}_{\mathcal{E}}\hookrightarrow\mathbb{F}_{\mathcal{A}}\twoheadrightarrow\mathbb{F}_{\mathcal{E}/\mathcal{A}}.$$
\end{Theorem}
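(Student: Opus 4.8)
The plan is to construct the two Hopf algebra maps independently and then verify, in turn, the three conditions defining a strictly exact sequence, reserving the real work for condition (c). I would build the surjection $p$ from the canonical quotient tensor functor $G:\mathcal{A}\boxtimes\mathcal{A}^{\mathrm{mop}}\rightarrow\mathcal{A}\boxtimes_{\mathcal{E}}\mathcal{A}^{\mathrm{mop}}$, which satisfies $T_{\mathcal{E}/\mathcal{A}}\circ G\cong T_{\mathcal{A}}$ by the very definition of $T_{\mathcal{E}/\mathcal{A}}$. Under the comonadic identifications $\mathcal{A}\boxtimes\mathcal{A}^{\mathrm{mop}}\simeq\mathrm{Comod}_{\mathcal{A}}(\mathbb{F}_{\mathcal{A}})$ and $\mathcal{A}\boxtimes_{\mathcal{E}}\mathcal{A}^{\mathrm{mop}}\simeq\mathrm{Comod}_{\mathcal{A}}(\mathbb{F}_{\mathcal{E}/\mathcal{A}})$ of Proposition \ref{prop:comonadicreconstruction}, the functor $G$ lives over the forgetful functors to $\mathcal{A}$ and so, by Tannakian functoriality, is corepresented by a morphism of Hopf algebras $p:\mathbb{F}_{\mathcal{A}}\rightarrow\mathbb{F}_{\mathcal{E}/\mathcal{A}}$ (concretely, $p$ is the evaluation at $\mathbbm{1}$ of the comonad morphism induced by the counit $G\circ G^R\Rightarrow\mathrm{id}$, exactly as the multiplication was produced in Proposition \ref{prop:relativeuniversalHopf}). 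Since $G$ is identified in Lemma \ref{lem:relativetensorasmodules} with the free $T_{\mathcal{E}}^R(\mathbbm{1})$-module functor, it is dominant, whence $p$ is surjective.

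For the inclusion $i$, I would use the coend presentations $\mathbb{F}_{\mathcal{A}}\cong\int^{A\in\mathcal{A}^{\mathbf{c}}}A^*\otimes A$ of Remark \ref{rem:Shimizucomparison} and its analogue $\mathbb{F}_{\mathcal{E}}\cong\int^{E\in\mathcal{E}^{\mathbf{c}}}E^*\otimes E$ in $\mathcal{E}$, whose image $F(\mathbb{F}_{\mathcal{E}})$ in $\mathcal{A}$ is what the statement denotes $\mathbb{F}_{\mathcal{E}}$. The structure maps $\iota_{F(E)}:F(E)^*\otimes F(E)\rightarrow\mathbb{F}_{\mathcal{A}}$ assemble, via the universal property of the coend indexed by $\mathcal{E}^{\mathbf{c}}$, into a map $i:\mathbb{F}_{\mathcal{E}}\rightarrow\mathbb{F}_{\mathcal{A}}$, and functoriality of the coend Hopf structure along the symmetric tensor functor $F$ makes $i$ a Hopf algebra map. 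Because $F$ is fully faithful by the faithful flatness hypothesis, $\mathcal{E}^{\mathbf{c}}\hookrightarrow\mathcal{A}^{\mathbf{c}}$ is fully faithful and $i$ is a monomorphism, exhibiting $\mathbb{F}_{\mathcal{E}}$ as a Hopf subalgebra. Conditions (a) and (b) then follow quickly: for normality, the underlying object of $\mathbb{F}_{\mathcal{E}}$ lies in the image of $\mathcal{E}\hookrightarrow\mathcal{Z}_{(2)}(\mathcal{A})$, hence is transparent, so the braidings entering the adjoint action of $\mathbb{F}_{\mathcal{A}}$ on $\mathbb{F}_{\mathcal{E}}$ cancel, the action collapses to the one through the counit, and $\mathbb{F}_{\mathcal{E}}$ is automatically stable; for faithful flatness, both $\mathbb{F}_{\mathcal{E}}$ and $\mathbb{F}_{\mathcal{A}}$ have compact underlying objects (the relevant right adjoints preserve compacts by Lemma \ref{lem:adjointtensor}), so Lemma \ref{lem:faithfulflatness} applies directly.

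The hard part will be condition (c): identifying $p$ with the cokernel map $\mathbb{F}_{\mathcal{A}}\rightarrow\mathbb{F}_{\mathcal{A}}/\mathbb{F}_{\mathcal{A}}\mathbb{F}_{\mathcal{E}}^+$. I would first check that $p\circ i$ equals the trivial composite $\mathbb{F}_{\mathcal{E}}\xrightarrow{\epsilon}\mathbbm{1}\xrightarrow{\eta}\mathbb{F}_{\mathcal{E}/\mathcal{A}}$; this is the Hopf-algebraic shadow of the fact that $\mathcal{E}\xrightarrow{F\boxtimes F^{\mathrm{mop}}}\mathcal{A}\boxtimes\mathcal{A}^{\mathrm{mop}}\xrightarrow{G}\mathcal{A}\boxtimes_{\mathcal{E}}\mathcal{A}^{\mathrm{mop}}$ lands in the fully faithful copy of $\mathcal{E}$, the two $\mathcal{E}$-actions being identified after relative balancing. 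Consequently $p$ factors through a map $\overline{p}:\mathbb{F}_{\mathcal{A}}/\mathbb{F}_{\mathcal{A}}\mathbb{F}_{\mathcal{E}}^+\rightarrow\mathbb{F}_{\mathcal{E}/\mathcal{A}}$, and, since (a) and (b) hold, the theory of \cite{Sch, BN} guarantees the cokernel is an honest Hopf algebra completing an exact sequence. It then remains to see $\overline{p}$ is an isomorphism, for which the cleanest route is Tannakian: both source and target corepresent $\mathcal{A}\boxtimes_{\mathcal{E}}\mathcal{A}^{\mathrm{mop}}$ as a tensor category over $\mathcal{A}$ — for $\mathbb{F}_{\mathcal{E}/\mathcal{A}}$ by Proposition \ref{prop:relativeuniversalHopf}, while for the cokernel one identifies $\mathrm{Comod}_{\mathcal{A}}(\mathbb{F}_{\mathcal{A}}/\mathbb{F}_{\mathcal{A}}\mathbb{F}_{\mathcal{E}}^+)$ with the full subcategory of $\mathbb{F}_{\mathcal{A}}$-comodules whose induced $\mathbb{F}_{\mathcal{E}}$-coaction is trivial and checks this is precisely the essential image of the fully faithful right adjoint $G^R$. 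Uniqueness of the corepresenting Hopf algebra then forces $\overline{p}$ to be an isomorphism intertwining $p$ with the quotient map. I expect the faithful flatness hypothesis to be used in an essential way exactly here, both to ensure $G^R$ is fully faithful and to match the trivial-coaction subcategory with $\mathcal{A}\boxtimes_{\mathcal{E}}\mathcal{A}^{\mathrm{mop}}$.
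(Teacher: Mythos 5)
Your skeleton --- construct $i$ and $p$, then verify conditions (a), (b), (c) --- matches the paper's, and your treatment of condition (b) via Lemma \ref{lem:faithfulflatness} and your observation that $p\circ i$ is trivial are both correct. But the two steps carrying the real content are handled with arguments that do not work as stated. For normality, you claim that transparency of the underlying object of $\mathbb{F}_{\mathcal{E}}$ makes the adjoint action of $\mathbb{F}_{\mathcal{A}}$ ``collapse to the one through the counit.'' This is a non-sequitur: the composite \eqref{eq:normality} contains a single braiding, so there is nothing for it to ``cancel'' against, and triviality of $h_{(1)}kS(h_{(2)})$ is equivalent to $\mathbb{F}_{\mathcal{E}}$ being central \emph{in the algebra} $\mathbb{F}_{\mathcal{A}}$ --- already in $\mathrm{Vec}$ a normal Hopf subalgebra such as $\mathbbm{k}[N]\subseteq\mathbbm{k}[G]$ for $N$ normal but non-central has a non-trivial adjoint action. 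Transparency is a statement about the braiding of $\mathcal{A}$, not about the multiplication of the coend $\mathbb{F}_{\mathcal{A}}$, and bridging the two is exactly the explicit computation the paper performs: restricted to $\iota_A\otimes\iota_E$, the adjoint action is shown to be the conjugation $E\mapsto A^*\otimes E\otimes A$ followed by the projection onto the largest subobject in $\mathcal{E}$, which lands in $\mathbb{F}_{\mathcal{E}}$ only after invoking \cite[Lemma 2.2]{Shi:nondeg}. (Relatedly, injectivity of your coend-assembled map $i$ does not follow from full faithfulness of $\mathcal{E}^{\mathbf{c}}\hookrightarrow\mathcal{A}^{\mathbf{c}}$ alone; the paper obtains it by identifying $\mathbb{F}_{\mathcal{E}}$ with the largest subobject of $\mathbb{F}_{\mathcal{A}}$ lying in $\mathcal{A}\boxtimes\mathcal{E}^{\mathrm{mop}}$.)

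The identification of the cokernel in condition (c) is also off track. You propose to realize $\mathrm{Comod}_{\mathcal{A}}(\mathbb{F}_{\mathcal{A}}/\mathbb{F}_{\mathcal{A}}\mathbb{F}_{\mathcal{E}}^+)$ as a full subcategory of $\mathrm{Comod}_{\mathcal{A}}(\mathbb{F}_{\mathcal{A}})$, namely the essential image of ``the fully faithful right adjoint $G^R$.'' But by Lemma \ref{lem:relativetensorasmodules} the functor $G$ is the free $T_{\mathcal{E}}^R(\mathbbm{1})$-module functor, so $G^R$ is the forgetful functor from $T_{\mathcal{E}}^R(\mathbbm{1})$-modules: it is faithful but not full, and $\mathcal{A}\boxtimes_{\mathcal{E}}\mathcal{A}^{\mathrm{mop}}$ is a de-equivariantization of $\mathcal{A}\boxtimes\mathcal{A}^{\mathrm{mop}}$, not a full subcategory of it. (The full subcategory of comodules whose coaction factors through $\mathbb{F}_{\mathcal{E}}$ is $\mathcal{A}\boxtimes\mathcal{E}^{\mathrm{mop}}$ --- the other end of the sequence; comodules corestrict along a quotient of coalgebras rather than restrict, so there is no ``induced $\mathbb{F}_{\mathcal{E}}$-coaction'' to trivialize.) The correct statement is Takeuchi's correspondence $\mathrm{Comod}_{\mathcal{A}}(\mathbb{F}_{\mathcal{A}}/\mathbb{F}_{\mathcal{A}}\mathbb{F}_{\mathcal{E}}^+)\simeq\mathrm{Mod}_{\mathrm{Comod}_{\mathcal{A}}(\mathbb{F}_{\mathcal{A}})}(\mathbb{F}_{\mathcal{E}})$, and establishing it --- via the adjunction between $(-)\otimes_{\mathbb{F}_{\mathcal{E}}}\mathbbm{1}$ and the cotensor product $(-)\Box^{\pi}\mathbb{F}_{\mathcal{A}}$, with conservativity supplied by Lemma \ref{lem:technicalrelativetensorfunctortensorproduct} and the faithfulness of $T_{\mathcal{E}/\mathcal{A}}$ --- is the bulk of the paper's proof. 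It cannot be replaced by a Tannakian uniqueness argument over a subcategory that is not actually there.
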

\begin{proof}
We begin by describing how the map of Hopf algebras $i:\mathbb{F}_{\mathcal{E}}\rightarrow\mathbb{F}_{\mathcal{A}}$ arises. We claim that there is an equivalence of tensor categories $$\mathrm{Comod}_{\mathcal{A}}(\mathbb{F}_{\mathcal{E}})\simeq\mathcal{A}\boxtimes\mathcal{E}^{\mathrm{mop}}.$$ Namely, using cofree copresentations, it is easy to show that the canonical tensor functor $$\mathcal{A}\boxtimes\mathcal{E}^{\mathrm{mop}}\simeq \mathcal{A}\boxtimes_{\mathcal{E}}\mathrm{Comod}_{\mathcal{E}}(\mathbb{F}_{\mathcal{E}})\rightarrow\mathrm{Comod}_{\mathcal{A}}(\mathbb{F}_{\mathcal{E}})$$ is an equivalence. Now, the inclusion of finite tensor categories $\mathrm{Id}\boxtimes F^{\mathrm{mop}}:\mathcal{A}\boxtimes\mathcal{E}^{\mathrm{mop}}\hookrightarrow  \mathcal{A}\boxtimes\mathcal{A}^{\mathrm{mop}}$ has a right adjoint. It therefore induces a map of comonads, and, a fortiori, a map of Hopf algebras $i:\mathbb{F}_{\mathcal{E}}\rightarrow\mathbb{F}_{\mathcal{A}}$. Moreover, $F$ is a fully faithful tensor functor between finite tensor categories, whence so is $\mathrm{Id}\boxtimes F^{\mathrm{mop}}$. Consequently, these two functors correspond to the inclusion of a topologizing subcategory by \cite[Proposition 6.3.1]{EGNO}. In particular, the right adjoint $G$ to $\mathrm{Id}\boxtimes F^{\mathrm{mop}}:\mathcal{A}\boxtimes\mathcal{E}^{\mathrm{mop}}\hookrightarrow \mathcal{A}\boxtimes\mathcal{A}^{\mathrm{mop}}$ is given by sending an object in $\mathcal{A}\boxtimes\mathcal{A}^{\mathrm{mop}}$ to its largest subobject in $\mathcal{A}\boxtimes\mathcal{E}^{\mathrm{mop}}$ (see for instance \cite[Section 4.2]{Shi:nondeg}). On the other hand, it follows by direct inspection that $G(\mathbb{F}_{\mathcal{A}}) = \mathbb{F}_{\mathcal{E}}$. Altogether, we find that $\mathbb{F}_{\mathcal{E}}$ is a subobject of $\mathbb{F}_{\mathcal{A}}$ as desired.

We show that the subalgebra $\mathbb{F}_{\mathcal{E}}\hookrightarrow \mathbb{F}_{\mathcal{A}}$ is a normal Hopf subalgebra, that is, we show that the composite 
\begin{equation}\label{eq:normality}
\mathbb{F}_{\mathcal{A}}\otimes\mathbb{F}_{\mathcal{E}}\xrightarrow{\Delta\otimes i}\mathbb{F}_{\mathcal{A}}\otimes\mathbb{F}_{\mathcal{A}}\otimes\mathbb{F}_{\mathcal{A}}\xrightarrow{\mathrm{id}\otimes\beta}\mathbb{F}_{\mathcal{A}}\otimes\mathbb{F}_{\mathcal{A}}\otimes\mathbb{F}_{\mathcal{A}}\xrightarrow{m\otimes S}\mathbb{F}_{\mathcal{A}}\otimes\mathbb{F}_{\mathcal{A}}\xrightarrow{m}\mathbb{F}_{\mathcal{A}}
\end{equation}
has image in $\mathbb{F}_{\mathcal{E}}$. In order to do so, we will use the descriptions of $\mathbb{F}_{\mathcal{A}}$ and $\mathbb{F}_{\mathcal{E}}$ as a coends recalled in Remark \ref{rem:Shimizucomparison}. In particular, it is enough to consider the image of the map in \eqref{eq:normality} precomposed with the canonical maps $$\iota_A\otimes\iota_E:(E^*\otimes E)\otimes (A^*\otimes A)\rightarrow \mathbb{F}_{\mathcal{E}}\otimes\mathbb{F}_{\mathcal{A}}$$ for every pair of compact objects $A$ in $\mathcal{A}$ and $E$ in $\mathcal{E}$. Using the explicit description of the Hopf algebra structure on $\mathbb{F}_{\mathcal{A}}$ given in \cite[Section 2]{Lyu:modular}, and recalling that $E$ lies in $\mathcal{Z}_{(2)}(\mathcal{A})$, we find that the overall composite is given by 
\begin{align*}(A^*\otimes A)\otimes (E^*\otimes E)&\xrightarrow{\mathrm{id}\otimes\beta\otimes\mathrm{id}}(A^*\otimes E^*\otimes A)\otimes E\\
&\xrightarrow{\mathrm{id}\otimes\mathrm{coev}_A\otimes\mathrm{id}}(A^*\otimes E^*\otimes A)\otimes (A^*\otimes A\otimes E)\\
&\xrightarrow{\mathrm{id}\otimes\beta}(A^*\otimes E^*\otimes A)\otimes (A^*\otimes E\otimes A)\\
&\xrightarrow{\iota_{A^*\otimes E\otimes A}}\mathbb{F}_{\mathcal{A}}.
\end{align*}
But, this last map factors through the coend $\int^{A\in\mathcal{A}^{\mathbf{c}}}A^*\otimes F^R(A)$ as $$E\xrightarrow{\mathrm{coev}_A\otimes\mathrm{id}}A^*\otimes A\otimes E\xrightarrow{\mathrm{id}\otimes\beta}A^*\otimes E\otimes A$$ factors through $F^R(A^*\otimes E\otimes A)$, which is the largest subobject of $A^*\otimes E\otimes A$ in $\mathcal{E}$ by the previous paragraph. Finally, the isomorphism of coends $$\int^{A\in\mathcal{A}^{\mathbf{c}}}A^*\otimes F^R(A)\cong \int^{E\in\mathcal{E}^{\mathbf{c}}}E^*\otimes E = \mathbb{F}_{\mathcal{E}}$$ provided by \cite[Lemma 2.2]{Shi:nondeg} completes the proof of normality.

That the inclusion $\mathbb{F}_{\mathcal{E}}\hookrightarrow \mathbb{F}_{\mathcal{A}}$ is faithfully flat follows from Lemma \ref{lem:faithfulflatness}.

It remains to identify the quotient Hopf algebra $\pi:\mathbb{F}_{\mathcal{A}}\twoheadrightarrow\mathbb{F}_{\mathcal{A}}/\mathbb{F}_{\mathcal{A}}\mathbb{F}_{\mathcal{E}}^+$. Note that we have $$\mathrm{Mod}_{\mathrm{Comod}_{\mathcal{A}}(\mathbb{F}_{\mathcal{A}})}(\mathbb{F}_{\mathcal{E}})\simeq \mathcal{A}\boxtimes_{\mathcal{E}}\mathcal{A}^{\mathrm{mop}}$$ as finite tensor categories by Lemma \ref{lem:relativetensorasmodules}. Namely, under the equivalence $\mathcal{E}\boxtimes\mathcal{E}^{\mathrm{mop}}\simeq \mathrm{Comod}_{\mathcal{E}}(\mathbb{F}_{\mathcal{E}})$, the algebra $T^R_{\mathcal{E}}(\mathbbm{1})$ corresponds to $\mathbb{F}_{\mathcal{E}}$. In particular, the tensor structure on $\mathrm{Mod}_{\mathrm{Comod}_{\mathcal{A}}(\mathbb{F}_{\mathcal{A}})}(\mathbb{F}_{\mathcal{E}})$ arises from the commutative algebra structure on $\mathbb{F}_{\mathcal{E}}$. To prove that $\mathbb{F}_{\mathcal{E}/\mathcal{A}}\cong \mathbb{F}_{\mathcal{A}}/\mathbb{F}_{\mathcal{A}}\mathbb{F}^+_{\mathcal{E}}$ as Hopf algebras, it will therefore be enough to show that there is an equivalence of tensor categories
$$\mathrm{Mod}_{\mathrm{Comod}_{\mathcal{A}}(\mathbb{F}_{\mathcal{A}})}(\mathbb{F}_{\mathcal{E}})\xrightarrow{\simeq} \mathrm{Comod}_{\mathcal{A}}(\mathbb{F}_{\mathcal{A}}/\mathbb{F}_{\mathcal{A}}\mathbb{F}_{\mathcal{E}}^+).$$ In order to do so, we will use a generalization of \cite[Theorem 1]{Tak}.

To define the desired functor observe that an object of $\mathrm{Mod}_{\mathrm{Comod}_{\mathcal{A}}(\mathbb{F}_{\mathcal{A}})}(\mathbb{F}_{\mathcal{E}})$ is an object $X$ of $\mathcal{A}$ equipped with a right action $n:X\otimes\mathbb{F}_{\mathcal{E}}\rightarrow X$ and a right coaction $\rho:X\rightarrow X\otimes\mathbb{F}_{\mathcal{A}}$ such that the following diagram commutes:
$$\begin{tikzcd}
X\otimes\mathbb{F}_{\mathcal{E}} \arrow[rr, "n"] \arrow[d, "\rho\otimes\Delta"']                                                                       &                                                                                                                        & X \arrow[d, "\rho"]              \\
X\otimes\mathbb{F}_{\mathcal{A}}\otimes\mathbb{F}_{\mathcal{E}}\otimes\mathbb{F}_{\mathcal{E}} \arrow[r, "\mathrm{id}\otimes\beta\otimes\mathrm{id}"'] & X\otimes\mathbb{F}_{\mathcal{E}}\otimes\mathbb{F}_{\mathcal{A}}\otimes\mathbb{F}_{\mathcal{E}} \arrow[r, "n\otimes m"'] & X\otimes\mathbb{F}_{\mathcal{A}}.
\end{tikzcd}$$ 
Postcomposing with the projection $\pi:\mathbb{F}_{\mathcal{A}}\twoheadrightarrow\mathbb{F}_{\mathcal{A}}/\mathbb{F}_{\mathcal{A}}\mathbb{F}_{\mathcal{E}}^+$, it follows that $$(\mathrm{id}\otimes\pi)\circ (n\otimes \mathrm{id})\circ(\mathrm{id}\otimes\beta\otimes\mathrm{id})\circ(\rho\otimes\mathrm{id}) = (\mathrm{id}\otimes\pi)\circ\rho \circ n,$$ so that $\rho$ induces a right $\mathbb{F}_{\mathcal{A}}/\mathbb{F}_{\mathcal{A}}\mathbb{F}_{\mathcal{E}}^+$-comodule structure on $X\otimes_{\mathbb{F}_{\mathcal{E}}}\mathbbm{1}$. This assignment is manifestly functorial. Additionally, it is straightforward to check that it is compatible with the monoidal structures. We therefore have a tensor functor
$$(-)\otimes_{\mathbb{F}_{\mathcal{E}}}\mathbbm{1}:\mathrm{Mod}_{\mathrm{Comod}_{\mathcal{A}}(\mathbb{F}_{\mathcal{A}})}(\mathbb{F}_{\mathcal{E}})\rightarrow \mathrm{Comod}_{\mathcal{A}}(\mathbb{F}_{\mathcal{A}}/\mathbb{F}_{\mathcal{A}}\mathbb{F}_{\mathcal{E}}^+).$$
This functor is left-exact as both its source and target are finite tensor categories. We claim that $(-)\otimes_{\mathbb{F}_{\mathcal{E}}}\mathbbm{1}$ is also faithful. Namely, we have seen in Lemma \ref{lem:technicalrelativetensorfunctortensorproduct} above that the composite functor $$\mathrm{Mod}_{\mathrm{Comod}_{\mathcal{A}}(\mathbb{F}_{\mathcal{A}})}(\mathbb{F}_{\mathcal{E}})\xrightarrow{(-)\otimes_{\mathbb{F}_{\mathcal{E}}}\mathbbm{1}}\mathrm{Comod}_{\mathcal{A}}(\mathbb{F}_{\mathcal{A}}/\mathbb{F}_{\mathcal{A}}\mathbb{F}_{\mathcal{E}}^+)\rightarrow\mathcal{A}$$ is identified with the faithful tensor functor $T_{\mathcal{E}/\mathcal{A}}:\mathcal{A}\boxtimes_{\mathcal{E}}\mathcal{A}^{\mathrm{rev}}\rightarrow\mathcal{A}$. It therefore follows from \cite[Lemma 4.3]{BZBJ} that $(-)\otimes_{\mathbb{F}_{\mathcal{E}}}\mathbbm{1}$ is conservative.

We only have left to prove that $(-)\otimes_{\mathbb{F}_{\mathcal{E}}}\mathbbm{1}$ is an equivalence. In order to do so, we will use the explicit form of its right adjoint provided by the cotensor product functor $(-)\Box^{\pi}\mathbb{F}_{\mathcal{A}}$. More precisely, for any right $\mathbb{F}_{\mathcal{A}}/\mathbb{F}_{\mathcal{A}}\mathbb{F}_{\mathcal{E}}^+$-comodule $Y$ in $\mathcal{A}$, we can consider the equalizer
$$\begin{tikzcd}[sep=small]
Y\Box^{\pi}\mathbb{F}_{\mathcal{A}} \arrow[r, dashed] & Y\otimes\mathbb{F}_{\mathcal{A}} \arrow[r, shift left] \arrow[r, shift right] & Y\otimes\mathbb{F}_{\mathcal{A}}/\mathbb{F}_{\mathcal{A}}\mathbb{F}_{\mathcal{E}}^+\otimes\mathbb{F}_{\mathcal{A}}.
\end{tikzcd}$$
The object $Y\Box^{\pi}\mathbb{F}_{\mathcal{A}}$ therefore carries a natural right $\mathbb{F}_{\mathcal{A}}$-comodule structure. It also carries a compatible right $\mathbb{F}_{\mathcal{E}}$-module structure, and thereby yields an object of $\mathrm{Mod}_{\mathrm{Comod}_{\mathcal{A}}(\mathbb{F}_{\mathcal{A}})}(\mathbb{F}_{\mathcal{E}})$. 
The unit of the adjunction between $(-)\otimes_{\mathbb{F}_{\mathcal{E}}}\mathbbm{1}$ and $ (-)\Box^{\pi}\mathbb{F}_{\mathcal{A}}$ is the map $$\xi_X: X\rightarrow (X\otimes_{\mathbb{F}_{\mathcal{E}}}\mathbbm{1})\Box^{\pi}\mathbb{F}_{\mathcal{A}}$$ induced by the $\mathbb{F}_{\mathcal{A}}$-comodule structure on $X$. The counit of this adjunction is the map $$\upsilon_Y:\big(Y\Box^{\pi}\mathbb{F}_{\mathcal{A}}\big)\otimes_{\mathbb{F}_{\mathcal{E}}}\mathbbm{1}\rightarrow Y$$ induced by $\pi:\mathbb{F}_{\mathcal{A}}\rightarrow \mathbb{F}_{\mathcal{A}}/\mathbb{F}_{\mathcal{A}}\mathbb{F}_{\mathcal{E}}^+$. It is easy to check that $\xi$ and $\upsilon$ satisfy the triangle identities. Now, the functor $(-)\otimes_{\mathbb{F}_{\mathcal{E}}}\mathbbm{1}$ is left-exact, so that the counit $\upsilon$ is an isomorphism. Finally, an adjunction for which both the counit is an isomorphism and the left adjoint is conservative is necessarily an equivalence.
\end{proof}

\begin{Remark}\label{rem:exactsequencecomod}
The notion of an exact sequence of tensor categories from \cite{BN} generalizes that of an exact sequence of Hopf algebras. In particular, an exact sequences of Hopf algebras in $\mathrm{Vec}$ leads to exact sequence of tensor categories by taking (co)modules. Conversely, in the presence of a fiber functor to $\mathrm{Vec}$, an exact sequences of tensor categories yields an exact sequence of Hopf algebras.

Our last theorem above may be interpreted as a version of this correspondence relative to the finite braided tensor category $\mathcal{A}$. We will not make this notion precise here because it is orthogonal to our purposes. We will nonetheless point out that, under the hypotheses of Theorem \ref{thm:exactsequenceHopf}, the exact sequence $\mathbb{F}_{\mathcal{E}}\hookrightarrow\mathbb{F}_{\mathcal{A}}\twoheadrightarrow\mathbb{F}_{\mathcal{E}/\mathcal{A}}$ of Hopf algebras in $\mathcal{A}$ corresponds to the exact sequence of tensor categories relative to $\mathcal{A}$ given by $$\begin{tikzcd}[sep=small]
\mathrm{Comod}_{\mathcal{A}}(\mathbb{F}_{\mathcal{E}}) \arrow[r] \arrow[d, "\rotatebox{-90}{$\simeq$}"] & \mathrm{Comod}_{\mathcal{A}}(\mathbb{F}_{\mathcal{A}}) \arrow[r] \arrow[d, "\rotatebox{-90}{$\simeq$}"] & \mathrm{Comod}_{\mathcal{A}}(\mathbb{F}_{\mathcal{E}/\mathcal{A}}) \arrow[d, "\rotatebox{-90}{$\simeq$}"] \\
\mathcal{A}\boxtimes\mathcal{E}^{\mathrm{mop}} \arrow[r]             & \mathcal{A}\boxtimes\mathcal{A}^{\mathrm{mop}} \arrow[r]             & \mathcal{A}\boxtimes_{\mathcal{E}}\mathcal{A}^{\mathrm{mop}}.
\end{tikzcd}$$
\end{Remark}

\subsection{The Relative Canonical Pairing}

We now show that the canonical pairing $\omega_{\mathcal{A}}$ on the universal Hopf algebra $\mathbb{F}_{\mathcal{A}}$ constructed in \cite{Lyu:modular} and recalled above in Remark \ref{rem:Shimizucomparison} descends to a pairing $\omega_{\mathcal{E}/\mathcal{A}}$ on $\mathbb{F}_{\mathcal{E}/\mathcal{A}}$. The pairing $\omega_{\mathcal{A}}$ plays a crucial role in \cite{Shi:nondeg, BJSS}. It is therefore not surprising that the pairing $\omega_{\mathcal{E}/\mathcal{A}}$ will feature prominently in our subsequent discussion of the relative non-degeneracy conditions for $\mathcal{A}$.

\begin{Proposition}\label{prop:inducedpairing}
The canonical pairing $\omega_{\mathcal{A}}$ on the universal Hopf algebra $\mathbb{F}_{\mathcal{A}}$ descends to a pairing $\omega_{\mathcal{E}/\mathcal{A}}$ on $\mathbb{F}_{\mathcal{E}/\mathcal{A}}$.
\end{Proposition}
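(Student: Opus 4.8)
The plan is to obtain $\omega_{\mathcal{E}/\mathcal{A}}$ as the unique factorization of $\omega_{\mathcal{A}}$ through the epimorphism $\pi\otimes\pi$, where $\pi:\mathbb{F}_{\mathcal{A}}\twoheadrightarrow\mathbb{F}_{\mathcal{E}/\mathcal{A}}=\mathbb{F}_{\mathcal{A}}/\mathbb{F}_{\mathcal{A}}\mathbb{F}_{\mathcal{E}}^+$ is the quotient map supplied by Theorem \ref{thm:exactsequenceHopf}. Since the tensor product of the finite tensor category $\mathcal{A}$ is biexact, $\pi\otimes\pi$ is again an epimorphism, and its kernel is the subobject $K\otimes\mathbb{F}_{\mathcal{A}}+\mathbb{F}_{\mathcal{A}}\otimes K$ of $\mathbb{F}_{\mathcal{A}}\otimes\mathbb{F}_{\mathcal{A}}$, where $K=\ker\pi=\mathbb{F}_{\mathcal{A}}\mathbb{F}_{\mathcal{E}}^+$. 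The sought factorization exists, and is then automatically unique, precisely when $\omega_{\mathcal{A}}$ annihilates this kernel, i.e. when $\omega_{\mathcal{A}}\circ(\kappa\otimes\mathrm{id})=0$ and $\omega_{\mathcal{A}}\circ(\mathrm{id}\otimes\kappa)=0$, where $\kappa:K\hookrightarrow\mathbb{F}_{\mathcal{A}}$ is the inclusion. So I would reduce the whole statement to this vanishing.

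The conceptual heart is the restriction of $\omega_{\mathcal{A}}$ along the subalgebra $\mathbb{F}_{\mathcal{E}}$. Using the coend formula for $\omega_{\mathcal{A}}$ recalled in Remark \ref{rem:Shimizucomparison} together with the presentation $\mathbb{F}_{\mathcal{E}}=\int^{E\in\mathcal{E}^{\mathbf{c}}}E^*\otimes E\hookrightarrow\mathbb{F}_{\mathcal{A}}$, I would evaluate $\omega_{\mathcal{A}}\circ(\iota_E\otimes\iota_B)$ for a compact object $E$ of $\mathcal{E}$, regarded via $F$ as an object of $\mathcal{Z}_{(2)}(\mathcal{A})\subseteq\mathcal{A}$, and a compact $B$ in $\mathcal{A}$. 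Because $F(E)$ lies in the symmetric center, the double braiding $\beta^2_{E,B^*}$ occurring in that formula is the identity, so the formula collapses to $\mathrm{ev}_E\otimes\mathrm{ev}_B=(\epsilon\circ\iota_E)\otimes(\epsilon\circ\iota_B)$. By the universal property of the coend this yields $\omega_{\mathcal{A}}\circ(i\otimes\mathrm{id})=\epsilon_{\mathbb{F}_{\mathcal{E}}}\otimes\epsilon_{\mathbb{F}_{\mathcal{A}}}$ for the inclusion $i:\mathbb{F}_{\mathcal{E}}\hookrightarrow\mathbb{F}_{\mathcal{A}}$; the mirror computation (taking the second variable in $\mathcal{E}$, and using that the symmetric center is closed under duals, so $\beta^2_{A,E^*}=\mathrm{id}$) gives $\omega_{\mathcal{A}}\circ(\mathrm{id}\otimes i)=\epsilon_{\mathbb{F}_{\mathcal{A}}}\otimes\epsilon_{\mathbb{F}_{\mathcal{E}}}$. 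Restricting to the augmentation ideal, on which $\epsilon_{\mathbb{F}_{\mathcal{E}}}$ vanishes, I conclude $\omega_{\mathcal{A}}\circ(i^+\otimes\mathrm{id})=0$ and $\omega_{\mathcal{A}}\circ(\mathrm{id}\otimes i^+)=0$, where $i^+:\mathbb{F}_{\mathcal{E}}^+\hookrightarrow\mathbb{F}_{\mathcal{A}}$.

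To upgrade this from $\mathbb{F}_{\mathcal{E}}^+$ to the full right ideal $K=\mathbb{F}_{\mathcal{A}}\mathbb{F}_{\mathcal{E}}^+$, I would invoke that $\omega_{\mathcal{A}}$ is a Hopf pairing on $\mathbb{F}_{\mathcal{A}}$ (Lyubashenko), i.e. it is multiplicative in each variable against the comultiplication in the other: schematically $\omega_{\mathcal{A}}(xy,z)=\sum\omega_{\mathcal{A}}(x,z_{(1)})\,\omega_{\mathcal{A}}(y,z_{(2)})$ together with a mirror identity, each carrying the appropriate braiding insertions. Writing a generator of $K$ as a product whose $\mathbb{F}_{\mathcal{E}}^+$-factor sits in the second tensorand and pairing it against an arbitrary element, multiplicativity routes that factor through one copy of $\omega_{\mathcal{A}}$, which is zero by the previous paragraph; this gives $\omega_{\mathcal{A}}\circ(\kappa\otimes\mathrm{id})=0$, and the symmetric argument gives $\omega_{\mathcal{A}}\circ(\mathrm{id}\otimes\kappa)=0$. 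The descended pairing $\omega_{\mathcal{E}/\mathcal{A}}$ then follows. I expect the main obstacle to be bookkeeping rather than conceptual: making the multiplicativity identities for $\omega_{\mathcal{A}}$ precise in the braided, coend-theoretic setting with the correct braiding morphisms inserted, and justifying the kernel identification $\ker(\pi\otimes\pi)=K\otimes\mathbb{F}_{\mathcal{A}}+\mathbb{F}_{\mathcal{A}}\otimes K$ from biexactness of the tensor product. The genuinely new input, that the $\mathcal{E}$-direction is annihilated, is forced entirely by the triviality of the double braiding on $\mathcal{Z}_{(2)}(\mathcal{A})$.
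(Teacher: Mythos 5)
Your proposal is correct and follows essentially the same route as the paper: both reduce the statement, via Theorem \ref{thm:exactsequenceHopf}, to showing that $\omega_{\mathcal{A}}$ annihilates $\mathbb{F}_{\mathcal{A}}\mathbb{F}_{\mathcal{E}}^+$ in each variable, and both derive this from the coend formula for $\omega_{\mathcal{A}}$ together with the triviality of the double braiding against objects of $\mathcal{E}\subseteq\mathcal{Z}_{(2)}(\mathcal{A})$. The only difference is organizational: where you first establish $\omega_{\mathcal{A}}\circ(i\otimes\mathrm{id})=\epsilon\otimes\epsilon$ and then invoke the general multiplicativity of the Hopf pairing to pass to the ideal $\mathbb{F}_{\mathcal{A}}\mathbb{F}_{\mathcal{E}}^+$, the paper carries out a single explicit coend computation showing $\omega_{\mathcal{A}}\circ(\mathrm{id}\otimes m)$ restricted along $\mathbb{F}_{\mathcal{E}}$ equals $\omega_{\mathcal{A}}\otimes\epsilon$, which amounts to verifying the relevant instance of that multiplicativity directly.
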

\begin{proof}
Thanks to Theorem \ref{thm:exactsequenceHopf}, it will be enough to show that $\mathbb{F}_{\mathcal{A}}\mathbb{F}_{\mathcal{E}}^+$ is contained in the left and right kernels of the pairing $\omega_{\mathcal{A}}$. To see that $\mathbb{F}_{\mathcal{A}}\mathbb{F}_{\mathcal{E}}^+$ is contained in the left kernel, it suffices to prove that the composite $$\mathbb{F}_{\mathcal{A}}\otimes\mathbb{F}_{\mathcal{A}}\otimes\mathbb{F}_{\mathcal{E}} \xrightarrow{\mathrm{id}\otimes m}\mathbb{F}_{\mathcal{A}}\otimes\mathbb{F}_{\mathcal{A}}\xrightarrow{\omega_{\mathcal{A}}}\mathbbm{1}$$ coincides with $\omega_{\mathcal{A}}\otimes\epsilon$. In order to see this, let $E$ be a compact object of $\mathcal{E}$, and let $A$, $B$ be compact objects of $\mathcal{A}$. Precomposing the first map above with $\iota_A\otimes\iota_B\otimes\iota_E$, we obtain the map in $\mathcal{A}$
\begin{align*}
(A^*\otimes A)\otimes (B^*\otimes B)\otimes (E^*\otimes E)
&\xrightarrow{\mathrm{id}\otimes\beta\otimes\mathrm{id}} (A^*\otimes A)\otimes ((E^*\otimes B^*)\otimes (B\otimes E))\\
&\xrightarrow{\beta\otimes\mathrm{id}} ((E^*\otimes B^*)\otimes A^*)\otimes (A\otimes (B\otimes E))\\
&\xrightarrow{\mathrm{id}\otimes\beta\otimes\mathrm{id}} (A^*\otimes A)\otimes ((E^*\otimes B^*)\otimes (B\otimes E))\\
&\xrightarrow{\mathrm{ev}_A\otimes\mathrm{ev}_{B\otimes E}}\mathbbm{1}.
\end{align*}
But, $E$ is in $\mathcal{Z}_{(2)}(\mathcal{A})$, so that this composite is simply 
\begin{align*}
(A^*\otimes A)\otimes (B^*\otimes B)\otimes (E^*\otimes E)&\xrightarrow{\mathrm{id}\otimes\mathrm{ev}_E} (A^*\otimes A)\otimes (B^*\otimes B)\\
&\xrightarrow{\mathrm{id}\otimes\beta^2\otimes\mathrm{id}} (A^*\otimes A)\otimes (B^*\otimes B)\xrightarrow{\mathrm{ev}_A\otimes\mathrm{ev}_B}\mathbbm{1},
\end{align*}
which induces $\omega_{\mathcal{A}}\otimes\epsilon$ on coends. This shows that $\mathbb{F}_{\mathcal{A}}\mathbb{F}_{\mathcal{E}}^+$ indeed lies in the left kernel of $\omega_{\mathcal{A}}$. The proof that $\mathbb{F}_{\mathcal{A}}\mathbb{F}_{\mathcal{E}}^+$ lies in the right ideal of $\omega_{\mathcal{A}}$ is analogous.
\end{proof}

The quotient of the universal Hopf algebra $\mathbb{F}_{\mathcal{A}}$ by the kernel of the canonical pairing $\omega_{\mathcal{A}}$ was studied in \cite[Section 3]{Lyu:modular}. We give a more precise characterization of both in Corollary \ref{cor:comparisonLyubashenko} below.

\subsection{The Relative Drinfeld Center}

For use in the next section, we will also need to understand the relationship between the relative universal Hopf algebra $\mathbb{F}_{\mathcal{E}/\mathcal{A}}$ and the relative Drinfeld center of $\mathcal{A}$. This notion can be traced back to \cite{DMNO} in the semisimple case.

\begin{Definition}
Let $\mathcal{B}$ be a finite braided tensor category, and let $\mathcal{C}$ be a $\mathcal{B}$-enriched finite tensor category with enrichment provided by the braided tensor functor $F:\mathcal{B}\rightarrow\mathcal{Z}(\mathcal{C})$. The relative Drinfeld center of $\mathcal{C}$, denoted by $\mathcal{Z}(\mathcal{C},\mathcal{B})$, is the full finite tensor subcategory of $\mathcal{Z}(\mathcal{C})$ on those objects $Z$ in $\mathcal{Z}(\mathcal{C})$ such that $\beta^2_{Z,F(B)} = \mathrm{id}$ for every $B$ in $\mathcal{B}$.
\end{Definition}

We begin by the following variant of a well-known identification. We refer the reader to \cite[Proposition 3.34]{Lau} and \cite[Lemma 2.35]{Kin} for related observations.

\begin{Lemma}\label{lem:relativecenter}
There is an equivalence of tensor categories $$\mathrm{End}_{\mathcal{C}\boxtimes_{\mathcal{B}}\mathcal{C}^{\mathrm{mop}}}(\mathcal{C})\simeq\mathcal{Z}(\mathcal{C},\mathcal{B}).$$
\end{Lemma}
\begin{proof}
It is well-known, and follows easily by inspecting the definitions, that $\mathrm{End}_{\mathcal{C}\boxtimes\mathcal{C}^{\mathrm{mop}}}(\mathcal{C})\simeq\mathcal{Z}(\mathcal{C})$ as finite tensor categories (see for instance \cite[Proposition 7.13.8]{EGNO}). The equivalence is given explicitly by sending $Z$ in $\mathcal{Z}(\mathcal{C})$ to the $\mathcal{C}$-$\mathcal{C}$-bimodule functor $C\mapsto C\otimes Z$. Now, by definition, $\mathrm{End}_{\mathcal{C}\boxtimes\mathcal{C}^{\mathrm{mop}}}(\mathcal{C})$ is the full tensor subcategory of $\mathrm{End}_{\mathcal{C}\boxtimes_{\mathcal{B}}\mathcal{C}^{\mathrm{mop}}}(\mathcal{C})$ on those $\mathcal{C}$-$\mathcal{C}$-bimodule functors intertwining the two $\mathcal{B}$-module structures. Under the equivalence $\mathrm{End}_{\mathcal{C}\boxtimes\mathcal{C}^{\mathrm{mop}}}(\mathcal{C})\simeq\mathcal{Z}(\mathcal{C})$, this corresponds precisely to those objects $Z$ in $\mathcal{Z}(\mathcal{C})$ for which the diagram
$$\begin{tikzcd}[sep = small]
F(B)\otimes C\otimes Z \arrow[rr, "{\beta_{F(B),C\otimes Z}}"] \arrow[rd, "{\beta_{F(B),C}\otimes \mathrm{id}}"'] &                     & C\otimes Z\otimes F(B) \arrow[ld, "{\mathrm{id}\otimes\beta_{Z,F(B)}}"] \\
& C\otimes F(B)\otimes Z, &       
\end{tikzcd}$$
commutes for all $C$ in $\mathcal{C}$ and $B$ in $\mathcal{B}$. This selects exactly the full tensor subcategory $\mathcal{Z}(\mathcal{C},\mathcal{B})$ of $\mathcal{Z}(\mathcal{C})$.
\end{proof}

As a consequence, we obtain the following generalization of and improvement upon \cite[Lemma 3.7]{Shi:nondeg}. 

\begin{Proposition}\label{prop:RelativeCenterModule}
Let $\mathcal{A}$ be a faithfully flat $\mathcal{E}$-enriched finite braided tensor category. There is an equivalence of tensor categories $$\mathrm{Mod}_{\mathcal{A}}(\mathbb{F}_{\mathcal{E}/\mathcal{A}})\simeq\mathcal{Z}(\mathcal{A},\mathcal{E}).$$
\end{Proposition}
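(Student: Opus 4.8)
The plan is to relate $\mathrm{Mod}_{\mathcal{A}}(\mathbb{F}_{\mathcal{E}/\mathcal{A}})$ to an endomorphism category and then invoke Lemma \ref{lem:relativecenter}. The starting point is that, by Proposition \ref{prop:relativeuniversalHopf}, we have $\mathrm{Comod}_{\mathcal{A}}(\mathbb{F}_{\mathcal{E}/\mathcal{A}})\simeq\mathcal{A}\boxtimes_{\mathcal{E}}\mathcal{A}^{\mathrm{mop}}$ as tensor categories, and under this identification the forgetful functor is the canonical functor $T_{\mathcal{E}/\mathcal{A}}$ to $\mathcal{A}$. Since $\mathbb{F}_{\mathcal{E}/\mathcal{A}}$ is a Hopf algebra with invertible antipode, its category of right modules is dual, in a precise sense, to its category of right comodules: the standard Hopf-algebraic fact is that $\mathrm{Mod}_{\mathcal{A}}(\mathbb{F}_{\mathcal{E}/\mathcal{A}})$ is equivalent, as a tensor category, to the category of $\mathcal{A}$-module endofunctors of the regular $\mathcal{A}\boxtimes_{\mathcal{E}}\mathcal{A}^{\mathrm{mop}}$-module $\mathcal{A}$. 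Concretely, a right $\mathbb{F}_{\mathcal{E}/\mathcal{A}}$-module structure on an object of $\mathcal{A}$ is the same datum as a $\mathcal{A}\boxtimes_{\mathcal{E}}\mathcal{A}^{\mathrm{mop}}$-module endomorphism of $\mathcal{A}$, because $\mathbb{F}_{\mathcal{E}/\mathcal{A}}=T_{\mathcal{E}/\mathcal{A}}T_{\mathcal{E}/\mathcal{A}}^R(\mathbbm{1})$ is exactly the algebra object computing such endomorphisms.

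The first key step, then, is to establish the equivalence of tensor categories
\[
\mathrm{Mod}_{\mathcal{A}}(\mathbb{F}_{\mathcal{E}/\mathcal{A}})\simeq\mathrm{End}_{\mathcal{A}\boxtimes_{\mathcal{E}}\mathcal{A}^{\mathrm{mop}}}(\mathcal{A}).
\]
I would derive this from the comonadic/monadic reconstruction already in play. Proposition \ref{prop:comonadicreconstruction} realizes $\mathcal{A}\boxtimes_{\mathcal{E}}\mathcal{A}^{\mathrm{mop}}$ as comodules via the adjunction $T_{\mathcal{E}/\mathcal{A}}\dashv T_{\mathcal{E}/\mathcal{A}}^R$, with the comonad identified as $(-)\otimes\mathbb{F}_{\mathcal{E}/\mathcal{A}}$. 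Dually, the $\mathcal{A}\boxtimes_{\mathcal{E}}\mathcal{A}^{\mathrm{mop}}$-module endofunctors of $\mathcal{A}$ are computed by the internal endomorphism algebra of the generating object $\mathbbm{1}\in\mathcal{A}$ under this action, and by the standard Hopf-theoretic duality (the antipode exchanging module and comodule structures) this internal End algebra is $\mathbb{F}_{\mathcal{E}/\mathcal{A}}$ itself, now regarded as an algebra rather than a coalgebra. This is the argument structure used in the $\mathcal{E}=\mathrm{Vec}$ case in \cite{Shi:nondeg}, and the relative version goes through verbatim once Proposition \ref{prop:relativeuniversalHopf} and Lemma \ref{lem:technicalrelativetensorfunctortensorproduct} are in hand.

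The second step is purely formal: apply Lemma \ref{lem:relativecenter} with $\mathcal{B}=\mathcal{E}$ and $\mathcal{C}=\mathcal{A}$, which gives
\[
\mathrm{End}_{\mathcal{A}\boxtimes_{\mathcal{E}}\mathcal{A}^{\mathrm{mop}}}(\mathcal{A})\simeq\mathcal{Z}(\mathcal{A},\mathcal{E})
\]
as tensor categories. Here $\mathcal{A}$ is $\mathcal{E}$-enriched via $F:\mathcal{E}\rightarrow\mathcal{Z}_{(2)}(\mathcal{A})\hookrightarrow\mathcal{Z}(\mathcal{A})$, and the faithful flatness hypothesis guarantees (via Proposition \ref{prop:relativefinite}) that $\mathcal{A}\boxtimes_{\mathcal{E}}\mathcal{A}^{\mathrm{mop}}$ is genuinely a finite tensor category, so that the endomorphism category is well-behaved and Lemma \ref{lem:relativecenter} applies. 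Composing the two equivalences yields the claim.

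I expect the main obstacle to be the first step, specifically checking that the module/comodule duality is compatible with the tensor structures, not merely the underlying abelian categories. One must verify that the algebra structure on $\mathbb{F}_{\mathcal{E}/\mathcal{A}}$ (dual to the coalgebra structure, using the braiding and antipode) is exactly the one whose modules reproduce composition of module endofunctors, and that the monoidal product on $\mathrm{Mod}_{\mathcal{A}}(\mathbb{F}_{\mathcal{E}/\mathcal{A}})$ matches composition in $\mathrm{End}_{\mathcal{A}\boxtimes_{\mathcal{E}}\mathcal{A}^{\mathrm{mop}}}(\mathcal{A})$. This is where the Hopf (as opposed to merely bialgebra) structure is essential and where the braiding $\beta$ enters the coherence bookkeeping. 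Once that identification is pinned down, the remainder is a direct appeal to the lemmas already established.
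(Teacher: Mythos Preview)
Your proposal is correct and follows the same two-step strategy as the paper: first identify $\mathrm{Mod}_{\mathcal{A}}(\mathbb{F}_{\mathcal{E}/\mathcal{A}})$ with $\mathrm{End}_{\mathcal{A}\boxtimes_{\mathcal{E}}\mathcal{A}^{\mathrm{mop}}}(\mathcal{A})$, then apply Lemma~\ref{lem:relativecenter}. The paper's execution of the first step is slightly more explicit than your appeal to ``standard Hopf-algebraic duality'': it uses the relative Eilenberg--Watts theorem to rewrite the endomorphism category as $\mathrm{Bimod}_{\mathrm{Comod}_{\mathcal{A}}(\mathbb{F}_{\mathcal{E}/\mathcal{A}})}(\mathbb{F}_{\mathcal{E}/\mathcal{A}})$, then invokes the fundamental theorem of Hopf modules to identify this with $\mathrm{LMod}_{\mathcal{A}}(\mathbb{F}_{\mathcal{E}/\mathcal{A}})$ via an explicit tensor functor, and finally uses the invertible antipode to pass to $\mathrm{Mod}_{\mathcal{A}}(\mathbb{F}_{\mathcal{E}/\mathcal{A}})^{\mathrm{mop}}$ (cancelling the $(-)^{\mathrm{mop}}$ coming from Eilenberg--Watts). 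This is precisely the content you flagged as the main obstacle, and the paper's route through bimodules of the regular comodule algebra makes the tensor compatibility transparent.
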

\begin{proof}
Combining Lemma \ref{lem:relativecenter} with the relative Eilenberg-Watts theorem, see for instance \cite[Theorem 3.3]{DSPS:balanced} or \cite[Lemma 5.7]{BJS}, there is an equivalence of tensor categories $$\mathcal{Z}(\mathcal{A},\mathcal{E})^{\mathrm{mop}}\simeq \mathrm{Bimod}_{\mathrm{Comod}_{\mathcal{A}}(\mathbb{F}_{\mathcal{E}/\mathcal{A}})}(\mathbb{F}_{\mathcal{E}/\mathcal{A}}).$$ In order to conclude the proof, it is therefore enough to identify the right hand-side with the tensor category $\mathrm{LMod}_{\mathcal{A}}(\mathbb{F}_{\mathcal{E}/\mathcal{A}})$, whose monoidal structure is provided by the Hopf algebra structure on $\mathbb{F}_{\mathcal{E}/\mathcal{A}}$. But there is an equivalence of left $\mathrm{Comod}_{\mathcal{A}}(\mathbb{F}_{\mathcal{E}/\mathcal{A}})$-module categories
$$\begin{tabular}{c c c}
$\mathcal{A}$ & $\xrightarrow{\sim}$ & $\mathrm{Mod}_{\mathrm{Comod}_{\mathcal{A}}(\mathbb{F}_{\mathcal{E}/\mathcal{A}})}(\mathbb{F}_{\mathcal{E}/\mathcal{A}})$.\\
$A$ & $\mapsto$ & $A\otimes \mathbb{F}_{\mathcal{E}/\mathcal{A}}$
\end{tabular}$$ This a variant of the fundamental theorem on Hopf modules, which follows by applying the Barr-Beck monadicity theorem to the dominant tensor functor $\mathrm{Comod}_{\mathcal{A}}(\mathbb{F}_{\mathcal{E}/\mathcal{A}})\rightarrow \mathcal{A}$.

Given a left $\mathbb{F}_{\mathcal{E}/\mathcal{A}}$-module in $\mathcal{A}$ with multiplication $n:\mathbb{F}_{\mathcal{E}/\mathcal{A}}\otimes M\rightarrow M$, we can endow $M\otimes \mathbb{F}_{\mathcal{E}/\mathcal{A}}$ in $\mathrm{Mod}_{\mathrm{Comod}_{\mathcal{A}}(\mathbb{F}_{\mathcal{E}/\mathcal{A}})}(\mathbb{F}_{\mathcal{E}/\mathcal{A}})$ with the left $\mathbb{F}_{\mathcal{E}/\mathcal{A}}$-module structure given by $$\mathbb{F}_{\mathcal{E}/\mathcal{A}}\otimes M\otimes \mathbb{F}_{\mathcal{E}/\mathcal{A}}\xrightarrow{(\mathrm{id}\otimes\beta\otimes\mathrm{id})\circ(\Delta\otimes\mathrm{id})}\mathbb{F}_{\mathcal{E}/\mathcal{A}}\otimes M\otimes \mathbb{F}_{\mathcal{E}/\mathcal{A}}\otimes\mathbb{F}_{\mathcal{E}/\mathcal{A}}\xrightarrow{n\otimes m}M\otimes \mathbb{F}_{\mathcal{E}/\mathcal{A}}.$$
It follows by inspection that this defines a tensor functor
$$\begin{tabular}{c c c}
$\mathrm{LMod}_{\mathcal{A}}(\mathbb{F}_{\mathcal{E}/\mathcal{A}})$ & $\xrightarrow{\sim}$ & $\mathrm{Bimod}_{\mathrm{Comod}_{\mathcal{A}}(\mathbb{F}_{\mathcal{E}/\mathcal{A}})}(\mathbb{F}_{\mathcal{E}/\mathcal{A}}),$\\
$M$ & $\mapsto$ & $M\otimes \mathbb{F}_{\mathcal{E}/\mathcal{A}}$
\end{tabular}$$
which is an equivalence by the discussion of the preceding paragraph. But, using the invertibility of the antipode, we have that $$\mathrm{LMod}_{\mathcal{A}}(\mathbb{F}_{\mathcal{E}/\mathcal{A}})\simeq \mathrm{Mod}_{\mathcal{A}}(\mathbb{F}_{\mathcal{E}/\mathcal{A}})^{\mathrm{mop}}$$ as tensor categories, which concludes the proof of the result.
\end{proof}

\begin{Remark}
In Remark \ref{rem:exactsequencecomod} above, we considered the relative exact sequence of tensor categories given by taking comodules for the exact sequence of Hopf algebras of Theorem \ref{thm:exactsequenceHopf}. We also obtain an exact sequence relative to $\mathcal{A}$ by taking modules: $$\begin{tikzcd}[sep = small]
\mathrm{Mod}_{\mathcal{A}}(\mathbb{F}_{\mathcal{E}}) \arrow[d, "\rotatebox{90}{$\simeq$}"'] & \mathrm{Mod}_{\mathcal{A}}(\mathbb{F}_{\mathcal{A}}) \arrow[l] \arrow[d, "\rotatebox{90}{$\simeq$}"'] & \mathrm{Mod}_{\mathcal{A}}(\mathbb{F}_{\mathcal{E}/\mathcal{A}}) \arrow[l] \arrow[d, "\rotatebox{90}{$\simeq$}"'] \\
\mathcal{Z}_{\mathcal{E}}(\mathcal{A}) & \mathcal{Z}(\mathcal{A}) \arrow[l]  & {\mathcal{Z}(\mathcal{A},\mathcal{E}).} \arrow[l]       
\end{tikzcd}$$ Above, the notation $\mathcal{Z}_{\mathcal{E}}(\mathcal{A})$ refers to the tensor category whose objects are pairs consisting of an object of $\mathcal{A}$ equipped with a half-braiding with respect to objects of $\mathcal{E}$. The left vertical identification is supplied by \cite[Lemma 4.5]{Shi:nondeg}.
\end{Remark}

\section{Relative Invertibility and Non-Degeneracy}

As above we work over a fixed finite symmetric tensor category $\mathcal{E}$ over a perfect field $\mathbbm{k}$. We begin by spelling out the invertibility criterion of \cite{BJSS} specialized to the Morita 4-category $\mathrm{Mor_2^{pre}}(\mathbf{Pr}_{\mathcal{E}})$ of $\mathcal{E}$-enriched pre-tensor categories. We go on to study in depth the invertibility of $\mathcal{E}$-enriched finite braided tensor categories. Our main tool is the relative universal Hopf algebra and its pairing, which were introduced above.

\subsection{An Algebraic Criterion for Relative Invertibility}

Let us now fix an $\mathcal{E}$-enriched finite braided tensor category $\mathcal{A}$. In addition, we write $F:\mathcal{E}\rightarrow \mathcal{Z}_{(2)}(\mathcal{A})$ for the symmetric tensor functor supplying its $\mathcal{E}$-enriched structure. We think of $\mathcal{A}$ as an object of the Morita 4-category $\mathrm{Mor_2^{pre}}(\mathbf{Pr}_{\mathcal{E}})$.

There are various pre-tensor categories associated to $\mathcal{A}$. In the previous section, we have mentioned the relative Drinfeld center $\mathcal{Z}(\mathcal{A},\mathcal{E})$, and focused our attention on the relative enveloping algebra $\mathcal{A}\boxtimes_{\mathcal{E}}\mathcal{A}^{\mathrm{mop}}$. Another pre-tensor category of interest is the relative Harish-Chandra category of $\mathcal{A}$ defined by $$\mathrm{HC}_{\mathcal{E}}(\mathcal{A}) := \mathcal{A}\boxtimes_{\mathcal{A}\boxtimes_{\mathcal{E}}\mathcal{A}^{\mathrm{rev}}}\mathcal{A}^{\mathrm{mop}}.$$ In case $\mathcal{E} = \mathrm{Vec}$, we simply omit this symbol from the notations.

\begin{Lemma}\label{lem:HCsymmetriccenter}
There is an equivalence of tensor categories $$\mathrm{End}_{\mathrm{HC}_{\mathcal{E}}(\mathcal{A})}(\mathcal{A})\simeq\mathcal{Z}_{(2)}(\mathcal{A}).$$
\end{Lemma}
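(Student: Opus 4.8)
The plan is to recognize $\mathcal{A}$ as a finite tensor category enriched over $\mathcal{A}\boxtimes_{\mathcal{E}}\mathcal{A}^{\mathrm{rev}}$, and then to read off the answer from Lemma \ref{lem:relativecenter}. The braided tensor functor supplying this enrichment is the canonical factorization functor $\Phi:\mathcal{A}\boxtimes_{\mathcal{E}}\mathcal{A}^{\mathrm{rev}}\rightarrow\mathcal{Z}(\mathcal{A})$, whose restriction to the first copy of $\mathcal{A}$ is the braided embedding $a\mapsto(a,\beta_{a,-})$ and whose restriction to the second copy is the embedding $a\mapsto(a,\beta_{-,a}^{-1})$. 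With respect to this enrichment, the relative enveloping algebra of $\mathcal{A}$ is precisely $\mathrm{HC}_{\mathcal{E}}(\mathcal{A})=\mathcal{A}\boxtimes_{\mathcal{A}\boxtimes_{\mathcal{E}}\mathcal{A}^{\mathrm{rev}}}\mathcal{A}^{\mathrm{mop}}$, so that applying Lemma \ref{lem:relativecenter} with $\mathcal{C}=\mathcal{A}$ and $\mathcal{B}=\mathcal{A}\boxtimes_{\mathcal{E}}\mathcal{A}^{\mathrm{rev}}$ would yield an equivalence of tensor categories
$$\mathrm{End}_{\mathrm{HC}_{\mathcal{E}}(\mathcal{A})}(\mathcal{A})\simeq\mathcal{Z}(\mathcal{A},\mathcal{A}\boxtimes_{\mathcal{E}}\mathcal{A}^{\mathrm{rev}}),$$
the right-hand side being the full tensor subcategory of $\mathcal{Z}(\mathcal{A})$ on those $(Z,\gamma_Z)$ with $\beta^2_{Z,\Phi(B)}=\mathrm{id}$ for every $B$. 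It then remains to identify this relative center with $\mathcal{Z}_{(2)}(\mathcal{A})$.

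For the identification I would argue that, since the double braiding is natural and multiplicative and the two copies of $\mathcal{A}$ generate $\mathcal{A}\boxtimes_{\mathcal{E}}\mathcal{A}^{\mathrm{rev}}$ under tensor products and subquotients, it suffices to impose $\beta^2_{Z,\Phi(B)}=\mathrm{id}$ on the images of these two copies. Testing against the second copy gives $\gamma_Z(a)=\beta_{Z,a}$ for all $a$, which says exactly that $(Z,\gamma_Z)$ lies in the essential image of the canonical braided embedding $\mathcal{A}\hookrightarrow\mathcal{Z}(\mathcal{A})$. Testing against the first copy gives $\gamma_Z(a)=\beta_{a,Z}^{-1}$; combined with the previous identity this becomes $\beta_{a,Z}\circ\beta_{Z,a}=\mathrm{id}$ for all $a$, which is precisely the defining condition for $Z$ to lie in the symmetric center $\mathcal{Z}_{(2)}(\mathcal{A})$. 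As the braided embedding $\mathcal{A}\hookrightarrow\mathcal{Z}(\mathcal{A})$ is fully faithful and monoidal, these two conditions together cut out a tensor-full copy of $\mathcal{Z}_{(2)}(\mathcal{A})$, delivering the claimed equivalence.

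The step requiring the most care is the invocation of Lemma \ref{lem:relativecenter}, since the present statement carries no faithful-flatness hypothesis and $\mathcal{A}\boxtimes_{\mathcal{E}}\mathcal{A}^{\mathrm{rev}}$ need not a priori be a finite tensor category. However, the proof of Lemma \ref{lem:relativecenter} only uses the general equivalence $\mathrm{End}_{\mathcal{A}\boxtimes\mathcal{A}^{\mathrm{mop}}}(\mathcal{A})\simeq\mathcal{Z}(\mathcal{A})$ together with the description of the $\mathcal{B}$-balanced bimodule endofunctors as the subcategory intertwining the two $\mathcal{B}$-module structures; this argument applies verbatim with $\mathcal{B}=\mathcal{A}\boxtimes_{\mathcal{E}}\mathcal{A}^{\mathrm{rev}}$ regarded merely as a braided pre-tensor category equipped with $\Phi$. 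I would therefore present the proof as running the argument of Lemma \ref{lem:relativecenter} directly in this setting. The only genuinely computational ingredient is then the double-braiding bookkeeping above, where the main pitfall is keeping the two opposite half-braiding conventions for the two copies of $\mathcal{A}$ straight; once this is done, matching the balanced condition with the symmetric-center condition is immediate.
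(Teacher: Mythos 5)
Your proposal is correct and follows essentially the same route as the paper: both apply Lemma \ref{lem:relativecenter} with $\mathcal{B}=\mathcal{A}\boxtimes_{\mathcal{E}}\mathcal{A}^{\mathrm{rev}}$ and $\mathcal{C}=\mathcal{A}$ to obtain $\mathrm{End}_{\mathrm{HC}_{\mathcal{E}}(\mathcal{A})}(\mathcal{A})\simeq\mathcal{Z}(\mathcal{A},\mathcal{A}\boxtimes_{\mathcal{E}}\mathcal{A}^{\mathrm{rev}})$, and then identify this relative center with $\mathcal{Z}_{(2)}(\mathcal{A})$ by observing that the condition only depends on the image of the central functor, which is generated by the two canonical copies of $\mathcal{A}$. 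The paper phrases the last step as reducing to the case $\mathcal{B}=\mathcal{A}\boxtimes\mathcal{A}^{\mathrm{rev}}$ and citing direct inspection, whereas you carry out that inspection explicitly (correctly, with the half-braiding conventions matching the paper's functor \eqref{eq:factorizability}), which is a purely presentational difference.
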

\begin{proof}
It was established in \cite[Proposition 3.7]{BJSS} that $\mathrm{End}_{\mathrm{HC}(\mathcal{A})}(\mathcal{A})\simeq\mathcal{Z}_{(2)}(\mathcal{A})$. More precisely, Lemma \ref{lem:relativecenter} with $\mathcal{B} = \mathcal{A}\boxtimes\mathcal{A}^{\mathrm{rev}}$ and $\mathcal{C} = \mathcal{A}$ yields equivalences $$\mathrm{End}_{\mathrm{HC}(\mathcal{A})}(\mathcal{A})\simeq\mathcal{Z}(\mathcal{A},\mathcal{A}\boxtimes\mathcal{A}^{\mathrm{rev}})\simeq\mathcal{Z}_{(2)}(\mathcal{A}),$$ where the second equivalence holds by direct inspection. Setting $\mathcal{B}=\mathcal{A}\boxtimes_{\mathcal{E}}\mathcal{A}^{\mathrm{rev}}$, we similarly find $$\mathrm{End}_{\mathrm{HC}_{\mathcal{E}}(\mathcal{A})}(\mathcal{A})\simeq\mathcal{Z}(\mathcal{A},\mathcal{A}\boxtimes_{\mathcal{E}}\mathcal{A}^{\mathrm{rev}}).$$ But, the braided tensor functor $\mathcal{A}\boxtimes\mathcal{A}^{\mathrm{rev}}\rightarrow\mathcal{Z}(\mathcal{A})$ factors through $\mathcal{A}\boxtimes\mathcal{A}^{\mathrm{rev}}\twoheadrightarrow \mathcal{A}\boxtimes_{\mathcal{E}}\mathcal{A}^{\mathrm{rev}}$, so that their images in $\mathcal{Z}(\mathcal{A})$ coincide. This shows that $\mathcal{Z}(\mathcal{A},\mathcal{A}\boxtimes_{\mathcal{E}}\mathcal{A}^{\mathrm{rev}}) = \mathcal{Z}(\mathcal{A},\mathcal{A}\boxtimes\mathcal{A}^{\mathrm{rev}})$ as tensor categories, and thereby concludes the proof.
\end{proof}

Thanks to Proposition \ref{prop:3dualizability} and Lemmas \ref{lem:relativecenter} and \ref{lem:HCsymmetriccenter}, we find that specializing the invertibility criterion given in \cite[Theorem 2.30]{BJSS} to $\mathrm{Mor_2^{pre}}(\mathbf{Pr}_{\mathcal{E}})$ yields the following statement.

\begin{Theorem}[\cite{BJSS}]\label{thm:relativeinvertibilityBJSS}
An $\mathcal{E}$-enriched finite braided tensor category $\mathcal{A}$ is an invertible object of $\mathrm{Mor_2^{pre}}(\mathbf{Pr}_{\mathcal{E}})$ if and only if the following conditions are satisfied:
\begin{enumerate}
    \item It is $\mathcal{E}$-non-degenerate, i.e.\ the functor $F:\mathcal{E}\rightarrow \mathcal{Z}_{(2)}(\mathcal{A})$ is an equivalence.
    \item It is $\mathcal{E}$-factorizable, i.e.\ the canonical functor $\mathcal{A}\boxtimes_{\mathcal{E}}\mathcal{A}^{\mathrm{rev}}\rightarrow \mathcal{Z}(\mathcal{A},\mathcal{E})$ is an equivalence.
    \item It is $\mathcal{E}$-cofactorizable, i.e.\ the canonical functor $\mathrm{HC}_{\mathcal{E}}(\mathcal{A})\rightarrow\mathrm{End}_{\mathcal{E}}(\mathcal{A})$ is an equivalence.
\end{enumerate}
\end{Theorem}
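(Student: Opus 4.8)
The plan is to obtain the statement as a direct specialization of the general invertibility criterion \cite[Theorem 2.30]{BJSS}, which characterizes invertible objects in a Morita 4-category of $E_2$-algebras living in a symmetric monoidal 2-category with enough duals and adjoints. The first order of business is to verify the standing hypotheses of that criterion in our setting: one needs the ambient 4-category $\mathrm{Mor_2^{pre}}(\mathbf{Pr}_{\mathcal{E}})$ to be 3-dualizable, so that the relevant evaluation and coevaluation 1-morphisms, together with their iterated adjoints, are available. This is precisely the content of Proposition \ref{prop:3dualizability}, and so the criterion of \cite{BJSS} applies to any object $\mathcal{A}$ of $\mathrm{Mor_2^{pre}}(\mathbf{Pr}_{\mathcal{E}})$.

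With the criterion in force, the abstract conditions of \cite[Theorem 2.30]{BJSS} for the invertibility of $\mathcal{A}$ are phrased in terms of certain canonical functors being equivalences: namely that the unit-to-endomorphism comparison is an equivalence, that the relative enveloping algebra $\mathcal{A}\boxtimes_{\mathcal{E}}\mathcal{A}^{\mathrm{rev}}$ maps equivalently onto the appropriate relative center, and a mirror cofactorizability statement involving the Harish-Chandra category. The second step of the plan is to translate each abstract condition into the explicit form (1)--(3) using our two identifications. Condition (1) results from Lemma \ref{lem:HCsymmetriccenter}, under which $\mathrm{End}_{\mathrm{HC}_{\mathcal{E}}(\mathcal{A})}(\mathcal{A})\simeq\mathcal{Z}_{(2)}(\mathcal{A})$, so that the non-degeneracy clause of \cite{BJSS} becomes the requirement that $F:\mathcal{E}\rightarrow\mathcal{Z}_{(2)}(\mathcal{A})$ be an equivalence. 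Condition (2) uses Lemma \ref{lem:relativecenter} with $\mathcal{B}=\mathcal{E}$ and $\mathcal{C}=\mathcal{A}$, under which $\mathrm{End}_{\mathcal{A}\boxtimes_{\mathcal{E}}\mathcal{A}^{\mathrm{mop}}}(\mathcal{A})\simeq\mathcal{Z}(\mathcal{A},\mathcal{E})$, identifying the factorizability clause with the assertion that $\mathcal{A}\boxtimes_{\mathcal{E}}\mathcal{A}^{\mathrm{rev}}\rightarrow\mathcal{Z}(\mathcal{A},\mathcal{E})$ is an equivalence. Condition (3) is the dual statement, recovered from the Harish-Chandra presentation of the cofactorizability functor.

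The main obstacle I anticipate is bookkeeping rather than conceptual: one must check that the canonical comparison functors appearing in \cite[Theorem 2.30]{BJSS}, which are manufactured abstractly from the evaluation and coevaluation data of the Morita 4-category together with their adjoints, really do coincide with the explicitly named functors in conditions (2) and (3). Concretely, this amounts to unwinding the definitions of the relevant 1- and 2-morphisms in $\mathrm{Mor_2^{pre}}(\mathbf{Pr}_{\mathcal{E}})$ and matching the induced functors on endomorphism categories against those produced by Lemmas \ref{lem:relativecenter} and \ref{lem:HCsymmetriccenter}. Since all the constituent identifications are already established, and since the $\mathcal{E}=\mathrm{Vec}$ case of this matching is carried out in \cite{BJSS}, the verification is a routine, if somewhat lengthy, diagram chase parallel to the one there.
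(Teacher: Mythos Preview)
Your proposal is correct and matches the paper's own approach: the theorem is stated as a direct specialization of \cite[Theorem 2.30]{BJSS}, with Proposition~\ref{prop:3dualizability} supplying the 3-dualizability hypothesis and Lemmas~\ref{lem:relativecenter} and~\ref{lem:HCsymmetriccenter} furnishing the identifications needed to rewrite the abstract invertibility conditions as (1)--(3). The paper gives no further detail beyond citing these three inputs, so your outline is in fact slightly more explicit than what appears there.
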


The main result of this section is the proof that these three conditions are equivalent when $\mathcal{A}$ is an $\mathcal{E}$-enriched finite braided tensor category such that $F:\mathcal{E}\rightarrow\mathcal{Z}_{(2)}(\mathcal{A})$ is faithful. Faithfulness is necessary because the second and third condition only involve the image of the functor $F$. When the monoidal unit of $\mathcal{E}$ is simple, this is automatic.

In the case $\mathcal{E} = \mathrm{Vec}$, the equivalence between the first two conditions was obtained in \cite[Theorem 1.1]{Shi:nondeg}, and the equivalence between the last two conditions was derived in \cite[Theorem 1.6]{BJSS}. These results were proven by comparing the above conditions to the non-degeneracy of the pairing $\omega_{\mathcal{A}}$ on the universal Hopf algebra $\mathbb{F}_{\mathcal{A}}$. In order to establish the $\mathcal{E}$-enriched version, we will proceed analogously using the relative universal Hopf algebra $\mathbb{F}_{\mathcal{E}/\mathcal{A}}$ and its pairing $\omega_{\mathcal{E}/\mathcal{A}}$.

\subsection{Relative Non-Degeneracy and Relative Factorizability}

We begin by considering a generalization of \cite[Theorem 4.2]{Shi:nondeg}, that is, we relate $\mathcal{E}$-non-degeneracy and $\mathcal{E}$-factorizability.

\begin{Theorem}\label{thm:nondegeneratevsfactorizable}
Assume that $\mathbbm{k}$ is algebraically closed, and let $\mathcal{A}$ be a faithfully flat $\mathcal{E}$-enriched finite braided tensor category. Then, $\mathcal{A}$ is $\mathcal{E}$-non-degenerate if and only if $\mathcal{A}$ is $\mathcal{E}$-factorizable.
\end{Theorem}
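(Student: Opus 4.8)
The plan is to carry out the comparison inside the Drinfeld center $\mathcal{Z}(\mathcal{A})$, which is itself a non-degenerate finite braided tensor category, and to compute the essential image of the canonical functor $\Phi\colon\mathcal{A}\boxtimes_{\mathcal{E}}\mathcal{A}^{\mathrm{rev}}\to\mathcal{Z}(\mathcal{A},\mathcal{E})$ by M\"uger's centralizer calculus. Recall that $\mathcal{A}$ has two fully faithful braided embeddings into $\mathcal{Z}(\mathcal{A})$, obtained by equipping an object with the half-braiding $\beta$ or with $\beta^{-1}$; write $\mathcal{A}^{+}$ and $\mathcal{A}^{-}$ for their images. Then $\Phi$ is the corestriction to $\mathcal{Z}(\mathcal{A},\mathcal{E})$ of the functor $\mathcal{A}\boxtimes_{\mathcal{E}}\mathcal{A}^{\mathrm{rev}}\to\mathcal{Z}(\mathcal{A})$ sending $A\boxtimes B$ to $A^{+}\otimes B^{-}$. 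This functor descends along $\mathcal{A}\boxtimes\mathcal{A}^{\mathrm{rev}}\twoheadrightarrow\mathcal{A}\boxtimes_{\mathcal{E}}\mathcal{A}^{\mathrm{rev}}$ and lands in $\mathcal{Z}(\mathcal{A},\mathcal{E})$ precisely because every object in the image of $\mathcal{E}$ lies in $\mathcal{Z}_{(2)}(\mathcal{A})$, so its two half-braidings agree and it is centralised by everything. Its essential image is the join $\mathcal{A}^{+}\vee\mathcal{A}^{-}$. Throughout, the source is a finite tensor category by Proposition \ref{prop:relativefinite} and the target is one as a tensor subcategory of $\mathcal{Z}(\mathcal{A})$.

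First I would record two structural facts about these copies, writing $C(-)$ for the centralizer $C_{\mathcal{Z}(\mathcal{A})}(-)$. An object lying in both $\mathcal{A}^{+}$ and $\mathcal{A}^{-}$ is one whose two half-braidings coincide, i.e.\ $\beta^{2}=\mathrm{id}$, so that $\mathcal{A}^{+}\cap\mathcal{A}^{-}$ is exactly the image of $\mathcal{Z}_{(2)}(\mathcal{A})$. A direct computation of the double braiding shows moreover that $\mathcal{A}^{+}$ and $\mathcal{A}^{-}$ centralise one another; comparing Frobenius--Perron dimensions through the centralizer formula in the non-degenerate category $\mathcal{Z}(\mathcal{A})$ upgrades the inclusions $\mathcal{A}^{-}\subseteq C(\mathcal{A}^{+})$ and $\mathcal{A}^{+}\subseteq C(\mathcal{A}^{-})$ to equalities $C(\mathcal{A}^{+})=\mathcal{A}^{-}$ and $C(\mathcal{A}^{-})=\mathcal{A}^{+}$. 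Combining these with the identity $C(\mathcal{D}_{1}\vee\mathcal{D}_{2})=C(\mathcal{D}_{1})\cap C(\mathcal{D}_{2})$ gives
\[
C(\mathcal{A}^{+}\vee\mathcal{A}^{-})=\mathcal{A}^{-}\cap\mathcal{A}^{+}=\mathcal{Z}_{(2)}(\mathcal{A}),
\]
whence the double-centralizer theorem yields $\mathcal{A}^{+}\vee\mathcal{A}^{-}=C(\mathcal{Z}_{(2)}(\mathcal{A}))$. Since by definition $\mathcal{Z}(\mathcal{A},\mathcal{E})=C(\mathcal{E})$, the functor $\Phi$ is dominant (its image fills $\mathcal{Z}(\mathcal{A},\mathcal{E})$) if and only if $C(\mathcal{Z}_{(2)}(\mathcal{A}))=C(\mathcal{E})$, which, applying $C$ once more, is equivalent to $\mathcal{Z}_{(2)}(\mathcal{A})=\mathcal{E}$, i.e.\ to $\mathcal{E}$-non-degeneracy.

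To promote dominance to the full equivalence demanded by $\mathcal{E}$-factorizability, I would compare total dimensions. By multiplicativity of Frobenius--Perron dimension for the relative Deligne product and, on the target side, the centralizer formula together with faithful flatness (ensuring $\mathrm{FPdim}$ of the image of $\mathcal{E}$ equals $\mathrm{FPdim}(\mathcal{E})$), both $\mathcal{A}\boxtimes_{\mathcal{E}}\mathcal{A}^{\mathrm{rev}}$ and $\mathcal{Z}(\mathcal{A},\mathcal{E})=C(\mathcal{E})$ have Frobenius--Perron dimension $\mathrm{FPdim}(\mathcal{A})^{2}/\mathrm{FPdim}(\mathcal{E})$. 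A dominant tensor functor between finite tensor categories of equal dimension is automatically an equivalence, so $\Phi$ is an equivalence exactly when it is dominant. Feeding this into the previous paragraph gives both implications: $\mathcal{E}$-non-degeneracy forces dominance and hence the equivalence, while conversely an equivalence is dominant and therefore forces $\mathcal{Z}_{(2)}(\mathcal{A})=\mathcal{E}$.

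The main obstacle is that the whole argument rests on M\"uger's centralizer calculus --- the centralizer dimension formula, the identity $C(\mathcal{D}_{1}\vee\mathcal{D}_{2})=C(\mathcal{D}_{1})\cap C(\mathcal{D}_{2})$, and above all the double-centralizer theorem $C(C(\mathcal{D}))=\mathcal{D}$ --- in the not-necessarily-semisimple setting of $\mathcal{Z}(\mathcal{A})$. These facts are classical for fusion categories but in the finite case rely on the non-degeneracy of $\mathcal{Z}(\mathcal{A})$ and the finite versions of these theorems established in \cite{Shi:nondeg}; one must also check carefully that $\mathcal{Z}(\mathcal{A},\mathcal{E})$ genuinely equals $C(\mathcal{E})$ and that the copies $\mathcal{A}^{\pm}$ are topologizing tensor subcategories to which these results apply. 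An alternative, closer to the paper's emphasis on $\mathbb{F}_{\mathcal{E}/\mathcal{A}}$, would be to identify $\Phi$ under Propositions \ref{prop:relativeuniversalHopf} and \ref{prop:RelativeCenterModule} with the functor $\mathrm{Comod}_{\mathcal{A}}(\mathbb{F}_{\mathcal{E}/\mathcal{A}})\to\mathrm{Mod}_{\mathcal{A}}(\mathbb{F}_{\mathcal{E}/\mathcal{A}})$ induced by the pairing $\omega_{\mathcal{E}/\mathcal{A}}$, reducing $\mathcal{E}$-factorizability to non-degeneracy of that pairing; I would hold this route in reserve, since it most naturally feeds into condition (4) of Theorem \ref{thm:mainnondegeneracy} rather than giving the most direct proof of the present statement.
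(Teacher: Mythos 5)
Your argument is correct and follows essentially the same route as the paper: both proofs reduce the statement to a Frobenius--Perron dimension count after identifying the essential image of $\mathcal{A}\boxtimes_{\mathcal{E}}\mathcal{A}^{\mathrm{rev}}\to\mathcal{Z}(\mathcal{A},\mathcal{E})$ with $\mathcal{Z}(\mathcal{A},\mathcal{Z}_{(2)}(\mathcal{A}))$, i.e.\ with the centralizer of the symmetric center, relying on Shimizu's finite-categorical extension of M\"uger's centralizer theory \cite{Shi:nondeg} exactly where you anticipate. The only organizational differences are that the paper first reduces to the case of a simple monoidal unit, invokes the join--meet identity $\mathrm{FPdim}(\mathcal{A}\vee\mathcal{A}^{\mathrm{rev}})\cdot\mathrm{FPdim}(\mathcal{A}\cap\mathcal{A}^{\mathrm{rev}})=\mathrm{FPdim}(\mathcal{A})^2$ rather than your derivation of $C(\mathcal{A}^{+})=\mathcal{A}^{-}$ and the double-centralizer theorem, and concludes by comparing $\mathrm{FPdim}(\mathcal{E})$ with $\mathrm{FPdim}(\mathcal{Z}_{(2)}(\mathcal{A}))$ where you apply $C(-)$ once more.
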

\begin{proof}
Without loss of generality, we can assume that $\mathcal{A}$ and therefore also $\mathcal{E}$ has simple monoidal unit $\mathbbm{1}$. Given that $\mathbbm{k}$ is algebraically closed, we can make use of the Frobenius-Perron dimension reviewed in \cite[Chapters 3 and 6]{EGNO}. Then, we have $$\mathrm{FPdim}(\mathcal{A}\boxtimes_{\mathcal{E}}\mathcal{A}^{\mathrm{rev}}) = \mathrm{FPdim}(\mathcal{A})^2/\mathrm{FPdim}(\mathcal{E})$$ by fully faithfulness of $\mathcal{E}\hookrightarrow\mathcal{A}$. Namely, we have seen that $\mathcal{A}\boxtimes_{\mathcal{E}}\mathcal{A}^{\mathrm{rev}} = \mathrm{Mod}_{\mathcal{A}\boxtimes\mathcal{A}^{\mathrm{rev}}}(T_{\mathcal{E}}^{R}(\mathbbm{1}))$ in Lemma \ref{lem:relativetensorasmodules}. But, the connected exact commutative algebra $T_{\mathcal{E}}^{R}(\mathbbm{1})$ in $\mathcal{E}\boxtimes\mathcal{E}^{\mathrm{rev}}$ has Frobenius-Perron dimension equal to $\mathrm{FPdim}(\mathcal{E})$ by \cite[Lemma 6.2.4]{EGNO}. The claim follows from the same result but now applied to the connected exact commutative algebra $T_{\mathcal{E}}^{R}(\mathbbm{1})$ in $\mathcal{A}\boxtimes\mathcal{A}^{\mathrm{rev}}$, which has the same Frobenius-Perron dimension.

Let us write $\mathcal{A}\vee\mathcal{A}^{\mathrm{rev}}$ for the image of the tensor functor $\mathcal{A}\boxtimes\mathcal{A}^{\mathrm{rev}}\rightarrow\mathcal{Z}(\mathcal{A})$. It is a finite tensor subcategory as $\mathcal{Z}(\mathcal{A})$ is braided. By \cite[Lemma 4.8]{Shi:nondeg}, we have $$\mathrm{FPdim}(\mathcal{A}\vee \mathcal{A}^{\mathrm{rev}})\cdot\mathrm{FPdim}(\mathcal{A}\cap \mathcal{A}^{\mathrm{rev}}) = \mathrm{FPdim}(\mathcal{A})^2.$$ But, we have $\mathcal{A}\cap \mathcal{A}^{\mathrm{rev}} = \mathcal{Z}_{(2)}(\mathcal{A})$ by inspection. Moreover, we also have that $\mathcal{A}\vee \mathcal{A}^{\mathrm{rev}} = \mathcal{Z}(\mathcal{A}, \mathcal{Z}_{(2)}(\mathcal{A}))$ by \cite[Lemma 2.8]{Mu} and \cite[Theorem 4.9]{Shi:nondeg}. It therefore follows that $$\mathrm{FPdim}(\mathcal{Z}(\mathcal{A}, \mathcal{Z}_{(2)}(\mathcal{A})))\cdot \mathrm{FPdim}(\mathcal{Z}_{(2)}(\mathcal{A})) = \mathrm{FPdim}(\mathcal{A})^2.$$ Now, the dominant tensor functor $\mathcal{A}\boxtimes\mathcal{A}^{\mathrm{rev}}\twoheadrightarrow\mathcal{Z}(\mathcal{A}, \mathcal{Z}_{(2)}(\mathcal{A}))$ factors through $\mathcal{A}\boxtimes_{\mathcal{E}}\mathcal{A}^{\mathrm{rev}}\twoheadrightarrow\mathcal{Z}(\mathcal{A}, \mathcal{Z}_{(2)}(\mathcal{A}))$. We therefore find that $\mathcal{A}$ is $\mathcal{E}$-factorizable if and only if $\mathrm{FPdim}(\mathcal{A}\boxtimes_{\mathcal{E}}\mathcal{A}^{\mathrm{rev}}) = \mathrm{FPdim}(\mathcal{Z}(\mathcal{A}, \mathcal{Z}_{(2)}(\mathcal{A})))$ by \cite[Proposition 6.3.4]{EGNO}. Thanks to the above equalities and \cite[Proposition 6.3.4]{EGNO}, this is the case if and only if $\mathrm{FPdim}(\mathcal{Z}_{(2)}(\mathcal{A})) = \mathrm{FPdim}(\mathcal{E})$. By \cite[Proposition 6.3.3]{EGNO}, this holds if and only if $\mathcal{E}\hookrightarrow\mathcal{Z}_{(2)}(\mathcal{A})$ is an equivalence. This concludes the proof.
\end{proof}

\begin{Remark}\label{rem:nondegeneracyarbitraryfield}
We now discuss how to generalize the proof of our last theorem to the case of an arbitrary perfect field. We emphasize that this is the only one of our result which does not have a complete proof over an arbitrary perfect field $\mathbbm{k}$.
Without loss of generality, we can again assume that $\mathcal{A}$ and therefore also $\mathcal{E}$ has simple monoidal unit $\mathbbm{1}$. Moreover, we can assume that $\mathbbm{k} = \mathrm{End}_{\mathcal{A}}(\mathbbm{1})$. Namely, as $\mathcal{A}$ is a braided tensor category, it must be linear over $\mathrm{End}_{\mathcal{A}}(\mathbbm{1})$. This observation also applies to $\mathcal{E}$. Let us write $\overline{\mathbbm{k}}$ for the algebraic closure of $\mathbbm{k}$. We consider the base changes $\overline{\mathcal{E}}$ and $\overline{\mathcal{A}}$ of $\mathcal{E}$ and $\mathcal{A}$ from $\mathbbm{k}$ to $\overline{\mathbbm{k}}$. These are finite tensor categories over $\overline{\mathbbm{k}}$ that have simple monoidal units as $\mathbbm{k} = \mathrm{End}_{\mathcal{A}}(\mathbbm{1})$. We claim that the symmetric center is stable under base change, i.e.\ we assert that $$\mathcal{Z}_{(2)}(\overline{\mathcal{A}}) \simeq \overline{\mathcal{Z}_{(2)}(\mathcal{A})}.$$ We expect that this can be obtained using a variant of the argument used to prove \cite[Lemma 2.1.4]{DS:compact}. However, so as to make this precise, one would need to generalize the theory of forms on finite semisimple tensor categories from \cite{EG:descent} to all finite tensor categories. As this would bring us too far afield, we leave the details to the interested reader. Finally, as the symmetric center is stable under base change, we find that $F:\mathcal{E}\rightarrow \mathcal{Z}_{(2)}(\mathcal{A})$ is an equivalence if and only if its base change is. Theorem \ref{thm:nondegeneratevsfactorizable} would then conclude the proof.
\end{Remark}

In order to complete the equivalence between the first two points in Theorem \ref{thm:relativeinvertibilityBJSS}, it remains to prove that if $\mathcal{A}$ is $\mathcal{E}$-factorizable, then it must be faithfully flat. In order to do so, it is necessary to make the very mild assumption that $F:\mathcal{E}\rightarrow\mathcal{Z}_{(2)}(\mathcal{A})$ is faithful. Namely, the condition of being $\mathcal{E}$-factorizable only depends on the image of $F$. We note that it is a standard result that $F$ is automatically faithful if $\mathcal{E}$ has simple monoidal unit. This condition is therefore really only needed to exclude pathological examples.

\begin{Proposition}\label{prop:faithfulfactorizablefaithfullyflat}
Let $\mathcal{A}$ be an $\mathcal{E}$-factorizable $\mathcal{E}$-enriched finite braided tensor category. If $F:\mathcal{E}\rightarrow\mathcal{Z}_{(2)}(\mathcal{A})$ is faithful, then $\mathcal{A}$ must be faithfully flat over $\mathcal{E}$.
\end{Proposition}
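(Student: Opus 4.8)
The plan is to upgrade the faithful functor $F$ to a fully faithful one by comparing the given $\mathcal{E}$-factorizability data with genuine faithful flatness over the \emph{image} of $F$, and then matching Frobenius--Perron dimensions. First I would reduce to the case where $\mathcal{A}$, and hence $\mathcal{E}$, has simple monoidal unit: a finite braided tensor category decomposes as a finite product of ones with simple unit, and if $\mathbbm{1}_{\mathcal{A}}$ is simple then faithfulness of $F$ forces $\mathrm{End}_{\mathcal{E}}(\mathbbm{1})\hookrightarrow\mathrm{End}_{\mathcal{A}}(\mathbbm{1})=\mathbbm{k}$, so $\mathcal{E}$ has simple unit too. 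As in Theorem \ref{thm:nondegeneratevsfactorizable}, I would work over an algebraically closed field so that Frobenius--Perron dimensions are available, the general perfect case following by the base-change discussion of Remark \ref{rem:nondegeneracyarbitraryfield}. Now write $\bar{\mathcal{E}}:=F(\mathcal{E})$ for the image, a finite symmetric tensor subcategory of $\mathcal{Z}_{(2)}(\mathcal{A})$, and factor $F$ as a dominant functor $q:\mathcal{E}\twoheadrightarrow\bar{\mathcal{E}}$ followed by the fully faithful inclusion $\bar{\mathcal{E}}\hookrightarrow\mathcal{Z}_{(2)}(\mathcal{A})$. In particular $\mathcal{A}$ is faithfully flat over $\bar{\mathcal{E}}$, and since the defining condition of the relative center only sees the image of $F$, one has $\mathcal{Z}(\mathcal{A},\mathcal{E})=\mathcal{Z}(\mathcal{A},\bar{\mathcal{E}})$.

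The key observation is that the $\mathcal{E}$-factorizability functor factors as
$$\mathcal{A}\boxtimes_{\mathcal{E}}\mathcal{A}^{\mathrm{rev}}\xrightarrow{\ \Phi\ }\mathcal{A}\boxtimes_{\bar{\mathcal{E}}}\mathcal{A}^{\mathrm{rev}}\xrightarrow{\ \Psi\ }\mathcal{Z}(\mathcal{A},\bar{\mathcal{E}}),$$
where $\Phi$ is the canonical comparison functor between the two relative tensor products (induced because every $\bar{\mathcal{E}}$-balanced functor is $\mathcal{E}$-balanced via $q$) and $\Psi$ is the $\bar{\mathcal{E}}$-factorizability functor. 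By hypothesis $\Psi\circ\Phi$ is an equivalence. I would then argue that $\Phi$ is itself an equivalence. On one hand, $\mathcal{E}$-factorizability forces the source $\mathcal{A}\boxtimes_{\mathcal{E}}\mathcal{A}^{\mathrm{rev}}\simeq\mathcal{Z}(\mathcal{A},\bar{\mathcal{E}})$ to be a finite tensor category, while Proposition \ref{prop:relativefinite} applied to the faithfully flat $\bar{\mathcal{E}}$ shows the target $\mathcal{A}\boxtimes_{\bar{\mathcal{E}}}\mathcal{A}^{\mathrm{rev}}$ is a finite tensor category; thus $\Phi$ is a tensor functor between finite tensor categories. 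On the other hand, $\Phi$ is fully faithful because $\Psi\circ\Phi$ is, and it is dominant because precomposing it with the dominant quotient $\mathcal{A}\boxtimes\mathcal{A}^{\mathrm{rev}}\twoheadrightarrow\mathcal{A}\boxtimes_{\mathcal{E}}\mathcal{A}^{\mathrm{rev}}$ yields the dominant quotient onto $\mathcal{A}\boxtimes_{\bar{\mathcal{E}}}\mathcal{A}^{\mathrm{rev}}$. Being fully faithful, the essential image of $\Phi$ is a topologizing tensor subcategory by \cite[Proposition 6.3.1]{EGNO}, and dominance forces this to be everything, so $\Phi$ is an equivalence.

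Finally, I would extract the dimension identity. Exactly as in the first paragraph of the proof of Theorem \ref{thm:nondegeneratevsfactorizable}, using Lemma \ref{lem:relativetensorasmodules} together with \cite[Lemma 6.2.4]{EGNO} and the fact that a tensor functor preserves Frobenius--Perron dimensions of objects, both relative tensor products have computable dimension: $\mathrm{FPdim}(\mathcal{A}\boxtimes_{\mathcal{E}}\mathcal{A}^{\mathrm{rev}})=\mathrm{FPdim}(\mathcal{A})^2/\mathrm{FPdim}(\mathcal{E})$ and $\mathrm{FPdim}(\mathcal{A}\boxtimes_{\bar{\mathcal{E}}}\mathcal{A}^{\mathrm{rev}})=\mathrm{FPdim}(\mathcal{A})^2/\mathrm{FPdim}(\bar{\mathcal{E}})$. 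Since $\Phi$ is an equivalence these agree, whence $\mathrm{FPdim}(\mathcal{E})=\mathrm{FPdim}(\bar{\mathcal{E}})$. As $q:\mathcal{E}\twoheadrightarrow\bar{\mathcal{E}}$ is a dominant tensor functor of equal Frobenius--Perron dimension, it is an equivalence by \cite[Proposition 6.3.4]{EGNO}, so $F$ is fully faithful and $\mathcal{A}$ is faithfully flat, as desired.

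The main obstacle I anticipate lies in the dimension computation for $\mathcal{A}\boxtimes_{\mathcal{E}}\mathcal{A}^{\mathrm{rev}}$ when $F$ is only faithful: to apply the formula one must know that the connected commutative algebra $(F\boxtimes F^{\mathrm{rev}})(T_{\mathcal{E}}^{R}(\mathbbm{1}))$ of Lemma \ref{lem:relativetensorasmodules} remains \emph{exact} in $\mathcal{A}\boxtimes\mathcal{A}^{\mathrm{rev}}$, and that it is still connected. Both are consequences of $\mathcal{E}$-factorizability -- exactness because $\mathrm{Mod}_{\mathcal{A}\boxtimes\mathcal{A}^{\mathrm{rev}}}(T_{\mathcal{E}}^{R}(\mathbbm{1}))\simeq\mathcal{Z}(\mathcal{A},\bar{\mathcal{E}})$ is then a finite tensor category, and connectedness because the latter has simple unit. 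This is exactly where the faithfulness hypothesis, rather than the mere existence of $F$, enters, and where the pathology of Example \ref{ex:pretensorcounterexample} is excluded; the rest of the argument is a formal manipulation of dominant and fully faithful tensor functors.
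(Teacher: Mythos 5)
Your argument is sound and its first half coincides with the paper's: both factor $F$ through its image $\bar{\mathcal{E}}$ and use the commuting triangle of factorizability functors to show that the comparison functor $\mathcal{A}\boxtimes_{\mathcal{E}}\mathcal{A}^{\mathrm{rev}}\rightarrow\mathcal{A}\boxtimes_{\bar{\mathcal{E}}}\mathcal{A}^{\mathrm{rev}}$ is fully faithful (for fullness you implicitly need $\Psi$ to be faithful, which holds as it is a tensor functor between finite tensor categories restricting to the identity on $\bar{\mathcal{E}}$ -- worth saying). Where you diverge is the endgame. You upgrade the comparison functor to an equivalence and run a global Frobenius--Perron dimension count on the two relative Deligne products, reducing to $\mathrm{FPdim}(\mathcal{E})=\mathrm{FPdim}(\bar{\mathcal{E}})$; the paper instead restricts the fully faithful comparison to the subcategories $\bar{\mathcal{E}}\boxtimes_{\mathcal{E}}\bar{\mathcal{E}}^{\mathrm{rev}}\hookrightarrow\bar{\mathcal{E}}$, identifies this with the base-change functor $\mathbf{Mod}_{\mathcal{E}}(A\otimes A)\rightarrow\mathbf{Mod}_{\mathcal{E}}(A)$ for $A=F^{R}(\mathbbm{1})$, and deduces $A\otimes A\cong A$ by Yoneda, whence $A=\mathbbm{1}$ using faithfulness of $F$. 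Your route buys a shorter, more numerical conclusion, and your diagnosis of where faithfulness enters (exactness and connectedness of $(F\boxtimes F^{\mathrm{rev}})(T^{R}_{\mathcal{E}}(\mathbbm{1}))$ in $\mathcal{A}\boxtimes\mathcal{A}^{\mathrm{rev}}$, which you correctly extract from factorizability) is accurate. What it costs is generality and a genuine technical debt in the first step: the paper's proof is dimension-free, hence works over any perfect field and needs no reduction to simple unit, whereas your reduction is more delicate than you indicate -- decomposing $\mathcal{A}$ into blocks $\mathcal{A}_i$ does not decompose $\mathcal{E}$ compatibly when $F$ is merely faithful, and the component functors $F_i$ need not individually be faithful, so one must first argue (e.g.\ via the decomposition of $F(\mathbbm{1}_{\mathcal{E}})$ into the $\mathbbm{1}_{\mathcal{A}_i}$ and the vanishing of the cross terms $\mathcal{A}_i\boxtimes_{\mathcal{E}}\mathcal{A}_j^{\mathrm{rev}}$ forced by factorizability) that the blocks of $\mathcal{E}$ and $\mathcal{A}$ match up bijectively. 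With that point repaired, your proof is a valid alternative over an algebraically closed field, which is the setting of the paper's main theorems.
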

\begin{proof}
Let us write $\mathcal{F}$ for the image of the symmetric tensor functor $F:\mathcal{E}\rightarrow \mathcal{Z}_{(2)}(\mathcal{A})$. Let us also set $A:=F^R(\mathbbm{1})$, a commutative exact algebra. The canonical symmetric tensor functor $\mathcal{E}\rightarrow\mathcal{F}$ can be identified with the free $A$-module functor $\mathcal{E}\rightarrow\mathbf{Mod}_{\mathcal{E}}(A)$. Let us now consider the following commutative diagram: $$\begin{tikzcd}[sep=3ex]
\mathcal{A}\boxtimes_{\mathcal{E}}\mathcal{A}^{\mathrm{rev}} \arrow[d] \arrow[r, "\simeq"] & {\mathcal{Z}(\mathcal{A},\mathcal{E})}                    \\
\mathcal{A}\boxtimes_{\mathcal{F}}\mathcal{A}^{\mathrm{rev}} \arrow[r]                              & {\mathcal{Z}(\mathcal{A},\mathcal{F})}. \arrow[u, "\rotatebox{-90}{$\simeq$}"']
\end{tikzcd}$$ The left vertical arrow is induced by the universal property of the relative Deligne tensor product given that every $\mathcal{F}$-balanced functor out of $\mathcal{A}\boxtimes\mathcal{A}^{\mathrm{rev}}$ is $\mathcal{E}$-balanced. This shows in particular that the diagram is indeed commutative. Then, the top horizontal arrow is an equivalence as $\mathcal{A}$ is $\mathcal{E}$-factorizable. The right vertical arrow is also an equivalence by definition of the relative Drinfeld center. As the diagram commutes, this implies that the bottom composite is an equivalence, so that the functor $\mathcal{A}\boxtimes_{\mathcal{F}}\mathcal{A}^{\mathrm{rev}}\rightarrow\mathcal{Z}(\mathcal{A},\mathcal{F})$ is full. But it is also faithful as it is a tensor functor between finite tensor categories and it restricts to the identity on $\mathcal{F}$. It follows that the functor $\mathcal{A}\boxtimes_{\mathcal{E}}\mathcal{A}^{\mathrm{rev}}\rightarrow \mathcal{A}\boxtimes_{\mathcal{F}}\mathcal{A}^{\mathrm{rev}}$ must be fully faithful.

The following diagram is commutative by construction:
$$\begin{tikzcd}[sep=3ex]
\mathcal{F}\boxtimes_{\mathcal{E}}\mathcal{F}^{\mathrm{rev}} \arrow[r, hook] \arrow[d] & \mathcal{A}\boxtimes_{\mathcal{E}}\mathcal{A}^{\mathrm{rev}} \arrow[d, hook] \\
\mathcal{F} \arrow[r, hook]                                                                       & \mathcal{A}\boxtimes_{\mathcal{F}}\mathcal{A}^{\mathrm{rev}}.
\end{tikzcd}$$ As the two horizontal arrows are fully faithful by definition of $\mathcal{F}$, we find that $\mathcal{F}\boxtimes_{\mathcal{E}}\mathcal{F}^{\mathrm{rev}}\hookrightarrow \mathcal{F}$ is fully faithful. But it follows from the properties of the relative Deligne tensor product \cite{DSPS:balanced}, that this functor can be identified with the left $\mathcal{E}$-module functor $\mathbf{Mod}_{\mathcal{E}}(A\otimes A)\rightarrow \mathbf{Mod}_{\mathcal{E}}(A)$ given by relative tensor product with $A$ viewed as an $A\otimes A$-$A$-bimodule. In particular, there are isomorphisms $$\mathrm{Hom}_{\mathcal{E}}(E,A\otimes A)\cong \mathrm{Hom}_{A\otimes A}(E\otimes A\otimes A,A\otimes A)\cong \mathrm{Hom}_{A}(E\otimes A,A)\cong\mathrm{Hom}_{\mathcal{E}}(E,A)$$ that are natural in $E$. Appealing to the Yoneda lemma, this shows that $A\otimes A\cong A$ as objects of $\mathcal{E}$. But, as $F$ is faithful, $\mathbbm{1}$ must be a subobject of $A$, from which we deduce that $A = \mathbbm{1}$.
\end{proof}

\subsection{Relative Factorizability and Non-Degenerate Pairing}

Let $\mathcal{A}$ be a faithfully flat $\mathcal{E}$-enriched braided tensor category. We will now prove that $\mathcal{A}$ is $\mathcal{E}$-factorizable if and only if the pairing $\omega_{\mathcal{E}/\mathcal{A}}$ on $\mathbb{F}_{\mathcal{A}}$ is non-degenerate in the following sense.

\begin{Definition}
We say that the pairing $\omega_{\mathcal{E}/\mathcal{A}}$ on $\mathbb{F}_{\mathcal{E}/\mathcal{A}}$ is non-degenerate if there exists a map $\chi:\mathbbm{1}\rightarrow \mathbb{F}_{\mathcal{E}/\mathcal{A}}\otimes \mathbb{F}_{\mathcal{E}/\mathcal{A}}$ such that the snake equations hold: $$(\mathrm{id}\otimes\omega_{\mathcal{E}/\mathcal{A}})\circ (\chi\otimes\mathrm{id}) = \mathrm{id} = (\omega_{\mathcal{E}/\mathcal{A}}\otimes\mathrm{id})\circ (\mathrm{id}\otimes\chi).$$
\end{Definition}

We begin by recalling the canonical braided tensor functor appearing in the definition of (relative) factorizability appear. Viewing $\mathcal{A}$ as a plain finite braided tensor category, there is a braided tensor functor
\begin{equation}\label{eq:factorizability}\begin{tabular}{c c c}
$\mathcal{A}\boxtimes\mathcal{A}^{\mathrm{rev}}$ & $\rightarrow$ & $\mathcal{Z}(\mathcal{A}),$\\
$A\boxtimes B\ \ \ $ & $\mapsto$ & $(A\otimes B, \zeta_{A\otimes B})$
\end{tabular}\end{equation}
where $\zeta$ is the half-braiding on $A\otimes B$ given by $\zeta_{A\otimes B,-}=(\beta_{A,-}\otimes B)\circ (A\otimes\beta^{-1}_{-,B})$. It follows from the definitions that this functor lands in the full tensor subcategory $\mathcal{Z}(\mathcal{A},\mathcal{E})$. Moreover, it is straightforward to check that the above assignment is $\mathcal{E}$-balanced. Altogether, we obtain a canonical braided tensor functor $$\mathcal{A}\boxtimes_{\mathcal{E}}\mathcal{A}^{\mathrm{rev}}\rightarrow \mathcal{Z}(\mathcal{A},\mathcal{E}).$$

It was shown in \cite[Theorem 3.3]{Shi:nondeg} that the tensor functor $\mathcal{A}\boxtimes\mathcal{A}^{\mathrm{rev}}\rightarrow\mathcal{Z}(\mathcal{A})$ can be identified with a functor $\omega_{\mathcal{A}}^{\flat}:\mathrm{Comod}_{\mathcal{A}}(\mathbb{F}_{\mathcal{A}})\rightarrow\mathrm{Mod}_{\mathcal{A}}(\mathbb{F}_{\mathcal{A}})$ induced by $\omega_{\mathcal{A}}$. We will prove a relative version of this identification. Towards this goal, we consider a tensor functor $$\omega_{\mathcal{E}/\mathcal{A}}^{\flat}:\mathrm{Comod}_{\mathcal{A}}(\mathbb{F}_{\mathcal{E}/\mathcal{A}})\rightarrow\mathrm{Mod}_{\mathcal{A}}(\mathbb{F}_{\mathcal{E}/\mathcal{A}}).$$
This functor is given on objects by sending the $\mathbb{F}_{\mathcal{E}/\mathcal{A}}$-comodule $M$ with coaction $\rho_M$ to the $\mathbb{F}_{\mathcal{E}/\mathcal{A}}$-module $M$ with action $$M\otimes \mathbb{F}_{\mathcal{E}/\mathcal{A}}\xrightarrow{\rho_M\otimes\mathrm{id}}M\otimes \mathbb{F}_{\mathcal{E}/\mathcal{A}}\otimes \mathbb{F}_{\mathcal{E}/\mathcal{A}}\xrightarrow{\mathrm{id}\otimes\omega_{\mathcal{E}/\mathcal{A}}} M,$$ and it is defined in the obvious way on morphisms. Using that the pairing $\omega_{\mathcal{E}/\mathcal{A}}$ is compatible with the Hopf algebra structure, it follows easily that $\omega_{\mathcal{E}/\mathcal{A}}^{\flat}$ is a tensor functor. We obtain the following generalization of \cite[Theorem 3.3]{Shi:nondeg}. We also note that a similar result was obtained in \cite[Proposition 3.32]{Kin} under different hypotheses. Our proof follows the same outline. 

\begin{Proposition}\label{prop:factorizablevspairing}
The canonical tensor functor $$\mathcal{A}\boxtimes_{\mathcal{E}}\mathcal{A}^{\mathrm{rev}}\rightarrow \mathcal{Z}(\mathcal{A},\mathcal{E})$$ is naturally identified with the tensor functor $$\omega_{\mathcal{E}/\mathcal{A}}^{\flat}:\mathrm{Comod}_{\mathcal{A}}(\mathbb{F}_{\mathcal{E}/\mathcal{A}})\rightarrow\mathrm{Mod}_{\mathcal{A}}(\mathbb{F}_{\mathcal{E}/\mathcal{A}})$$ arising from the pairing $\omega_{\mathcal{E}/\mathcal{A}}$. In particular, it is an equivalence if and only if $\omega_{\mathcal{E}/\mathcal{A}}$ is non-degenerate.
\end{Proposition}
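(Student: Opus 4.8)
The plan is to reduce the proposition to a structure-by-structure comparison after pinning both functors over the same underlying functor to $\mathcal{A}$, and then to deduce the equivalence criterion formally. First I would record the two identifications already available: Proposition~\ref{prop:relativeuniversalHopf} furnishes $\mathrm{Comod}_{\mathcal{A}}(\mathbb{F}_{\mathcal{E}/\mathcal{A}})\simeq\mathcal{A}\boxtimes_{\mathcal{E}}\mathcal{A}^{\mathrm{mop}}$, which I would compose with the braided equivalence $\mathcal{A}\boxtimes_{\mathcal{E}}\mathcal{A}^{\mathrm{mop}}\simeq\mathcal{A}\boxtimes_{\mathcal{E}}\mathcal{A}^{\mathrm{rev}}$ supplied by the braiding of $\mathcal{A}$, while Proposition~\ref{prop:RelativeCenterModule} gives $\mathrm{Mod}_{\mathcal{A}}(\mathbb{F}_{\mathcal{E}/\mathcal{A}})\simeq\mathcal{Z}(\mathcal{A},\mathcal{E})$. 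Under these identifications both the canonical functor and $\omega_{\mathcal{E}/\mathcal{A}}^{\flat}$ are tensor functors covering the functor $T_{\mathcal{E}/\mathcal{A}}$ to $\mathcal{A}$: the functor $\omega_{\mathcal{E}/\mathcal{A}}^{\flat}$ leaves underlying objects unchanged and the forgetful functor on comodules is $T_{\mathcal{E}/\mathcal{A}}$ by Proposition~\ref{prop:comonadicreconstruction}, whereas the canonical functor sends $A\boxtimes B$ to $A\otimes B$ equipped with the half-braiding $\zeta$ of \eqref{eq:factorizability}. It therefore suffices to check that the two pieces of data carried by the common underlying object $A\otimes B$ agree.

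Next I would carry out the core computation on the coend model. Through Proposition~\ref{prop:comonadicreconstruction} the object $A\boxtimes B$ determines a right $\mathbb{F}_{\mathcal{E}/\mathcal{A}}$-comodule $(A\otimes B,\rho)$ whose coaction I would read off explicitly; applying $\omega_{\mathcal{E}/\mathcal{A}}^{\flat}$ yields the module structure $(\mathrm{id}\otimes\omega_{\mathcal{E}/\mathcal{A}})\circ(\rho\otimes\mathrm{id})$, which under $\mathrm{Mod}_{\mathcal{A}}(\mathbb{F}_{\mathcal{E}/\mathcal{A}})\simeq\mathcal{Z}(\mathcal{A},\mathcal{E})$ corresponds to a half-braiding on $A\otimes B$. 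Since $\omega_{\mathcal{E}/\mathcal{A}}$ is the descent of $\omega_{\mathcal{A}}$ along the surjection $\pi\colon\mathbb{F}_{\mathcal{A}}\twoheadrightarrow\mathbb{F}_{\mathcal{E}/\mathcal{A}}$ of Theorem~\ref{thm:exactsequenceHopf} and Proposition~\ref{prop:inducedpairing}, I would pull the computation back to $\mathbb{F}_{\mathcal{A}}$ and evaluate it on the coend $\int^{A}A^{*}\otimes A$ using the explicit formula for $\omega_{\mathcal{A}}$ from Remark~\ref{rem:Shimizucomparison}. The double braiding $\beta^{2}_{A,B^{*}}$ appearing there is exactly what produces the half-braiding $\zeta_{A\otimes B,-}=(\beta_{A,-}\otimes B)\circ(A\otimes\beta^{-1}_{-,B})$; tracing the evaluation morphisms and the braidings through the universal property of the coend, precisely as in \cite[Theorem~3.3]{Shi:nondeg}, shows that the half-braiding produced by $\omega_{\mathcal{E}/\mathcal{A}}^{\flat}$ is this $\zeta$. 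Naturality in $A$ and $B$ together with compatibility with the tensor structures then promotes this pointwise agreement to a natural monoidal isomorphism of the two functors.

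Finally, I would treat the equivalence criterion as a formal consequence. As $\mathbb{F}_{\mathcal{E}/\mathcal{A}}$ has compact underlying object it admits a dual $\mathbb{F}_{\mathcal{E}/\mathcal{A}}^{*}$ in $\mathcal{A}$, and the pairing $\omega_{\mathcal{E}/\mathcal{A}}$ is the same datum as a morphism of Hopf algebras $\omega_{\mathcal{E}/\mathcal{A}}^{\sharp}\colon\mathbb{F}_{\mathcal{E}/\mathcal{A}}\to\mathbb{F}_{\mathcal{E}/\mathcal{A}}^{*}$. Under the standard equivalence $\mathrm{Comod}_{\mathcal{A}}(\mathbb{F}_{\mathcal{E}/\mathcal{A}})\simeq\mathrm{Mod}_{\mathcal{A}}(\mathbb{F}_{\mathcal{E}/\mathcal{A}}^{*})$ the functor $\omega_{\mathcal{E}/\mathcal{A}}^{\flat}$ becomes restriction of scalars along $\omega_{\mathcal{E}/\mathcal{A}}^{\sharp}$, which is an equivalence of finite tensor categories if and only if $\omega_{\mathcal{E}/\mathcal{A}}^{\sharp}$ is an isomorphism. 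The existence of a copairing $\chi$ satisfying the snake equations is exactly the assertion that $\omega_{\mathcal{E}/\mathcal{A}}^{\sharp}$ is invertible, so $\omega_{\mathcal{E}/\mathcal{A}}^{\flat}$, and hence the canonical functor, is an equivalence precisely when $\omega_{\mathcal{E}/\mathcal{A}}$ is non-degenerate.

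The main obstacle is the coend bookkeeping of the second step: one must read off the comodule coaction supplied by Proposition~\ref{prop:comonadicreconstruction} correctly and verify that the descent of $\omega_{\mathcal{A}}$ along $\pi$ reproduces $\zeta$ on the nose rather than some braided twist of it. Everything else --- the passage between $\mathcal{A}^{\mathrm{mop}}$ and $\mathcal{A}^{\mathrm{rev}}$, the naturality, and the closing duality argument --- is routine.
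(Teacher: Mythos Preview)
Your proposal is correct, and the third paragraph matches the paper's argument essentially verbatim. The difference lies in how you handle the identification in the middle step.

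The paper does not recompute the half-braiding on generators $A\boxtimes B$. Instead it assembles a three-square diagram
\[
\begin{tikzcd}[sep=small]
\mathcal{A}\boxtimes\mathcal{A}^{\mathrm{rev}} \arrow[r, two heads] \arrow[d,"\simeq"'] & \mathcal{A}\boxtimes_{\mathcal{E}}\mathcal{A}^{\mathrm{rev}} \arrow[r] \arrow[d,"\simeq"'] & {\mathcal{Z}(\mathcal{A},\mathcal{E})} \arrow[r, hook] \arrow[d,"\simeq"] & \mathcal{Z}(\mathcal{A}) \arrow[d,"\simeq"] \\
\mathrm{Comod}_{\mathcal{A}}(\mathbb{F}_{\mathcal{A}}) \arrow[r, two heads] & \mathrm{Comod}_{\mathcal{A}}(\mathbb{F}_{\mathcal{E}/\mathcal{A}}) \arrow[r,"\omega_{\mathcal{E}/\mathcal{A}}^{\flat}"] & \mathrm{Mod}_{\mathcal{A}}(\mathbb{F}_{\mathcal{E}/\mathcal{A}}) \arrow[r, hook] & \mathrm{Mod}_{\mathcal{A}}(\mathbb{F}_{\mathcal{A}}),
\end{tikzcd}
\]
checks the outer, left, right, top and bottom faces separately (the outer one being exactly \cite[\S3.5]{Shi:nondeg}), observes that the natural isomorphism on the outer square is $\mathcal{E}$-balanced so it descends through the left column, and then uses full faithfulness of the right-hand inclusions to conclude that the middle square commutes. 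In other words, the paper \emph{deduces} the relative identification from the absolute one purely categorically, without ever reopening the coend.

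Your approach instead pulls the action map back along $\pi$ and reruns Shimizu's coend computation pointwise. This is perfectly valid and arguably more transparent about \emph{why} the double braiding in $\omega_{\mathcal{A}}$ produces $\zeta$; the price is the bookkeeping you flag, plus the need to say explicitly why agreement on the images of $A\boxtimes B$ propagates (both functors are cocontinuous left $\mathcal{A}$-module functors over the same underlying functor $T_{\mathcal{E}/\mathcal{A}}$, and those images generate under colimits). The paper's diagram chase sidesteps that propagation issue by building the descent into the $\mathcal{E}$-balancedness check. One small notational caution: the paper reserves the symbol $\omega_{\mathcal{E}/\mathcal{A}}^{\sharp}$ for the functor $\mathrm{Mod}_{\mathcal{A}}(\mathbb{F}_{\mathcal{E}/\mathcal{A}})\to\mathrm{Comod}_{\mathcal{A}}(\mathbb{F}_{\mathcal{E}/\mathcal{A}})$ appearing in the cofactorizability discussion, so you may want a different name for your Hopf algebra map $\mathbb{F}_{\mathcal{E}/\mathcal{A}}\to\mathbb{F}_{\mathcal{E}/\mathcal{A}}^{*}$.
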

\begin{proof}
Let us consider the diagram:
\begin{equation}\label{eq:relativefactorizability}\begin{tikzcd}[sep=tiny]
\mathcal{A}\boxtimes\mathcal{A}^{\mathrm{rev}} \arrow[rr, two heads] \arrow[dd, "\rotatebox{90}{$\simeq$}"'] \arrow[rrrrrr, bend left = 15]                                   &  & \mathcal{A}\boxtimes_{\mathcal{E}}\mathcal{A}^{\mathrm{rev}} \arrow[rr] \arrow[dd, "\rotatebox{90}{$\simeq$}"']                                           &  & {\mathcal{Z}(\mathcal{A},\mathcal{E})} \arrow[dd, "\rotatebox{-90}{$\simeq$}"] \arrow[rr, hook]          &  & \mathcal{Z}(\mathcal{A}) \arrow[dd, "\rotatebox{-90}{$\simeq$}"]            \\
&  &  &  & &  &  \\
\mathrm{Comod}_{\mathcal{A}}(\mathbb{F}_{\mathcal{A}}) \arrow[rr, two heads] \arrow[rrrrrr, "\omega_{\mathcal{A}}^{\flat}"', bend right = 15] &  & \mathrm{Comod}_{\mathcal{A}}(\mathbb{F}_{\mathcal{E}/\mathcal{A}}) \arrow[rr, "\omega_{\mathcal{E}/\mathcal{A}}^{\flat}"] &  & \mathrm{Mod}_{\mathcal{A}}(\mathbb{F}_{\mathcal{E}/\mathcal{A}}) \arrow[rr, hook] &  & \mathrm{Mod}_{\mathcal{A}}(\mathbb{F}_{\mathcal{A}}).
\end{tikzcd}\end{equation}
We will deduce the commutativity of the middle square from the commutativity of the other squares. Firstly, that the vertical arrows are equivalences follows from Proposition \ref{prop:comonadicreconstruction} as $\mathcal{A}^{\mathrm{rev}}\simeq \mathcal{A}^{\mathrm{mop}}$. Additionally, that the left square commutes follows from the proof of Theorem \ref{thm:exactsequenceHopf}. Secondly, that the right vertical arrows are equivalences was established in Proposition \ref{prop:RelativeCenterModule}. From the proof of that result, one deduces the commutativity of the right square. Thirdly, the top square commutes by definition. Fourthly, the bottom square commutes as the map of Hopf algebras $\mathbb{F}_{\mathcal{A}}\rightarrow\mathbb{F}_{\mathcal{E}/\mathcal{A}}$ is compatible with the pairings by construction (see Proposition \ref{prop:inducedpairing}).

Crucially, the outer square of \eqref{eq:relativefactorizability} commutes as was shown in \cite[Subsection 3.5]{Shi:nondeg}. We now argue that this implies that the middle square commutes as well. To see this, observe that it follows by direct inspection that the natural isomorphism witnessing the commutativity of the outer square is $\mathcal{E}$-balanced, so that it factors via $\mathcal{A}\boxtimes_{\mathcal{E}}\mathcal{A}^{\mathrm{rev}}$. As a consequence, the composite of the middle and the right squares in \eqref{eq:relativefactorizability} does commute. But the right horizontal functors are fully faithful by definition of the relative Drinfeld center. This shows that middle square commutes as desired, and concludes the proof of the first part of the proposition.

We now conclude the proof of the proposition. As $\mathbb{F}_{\mathcal{E}/\mathcal{A}}$ is a compact object of $\mathcal{A}$, it has a dual $\mathbb{F}_{\mathcal{E}/\mathcal{A}}^*$. In particular, $\mathbb{F}_{\mathcal{E}/\mathcal{A}}^*$ inherits a Hopf algebra structure, for which there is an identification $$\mathrm{Comod}_{\mathcal{A}}(\mathbb{F}_{\mathcal{E}/\mathcal{A}})\simeq \mathrm{Mod}_{\mathcal{A}}(\mathbb{F}_{\mathcal{E}/\mathcal{A}}^*)$$ of tensor categories. Moreover, the pairing $\omega_{\mathcal{E}/\mathcal{A}}$ induces a homomorphism of Hopf algebras $f:\mathbb{F}_{\mathcal{E}/\mathcal{A}}^*\rightarrow \mathbb{F}_{\mathcal{E}/\mathcal{A}}$. The tensor functor $\omega_{\mathcal{E}/\mathcal{A}}^{\flat}$ then corresponds to restriction under the homomorphism of Hopf algebras $f$. But the functor $\omega_{\mathcal{E}/\mathcal{A}}^{\flat}$ is an equivalence if and only if $f$ is an isomorphism (see for instance \cite[Lemma 3.4]{Shi:nondeg}). The latter condition holds if and only if $\omega_{\mathcal{E}/\mathcal{A}}$ is non-degenerate. This finishes the proof.
\end{proof}

Let $\mathcal{A}$ be a finite braided tensor category. The kernel $\ker(\omega_{\mathcal{A}})$ of the pairing $\omega_{\mathcal{A}}$ on $\mathbb{F}_{\mathcal{A}}$ was studied in \cite[Section 3]{Lyu:modular}. In particular, it was shown therein that the Hopf algebra $\mathbb{F}_{\mathcal{A}}/\ker(\omega_{\mathcal{A}})$ admits a non-degenerate pairing. Using our results, we can give another description of the kernel $\ker(\omega_{\mathcal{A}})$.

\begin{Corollary}\label{cor:comparisonLyubashenko}
Let $\mathcal{A}$ be a finite braided tensor category. We have that $$\ker(\omega_{\mathcal{A}})=\mathbb{F}_{\mathcal{A}}\mathbb{F}_{\mathcal{Z}_{(2)}(\mathcal{A})}^{+}\subseteq \mathbb{F}_{\mathcal{A}},$$ so that $\mathbb{F}_{\mathcal{A}}/\ker(\omega_{\mathcal{A}})\cong \mathbb{F}_{\mathcal{E}/\mathcal{A}}$ as Hopf algebras.
\end{Corollary}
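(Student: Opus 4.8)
The plan is to specialize the machinery developed in this section to the canonical enrichment $\mathcal{E} := \mathcal{Z}_{(2)}(\mathcal{A})$, for which the structure functor $F : \mathcal{Z}_{(2)}(\mathcal{A}) \to \mathcal{Z}_{(2)}(\mathcal{A})$ is the identity and hence fully faithful. In particular $\mathcal{A}$ is faithfully flat over $\mathcal{E}$, so Theorem~\ref{thm:exactsequenceHopf} applies and yields $\mathbb{F}_{\mathcal{E}/\mathcal{A}} \cong \mathbb{F}_{\mathcal{A}}/\mathbb{F}_{\mathcal{A}}\mathbb{F}_{\mathcal{Z}_{(2)}(\mathcal{A})}^{+}$ as Hopf algebras, with quotient map $\pi : \mathbb{F}_{\mathcal{A}} \twoheadrightarrow \mathbb{F}_{\mathcal{E}/\mathcal{A}}$ whose kernel is exactly $\mathbb{F}_{\mathcal{A}}\mathbb{F}_{\mathcal{Z}_{(2)}(\mathcal{A})}^{+}$. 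This gives the second assertion as soon as the first is in hand, and it reduces the corollary to the single identity $\ker(\omega_{\mathcal{A}}) = \ker(\pi)$.

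By Proposition~\ref{prop:inducedpairing} the pairing descends along $\pi$, i.e.\ $\omega_{\mathcal{A}} = \omega_{\mathcal{E}/\mathcal{A}} \circ (\pi \otimes \pi)$, and in particular the inclusion $\ker(\pi) = \mathbb{F}_{\mathcal{A}}\mathbb{F}_{\mathcal{Z}_{(2)}(\mathcal{A})}^{+} \subseteq \ker(\omega_{\mathcal{A}})$ is immediate. For the reverse inclusion I would show that the descended pairing $\omega_{\mathcal{E}/\mathcal{A}}$ is non-degenerate: since $\pi$ is epic, an element of $\mathbb{F}_{\mathcal{A}}$ lies in the radical of $\omega_{\mathcal{A}}$ precisely when its image under $\pi$ lies in the radical of $\omega_{\mathcal{E}/\mathcal{A}}$, so that non-degeneracy of $\omega_{\mathcal{E}/\mathcal{A}}$ forces $\ker(\omega_{\mathcal{A}}) \subseteq \ker(\pi)$ and hence equality. (At the categorical level this is cleanest read off the factorization of the induced map $\omega_{\mathcal{A}}^{\flat}$ through $\pi$ and its dual.)

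To obtain the non-degeneracy of $\omega_{\mathcal{E}/\mathcal{A}}$ I would invoke Proposition~\ref{prop:factorizablevspairing}, which reduces it to showing that $\mathcal{A}$ is $\mathcal{E}$-factorizable for $\mathcal{E} = \mathcal{Z}_{(2)}(\mathcal{A})$, namely that $\mathcal{A}\boxtimes_{\mathcal{E}}\mathcal{A}^{\mathrm{rev}} \to \mathcal{Z}(\mathcal{A},\mathcal{E})$ is an equivalence. But with $F$ the identity the category $\mathcal{A}$ is tautologically $\mathcal{E}$-non-degenerate, so Theorem~\ref{thm:nondegeneratevsfactorizable} supplies $\mathcal{E}$-factorizability at once. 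Concretely this is the Frobenius--Perron dimension count inside the proof of that theorem: for $\mathcal{E} = \mathcal{Z}_{(2)}(\mathcal{A})$ the equality $\mathrm{FPdim}(\mathcal{A}\boxtimes_{\mathcal{E}}\mathcal{A}^{\mathrm{rev}}) = \mathrm{FPdim}(\mathcal{Z}(\mathcal{A},\mathcal{Z}_{(2)}(\mathcal{A})))$ holds automatically, forcing the dominant comparison functor to be an equivalence.

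The main obstacle is exactly this last step, establishing $\mathcal{Z}_{(2)}(\mathcal{A})$-factorizability, which carries the genuine content and rests on the dimension-counting argument of Theorem~\ref{thm:nondegeneratevsfactorizable}; consequently the proof inherits the standing assumption that $\mathbbm{k}$ be algebraically closed, or else the base-change reduction sketched in Remark~\ref{rem:nondegeneracyarbitraryfield}. The remaining work is purely bookkeeping: verifying that the radical of the descended pairing is computed compatibly through the epimorphism $\pi$, and reading $\ker(\omega_{\mathcal{A}})$ as the two-sided radical Hopf ideal so that the quotient Hopf algebra structures coincide with those of Theorem~\ref{thm:exactsequenceHopf}.
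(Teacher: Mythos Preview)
Your proposal is correct and follows essentially the same route as the paper's proof: specialize to $\mathcal{E}=\mathcal{Z}_{(2)}(\mathcal{A})$, use Theorem~\ref{thm:nondegeneratevsfactorizable} together with Proposition~\ref{prop:factorizablevspairing} to obtain non-degeneracy of $\omega_{\mathcal{E}/\mathcal{A}}$, and then read off the kernel identity from the compatibility of pairings in Proposition~\ref{prop:inducedpairing}. Your observation that this inherits the algebraically-closed hypothesis (or the base-change workaround of Remark~\ref{rem:nondegeneracyarbitraryfield}) is also accurate and worth noting.
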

\begin{proof}
We view $\mathcal{A}$ as a faithfully flat $\mathcal{Z}_{(2)}(\mathcal{A})$-enriched finite braided tensor category. It follows from Proposition \ref{prop:factorizablevspairing} and Theorem \ref{thm:nondegeneratevsfactorizable} that the pairing on $\mathbb{F}_{\mathcal{Z}_{(2)}(\mathcal{A})/\mathcal{A}}$ is non-degenerate. But the canonical map of Hopf algebras $\mathbb{F}_{\mathcal{A}}\rightarrow \mathbb{F}_{\mathcal{Z}_{(2)}(\mathcal{A})/\mathcal{A}}$ is compatible with the pairings by Proposition \ref{prop:inducedpairing}. This concludes the proof.
\end{proof}

\begin{Remark}
It would be interesting to understand whether this characterization of the kernel of the pairing $\omega_{\mathcal{A}}$ holds without any finiteness hypothesis. It is at the very least clear that $\mathbb{F}_{\mathcal{A}}\mathbb{F}_{\mathcal{Z}_{(2)}(\mathcal{A})}^{+}\subseteq\ker(\omega_{\mathcal{A}})$.
\end{Remark}

\subsection{Relative Cofactorizability \& Non-Degenerate Pairing}

Let $\mathcal{A}$ be a finite braided tensor category. We begin by recalling from \cite{BJSS} how the canonical monoidal functor $$\mathrm{HC}(\mathcal{A})=\mathcal{A}\boxtimes_{\mathcal{A}\boxtimes\mathcal{A}^{\mathrm{rev}}}\mathcal{A}^{\mathrm{mop}}\rightarrow \mathrm{End}(\mathcal{A})$$ arises. Observe that there is a monoidal functor
$$\begin{tabular}{c c c}
$\mathcal{A}\boxtimes\mathcal{A}^{\mathrm{mop}}$ & $\rightarrow$ & $\mathrm{End}(\mathcal{A})$.\\
$A\boxtimes B$ & $\mapsto$ & $\{V\mapsto A\otimes V\otimes B\}$
\end{tabular}$$
One checks that this last functor above is $\mathcal{A}\boxtimes\mathcal{A}^{\mathrm{rev}}$-balanced, so that it induces a monoidal functor $\mathrm{HC}(\mathcal{A})\rightarrow \mathrm{End}(\mathcal{A})$. This functor first appeared in \cite{BZBJ2}. Let now $\mathcal{A}$ be a faithfully flat $\mathcal{E}$-enriched finite braided tensor category. The tensor functor $\mathcal{A}\boxtimes\mathcal{A}^{\mathrm{mop}}\rightarrow\mathrm{End}(\mathcal{A})$ can then naturally be upgraded to a monoidal functor $\mathcal{A}\boxtimes\mathcal{A}^{\mathrm{mop}}\rightarrow\mathrm{End}_{\mathcal{E}}(\mathcal{A})$. Moreover, it follows by inspection that this functor is not only $\mathcal{A}\boxtimes\mathcal{A}^{\mathrm{rev}}$-balanced, but that the two $\mathcal{E}$-balanced structure coincide, so that we obtain a monoidal functor $$\mathrm{HC}_{\mathcal{E}}(\mathcal{A})=\mathcal{A}\boxtimes_{\mathcal{A}\boxtimes_{\mathcal{E}}\mathcal{A}^{\mathrm{rev}}}\mathcal{A}^{\mathrm{mop}}\rightarrow \mathrm{End}_{\mathcal{E}}(\mathcal{A}).$$

It was shown in \cite[Proposition 3.16]{BJSS}, see also \cite[Section 4]{BZBJ2}, that there is an equivalence $$\mathrm{HC}(\mathcal{A})\simeq \mathrm{Mod}_{\mathcal{A}}(\mathbb{F}_{\mathcal{A}})$$ of categories. Succinctly, the monoidal structure on $\mathrm{HC}(\mathcal{A})$ arises by viewing $\mathbb{F}_{\mathcal{A}}$ as a commutative algebra in $\mathcal{Z}(\mathcal{A})$. Namely, thanks to \cite[Corollary 6.7]{SY:MTC}, $T^R_{\mathcal{A}}(\mathbbm{1})$ is a commutative algebra in $\mathcal{A}\boxtimes\mathcal{A}^{\mathrm{rev}}$. Using the canonical braided tensor functor $\mathcal{A}\boxtimes\mathcal{A}^{\mathrm{rev}}\rightarrow\mathcal{Z}(\mathcal{A})$, we obtain the desired commutative algebra structure on $\mathbb{F}_{\mathcal{A}}$ in $\mathcal{Z}(\mathcal{A})$. It then follows from a standard argument that this provides $\mathrm{Mod}_{\mathcal{A}}(\mathbb{F}_{\mathcal{A}})$ with a monoidal structure, which agrees with that of $\mathrm{HC}(\mathcal{A})$ thanks to the universal property of the relative Deligne tensor product. In particular, we emphasize that this monoidal structure does not come from the Hopf algebra structure on $\mathbb{F}_{\mathcal{A}}$. For the purposes of our proof, it is only necessary to observe that the monoidal unit of $\mathrm{Mod}_{\mathcal{A}}(\mathbb{F}_{\mathcal{A}})$ is given by $\mathbb{F}_{\mathcal{A}}$. Now, using the properties of the relative Deligne tensor product, we also have an equivalence of plain categories $$\mathrm{HC}_{\mathcal{E}}(\mathcal{A})\simeq \mathcal{A}\boxtimes_{\mathcal{A}\boxtimes_{\mathcal{E}}\mathcal{A}^{\mathrm{rev}}}\mathrm{Mod}_{\mathcal{A}\boxtimes_{\mathcal{E}}\mathcal{A}^{\mathrm{rev}}}(T^R_{\mathcal{E}/\mathcal{A}}(\mathbbm{1}))\simeq \mathrm{Mod}_{\mathcal{A}}(\mathbb{F}_{\mathcal{E}/\mathcal{A}}).$$ Using the obvious variant of the argument given above in the non-relative case, we find that upon viewing $\mathbb{F}_{\mathcal{E}/\mathcal{A}}$ as a commutative algebra in $\mathcal{Z}(\mathcal{A})$ and endowing $\mathrm{Mod}_{\mathcal{A}}(\mathbb{F}_{\mathcal{E}/\mathcal{A}})$ with the corresponding monoidal structure, the last equivalence above is in fact compatible with the monoidal structures. In particular, the monoidal unit is again given by $\mathbb{F}_{\mathcal{E}/\mathcal{A}}$. As the functor $\mathrm{HC}(\mathcal{A})\twoheadrightarrow\mathrm{HC}_{\mathcal{E}}(\mathcal{A})$ is monoidal and compatible with the left $\mathcal{A}$-module structures by construction, it follows from the relative Eilenberg-Watts theorem, see \cite[Theorem 3.3]{DSPS:balanced} or \cite[Lemma 5.7]{BJS}, that it can be identified with $$(-)\otimes_{\mathbb{F}_{\mathcal{A}}} \mathbb{F}_{\mathcal{E}/\mathcal{A}}:\mathrm{Mod}_{\mathcal{A}}(\mathbb{F}_{\mathcal{A}})\rightarrow\mathrm{Mod}_{\mathcal{A}}(\mathbb{F}_{\mathcal{E}/\mathcal{A}}).$$

There are also equivalences of plain categories $$\mathrm{End}(\mathcal{A})\simeq \mathcal{A}\boxtimes\mathcal{A}^{\mathrm{mop}}\simeq\mathrm{Comod}_{\mathcal{A}}(\mathbb{F}_{\mathcal{A}}).$$ The first one is for instance given by \cite[Proposition 5.3 and Corollary 5.5]{BJS}. In the non-relative case, we can use \cite[Lemma 3.5]{Shi:unimodular} to see that the equivalence above is implemented by the explicit formula \begin{equation}\label{eq:EndtoComodCoend}
F\mapsto \int^{A\in\mathcal{A}^{\mathbf{c}}} F(A)^*\otimes A.
\end{equation}
In the relative case, recall from \cite[Lemma 5.4 and Proposition 5.8]{BJS} that $$\mathrm{End}_{\mathcal{E}}(\mathcal{A})\simeq \mathrm{End}(\mathcal{A})\boxtimes_{\mathcal{E}\boxtimes\mathcal{E}}\mathcal{E}.$$ It follows that the forgetful functor $\mathrm{End}_{\mathcal{E}}(\mathcal{A})\rightarrow\mathrm{End}(\mathcal{A})$ corresponds to the functor $\mathrm{Mod}_{\mathcal{A}\boxtimes\mathcal{A}^{\mathrm{mop}}}(\mathbb{F}_{\mathcal{E}})\rightarrow\mathcal{A}\boxtimes\mathcal{A}^{\mathrm{mop}}$. In fact, we have that the category $\mathrm{End}_{\mathcal{E}}(\mathcal{A})$ can be explicit identified via
\begin{equation}\label{eq:modulefunctor}
\begin{tabular}{c c c}
$\mathrm{End}_{\mathcal{E}}(\mathcal{A})$ & $\xrightarrow{\simeq}$ & $\mathrm{Mod}_{\mathrm{Comod}_{\mathcal{A}}(\mathbb{F}_{\mathcal{A}})}(\mathbb{F}_{\mathcal{E}})$,\\
$F$ & $\mapsto$ & $\int^{A\in\mathcal{A}^{\mathbf{c}}}F(A)^*\otimes A$
\end{tabular}
\end{equation}
where the coend is equipped with the canonical right $\mathbb{F}_{\mathcal{E}}$-module structure induced by the $\mathcal{E}$-module structure on $F$. Thanks to Theorem \ref{thm:exactsequenceHopf}, we therefore have the following commutative diagram of plain categories
$$\begin{tikzcd}[sep=3ex]
\mathrm{End}_{\mathcal{E}}(\mathcal{A}) \arrow[r] \arrow[d,"\rotatebox{90}{$\simeq$}"']& \mathrm{End}(\mathcal{A}) \arrow[d,"\rotatebox{-90}{$\simeq$}"] \\
\mathrm{Mod}_{\mathrm{Comod}_{\mathcal{A}}(\mathbb{F}_{\mathcal{A}})}(\mathbb{F}_{\mathcal{E}}) \arrow[r] \arrow[d,"\rotatebox{90}{$\simeq$}"'] & \mathrm{Comod}_{\mathcal{A}}(\mathbb{F}_{\mathcal{A}}),\\
\mathrm{Comod}_{\mathcal{A}}(\mathbb{F}_{\mathcal{E}/\mathcal{A}}) \arrow[ru, "(-)\Box^{\pi}\mathbb{F}_{\mathcal{A}}"'{xshift=-2pt, yshift=-2pt}] & 
\end{tikzcd}$$ in which the two horizontal arrows are forgetful functors.

The canonical functor $\mathrm{HC}(\mathcal{A})\rightarrow \mathrm{End}(\mathcal{A})$ was identified with a functor $\mathrm{Mod}_{\mathcal{A}}(\mathbb{F}_{\mathcal{A}})\rightarrow \mathrm{Comod}_{\mathcal{A}}(\mathbb{F}_{\mathcal{A}})$ in \cite[Section 3.3]{BJSS}. This can be done effectively using the following result.

\begin{Lemma}[\cite{Kin}]\label{lem:pairing}
Let $A$ be an algebra in $\mathcal{A}$ and $C$ be a coalgebra in $\mathcal{A}$. Pairings between $A$ and $C$ are in bijective correspondence with isomorphism classes of left $\mathcal{A}$-module functors $\mathrm{Mod}_{\mathcal{A}}(A)\rightarrow \mathrm{Comod}_{\mathcal{A}}(C)$ sending free $A$-modules to cofree $C$-modules.
\end{Lemma}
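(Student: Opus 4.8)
The plan is to exhibit a pair of mutually inverse assignments between pairings $\omega\colon A\otimes C\to\mathbbm{1}$ and isomorphism classes of cocontinuous left $\mathcal{A}$-module functors $G\colon\mathrm{Mod}_{\mathcal{A}}(A)\to\mathrm{Comod}_{\mathcal{A}}(C)$ carrying the free functor $\mathrm{Free}_A=(-)\otimes A$ to the cofree functor $\mathrm{Cofree}_C=(-)\otimes C$. Here a pairing is a morphism $\omega\colon A\otimes C\to\mathbbm{1}$ which is unital, $\omega\circ(\eta_A\otimes\mathrm{id}_C)=\epsilon_C$, and multiplicative, in the sense that $\omega\circ(m_A\otimes\mathrm{id}_C)$ equals the composite obtained from $\mathrm{id}_{A\otimes A}\otimes\Delta_C$ by interleaving the two middle factors using the braiding $\beta$ of $\mathcal{A}$ and then applying $\omega\otimes\omega$. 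The key structural input is that $\mathrm{Free}_A$ is left adjoint to the forgetful functor while $\mathrm{Cofree}_C$ is right adjoint to it; consequently the full subcategory of free $A$-modules is the Kleisli category of the monad $(-)\otimes A$, and the full subcategory of cofree $C$-comodules is the co-Kleisli category of the comonad $(-)\otimes C$. Note that no duality on $A$ or $C$ is used, which is why the Kleisli formalism, rather than a direct formula for the coaction, is the right tool.

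For the direction from functors to pairings I would start from a cocontinuous left $\mathcal{A}$-module functor $G$ with $G\circ\mathrm{Free}_A\simeq\mathrm{Cofree}_C$. Since the free modules are projective generators of $\mathrm{Mod}_{\mathcal{A}}(A)$, the functor $G$ is the left Kan extension of its restriction to them, so all of its content is the induced functor between the Kleisli and co-Kleisli categories. Under the adjunction isomorphisms $\mathrm{Hom}_A(\mathrm{Free}_A X,\mathrm{Free}_A Y)\cong\mathrm{Hom}_{\mathcal{A}}(X,Y\otimes A)$ and $\mathrm{Hom}_C(\mathrm{Cofree}_C X,\mathrm{Cofree}_C Y)\cong\mathrm{Hom}_{\mathcal{A}}(X\otimes C,Y)$, this data becomes a family $\Phi_{X,Y}\colon\mathrm{Hom}_{\mathcal{A}}(X,Y\otimes A)\to\mathrm{Hom}_{\mathcal{A}}(X\otimes C,Y)$ natural in $X$ and $Y$ and compatible with the $\mathcal{A}$-action. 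As the source presheaf in $X$ is representable by $Y\otimes A$, the Yoneda lemma identifies $\Phi$ with the single element $\Phi_{Y\otimes A,Y}(\mathrm{id})\in\mathrm{Hom}_{\mathcal{A}}((Y\otimes A)\otimes C,Y)$, and left $\mathcal{A}$-linearity in $Y$ then forces this to be $\mathrm{id}_Y\otimes\omega$ for a unique $\omega\colon A\otimes C\to\mathbbm{1}$, so that $\Phi_{X,Y}(f)=(\mathrm{id}_Y\otimes\omega)\circ(f\otimes\mathrm{id}_C)$.

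Next I would verify that the functoriality of $\Phi$ is equivalent to the pairing axioms. Kleisli composition is $g\bullet f=(\mathrm{id}\otimes m_A)\circ(g\otimes\mathrm{id}_A)\circ f$ and co-Kleisli composition is $v\bullet u=v\circ(u\otimes\mathrm{id}_C)\circ(\mathrm{id}\otimes\Delta_C)$; unwinding $\Phi(g\bullet f)=\Phi(g)\bullet\Phi(f)$ on universal $f,g$ collapses exactly to the multiplicativity of $\omega$, while preservation of the Kleisli identity $\mathrm{id}_X\otimes\eta_A$ collapses to $\omega\circ(\eta_A\otimes\mathrm{id}_C)=\epsilon_C$. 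Conversely, any pairing yields a well-defined $\Phi$, which I would promote to a genuine cocontinuous module functor $G$ via the bar presentation, taking $G(M)$ to be the coequalizer of the two cofree comodule maps $\mathrm{Cofree}_C(M\otimes A)\rightrightarrows\mathrm{Cofree}_C(M)$ obtained by applying $\Phi$ to the canonical presentation $\mathrm{Free}_A(M\otimes A)\rightrightarrows\mathrm{Free}_A(M)\to M$. Finally these assignments are mutually inverse on isomorphism classes: the pairing is recovered from $G$ by evaluation of $\Phi$ on the universal free-module morphism, and a natural isomorphism of such module functors restricts on cofree comodules to a family determined, by the same Yoneda and $\mathcal{A}$-linearity reduction, by a morphism $C\to\mathbbm{1}$ which the unit axiom forces to be $\epsilon_C$, leaving $\omega$ unchanged.

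The main obstacle is twofold. The computational heart is the matching in the third step: checking that Kleisli composition on the module side corresponds under $\Phi$ to co-Kleisli composition on the comodule side precisely when $\omega$ is multiplicative, with careful bookkeeping of where the braiding $\beta$ of $\mathcal{A}$ must be inserted to interleave the tensor factors; this is also what fixes the exact sign/order convention of the axiom and determines whether the antipode intervenes. The more conceptual point to treat with care is the reconstruction itself: one must justify that a left $\mathcal{A}$-module functor sending free to cofree is equivalent data to the Kleisli/co-Kleisli functor $\Phi$ — that it is cocontinuous and is the Kan extension of its restriction to the projective generators — and that the normalization $G\circ\mathrm{Free}_A\simeq\mathrm{Cofree}_C$, equivalently $G(A)\cong C$ as the cofree comodule on $\mathbbm{1}$, is exactly what pins the target object to $\mathbbm{1}$ and so produces pairings valued in $\mathbbm{1}$ rather than in some auxiliary object.
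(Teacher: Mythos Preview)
Your proposal is correct and takes essentially the same approach as the paper: both recognize that a cocontinuous left $\mathcal{A}$-module functor sending free modules to cofree comodules is determined by its restriction to free modules, and that this data collapses to a single map $A\otimes C\to\mathbbm{1}$. The paper's proof is a terse two-line sketch---it says the functor is determined by the image of the multiplication map $m\colon A\otimes A\to A$, which is a $C$-comodule map $A\otimes C\to C$, equivalently a pairing---whereas you have unpacked the same idea in full via the Kleisli/co-Kleisli formalism and made explicit the correspondence between functoriality and the pairing axioms.
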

\begin{proof}
Even though are hypotheses are more general, the proof given in \cite[Lemma A.8]{Kin} carries over verbatim. We outline the argument for the reader's convenience. A functor $\mathrm{Mod}_{\mathcal{A}}(A)\rightarrow \mathrm{Comod}_{\mathcal{A}}(C)$ as in the statement of the lemma is completely determined by its action on the multiplication map $m:A\otimes A\rightarrow A$. Namely, such a functor is not only compatible with the left $\mathcal{A}$-module structures, but is also cocontinuous because all of our constructions are carried out within $\mathbf{Pr}$. In particular, this functor is completely determined by the image of $m$, which is a right $C$-comodule map $:A\otimes C\rightarrow C$, or, equivalently, the data of a pairing $A\otimes C\rightarrow\mathbbm{1}$.
\end{proof}

\noindent A variant of the above observation was used in \cite[Proposition 3.19]{BJSS} to show that the canonical functor $\mathrm{HC}(\mathcal{A})\rightarrow\mathrm{End}(\mathcal{A})$ can be identified with the functor $\omega_{\mathcal{A}}^{\sharp}:\mathrm{Mod}_{\mathcal{A}}(\mathbb{F}_{\mathcal{A}})\rightarrow\mathrm{Comod}_{\mathcal{A}}(\mathbb{F}_{\mathcal{A}})$ supplied by the pairing $\omega_{\mathcal{A}}$. We will write $\omega_{\mathcal{E}/\mathcal{A}}^{\sharp}:\mathrm{Mod}_{\mathcal{A}}(\mathbb{F}_{\mathcal{E}/\mathcal{A}})\rightarrow\mathrm{Comod}_{\mathcal{A}}(\mathbb{F}_{\mathcal{E}/\mathcal{A}})$ for the functor obtained by applying Lemma \ref{lem:pairing} to the pairing $\omega_{\mathcal{E}/\mathcal{A}}$.

Our next result generalizes \cite[Proposition 3.19]{BJSS}, although our proof relies crucially on this result. We wish to point out that a similar result was obtained in \cite[Proposition 3.39]{Kin} under different hypotheses.

\begin{Proposition}\label{prop:cofactorizablenondegeneratepairing}
Let $\mathcal{A}$ be a faithfully flat $\mathcal{E}$-enriched finite braided tensor category. The canonical functor $$\mathrm{HC}_{\mathcal{E}}(\mathcal{A})\rightarrow\mathrm{End}_{\mathcal{E}}(\mathcal{A})$$ is naturally identified with the functor $$\omega_{\mathcal{E}/\mathcal{A}}^{\sharp}:\mathrm{Mod}_{\mathcal{A}}(\mathbb{F}_{\mathcal{E}/\mathcal{A}})\rightarrow\mathrm{Comod}_{\mathcal{A}}(\mathbb{F}_{\mathcal{E}/\mathcal{A}})$$ arising from the Hopf pairing $\omega_{\mathcal{E}/\mathcal{A}}$. In particular, it is an equivalence if and only if $\omega_{\mathcal{E}/\mathcal{A}}$ is non-degenerate.
\end{Proposition}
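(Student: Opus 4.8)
The plan is to mirror the proof of Proposition \ref{prop:factorizablevspairing}, transporting the non-relative identification of \cite[Proposition 3.19]{BJSS} across the exact sequence of Hopf algebras of Theorem \ref{thm:exactsequenceHopf}. Write $q:\mathbb{F}_{\mathcal{A}}\twoheadrightarrow\mathbb{F}_{\mathcal{E}/\mathcal{A}}$ for the quotient map. The discussion preceding the statement has already identified the quotient functor $\mathrm{HC}(\mathcal{A})\twoheadrightarrow\mathrm{HC}_{\mathcal{E}}(\mathcal{A})$ with induction $(-)\otimes_{\mathbb{F}_{\mathcal{A}}}\mathbb{F}_{\mathcal{E}/\mathcal{A}}$ and the forgetful functor $\mathrm{End}_{\mathcal{E}}(\mathcal{A})\to\mathrm{End}(\mathcal{A})$ with the cotensor $(-)\Box^{\pi}\mathbb{F}_{\mathcal{A}}$. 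First I would record that the canonical functor $\mathrm{HC}(\mathcal{A})\to\mathrm{End}(\mathcal{A})$ factors through the relative versions, so that, writing $G$ for the as-yet-unidentified bottom functor, we obtain a commutative square
$$\begin{tikzcd}[sep=small]
\mathrm{Mod}_{\mathcal{A}}(\mathbb{F}_{\mathcal{A}}) \arrow[r, "\omega_{\mathcal{A}}^{\sharp}"] \arrow[d, two heads] & \mathrm{Comod}_{\mathcal{A}}(\mathbb{F}_{\mathcal{A}}) \\
\mathrm{Mod}_{\mathcal{A}}(\mathbb{F}_{\mathcal{E}/\mathcal{A}}) \arrow[r, "G"'] & \mathrm{Comod}_{\mathcal{A}}(\mathbb{F}_{\mathcal{E}/\mathcal{A}}), \arrow[u, "(-)\Box^{\pi}\mathbb{F}_{\mathcal{A}}"']
\end{tikzcd}$$
whose top row is $\omega_{\mathcal{A}}^{\sharp}$ by \cite[Proposition 3.19]{BJSS}. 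This commutativity is immediate, since both canonical functors arise from the common monoidal functor $\mathcal{A}\boxtimes\mathcal{A}^{\mathrm{mop}}\to\mathrm{End}_{\mathcal{E}}(\mathcal{A})$.

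The point at which this argument must depart from that of Proposition \ref{prop:factorizablevspairing} is that there the right-hand comparison functors were fully faithful inclusions of relative centers, whereas here the right-hand functor $(-)\Box^{\pi}\mathbb{F}_{\mathcal{A}}$ is merely faithful. I therefore cannot simply cancel it to read off the inner identification, and instead I would characterize $G$ through Lemma \ref{lem:pairing}. The functor $G$ is a left $\mathcal{A}$-module functor, and it sends free modules to cofree comodules: being monoidal, it sends the unit $\mathbb{F}_{\mathcal{E}/\mathcal{A}}$ of $\mathrm{HC}_{\mathcal{E}}(\mathcal{A})$, which is the free module on $\mathbbm{1}$, to the unit of $\mathrm{End}_{\mathcal{E}}(\mathcal{A})$, which under \eqref{eq:modulefunctor} is the cofree comodule on $\mathbbm{1}$; then $\mathcal{A}$-linearity propagates this to every free module $A\otimes\mathbb{F}_{\mathcal{E}/\mathcal{A}}$. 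Hence Lemma \ref{lem:pairing} applies and furnishes a unique pairing $p$ on $\mathbb{F}_{\mathcal{E}/\mathcal{A}}$ with $G\cong p^{\sharp}$.

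It then remains to identify $p$ with $\omega_{\mathcal{E}/\mathcal{A}}$. I would use the naturality of the correspondence of Lemma \ref{lem:pairing} under the Hopf map $q$: induction sends free $\mathbb{F}_{\mathcal{A}}$-modules to free $\mathbb{F}_{\mathcal{E}/\mathcal{A}}$-modules and the cotensor sends cofree $\mathbb{F}_{\mathcal{E}/\mathcal{A}}$-comodules to cofree $\mathbb{F}_{\mathcal{A}}$-comodules, so the composite $(-)\Box^{\pi}\mathbb{F}_{\mathcal{A}}\circ G\circ(-)\otimes_{\mathbb{F}_{\mathcal{A}}}\mathbb{F}_{\mathcal{E}/\mathcal{A}}$ is again a free-to-cofree $\mathcal{A}$-module functor, whose associated pairing is $p\circ(q\otimes q)$. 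By the commutativity of the square this composite is $\omega_{\mathcal{A}}^{\sharp}$, whose pairing is $\omega_{\mathcal{A}}$; uniqueness in Lemma \ref{lem:pairing} thus gives $p\circ(q\otimes q)=\omega_{\mathcal{A}}$. On the other hand, Proposition \ref{prop:inducedpairing} asserts exactly that $\omega_{\mathcal{A}}$ factors as $\omega_{\mathcal{E}/\mathcal{A}}\circ(q\otimes q)$. Since $q$ is an epimorphism, so is $q\otimes q$, whence $p=\omega_{\mathcal{E}/\mathcal{A}}$ and $G\cong\omega_{\mathcal{E}/\mathcal{A}}^{\sharp}$. For the final clause I would argue verbatim as at the end of the proof of Proposition \ref{prop:factorizablevspairing}: dualizing the compact Hopf algebra $\mathbb{F}_{\mathcal{E}/\mathcal{A}}$, the pairing $\omega_{\mathcal{E}/\mathcal{A}}$ induces a homomorphism of Hopf algebras along which $\omega_{\mathcal{E}/\mathcal{A}}^{\sharp}$ is (co)restriction, and this functor is an equivalence if and only if that homomorphism is an isomorphism, i.e.\ if and only if $\omega_{\mathcal{E}/\mathcal{A}}$ is non-degenerate, by the analogue of \cite[Lemma 3.4]{Shi:nondeg}.

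The main obstacle is precisely the loss of full faithfulness on the comodule side: the clean cancellation available in Proposition \ref{prop:factorizablevspairing} is unavailable, so the substantive work lies in verifying that $G$ sends free modules to cofree comodules (so that Lemma \ref{lem:pairing} applies) and in the bookkeeping showing that the pairing extracted from $G$ is genuinely the descent of $\omega_{\mathcal{A}}$ along $q$, where Proposition \ref{prop:inducedpairing} together with the epimorphism property of $q\otimes q$ does the decisive work.
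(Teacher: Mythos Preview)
Your proposal is correct and follows essentially the same strategy as the paper: set up the comparison diagram with the non-relative identification $\omega_{\mathcal{A}}^{\sharp}$ of \cite{BJSS} on the outside, verify that the canonical relative functor sends free modules to cofree comodules so that Lemma \ref{lem:pairing} applies, and then pin down the resulting pairing as $\omega_{\mathcal{E}/\mathcal{A}}$ using that $\omega_{\mathcal{A}}$ descends along the epimorphism $q$. The only cosmetic difference is that you obtain the free-to-cofree property from monoidality (unit $\mapsto$ unit, then propagate by $\mathcal{A}$-linearity), whereas the paper traces the free module $A\otimes\mathbb{F}_{\mathcal{A}}$ explicitly through the coend identifications \eqref{eq:EndtoComodCoend} and \eqref{eq:modulefunctor}; your shortcut still relies on knowing that the identity endofunctor corresponds to the cofree comodule $\mathbb{F}_{\mathcal{E}/\mathcal{A}}$, which is exactly the $A=\mathbbm{1}$ case of the paper's computation.
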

\begin{proof}
We consider the diagram:
\begin{equation}\label{eq:relativecofactorizability}\begin{tikzcd}[sep= 0.88em]
\mathrm{HC}(\mathcal{A})\arrow[rr, two heads] \arrow[dd, "\rotatebox{90}{$\simeq$}"'] \arrow[rrrrrr, bend left = 15]                                   &  & \mathrm{HC}_{\mathcal{E}}(\mathcal{A}) \arrow[rr] \arrow[dd, "\rotatebox{90}{$\simeq$}"']                                           &  & {\mathrm{End}_{\mathcal{E}}(\mathcal{A})} \arrow[dd, "\rotatebox{-90}{$\simeq$}"] \arrow[rr, hook]          &  & {\mathrm{End}(\mathcal{A})} \arrow[dd, "\rotatebox{-90}{$\simeq$}"]            \\
&  &  &  & &  &  \\
\mathrm{Mod}_{\mathcal{A}}(\mathbb{F}_{\mathcal{A}}) \arrow[rr, two heads, "(-)\otimes_{\mathbb{F}_{\mathcal{A}}} \mathbb{F}_{\mathcal{E}/\mathcal{A}}" {yshift=4pt}] \arrow[rrrrrr, "\omega_{\mathcal{A}}^{\sharp}"', bend right = 15] &  & \mathrm{Mod}_{\mathcal{A}}(\mathbb{F}_{\mathcal{E}/\mathcal{A}}) \arrow[rr, "\omega_{\mathcal{E}/\mathcal{A}}^{\sharp}"] &  & \mathrm{Comod}_{\mathcal{A}}(\mathbb{F}_{\mathcal{E}/\mathcal{A}}) \arrow[rr, hook, "(-)\Box^{\pi} \mathbb{F}_{\mathcal{A}}" {yshift=4pt}] &  & \mathrm{Comod}_{\mathcal{A}}(\mathbb{F}_{\mathcal{A}}).
\end{tikzcd}\end{equation}
The vertical equivalences as well as the commutativity of the left and right squares were described above. Commutativity of the top and bottom squares holds by construction and direct inspection respectively. The commutativity of the outer square was derived in the proof of \cite[Proposition 3.19]{BJSS}.

We now observe that every free $\mathbb{F}_{\mathcal{E}/\mathcal{A}}$-module is the image under the functor $(-)\otimes_{\mathbb{F}_{\mathcal{A}}} \mathbb{F}_{\mathcal{E}/\mathcal{A}}$ of a free $\mathbb{F}_{\mathcal{A}}$-module. It was also shown in \cite[Section 3.3]{BJSS} that the left $\mathcal{A}$-module functor $\mathrm{HC}(\mathcal{A})\twoheadrightarrow\mathrm{End}(\mathcal{A})$ sends free $\mathbb{F}_{\mathcal{A}}$-modules to cofree $\mathbb{F}_{\mathcal{A}}$-comodules. More precisely, the free $\mathbb{F}_{\mathcal{A}}$-module $A\otimes\mathbb{F}_{\mathcal{A}}$ is sent to the functor $A\otimes (-):\mathcal{A}\rightarrow \mathcal{A}$, which is identified with the cofree $\mathbb{F}_{\mathcal{A}}$-comodule $A\otimes\mathbb{F}_{\mathcal{A}}$ under Equation \eqref{eq:EndtoComodCoend}. As a consequence, we find that the free $\mathbb{F}_{\mathcal{A}}$-module $A\otimes\mathbb{F}_{\mathcal{A}}$ is sent under $\mathrm{HC}(\mathcal{A})\twoheadrightarrow\mathrm{End}_{\mathcal{E}}(\mathcal{A})$ to the $\mathcal{E}$-module functor $A\otimes (-):\mathcal{A}\rightarrow \mathcal{A}$. Under the equivalence of Equation \eqref{eq:modulefunctor}, we find that the $\mathcal{E}$-module functor $A\otimes (-):\mathcal{A}\rightarrow \mathcal{A}$ corresponds to the cofree $\mathbb{F}_{\mathcal{A}}$-comodule $A\otimes\mathbb{F}_{\mathcal{A}}$ with its canonical $\mathbb{F}_{\mathcal{E}}$-module structure. But, the latter corresponds to the cofree $\mathbb{F}_{\mathcal{E}/\mathcal{A}}$-comodule $A\otimes\mathbb{F}_{\mathcal{E}/\mathcal{A}}$, so that we have established that the left $\mathcal{A}$-module functor $\mathrm{HC}(\mathcal{A})\twoheadrightarrow\mathrm{End}_{\mathcal{E}}(\mathcal{A})$ sends free $\mathbb{F}_{\mathcal{A}}$-modules to cofree $\mathbb{F}_{\mathcal{E}/\mathcal{A}}$-modules. As every free $\mathbb{F}_{\mathcal{E}/\mathcal{A}}$-module is the image of a free $\mathbb{F}_{\mathcal{A}}$-module, it follows immediately that the left $\mathcal{A}$-module functor $\mathrm{HC}_{\mathcal{E}}(\mathcal{A})\twoheadrightarrow\mathrm{End}_{\mathcal{E}}(\mathcal{A})$ sends free $\mathbb{F}_{\mathcal{E}/\mathcal{A}}$-modules to cofree $\mathbb{F}_{\mathcal{E}/\mathcal{A}}$-modules. Finally, the bottom square of the diagram of Equation \eqref{eq:relativecofactorizability} commutes, so we find that $\mathrm{HC}_{\mathcal{E}}(\mathcal{A})\twoheadrightarrow\mathrm{End}_{\mathcal{E}}(\mathcal{A})$ can be identified with $\omega_{\mathcal{E}/\mathcal{A}}^{\sharp}$ by appealing to Lemma \ref{lem:pairing}.

We now conclude the proof of the proposition. Let us write $\mathbb{F}_{\mathcal{E}/\mathcal{A}}^*$ for the dual of $\mathbb{F}_{\mathcal{E}/\mathcal{A}}$. The coalgebra structure on $\mathbb{F}_{\mathcal{E}/\mathcal{A}}$ induces an algebra structure on $\mathbb{F}_{\mathcal{E}/\mathcal{A}}^*$. We may therefore consider the following equivalence of plain categories $$\mathrm{Comod}_{\mathcal{A}}(\mathbb{F}_{\mathcal{E}/\mathcal{A}})\simeq \mathrm{Mod}_{\mathcal{A}}(\mathbb{F}_{\mathcal{E}/\mathcal{A}}^*).$$ The pairing $\omega_{\mathcal{E}/\mathcal{A}}$ induces a map of algebras $f:\mathbb{F}_{\mathcal{E}/\mathcal{A}}\rightarrow \mathbb{F}_{\mathcal{E}/\mathcal{A}}^*$. Under the above identification the functor $\omega_{\mathcal{E}/\mathcal{A}}^{\sharp}$ corresponds to the functor $$(-)\otimes_{\mathbb{F}_{\mathcal{E}/\mathcal{A}}}\mathbb{F}_{\mathcal{E}/\mathcal{A}}^*:\mathrm{Mod}_{\mathcal{A}}(\mathbb{F}_{\mathcal{E}/\mathcal{A}}^*)\rightarrow\mathrm{Mod}_{\mathcal{A}}(\mathbb{F}_{\mathcal{E}/\mathcal{A}})$$ induced by $f$. In particular, its right adjoint is the functor of restriction under $f$. We therefore find that $(-)\otimes_{\mathbb{F}_{\mathcal{E}/\mathcal{A}}}\mathbb{F}_{\mathcal{E}/\mathcal{A}}^*$ is an equivalence if and only if its right adjoint is an equivalence. Thanks to \cite[Lemma 3.4]{Shi:nondeg}, this second condition holds if and only if $f$ is an isomorphism, which itself holds if and only if $\omega_{\mathcal{E}/\mathcal{A}}$ is non-degenerate.
\end{proof}

It remains to show that if we only assume that $F:\mathcal{E}\rightarrow\mathcal{Z}_{(2)}(\mathcal{A})$ is faithful, then $\mathcal{E}$-cofactorizability implies that $F$ is fully faithful.

\begin{Lemma}
Let $\mathcal{A}$ be an $\mathcal{E}$-cofactorizable $\mathcal{E}$-enriched finite braided tensor category such that that $F:\mathcal{E}\rightarrow\mathcal{Z}_{(2)}(\mathcal{A})$ is faithful. Then, we have that $\mathcal{A}$ is faithfully flat, i.e.\ $F$ is fully faithful.
\end{Lemma}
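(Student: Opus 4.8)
The plan is to mirror the proof of Proposition~\ref{prop:faithfulfactorizablefaithfullyflat}, replacing the relative enveloping category and relative Drinfeld center by the relative Harish-Chandra category and $\mathrm{End}_{\mathcal{E}}(\mathcal{A})$. Write $\mathcal{F}$ for the essential image of $F$, so that $F$ factors as an essentially surjective symmetric tensor functor $q\colon\mathcal{E}\to\mathcal{F}$ followed by the fully faithful inclusion $\iota\colon\mathcal{F}\hookrightarrow\mathcal{Z}_{(2)}(\mathcal{A})$; in particular $\mathcal{A}$ is faithfully flat over $\mathcal{F}$. Put $A:=q^{R}(\mathbbm{1})$, a commutative exact algebra in $\mathcal{E}$, so that $q$ is the free-module functor $\mathcal{E}\to\mathrm{Mod}_{\mathcal{E}}(A)\simeq\mathcal{F}$; it then suffices to prove $A\cong\mathbbm{1}$. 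I would obtain this exactly as in the second half of the proof of Proposition~\ref{prop:faithfulfactorizablefaithfullyflat}: granting that the canonical quotient tensor functor $\mathcal{A}\boxtimes_{\mathcal{E}}\mathcal{A}^{\mathrm{rev}}\to\mathcal{A}\boxtimes_{\mathcal{F}}\mathcal{A}^{\mathrm{rev}}$ is an equivalence, the commutative square involving $\mathcal{F}\boxtimes_{\mathcal{E}}\mathcal{F}^{\mathrm{rev}}\hookrightarrow\mathcal{A}\boxtimes_{\mathcal{E}}\mathcal{A}^{\mathrm{rev}}$ shows that $\mathcal{F}\boxtimes_{\mathcal{E}}\mathcal{F}^{\mathrm{rev}}\to\mathcal{F}$ is fully faithful; identifying this with $\mathrm{Mod}_{\mathcal{E}}(A\otimes A)\to\mathrm{Mod}_{\mathcal{E}}(A)$ and invoking the Yoneda lemma yields $A\otimes A\cong A$, whence faithfulness of $F$ (so that $\mathbbm{1}\hookrightarrow A$) forces $A\cong\mathbbm{1}$ and $F$ is fully faithful.

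Thus the substance is to deduce from $\mathcal{E}$-cofactorizability that $\mathcal{A}\boxtimes_{\mathcal{E}}\mathcal{A}^{\mathrm{rev}}\to\mathcal{A}\boxtimes_{\mathcal{F}}\mathcal{A}^{\mathrm{rev}}$ is an equivalence, and for this I would work on the $\mathrm{End}$-side. The canonical functor $\Phi\colon\mathrm{HC}_{\mathcal{E}}(\mathcal{A})\to\mathrm{End}_{\mathcal{E}}(\mathcal{A})$ is induced by $X\boxtimes Y\mapsto X\otimes(-)\otimes Y$; each such endofunctor carries a canonical $\mathcal{Z}_{(2)}(\mathcal{A})$-module structure coming from the braiding with central objects, a fortiori an $\mathcal{F}$-module structure. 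Consequently $\Phi$ factors as $\Phi=J\circ\Phi'$, where $J\colon\mathrm{End}_{\mathcal{F}}(\mathcal{A})\hookrightarrow\mathrm{End}_{\mathcal{E}}(\mathcal{A})$ restricts the module structure along $q$. Because $q$ is essentially surjective, an $\mathcal{E}$-module natural transformation between $\mathcal{F}$-module endofunctors is automatically $\mathcal{F}$-module, so $J$ is fully faithful. Since $\Phi$ is an equivalence by hypothesis and $J$ is fully faithful, $\Phi=J\circ\Phi'$ forces $J$ to be essentially surjective, hence an equivalence $\mathrm{End}_{\mathcal{F}}(\mathcal{A})\simeq\mathrm{End}_{\mathcal{E}}(\mathcal{A})$.

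Finally I would translate this into the desired statement. Recall $\mathrm{End}(\mathcal{A})\simeq\mathcal{A}\boxtimes\mathcal{A}^{\mathrm{mop}}$ as plain categories and, combining $\mathrm{End}_{\mathcal{E}}(\mathcal{A})\simeq\mathrm{End}(\mathcal{A})\boxtimes_{\mathcal{E}\boxtimes\mathcal{E}}\mathcal{E}$ from \cite{BJS} with the reasoning of Lemma~\ref{lem:relativetensorasmodules}, that $\mathrm{End}_{\mathcal{E}}(\mathcal{A})\simeq\mathcal{A}\boxtimes_{\mathcal{E}}\mathcal{A}^{\mathrm{mop}}$ as plain categories, and likewise over $\mathcal{F}$. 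Under these identifications $J$ corresponds to the right adjoint of the canonical quotient $Q\colon\mathcal{A}\boxtimes_{\mathcal{E}}\mathcal{A}^{\mathrm{mop}}\twoheadrightarrow\mathcal{A}\boxtimes_{\mathcal{F}}\mathcal{A}^{\mathrm{mop}}$; as $Q$ is dominant its right adjoint is fully faithful, and $Q$ is an equivalence if and only if that adjoint is. Hence $J\simeq Q^{R}$ being an equivalence shows $Q$, and therefore $\mathcal{A}\boxtimes_{\mathcal{E}}\mathcal{A}^{\mathrm{rev}}\to\mathcal{A}\boxtimes_{\mathcal{F}}\mathcal{A}^{\mathrm{rev}}$ (using $\mathcal{A}^{\mathrm{rev}}\simeq\mathcal{A}^{\mathrm{mop}}$), is an equivalence, completing the reduction.

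The main obstacle is precisely this last translation: identifying the module-structure-restriction functor $J$ with the right adjoint of the relative-tensor-product quotient $Q$ under the plain-category equivalences $\mathrm{End}_{\bullet}(\mathcal{A})\simeq\mathcal{A}\boxtimes_{\bullet}\mathcal{A}^{\mathrm{mop}}$, keeping careful track of the variances and of the monoidal-versus-composition structures. A lesser point to verify is that the $\mathcal{Z}_{(2)}(\mathcal{A})$-module lift $\Phi'$ of $\Phi$ is still $\mathcal{A}\boxtimes_{\mathcal{E}}\mathcal{A}^{\mathrm{rev}}$-balanced, so that it genuinely descends to $\mathrm{HC}_{\mathcal{E}}(\mathcal{A})$; this is a routine inspection parallel to the construction of the cofactorizability functor itself.
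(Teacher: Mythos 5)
Your proposal is correct and follows essentially the same route as the paper's proof: factor the cofactorizability equivalence through $\mathrm{End}_{\mathcal{F}}(\mathcal{A})$, conclude that the restriction functor $\mathrm{End}_{\mathcal{F}}(\mathcal{A})\rightarrow\mathrm{End}_{\mathcal{E}}(\mathcal{A})$ is an equivalence (using that a module structure on a natural transformation is a property), identify it with the right adjoint of the quotient $\mathcal{A}\boxtimes_{\mathcal{E}}\mathcal{A}^{\mathrm{mop}}\rightarrow\mathcal{A}\boxtimes_{\mathcal{F}}\mathcal{A}^{\mathrm{mop}}$ to see that the latter is an equivalence, and then run the Yoneda argument from the second half of Proposition \ref{prop:faithfulfactorizablefaithfullyflat}. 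The minor reordering (full faithfulness of $J$ first, then essential surjectivity from the factorization) does not change the substance.
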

\begin{proof}
Let us write $\mathcal{F}$ for the finite symmetric tensor category given by the image of $F:\mathcal{E}\rightarrow\mathcal{Z}_{(2)}(\mathcal{A})$. Let us now consider the following commutative diagram: $$\begin{tikzcd}[sep=3ex]
\mathrm{HC}_{\mathcal{E}}(\mathcal{A}) \arrow[d] \arrow[r, "\simeq"] & {\mathrm{End}_{\mathcal{E}}(\mathcal{A})}                    \\
\mathrm{HC}_{\mathcal{F}}(\mathcal{A}) \arrow[r]                              & {\mathrm{End}_{\mathcal{F}}(\mathcal{A})}. \arrow[u]
\end{tikzcd}$$ 
The top horizontal arrow is an equivalence by hypothesis. In particular, this implies that the functor $\mathrm{End}_{\mathcal{F}}(\mathcal{A})\rightarrow \mathrm{End}_{\mathcal{E}}(\mathcal{A})$ is essentially surjective and full. But this functor is also faithful by inspection. Namely, a module structure on a natural transformation is a property and not additional data. It follows that $\mathrm{End}_{\mathcal{F}}(\mathcal{A})\rightarrow \mathrm{End}_{\mathcal{E}}(\mathcal{A})$ is an equivalence. This shows that the functor $\mathcal{A}\boxtimes_{\mathcal{F}}\mathcal{A}^{\mathrm{mop}}\rightarrow\mathcal{A}\boxtimes_{\mathcal{E}}\mathcal{A}^{\mathrm{mop}}$ is an equivalence. But this is the right adjoint to the canonical functor $$\mathcal{A}\boxtimes_{\mathcal{E}}\mathcal{A}^{\mathrm{mop}}\rightarrow\mathcal{A}\boxtimes_{\mathcal{F}}\mathcal{A}^{\mathrm{mop}},$$ induced by the universal property of the relative Deligne tensor product, so that this latter functor is also an equivalence. Mimicking the second half of the proof of Proposition \ref{prop:faithfulfactorizablefaithfullyflat} yields the desired result.
\end{proof}

\subsection{Relative Invertibility}

Combining Theorem \ref{thm:relativeinvertibilityBJSS} with Theorem \ref{thm:nondegeneratevsfactorizable} and Propositions \ref{prop:factorizablevspairing} and \ref{prop:cofactorizablenondegeneratepairing}, we obtain the following result generalizing \cite{BJSS}.

\begin{Theorem}\label{thm:relativeinvertibility}
Let $\mathbbm{k}$ be algebraically closed, and let $\mathcal{E}$ be a finite symmetric tensor category over $\mathbbm{k}$. An $\mathcal{E}$-enriched finite braided tensor category $\mathcal{A}$ is invertible as an object of the Morita 4-category $\mathrm{Mor}^{\mathrm{pre}}_2(\mathbf{Pr}_{\mathcal{E}})$ if and only if $F:\mathcal{E}\rightarrow\mathcal{Z}_{(2)}(\mathcal{A})$ is an equivalence.
\end{Theorem}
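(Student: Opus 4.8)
The plan is to assemble the four preceding results, so the argument is essentially a bookkeeping exercise once the substantive work---encapsulated in the pairing identifications---has been carried out. The strategy is to verify the three conditions of the specialized invertibility criterion of Theorem \ref{thm:relativeinvertibilityBJSS} by routing both factorizability and cofactorizability through a single gateway, namely the non-degeneracy of the pairing $\omega_{\mathcal{E}/\mathcal{A}}$.

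The forward direction is immediate. If $\mathcal{A}$ is invertible in $\mathrm{Mor}^{\mathrm{pre}}_2(\mathbf{Pr}_{\mathcal{E}})$, then Theorem \ref{thm:relativeinvertibilityBJSS} furnishes in particular its first condition, which is precisely the assertion that $F:\mathcal{E}\rightarrow\mathcal{Z}_{(2)}(\mathcal{A})$ is an equivalence. No further argument is needed.

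For the converse, suppose $F$ is an equivalence. Then $F$ is in particular fully faithful, so $\mathcal{A}$ is faithfully flat and every result above requiring faithful flatness becomes available. I would then verify the remaining two conditions of Theorem \ref{thm:relativeinvertibilityBJSS} in turn. First, since $F$ being an equivalence is exactly $\mathcal{E}$-non-degeneracy, Theorem \ref{thm:nondegeneratevsfactorizable} yields that $\mathcal{A}$ is $\mathcal{E}$-factorizable, establishing the second condition. Next, by Proposition \ref{prop:factorizablevspairing}, $\mathcal{E}$-factorizability is equivalent to the non-degeneracy of the canonical pairing $\omega_{\mathcal{E}/\mathcal{A}}$, so the latter holds. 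Finally, Proposition \ref{prop:cofactorizablenondegeneratepairing} identifies non-degeneracy of $\omega_{\mathcal{E}/\mathcal{A}}$ with $\mathcal{E}$-cofactorizability, giving the third condition. With all three conditions in hand, Theorem \ref{thm:relativeinvertibilityBJSS} concludes that $\mathcal{A}$ is invertible.

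The step carrying the real weight is the passage through the pairing: it is Propositions \ref{prop:factorizablevspairing} and \ref{prop:cofactorizablenondegeneratepairing} that allow both the factorizability functor $\mathcal{A}\boxtimes_{\mathcal{E}}\mathcal{A}^{\mathrm{rev}}\rightarrow\mathcal{Z}(\mathcal{A},\mathcal{E})$ and the cofactorizability functor $\mathrm{HC}_{\mathcal{E}}(\mathcal{A})\rightarrow\mathrm{End}_{\mathcal{E}}(\mathcal{A})$ to be tested against the single invariant furnished by $\omega_{\mathcal{E}/\mathcal{A}}$. At the level of this theorem, however, no genuine obstacle remains; the only point requiring care is to observe that assuming $F$ to be an equivalence---rather than merely faithful---grants faithful flatness outright, so that no appeal to Proposition \ref{prop:faithfulfactorizablefaithfullyflat} or its cofactorizable analogue is needed, and the hypothesis matches the clean biconditional being asserted.
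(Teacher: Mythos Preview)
Your proposal is correct and follows essentially the same approach as the paper, which simply states that the theorem is obtained by combining Theorem \ref{thm:relativeinvertibilityBJSS} with Theorem \ref{thm:nondegeneratevsfactorizable} and Propositions \ref{prop:factorizablevspairing} and \ref{prop:cofactorizablenondegeneratepairing}. Your observation that the hypothesis $F$ being an equivalence already grants faithful flatness (so no appeal to Proposition \ref{prop:faithfulfactorizablefaithfullyflat} or its cofactorizable analogue is needed) is exactly right and explains why the paper can give a clean biconditional here without any auxiliary faithfulness assumption.
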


\noindent Heuristically, the above theorem shows that any finite braided tensor category is invertible relative to its symmetric center. Let us also mention that, in order to remove the hypothesis on the base field $\mathbbm{k}$, it would suffice to formalize the argument sketched out in Remark \ref{rem:nondegeneracyarbitraryfield}.

\begin{Example}\label{ex:smallquantumgroup}
Set $\mathbbm{k} = \mathbb{C}$ to be the field of complex numbers. Let also $G$ be a simple algebraic group over $\mathbb{C}$, and $q$ be a root of unity. We write $\mathrm{Rep}(G_q)$ for the braided tensor category of representations of Lusztig's divided power quantum group for $G$ at root of unity $q$. Its symmetric center was computed in \cite[Section 9]{Neg:arbitrary}. It follows from \cite{Del:fiber} that there is an essentially unique symmetric tensor functor $F:\mathcal{Z}_{(2)}(\mathrm{Rep}(G_q))\rightarrow\mathrm{sVec}$, to the category of super vector spaces. As shown in \cite[Section 10]{Neg:arbitrary}, the image of this functor can be either $\mathrm{Vec}$ or $\mathrm{sVec}$. We write $\mathrm{Tann}_q\subseteq \mathcal{Z}_{(2)}(\mathrm{Rep}(G_q))$ for the symmetric tensor category that is the preimage of $\mathrm{Vec}\subset\mathrm{sVec}$ under $F$. It was established in \cite[Theorem 13.1]{Neg:arbitrary} that $$\mathrm{Rep}(G_q)\boxtimes_{\mathrm{Tann}_q}\!\!\mathrm{Vec}$$ is a finite braided tensor category. By construction, its symmetric center is either $\mathrm{Vec}$ if $\mathrm{Tann}_q = \mathcal{Z}_{(2)}(\mathrm{Rep}(G_q))$ or $\mathrm{sVec}$ otherwise. In the former case, we obtain invertible objects of $\mathrm{Mor}^{\mathrm{pre}}_2(\mathbf{Pr})$ via \cite{BJSS,Shi:nondeg}. In the latter, we get invertible objects of $\mathrm{Mor}^{\mathrm{pre}}_2(\mathbf{Pr}_{\mathrm{sVec}})$ thanks to Theorem \ref{thm:relativeinvertibility}.
\end{Example}

\begin{Remark}
Under the hypotheses of the previous example, it was shown in \cite[Theorem 4.3]{Kin} that when $\mathrm{Tann}_q=\mathcal{Z}_{(2)}(\mathrm{Rep}(G_q))$, that is, the symmetric center of $\mathrm{Rep}(G_q)$ is Tannakian, then $\mathrm{Rep}(G_q)$ is an invertible object of the Morita 4-category of $\mathcal{Z}_{(2)}(\mathrm{Rep}(G_q))$-enriched braided pre-tensor categories. The proof makes heavy use of the fact that the finite braided tensor category $\mathrm{Rep}(G_q)\boxtimes_{\mathrm{Tann}_q}\!\!\mathrm{Vec}$ is non-degenerate, and therefore invertible in the Morita 4-category of braided pre-tensor categories by \cite{BJSS}. Thanks to Theorem \ref{thm:relativeinvertibility}, we expect that the assumption that $\mathrm{Tann}_q=\mathcal{Z}_{(2)}(\mathrm{Rep}(G_q))$ can be removed, so that the result of \cite{Kin} holds without any restriction on the pair of $G$ and $q$.
\end{Remark}

\begin{Example}\label{ex:mixedVerlinde}
Let $\mathbbm{k}$ be an algebraically closed field of characteristic $p>0$. Let also $n$ be a positive integer, and $\zeta$ a root of unity in $\mathbbm{k}$. The mixed Verlinde category $\mathrm{Ver}_{p^{(n)}}^{\zeta}$, introduced in \cite{STWZ}, are finite braided tensor categories obtained as the abelian envelope of quotients of the category of tilting modules for $\mathrm{SL}_2$ at root of unity $\zeta$. With $\zeta = + 1$, these are precisely the symmetric Verlinde categories $\mathrm{Ver}_{p^{n}}$ defined in \cite{BEO,C:monoidal}. For any root of unity $\zeta$, the symmetric center of $\mathrm{Ver}_{p^{(n)}}^{\zeta}$ was identified in \cite{D:Verlinde}. For instance, if $\zeta$ has odd order, the symmetric center of $\mathrm{Ver}_{p^{(n)}}^{\zeta}$ is $\mathrm{Ver}_{p^{n-1}}$. For such roots of unity, it therefore follows from Theorem \ref{thm:relativeinvertibility} that $\mathrm{Ver}_{p^{(n)}}^{\zeta}$ yields an invertible object in the Morita 4-category of $\mathrm{Ver}_{p^{n-1}}$-enriched braided pre-tensor categories.
\end{Example}

\section{Applications}

We discuss some applications of Theorem \ref{thm:relativeinvertibility}. We assume throughout that our base field $\mathbbm{k}$ is algebraically closed, although we suspect that all our result hold over any perfect field. We begin by giving a higher categorical construction of the relative Witt groups of separable braided tensor categories introduced in \cite{DNO}. We then consider non-separable versions of the Witt group and its relative variants. Finally, we generalize the full dualizability result of \cite{BJS}.

\subsection{Witt Groups of Separable Braided Tensor Categories}

Let also $\mathcal{E}$ be a separable symmetric tensor category over an algebraically closed field. We may therefore consider the symmetric monoidal sub-4-category $\mathrm{Mor}^{\mathrm{sep}}_2(\mathbf{Pr}_{\mathcal{E}})$ of $\mathrm{Mor}^{\mathrm{pre}}_2(\mathbf{Pr}_{\mathcal{E}})$. Its Picard group $\pi_0(\mathrm{Mor}^{\mathrm{sep}}_2(\mathbf{Pr}_{\mathcal{E}})^{\times})$, that is, the group of invertible objects of $\mathrm{Mor}^{\mathrm{sep}}_2(\mathbf{Pr}_{\mathcal{E}})$ up to equivalence, can be identified with a well-known group in the theory of separable braided tensor categories.

In the case $\mathcal{E}=\mathrm{Vec}$, it was shown in \cite{BJSS} that the group $\pi_0(\mathrm{Mor}^{\mathrm{sep}}_2(\mathbf{Pr})^{\times})$ coincides with the (quantum) Witt group $\mathcal{W}itt^{\mathrm{sep}}(\mathrm{Vec})$ introduced in \cite{DMNO}. In essence, the group $\mathcal{W}itt^{\mathrm{sep}}(\mathrm{Vec})$ is the quotient of the monoid of non-degenerate separable braided tensor categories by the submonoid consisting of the Drinfeld centers of separable tensor categories. This abelian group is of great interest as it offers an approachable alternative to the essentially inaccessible problem of classifying all non-degenerate separable braided tensor categories.

Now, if $\mathcal{E}$ is a separable symmetric tensor category, a relative (quantum) Witt group $\mathcal{W}itt^{\mathrm{sep}}(\mathcal{E})$ of $\mathcal{E}$-non-degenerate separable braided tensor categories was introduced in \cite{DNO}. Succinctly, the group $\mathcal{W}itt^{\mathrm{sep}}(\mathcal{E})$ is defined as the quotient of the monoid $\mathcal{E}$-non-degenerate separable braided tensor categories by the submonoid of relative Drinfeld centers of faithfully flat $\mathcal{E}$-enriched separable tensor categories. Generalizing \cite[Theorem 4.2]{BJSS}, we obtain the following result.

\begin{Proposition}
Let $\mathbbm{k}$ be an algebraically closed field, and let $\mathcal{E}$ be a separable symmetric tensor category. Then, the relative Witt group $\mathcal{W}itt^{\mathrm{sep}}(\mathcal{E})$ is isomorphic to $\pi_0(\mathrm{Mor}^{\mathrm{sep}}_2(\mathbf{Pr}_{\mathcal{E}})^{\times})$, the Picard group of $\mathrm{Mor}^{\mathrm{sep}}_2(\mathbf{Pr}_{\mathcal{E}})$.
\end{Proposition}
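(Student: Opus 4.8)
The plan is to follow the strategy of \cite[Theorem 4.2]{BJSS}, transposed to the relative setting, and to exhibit a monoid homomorphism that descends to an isomorphism of the two quotients. First I would identify the underlying monoids. By Theorem \ref{thm:relativeinvertibility}, the invertible objects of $\mathrm{Mor}^{\mathrm{pre}}_2(\mathbf{Pr}_{\mathcal{E}})$ are exactly the $\mathcal{E}$-non-degenerate finite braided tensor categories; intersecting with the objects of the sub-4-category $\mathrm{Mor}^{\mathrm{sep}}_2(\mathbf{Pr}_{\mathcal{E}})$, the invertible objects therein are precisely the $\mathcal{E}$-non-degenerate separable braided tensor categories. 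I would check that invertibility in the sub-4-category agrees with invertibility in the ambient one: the inverse of $\mathcal{A}$ is $\mathcal{A}^{\mathrm{rev}}$, which is again separable and $\mathcal{E}$-non-degenerate, and $\mathcal{A}\boxtimes_{\mathcal{E}}\mathcal{A}^{\mathrm{rev}}\simeq\mathcal{Z}(\mathcal{A},\mathcal{E})$ by $\mathcal{E}$-factorizability. Thus the monoid of invertible objects, under $\boxtimes_{\mathcal{E}}$ with unit $\mathcal{E}$, is exactly the monoid of $\mathcal{E}$-non-degenerate separable braided tensor categories underlying $\mathcal{W}itt^{\mathrm{sep}}(\mathcal{E})$.

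Next I would compute the equivalence relation defining $\pi_0$. Two invertible objects $\mathcal{A}$ and $\mathcal{B}$ represent the same class in the Picard group precisely when there is an invertible $1$-morphism $\mathcal{A}\nrightarrow\mathcal{B}$; using the inverse $\mathcal{B}^{\mathrm{rev}}$ and the symmetric monoidal structure, this is equivalent to the existence of an invertible $1$-morphism $\mathcal{A}\boxtimes_{\mathcal{E}}\mathcal{B}^{\mathrm{rev}}\nrightarrow\mathcal{E}$ to the monoidal unit. So it suffices to characterize the invertible $1$-morphisms $\mathcal{D}\nrightarrow\mathcal{E}$ out of an $\mathcal{E}$-non-degenerate separable braided tensor category $\mathcal{D}$. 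Such a $1$-morphism is an $\mathcal{E}$-enriched separable tensor category $\mathcal{C}$ together with a braided tensor functor $\mathcal{D}\rightarrow\mathcal{Z}(\mathcal{C})$; the $\mathcal{E}$-enrichment forces this functor to factor through the relative Drinfeld center $\mathcal{Z}(\mathcal{C},\mathcal{E})$, which by Lemma \ref{lem:relativecenter} is $\mathrm{End}_{\mathcal{C}\boxtimes_{\mathcal{E}}\mathcal{C}^{\mathrm{mop}}}(\mathcal{C})$.

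The crux, and the step I expect to be the main obstacle, is to show that this $1$-morphism is invertible if and only if the induced functor $\mathcal{D}\rightarrow\mathcal{Z}(\mathcal{C},\mathcal{E})$ is an equivalence, equivalently that $\mathcal{C}$ is a faithfully flat $\mathcal{E}$-enriched separable tensor category with $\mathcal{D}\simeq\mathcal{Z}(\mathcal{C},\mathcal{E})$. Here I would invoke the general invertibility theory for $1$-morphisms in Morita $4$-categories of $E_2$-algebras from \cite{GS}, exactly as in \cite{BJSS}, now applied relative to $\mathcal{E}$: the composite of $\mathcal{C}$ with its adjoint $1$-morphism must recover the identity $1$-morphisms of $\mathcal{D}$ and of $\mathcal{E}$, whose central structures are the factorizability equivalences, and unwinding this through the relative center identifications of Lemma \ref{lem:relativecenter} and Proposition \ref{prop:RelativeCenterModule} pins down $\mathcal{Z}(\mathcal{C},\mathcal{E})\simeq\mathcal{D}$. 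The delicate points will be verifying that all constructions stay within the separable sub-4-category, so that $\mathcal{C}\boxtimes_{\mathcal{E}}\mathcal{C}^{\mathrm{mop}}$ and the relevant centers remain separable, using Theorem \ref{thm:enrichedfullydualizbale}, and that the enriched balancing is correctly tracked through the relative Deligne tensor products.

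Finally I would assemble the isomorphism. The previous step shows that $\mathcal{A}$ and $\mathcal{B}$ are identified in $\pi_0(\mathrm{Mor}^{\mathrm{sep}}_2(\mathbf{Pr}_{\mathcal{E}})^{\times})$ if and only if $\mathcal{A}\boxtimes_{\mathcal{E}}\mathcal{B}^{\mathrm{rev}}\simeq\mathcal{Z}(\mathcal{C},\mathcal{E})$ for some faithfully flat $\mathcal{E}$-enriched separable tensor category $\mathcal{C}$, which is exactly the defining relation of the relative Witt group $\mathcal{W}itt^{\mathrm{sep}}(\mathcal{E})$ of \cite{DNO}. The assignment $\mathcal{A}\mapsto[\mathcal{A}]$ therefore defines a surjective monoid homomorphism from the monoid of invertible objects onto $\mathcal{W}itt^{\mathrm{sep}}(\mathcal{E})$ whose induced congruence is precisely the Picard equivalence relation, yielding the desired group isomorphism $\pi_0(\mathrm{Mor}^{\mathrm{sep}}_2(\mathbf{Pr}_{\mathcal{E}})^{\times})\cong\mathcal{W}itt^{\mathrm{sep}}(\mathcal{E})$; specializing to $\mathcal{E}=\mathrm{Vec}$ recovers \cite[Theorem 4.2]{BJSS} and the Witt group of \cite{DMNO}.
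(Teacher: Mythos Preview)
Your proposal is correct and follows essentially the same route as the paper: identify the invertible objects via Theorem \ref{thm:relativeinvertibility}, then reduce the equivalence relation on $\pi_0$ to the statement that a 1-morphism given by an $\mathcal{E}$-enriched separable tensor category $\mathcal{C}$ is invertible if and only if the induced braided tensor functor to $\mathcal{Z}(\mathcal{C},\mathcal{E})$ is an equivalence.

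The one notable difference is in how the ``crux'' step is handled. The paper does not attempt to prove this characterization of invertible 1-morphisms from scratch; it simply cites \cite[Theorem 2.51]{DHJFNPPRY}, remarking that the argument there (stated in characteristic zero) goes through in positive characteristic under the separability hypothesis. You instead propose to unwind this directly from the invertibility criteria of \cite{GS,BJSS} together with Lemma \ref{lem:relativecenter} and Proposition \ref{prop:RelativeCenterModule}. That is a viable plan and is morally what underlies the cited result, but be aware that carrying it out carefully involves more bookkeeping than your sketch suggests (tracking the centered structures and checking closure under the separable sub-4-category), which is presumably why the paper opts to import the statement wholesale.
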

\begin{proof}
It follows from Theorem \ref{thm:relativeinvertibility} that the invertible objects of $\mathrm{Mor}^{\mathrm{sep}}_2(\mathbf{Pr}_{\mathcal{E}})$ are precisely faithfully flat $\mathcal{E}$-enriched separable braided tensor categories. It is therefore enough to show that given a faithfully flat $\mathcal{E}$-enriched separable braided tensor category $\mathcal{A}$ and an $\mathcal{E}$-enriched $\mathcal{A}$-central separable tensor category $\mathcal{C}$, the corresponding 1-morphism $\mathcal{C}:\mathcal{E}\nrightarrow \mathcal{A}$ is an equivalence if and only if the canonical braided tensor functor $\mathcal{A}\rightarrow \mathcal{Z}(\mathcal{C},\mathcal{E})$ is an equivalence in $\mathrm{Mor}^{\mathrm{sep}}_2(\mathbf{Pr}_{\mathcal{E}})$. This is \cite[Theorem 2.51]{DHJFNPPRY}. More precisely, that result is stated in characteristic zero, but the proof carries over to positive characteristic under our additional separability assumption.
\end{proof}

\subsection{Witt Groups of Finite Braided Tensor Categories}

Let us write $\mathrm{Mor}_2^{\mathrm{fin}}(\mathbf{Pr})$ for the symmetric monoidal sub-4-category of $\mathrm{Mor}_2^{\mathrm{pre}}(\mathbf{Pr})$ whose objects are finite braided tensor categories, 1-morphisms are finite central pre-tensor categories, 2-morphisms are finite centered bimodule categories, and 3- and 4-morphisms are unchanged. The group $\pi_0(\mathrm{Mor}_2^{\mathrm{fin}}(\mathbf{Pr})^{\times})$ is a finite but non-separable version of the Witt group of non-degenerate separable braided tensor categories $\mathcal{W}itt^{\mathrm{sep}}(\mathrm{Vec})\cong \pi_0(\mathrm{Mor}_2^{\mathrm{sep}}(\mathbf{Pr})^{\times})$. Another such group $\mathcal{W}itt^{\mathrm{fin}}(\mathrm{Vec})$ was introduced in \cite[Section 7]{SY:MTC}. More precisely, they define the group $\mathcal{W}itt^{\mathrm{fin}}(\mathrm{Vec})$ as the quotient of the monoid of non-degenerate finite braided tensor categories by the submonoid of Drinfeld centers. We expect that these groups are isomorphic although we can only prove that there is a surjective group homomorphism between them. In order to do so, we provide an affirmative answer to \cite[Question 4.7]{BJSS}.

\begin{Proposition}
Let $\mathcal{C}$ be a finite tensor category with simple monoidal unit. Then, the class of the non-degenerate finite braided tensor category $\mathcal{Z}(\mathcal{C})$ viewed as an object of $\mathrm{Mor}^{\mathrm{fin}}_2(\mathbf{Pr})$ is trivial.
\end{Proposition}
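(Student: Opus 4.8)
The plan is to exhibit an invertible $1$-morphism connecting $\mathcal{Z}(\mathcal{C})$ to the monoidal unit $\mathrm{Vec}$ of $\mathrm{Mor}^{\mathrm{fin}}_2(\mathbf{Pr})$, thereby showing that its class in the Picard group is trivial. First I would record that $\mathcal{Z}(\mathcal{C})$ is non-degenerate, i.e.\ its symmetric center is $\mathrm{Vec}$, so that it is indeed an invertible object of $\mathrm{Mor}^{\mathrm{fin}}_2(\mathbf{Pr})$ by Theorem \ref{thm:relativeinvertibility} applied with $\mathcal{E}=\mathrm{Vec}$; the hypothesis that $\mathcal{C}$ has simple unit ensures that $\mathcal{Z}(\mathcal{C})$ is a genuine finite braided tensor category with simple unit. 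The triviality of its class then amounts to producing an equivalence $\mathcal{Z}(\mathcal{C})\simeq\mathrm{Vec}$ in the $4$-category. The candidate $1$-morphism $\mathcal{Z}(\mathcal{C})\nrightarrow\mathrm{Vec}$ is $\mathcal{C}$ itself, equipped with the tautological $\mathcal{Z}(\mathcal{C})$-central structure furnished by the identity $\mathcal{Z}(\mathcal{C})\boxtimes\mathrm{Vec}^{\mathrm{rev}}=\mathcal{Z}(\mathcal{C})\xrightarrow{\mathrm{id}}\mathcal{Z}(\mathcal{C})$, with inverse candidate $\mathcal{C}^{\mathrm{mop}}\colon\mathrm{Vec}\nrightarrow\mathcal{Z}(\mathcal{C})$.

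I would then verify invertibility by computing the two composites and identifying them with the relevant identity $1$-morphisms. The composite over $\mathrm{Vec}$ is the Deligne product $\mathcal{C}\boxtimes\mathcal{C}^{\mathrm{mop}}$, viewed as a $1$-morphism $\mathcal{Z}(\mathcal{C})\nrightarrow\mathcal{Z}(\mathcal{C})$; here the classical identification $\mathrm{End}_{\mathcal{C}\boxtimes\mathcal{C}^{\mathrm{mop}}}(\mathcal{C})\simeq\mathcal{Z}(\mathcal{C})$ recalled in the proof of Lemma \ref{lem:relativecenter} exhibits $\mathcal{C}$ as an invertible $(\mathcal{C}\boxtimes\mathcal{C}^{\mathrm{mop}})$-$\mathcal{Z}(\mathcal{C})$-bimodule, i.e.\ a Morita equivalence $\mathcal{C}\boxtimes\mathcal{C}^{\mathrm{mop}}\simeq\mathcal{Z}(\mathcal{C})$, which supplies the invertible $2$-morphism identifying this composite with the identity on $\mathcal{Z}(\mathcal{C})$. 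The composite over $\mathcal{Z}(\mathcal{C})$ is the relative Deligne product $\mathcal{C}^{\mathrm{mop}}\boxtimes_{\mathcal{Z}(\mathcal{C})}\mathcal{C}$, which I would show is Morita-trivial using the other direction of the same invertible bimodule; this is corroborated by the Frobenius–Perron dimension count $\mathrm{FPdim}(\mathcal{C}^{\mathrm{mop}}\boxtimes_{\mathcal{Z}(\mathcal{C})}\mathcal{C})=\mathrm{FPdim}(\mathcal{C})^2/\mathrm{FPdim}(\mathcal{Z}(\mathcal{C}))=1$, so that the composite is identified with the identity on $\mathrm{Vec}$.

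The main obstacle is organizing this so that all $2$-morphisms respect the centered structures: one must check that the Morita equivalence $\mathcal{C}\boxtimes\mathcal{C}^{\mathrm{mop}}\simeq\mathcal{Z}(\mathcal{C})$ intertwines the $\mathcal{Z}(\mathcal{C})$-$\mathcal{Z}(\mathcal{C})$-central structure on the composite with the factorization structure on the identity $1$-morphism, and likewise for the trivial composite, while also handling the fact that relative Deligne products need only be pre-tensor categories (cf.\ Example \ref{ex:pretensorcounterexample}). In the separable setting this bookkeeping is subsumed by the criterion of \cite[Theorem 2.51]{DHJFNPPRY}, which detects invertible $1$-morphisms via the central functor $\mathcal{A}\boxtimes\mathcal{B}^{\mathrm{rev}}\to\mathcal{Z}(\mathcal{M})$ being an equivalence—here that functor is the identity on $\mathcal{Z}(\mathcal{C})$ and the conclusion would be immediate. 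The point of the present finite, non-separable situation, which is precisely \cite[Question 4.7]{BJSS}, is that this criterion is not directly available, so the crux is to run the explicit bimodule argument by hand, verifying the centered compatibilities and the relative-tensor computation without recourse to semisimplicity.
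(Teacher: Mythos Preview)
Your setup is the same as the paper's: use $\mathcal{C}$ with its tautological $\mathcal{Z}(\mathcal{C})$-central structure as the candidate invertible $1$-morphism $\mathcal{Z}(\mathcal{C})\nrightarrow\mathrm{Vec}$. Where you diverge is in how you verify invertibility, and there your argument has genuine gaps.

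First, the Frobenius--Perron computation is not correct. The paper shows (this is exactly condition~4 below) that $\mathcal{C}\boxtimes_{\mathcal{Z}(\mathcal{C})}\mathcal{C}^{\mathrm{mop}}\simeq\mathrm{End}(\mathcal{C})$ as monoidal categories, and $\mathrm{End}(\mathcal{C})$ under composition is a genuinely large pre-tensor category, not $\mathrm{Vec}$; it is only \emph{Morita} equivalent to $\mathrm{Vec}$ via the module $\mathcal{C}$. The quotient formula you invoke applies to modules over an exact commutative algebra inside a braided tensor category, not to a relative Deligne product of this shape, and in any case $\mathrm{End}(\mathcal{C})$ is not rigid, so $\mathrm{FPdim}$ is not even defined in the sense you need. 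So the count $=1$ is simply wrong, and the heuristic it was meant to support collapses.

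Second, and more importantly, you correctly isolate the real obstacle --- verifying the centered compatibilities without the separable criterion of \cite{DHJFNPPRY} --- but then stop. The paper's resolution is to invoke the invertibility criterion for $1$-morphisms \cite[Theorem~2.26]{BJSS}, which reduces everything to four concrete functors being equivalences:
\begin{enumerate}
\item $\mathcal{C}\boxtimes\mathcal{C}^{\mathrm{mop}}\to\mathrm{End}_{\mathcal{Z}(\mathcal{C})}(\mathcal{C})$,
\item $\mathcal{Z}(\mathcal{C})\to\mathcal{Z}(\mathcal{C})$,
\item $\mathrm{Vec}\to\mathrm{End}_{\mathcal{C}\boxtimes_{\mathcal{Z}(\mathcal{C})}\mathcal{C}^{\mathrm{mop}}}(\mathcal{C})$,
\item $\mathcal{C}\boxtimes_{\mathcal{Z}(\mathcal{C})}\mathcal{C}^{\mathrm{mop}}\to\mathrm{End}(\mathcal{C})$.
\end{enumerate}
Condition~2 is tautological; condition~3 follows from Lemma~\ref{lem:relativecenter} and the non-degeneracy of $\mathcal{Z}(\mathcal{C})$; condition~1 is \cite[Theorem~7.12.11]{EGNO}. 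The genuinely new content --- answering \cite[Question~4.7]{BJSS} --- is condition~4, and the paper's trick is to observe that under the Eilenberg--Watts identifications $\mathrm{End}(\mathcal{C})\simeq\mathcal{C}\boxtimes\mathcal{C}^{\mathrm{mop}}$ and $\mathrm{End}_{\mathcal{Z}(\mathcal{C})}(\mathcal{C})\simeq\mathcal{C}\boxtimes_{\mathcal{Z}(\mathcal{C})}\mathcal{C}^{\mathrm{mop}}$, the functors in conditions~1 and~4 become \emph{adjoint} to one another. Since condition~1 is already known to be an equivalence, its adjoint is too. This adjunction argument is the missing idea in your proposal; without it, the ``run the bimodule argument by hand'' step you gesture at has no content.
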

\begin{proof}
Let us write $\mathcal{A}:=\mathcal{Z}(\mathcal{C})$. We show that the 1-morphism $\mathcal{C}:\mathcal{A}\nrightarrow\mathrm{Vec}$ is invertible. Applying \cite[Theorem 2.26]{BJSS}, it follows that we must show that the following four conditions are satisfied:
\begin{enumerate}
    \item The canonical tensor functor $\mathcal{C}\boxtimes\mathcal{C}^{\mathrm{mop}}\rightarrow \mathrm{End}_{\mathcal{A}}(\mathcal{C})$ is an equivalence.
    \item The canonical braided tensor functor $\mathcal{A}\rightarrow\mathcal{Z}(\mathcal{C})$ is an equivalence.
    \item The inclusion of the unit $\mathrm{Vec}\rightarrow \mathrm{End}_{\mathcal{C}\boxtimes_{\mathcal{A}}\mathcal{C}^{\mathrm{mop}}}(\mathcal{C})$ is an equivalence.
    \item The canonical monoidal functor $\mathcal{C}\boxtimes_{\mathcal{A}}\mathcal{C}^{\mathrm{mop}}\rightarrow \mathrm{End}(\mathcal{C})$ is an equivalence.
\end{enumerate}
The second condition holds by assumption. To show that the third condition holds, we may appeal to Lemma \ref{lem:relativecenter} to rewrite $\mathrm{End}_{\mathcal{C}\boxtimes_{\mathcal{A}}\mathcal{C}^{\mathrm{mop}}}(\mathcal{C})$ as the symmetric center of $\mathcal{A}$, which is trivial by \cite[Proposition 8.6.3]{EGNO} and \cite[Theorem 1.1]{Shi:nondeg}. Furthermore, the first condition follows from the second via \cite[Theorem 7.12.11]{EGNO}. It therefore only remains to argue that the fourth condition is satisfied.

Let us consider the following diagram of plain categories, whose horizontal arrows are the canonical functors appearing in conditions 1 and 4 above:
$$\begin{tikzcd}[sep=small]
\mathcal{C}\boxtimes\mathcal{C}^{\mathrm{mop}} \arrow[r] & \mathrm{End}_{\mathcal{A}}(\mathcal{C}) \arrow[d, "\rotatebox{-90}{$\simeq$}"]                      \\
\mathrm{End}(\mathcal{C}) \arrow[u, "\rotatebox{90}{$\simeq$}"]                      & \mathcal{C}\boxtimes_{\mathcal{A}}\mathcal{C}^{\mathrm{mop}}. \arrow[l]
\end{tikzcd}$$
The vertical equivalences are supplied for instance by \cite[Proposition 5.3 and Corollary
5.5]{BJS}. It follows by direct inspection that the two horizontal functors are adjoints. Given that the top horizontal arrow is an equivalence, so is the bottom one, whence concluding the proof.
\end{proof}

\begin{Corollary}\label{cor:Wittquotient}
There is a well-defined surjective group homomorphism $$\mathcal{W}itt^{\mathrm{fin}}(\mathrm{Vec})\twoheadrightarrow \pi_0(\mathrm{Mor}^{\mathrm{fin}}_2(\mathbf{Pr})^{\times}).$$
\end{Corollary}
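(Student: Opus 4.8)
The plan is to build the homomorphism first at the level of monoids and then check that it descends to the Witt quotient. I would begin by recalling that, by Theorem \ref{thm:relativeinvertibility} applied with $\mathcal{E} = \mathrm{Vec}$, the invertible objects of $\mathrm{Mor}_2^{\mathrm{fin}}(\mathbf{Pr})$ are precisely the non-degenerate finite braided tensor categories. Since the symmetric monoidal structure on $\mathrm{Mor}_2^{\mathrm{fin}}(\mathbf{Pr})$ is given by the Deligne tensor product $\boxtimes$, which is also the operation underlying the monoid used to define $\mathcal{W}itt^{\mathrm{fin}}(\mathrm{Vec})$, the assignment $\mathcal{A}\mapsto [\mathcal{A}]$ sending a non-degenerate finite braided tensor category to its equivalence class in the Picard group defines a homomorphism of commutative monoids from the monoid $M$ of equivalence classes of non-degenerate finite braided tensor categories to $\pi_0(\mathrm{Mor}^{\mathrm{fin}}_2(\mathbf{Pr})^{\times})$. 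Here I would note that a braided equivalence certainly induces an equivalence in the Morita $4$-category, so the map is well-defined even though Morita equivalence is a priori coarser.

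Next I would observe that this monoid homomorphism is already surjective: by the identification above, every invertible object of $\mathrm{Mor}_2^{\mathrm{fin}}(\mathbf{Pr})$ is equivalent to a non-degenerate finite braided tensor category, hence lies in the image.

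Then I would check that the homomorphism factors through $\mathcal{W}itt^{\mathrm{fin}}(\mathrm{Vec})$. Recall that the latter is the quotient of $M$ by the submonoid of classes of Drinfeld centers $\mathcal{Z}(\mathcal{C})$ of finite tensor categories $\mathcal{C}$ with simple monoidal unit. By the preceding Proposition, each such class $[\mathcal{Z}(\mathcal{C})]$ is trivial in $\pi_0(\mathrm{Mor}^{\mathrm{fin}}_2(\mathbf{Pr})^{\times})$. As our map is a monoid homomorphism, it therefore sends the entire submonoid of Drinfeld centers to the identity, and so factors through the quotient. Concretely, if two non-degenerate finite braided tensor categories satisfy $\mathcal{A}_1\boxtimes\mathcal{Z}(\mathcal{C}_1)\simeq\mathcal{A}_2\boxtimes\mathcal{Z}(\mathcal{C}_2)$, then in the Picard group one computes $[\mathcal{A}_1]=[\mathcal{A}_1][\mathcal{Z}(\mathcal{C}_1)]=[\mathcal{A}_2][\mathcal{Z}(\mathcal{C}_2)]=[\mathcal{A}_2]$, exactly the relation defining Witt equivalence.

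The induced map $\mathcal{W}itt^{\mathrm{fin}}(\mathrm{Vec})\to\pi_0(\mathrm{Mor}^{\mathrm{fin}}_2(\mathbf{Pr})^{\times})$ is then a group homomorphism, since both source and target are groups and the map respects multiplication, and it is surjective because surjectivity already holds at the monoid level. The only genuinely nontrivial input is the preceding Proposition on the triviality of Drinfeld centers in the Picard group, which itself relies on Theorem \ref{thm:relativeinvertibility}; everything else is a formal consequence of the universal property of the quotient defining the Witt group. The main point requiring care is simply to match up the monoidal structures and the respective equivalence relations, so that establishing well-definedness reduces precisely to killing the Drinfeld-center submonoid.
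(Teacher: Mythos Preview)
Your proposal is correct and follows the approach implicit in the paper: the corollary is stated without proof immediately after the Proposition showing that Drinfeld centers are trivial in $\mathrm{Mor}^{\mathrm{fin}}_2(\mathbf{Pr})$, and your argument spells out exactly the formal passage from that Proposition to the quotient description of $\mathcal{W}itt^{\mathrm{fin}}(\mathrm{Vec})$. One minor inaccuracy in your commentary is that the preceding Proposition does not actually rely on Theorem~\ref{thm:relativeinvertibility} but rather on the invertibility criterion of \cite{BJSS} directly; this does not affect the validity of your proof.
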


\begin{Remark}
There are group homomorphisms
$$\mathcal{W}itt^{\mathrm{sep}}(\mathrm{Vec})\hookrightarrow\mathcal{W}itt^{\mathrm{fin}}(\mathrm{Vec})\twoheadrightarrow \pi_0(\mathrm{Mor}_2^{\mathrm{fin}}(\mathbf{Pr})^{\times})\rightarrow\pi_0(\mathrm{Mor}_2^{\mathrm{pre}}(\mathbf{Pr})^{\times}).$$
The left homomorphism can easily be seen to be injective. As already noted in \cite[Question 4.5]{BJSS} and \cite[Question 7.11]{SY:MTC}, it is very natural to wonder whether this morphism is also surjective. That the middle homomorphism is surjective follows from the definitions. The potential failure of this group homomorphism to be injective comes from the fact that, a priori, there may exist non-degenerate finite braided tensor categories that can be expressed as the Drinfeld center of a finite pre-tensor category, but not of a finite tensor category. This is exactly the content of \cite[Question 4.4]{BJSS}. The right homomorphism is perhaps the most mysterious. It is unclear whether this map is either injective or surjective, see \cite[Question 4.6]{BJSS}.
\end{Remark}

Let now $\mathcal{E}$ be a finite symmetric tensor category. We consider $\mathrm{Mor}_2^{\mathrm{fin}}(\mathbf{Pr}_{\mathcal{E}})$, the symmetric monoidal sub-4-category of $\mathrm{Mor}_2^{\mathrm{pre}}(\mathbf{Pr}_{\mathcal{E}})$ whose objects are faithfully flat $\mathcal{E}$-enriched finite braided tensor categories, 1-morphisms are $\mathcal{E}$-enriched finite central pre-tensor categories, 2-morphisms are $\mathcal{E}$-enriched finite centered bimodule categories, and 3- and 4-morphisms are unchanged. That this indeed a symmetric monoidal subcategory follows from the argument used in the proof of Proposition \ref{prop:relativefinite}. We propose the Picard group $\pi_0(\mathrm{Mor}_2^{\mathrm{fin}}(\mathbf{Pr}_{\mathcal{E}})^{\times})$ as a finite but non-separable generalization of the relative Witt groups of \cite{DNO}. In particular, we view the study of this group as an alternative to the virtually inaccessible problem of classifying all finite braided tensor categories.

\begin{Conjecture}
The group $\pi_0(\mathrm{Mor}_2^{\mathrm{fin}}(\mathbf{Pr}_{\mathcal{E}})^{\times})$ is isomorphic to the quotient of the monoid of $\mathcal{E}$-non-degenerate finite braided tensor categories by the submonoid of relative Drinfeld centers of faithfully flat $\mathcal{E}$-enriched finite tensor categories.
\end{Conjecture}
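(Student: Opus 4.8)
The plan is to follow the strategy of the separable case treated above, reducing the identification of the Picard group to a characterization of the trivial classes. By Theorem \ref{thm:relativeinvertibility}, the invertible objects of $\mathrm{Mor}_2^{\mathrm{fin}}(\mathbf{Pr}_{\mathcal{E}})$ are exactly the faithfully flat $\mathcal{E}$-non-degenerate finite braided tensor categories. Writing $M$ for the commutative monoid of such categories under $\boxtimes_{\mathcal{E}}$, and $N\subseteq M$ for the submonoid of relative Drinfeld centers $\mathcal{Z}(\mathcal{C},\mathcal{E})$ of faithfully flat $\mathcal{E}$-enriched finite tensor categories $\mathcal{C}$, one obtains a canonical surjective monoid homomorphism $M\twoheadrightarrow \pi_0(\mathrm{Mor}_2^{\mathrm{fin}}(\mathbf{Pr}_{\mathcal{E}})^{\times})$. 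The goal is to show that this descends to an isomorphism $M/N\cong \pi_0(\mathrm{Mor}_2^{\mathrm{fin}}(\mathbf{Pr}_{\mathcal{E}})^{\times})$.

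First I would check that $M/N$ is already a group, so that the quotient is well-behaved. Given $\mathcal{A}\in M$, Theorem \ref{thm:nondegeneratevsfactorizable} furnishes an equivalence $\mathcal{A}\boxtimes_{\mathcal{E}}\mathcal{A}^{\mathrm{rev}}\simeq \mathcal{Z}(\mathcal{A},\mathcal{E})$ of $\mathcal{E}$-enriched braided tensor categories, exhibiting $[\mathcal{A}]\cdot[\mathcal{A}^{\mathrm{rev}}]$ as the class of a relative Drinfeld center, hence as the identity in $M/N$. Along the way one must verify that $\mathcal{Z}(\mathcal{C},\mathcal{E})$ is genuinely $\mathcal{E}$-non-degenerate for any faithfully flat $\mathcal{E}$-enriched finite tensor category $\mathcal{C}$, and that $N$ is closed under $\boxtimes_{\mathcal{E}}$; both should follow by computing the symmetric center of $\mathcal{Z}(\mathcal{C},\mathcal{E})$ via Lemma \ref{lem:relativecenter}, exactly as in the non-relative computation.

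By the group structure, the whole statement reduces to identifying the trivial classes: it suffices to prove that an $\mathcal{E}$-non-degenerate finite braided tensor category $\mathcal{D}$ is trivial as an object of $\mathrm{Mor}_2^{\mathrm{fin}}(\mathbf{Pr}_{\mathcal{E}})$ if and only if $\mathcal{D}\simeq \mathcal{Z}(\mathcal{C},\mathcal{E})$ for some faithfully flat $\mathcal{E}$-enriched finite tensor category $\mathcal{C}$. For the ``if'' direction, given such $\mathcal{C}$ I would exhibit it as an invertible $1$-morphism $\mathcal{C}:\mathcal{E}\nrightarrow \mathcal{D}$ by verifying the relative analogue of the four conditions of \cite[Theorem 2.26]{BJSS}; here the condition identifying the relative center is satisfied by hypothesis, while the remaining conditions follow from Lemma \ref{lem:relativecenter}, the relative factorizability of $\mathcal{D}$, and the universal property of the relative Deligne tensor product, mirroring the argument used above to prove that Drinfeld centers are trivial in $\pi_0(\mathrm{Mor}_2^{\mathrm{fin}}(\mathbf{Pr})^{\times})$.

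The main obstacle is the converse. Suppose $\mathcal{D}$ is trivial, so that there is an invertible $1$-morphism $\mathcal{C}:\mathcal{E}\nrightarrow \mathcal{D}$. The relative invertibility criterion then forces the canonical braided tensor functor $\mathcal{D}\rightarrow \mathcal{Z}(\mathcal{C},\mathcal{E})$ to be an equivalence, so that $\mathcal{D}$ is indeed presented as a relative Drinfeld center. The difficulty is that, in $\mathrm{Mor}_2^{\mathrm{fin}}(\mathbf{Pr}_{\mathcal{E}})$, a $1$-morphism $\mathcal{C}$ is only a finite central \emph{pre}-tensor category, whereas membership in $N$ requires $\mathcal{C}$ to be a genuine $\mathcal{E}$-enriched finite \emph{tensor} category. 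Bridging this gap amounts to showing that every $\mathcal{E}$-non-degenerate finite braided tensor category realizable as the relative center of a finite pre-tensor category is already the relative center of a finite tensor category. This is the relative incarnation of the open problem \cite[Question 4.4]{BJSS} obstructing injectivity in the non-relative finite case, and is precisely why the statement remains conjectural: absent a resolution of this point, the argument above yields only a surjection $M/N\twoheadrightarrow\pi_0(\mathrm{Mor}_2^{\mathrm{fin}}(\mathbf{Pr}_{\mathcal{E}})^{\times})$ rather than the claimed isomorphism.
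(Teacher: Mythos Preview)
The statement is a \emph{conjecture}, not a theorem; the paper does not prove it. Its discussion consists of a single sentence following the conjecture: it asserts that Corollary~\ref{cor:Wittquotient} admits an $\mathcal{E}$-enriched version, yielding the surjection, and that the remaining difficulty is proving injectivity. Your analysis is more detailed but essentially aligned: you spell out the construction of the surjection, sketch why $M/N$ is a group via Theorem~\ref{thm:nondegeneratevsfactorizable}, outline the ``if'' direction (relative Drinfeld centers give trivial classes) as the $\mathcal{E}$-enriched analogue of the proposition immediately preceding Corollary~\ref{cor:Wittquotient}, and correctly identify the obstruction to the ``only if'' direction as the relative incarnation of \cite[Question~4.4]{BJSS}, namely the passage from finite pre-tensor categories to finite tensor categories. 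Both you and the paper agree that this last point is precisely what leaves the statement conjectural, so your proposal is not a proof but rather a correct diagnosis of why none is currently available.
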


\noindent More precisely, Corollary \ref{cor:Wittquotient} admits an $\mathcal{E}$-enriched version, so that the difficulty is once again proving that this homomorphism is injective.

\subsection{Full Dualizability of Finite Braided Tensor Categories}

We obtain the following simultaneous generalization of \cite[Theorem 1.10]{BJS} and \cite[Theorem 1.1]{BJSS}.

\begin{Theorem}\label{thm:separablesymmetriccenter4dualizable}
Over an algebraically closed field, every finite braided tensor category whose symmetric center is separable is a fully dualizable object of the Morita 4-category $\mathrm{Mor}^{\mathrm{pre}}_2(\mathbf{Pr})$.
\end{Theorem}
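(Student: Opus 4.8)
The plan is to follow the strategy sketched in the introduction: leverage the relative invertibility result (Theorem \ref{thm:relativeinvertibility}) together with the known full dualizability of the enriching category. Set $\mathcal{E} := \mathcal{Z}_{(2)}(\mathcal{A})$, which is separable by hypothesis. Viewing $\mathcal{A}$ as an $\mathcal{E}$-enriched finite braided tensor category via the tautological functor $F:\mathcal{E} = \mathcal{Z}_{(2)}(\mathcal{A}) \hookrightarrow \mathcal{Z}_{(2)}(\mathcal{A})$, which is an equivalence, Theorem \ref{thm:relativeinvertibility} immediately tells us that $\mathcal{A}$ is an \emph{invertible} object of $\mathrm{Mor}^{\mathrm{pre}}_2(\mathbf{Pr}_{\mathcal{E}})$. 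Invertible objects are in particular fully dualizable, so $\mathcal{A}$ comes equipped with a complete package of duality and adjunction data \emph{in the enriched Morita $4$-category}.

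The second ingredient is a comparison functor forgetting the enrichment. I would invoke the (non-monoidal, but symmetric-monoidal-on-the-relevant-fibers) forgetful map $\mathrm{Mor}^{\mathrm{pre}}_2(\mathbf{Pr}_{\mathcal{E}}) \to \mathrm{Mor}^{\mathrm{pre}}_2(\mathbf{Pr})$ alluded to in the introduction. This functor carries the invertibility data for $\mathcal{A}$ over to duality/adjunction data for $\mathcal{A}$ as a plain braided pre-tensor category. The point is that all duals of objects, and all adjoints of $1$- and $2$-morphisms, that constitute the invertibility witnesses in the enriched category are \emph{preserved} by this forgetful functor, since the underlying $\mathbf{Pr}$-level structures and their adjunctions are exactly what the enriched data refines. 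This supplies the lower-dimensional dualizability data for $\mathcal{A}$ in $\mathrm{Mor}^{\mathrm{pre}}_2(\mathbf{Pr})$ essentially for free; indeed $3$-dualizability already holds by Proposition \ref{prop:3dualizability}, so the only genuinely new content is the existence of adjoints for $3$-morphisms, i.e. the step from $3$-dualizability to full ($4$-)dualizability.

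The key structural observation — and the heart of the argument — is that the missing full-dualizability data for $\mathcal{A}$ can be assembled from the enriched invertibility data together with the full-dualizability data for $\mathcal{E}$ itself. Since $\mathcal{E} = \mathcal{Z}_{(2)}(\mathcal{A})$ is separable, Theorem \ref{thm:enrichedfullydualizbale} (or rather its $\mathcal{E} = \mathrm{Vec}$ specialization, which is \cite[Theorem 1.10]{BJS}) guarantees that $\mathcal{E}$ is fully dualizable in $\mathrm{Mor}^{\mathrm{pre}}_2(\mathbf{Pr})$, and in particular every relevant $3$-morphism built out of $\mathcal{E}$ admits adjoints. The mechanism here is that in the Morita theory of $E_2$-algebras, an object equipped with a fully dualizable ``base'' $\mathcal{E}$ and an invertible structure relative to that base acquires full dualizability: the serial composite of the $\mathrm{SO}$-type coherence data for the invertible relative structure with the adjunction data internal to $\mathcal{E}$ produces the required $3$-morphism adjoints. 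Concretely, I would exhibit the adjoint of an arbitrary $3$-morphism of $\mathcal{A}$ by factoring it, up to the invertibility equivalences, through $3$-morphisms of $\mathcal{E}$, whose adjoints exist by Theorem \ref{thm:enrichedfullydualizbale}, and then transporting back along the invertibility data.

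The main obstacle I anticipate is precisely the bookkeeping in this last step: making the factorization of an $\mathcal{A}$-level $3$-morphism through $\mathcal{E}$-level data fully precise, and checking that transporting adjoints along the (higher) equivalences furnished by invertibility genuinely yields adjoints \emph{as centered bimodule functors}, not merely at the level of underlying functors in $\mathbf{Pr}$. One must verify that the centered/bimodule structures are compatible at every stage — a coherence check rather than a conceptual difficulty, but a delicate one. A clean way to organize this is to package the invertibility of $\mathcal{A}$ in $\mathrm{Mor}^{\mathrm{pre}}_2(\mathbf{Pr}_{\mathcal{E}})$ as an equivalence of the relevant mapping $3$-categories and then appeal to the general principle that full dualizability is detected fiberwise over a fully dualizable base. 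I would therefore aim to state and use a lemma to the effect that if $x$ is invertible in $\mathrm{Mor}^{\mathrm{pre}}_2(\mathbf{Pr}_{\mathcal{E}})$ and $\mathcal{E}$ is fully dualizable in $\mathrm{Mor}^{\mathrm{pre}}_2(\mathbf{Pr})$, then the image of $x$ under the forgetful functor is fully dualizable in $\mathrm{Mor}^{\mathrm{pre}}_2(\mathbf{Pr})$, isolating the coherence argument once and for all.
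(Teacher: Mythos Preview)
Your proposal captures the high-level strategy from the introduction but has a genuine gap in execution. The forgetful functor $\mathrm{Mor}_2^{\mathrm{pre}}(\mathbf{Pr}_{\mathcal{E}}) \to \mathrm{Mor}_2^{\mathrm{pre}}(\mathbf{Pr})$ is not monoidal, and it does \emph{not} send dualizability data for $\mathcal{A}$ in the enriched category to dualizability data in the absolute category: the coevaluation for $\mathcal{A}$ over $\mathcal{E}$ is the 1-morphism $\mathcal{A}:\mathcal{A}\boxtimes_{\mathcal{E}}\mathcal{A}^{\mathrm{rev}}\nrightarrow\mathcal{E}$, whereas the coevaluation in the absolute category is $\mathcal{A}:\mathcal{A}\boxtimes\mathcal{A}^{\mathrm{rev}}\nrightarrow\mathrm{Vec}$. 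These are different 1-morphisms between different objects, so ``transporting invertibility data along the forgetful functor'' does not produce the absolute duality data you need.

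The paper makes this precise by factoring the absolute coevaluation as a composite
\[
\mathcal{A}\boxtimes\mathcal{A}^{\mathrm{rev}}\nrightarrow\mathcal{A}\boxtimes_{\mathcal{E}}\mathcal{A}^{\mathrm{rev}}\nrightarrow\mathcal{E}\nrightarrow\mathrm{Vec}
\]
and showing each piece has all adjoints. The middle piece is the image of the enriched coevaluation and is invertible by Theorem~\ref{thm:relativeinvertibility}; the last piece has all adjoints by separability of $\mathcal{E}$ via \cite{BJS}. But the first piece, $\mathcal{A}\boxtimes_{\mathcal{E}}\mathcal{A}^{\mathrm{rev}}:\mathcal{A}\boxtimes\mathcal{A}^{\mathrm{rev}}\nrightarrow\mathcal{A}\boxtimes_{\mathcal{E}}\mathcal{A}^{\mathrm{rev}}$, is invisible from the enriched side and requires a separate, concrete argument: one identifies the units and counits of its left and right adjoints with bimodules induced by free-module functors over $T^R_{\mathcal{E}}(\mathbbm{1})$, and then uses separability of this algebra (equivalent to separability of $\mathcal{E}$) together with Lemma~\ref{lem:separablebimodule} to produce the required further adjoints. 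Your proposed general lemma---invertibility over a fully dualizable base implies full dualizability---may well be true, but proving it would require exactly this factorization and the analysis of the transition 1-morphism; it cannot be deduced formally. Finally, your framing of the missing step as ``adjoints for arbitrary 3-morphisms, factored through $\mathcal{E}$'' is not how the argument runs: the work is in showing the specific (co)evaluation 1-morphisms have all iterated adjoints.
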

\begin{proof}
We have already recalled above in Proposition \ref{prop:3dualizability} that finite braided pre-tensor categories are 3-dualizable objects of the Morita 4-category $\mathrm{Mor}^{\mathrm{pre}}_2(\mathbf{Pr})$. That they are 2-dualizable is a general property of objects in a higher Morita category of $E_2$-algebras \cite{GS}. More precisely, the dual to the finite braided pre-tensor category $\mathcal{A}$ is $\mathcal{A}^{\mathrm{rev}}$ with evaluation and coevaluation 1-morphisms being supplied by the canonical $\mathcal{A}\boxtimes\mathcal{A}^{\mathrm{rev}}$-central finite pre-tensor category $\mathcal{A}$, i.e.\ $\mathcal{A}$ equipped with the braided tensor functor $\mathcal{A}\boxtimes\mathcal{A}^{\mathrm{rev}}\rightarrow\mathcal{Z}(\mathcal{A})$. We will show that if $\mathcal{A}$ is a finite braided tensor category whose symmetric center is separable, then these evaluation and coevaluation 1-morphisms have all possible adjoints. This proves that such a braided tensor category is a fully dualizable object of $\mathrm{Mor}^{\mathrm{pre}}_2(\mathbf{Pr})$.

Without loss of generality, we will focus on the coevaluation 1-morphism, the evaluation 1-morphism can be dealt with entirely analogously. The coevaluation 1-morphism is $\mathcal{A}:\mathcal{A}\boxtimes\mathcal{A}^{\mathrm{rev}}\nrightarrow \mathrm{Vec}$ corresponding to the canonical braided tensor functor $\mathcal{A}\boxtimes\mathcal{A}^{\mathrm{rev}}\rightarrow\mathcal{Z}(\mathcal{A})$. Let us write $\mathcal{E} := \mathcal{Z}_{(2)}(\mathcal{A})$ for the symmetric center of $\mathcal{A}$. The above coevaluation 1-morphism can be factored as follows: $$\mathcal{A}\boxtimes\mathcal{A}^{\mathrm{rev}}\nrightarrow\mathcal{A}\boxtimes_{\mathcal{E}}\mathcal{A}^{\mathrm{rev}}\nrightarrow\mathcal{E}\nrightarrow\mathrm{Vec}.$$ It is the composite of the 1-morphisms $\mathcal{A}\boxtimes_{\mathcal{E}}\mathcal{A}^{\mathrm{rev}}:\mathcal{A}\boxtimes\mathcal{A}^{\mathrm{rev}}\nrightarrow\mathcal{A}\boxtimes_{\mathcal{E}}\mathcal{A}^{\mathrm{rev}}$, $\mathcal{A}:\mathcal{A}\boxtimes_{\mathcal{E}}\mathcal{A}^{\mathrm{rev}}\nrightarrow\mathcal{E}$, and $\mathcal{E}:\mathcal{E}\nrightarrow\mathrm{Vec}$. It will be enough to show that all three of these 1-morphisms have all adjoints. For the 1-morphism $\mathcal{E}:\mathcal{E}\nrightarrow\mathrm{Vec}$, this follows immediately from \cite[Theorem 5.16]{BJS} as $\mathcal{E}$ is separable by assumption. For use below, let us also recall that separability of $\mathcal{E}$ is equivalent to separability of the algebra $T^R_{\mathcal{E}}$ in $\mathcal{E}\boxtimes\mathcal{E}$ thanks to \cite[Corollary 2.5.9]{DSPS:dualizable}.

We now check that the 1-morphism $\mathcal{A}:\mathcal{A}\boxtimes_{\mathcal{E}}\mathcal{A}^{\mathrm{rev}}\nrightarrow\mathcal{E}$ has all possible adjoints. To see this, note that it follows from \cite[Lemma 2.22]{Kin}, which builds on \cite{Hau:remarks}, that restriction along the inclusion $\mathrm{Vec}\hookrightarrow\mathcal{E}$ defines a (non-monoidal) functor $$\mathrm{Mor}_2(\mathbf{Pr}_{\mathcal{E}})\rightarrow\mathrm{Mor}_2(\mathbf{Pr})$$ between $4$-categories. But, the 1-morphism $\mathcal{A}\boxtimes_{\mathcal{E}}\mathcal{A}^{\mathrm{rev}}\nrightarrow\mathcal{E}$ is in the image of this functor as it corresponds to the coevaluation 1-morphisms for $\mathcal{A}$ viewed as an $\mathcal{E}$-enriched finite braided tensor category, that is, as an object of $\mathrm{Mor}_2(\mathbf{Pr}_{\mathcal{E}})$. In particular, the 1-morphism $\mathcal{A}:\mathcal{A}\boxtimes_{\mathcal{E}}\mathcal{A}^{\mathrm{rev}}\nrightarrow\mathcal{E}$ in $\mathrm{Mor}_2(\mathbf{Pr}_{\mathcal{E}})$ is invertible by Theorem \ref{thm:relativeinvertibility}. It is therefore also invertible in $\mathrm{Mor}_2(\mathbf{Pr})$, whence it has all possible adjoints.

For the sake of brevity, we will use the following shortened notations
$$\mathcal{A}^{\mathrm{e}} := \mathcal{A}\boxtimes\mathcal{A}^{\mathrm{rev}},\quad \mathcal{A}^{\mathrm{e}}_{\mathcal{E}} := \mathcal{A}\boxtimes_{\mathcal{E}}\mathcal{A}^{\mathrm{rev}}.$$
It only remains to argue that the 1-morphism $\mathcal{A}^{\mathrm{e}}_{\mathcal{E}}:\mathcal{A}^{\mathrm{e}}\nrightarrow\mathcal{A}^{\mathrm{e}}_{\mathcal{E}}$ has all possible adjoints. In order to do so, we will use the fact that this 1-morphism arises from the canonical braided tensor functor $\mathcal{A}^{\mathrm{e}}\twoheadrightarrow \mathcal{A}^{\mathrm{e}}_{\mathcal{E}}$. In \cite[Section 3.2.2]{H}, it was shown that both the left and the right adjoint to $\mathcal{A}^{\mathrm{e}}_{\mathcal{E}}:\mathcal{A}^{\mathrm{e}}\nrightarrow\mathcal{A}^{\mathrm{e}}_{\mathcal{E}}$ are given by $\mathcal{A}^{\mathrm{e}}_{\mathcal{E}}:\mathcal{A}^{\mathrm{e}}_{\mathcal{E}}\nrightarrow\mathcal{A}^{\mathrm{e}}$. Furthermore, the unit and counit of these adjunctions are the 2-morphisms given by the $\mathcal{A}^{\mathrm{e}}_{\mathcal{E}}$-$\mathcal{A}^{\mathrm{e}}_{\mathcal{E}}$-central bimodules
$$\mathcal{A}^{\mathrm{e}}_{\mathcal{E}}:\mathcal{A}^{\mathrm{e}}_{\mathcal{E}}\boxtimes_{\mathcal{A}^{\mathrm{e}}}\!(\mathcal{A}^{\mathrm{e}}_{\mathcal{E}})^{\mathrm{mop}}\nRightarrow\mathcal{A}^{\mathrm{e}}_{\mathcal{E}},\quad \mathcal{A}^{\mathrm{e}}_{\mathcal{E}}:\mathcal{A}^{\mathrm{e}}_{\mathcal{E}}\nRightarrow\mathcal{A}^{\mathrm{e}}_{\mathcal{E}}\boxtimes_{\mathcal{A}^{\mathrm{e}}}\!(\mathcal{A}^{\mathrm{e}}_{\mathcal{E}})^{\mathrm{mop}},$$
induced by the braided tensor functor $\mathcal{A}^{\mathrm{e}}_{\mathcal{E}}\boxtimes_{\mathcal{A}^{\mathrm{e}}}(\mathcal{A}^{\mathrm{e}}_{\mathcal{E}})^{\mathrm{mop}}\rightarrow\mathcal{A}^{\mathrm{e}}_{\mathcal{E}}$, as well as the $\mathcal{A}^{\mathrm{e}}$-$\mathcal{A}^{\mathrm{e}}$-central bimodules
$$\mathcal{A}^{\mathrm{e}}_{\mathcal{E}}:\mathcal{A}^{\mathrm{e}}\nRightarrow\mathcal{A}^{\mathrm{e}}_{\mathcal{E}},\quad \mathcal{A}^{\mathrm{e}}_{\mathcal{E}}:\mathcal{A}^{\mathrm{e}}_{\mathcal{E}}\nRightarrow\mathcal{A}^{\mathrm{e}},$$ induced by $\mathcal{A}^{\mathrm{e}}\rightarrow \mathcal{A}^{\mathrm{e}}_{\mathcal{E}}$. By \cite[Corollary 4.3]{BJS}, it will suffice to show that the underlying bimodule categories have all adjoints. Thanks to Lemma \ref{lem:separablebimodule} below, it is enough to show that the two tensor functors $\mathcal{A}^{\mathrm{e}}_{\mathcal{E}}\boxtimes_{\mathcal{A}^{\mathrm{e}}}\!(\mathcal{A}^{\mathrm{e}}_{\mathcal{E}})^{\mathrm{mop}}\rightarrow\mathcal{A}^{\mathrm{e}}_{\mathcal{E}}$ and $\mathcal{A}^{\mathrm{e}}\rightarrow \mathcal{A}^{\mathrm{e}}_{\mathcal{E}}$ can be identified with the free module functor over a separable algebra. In the case of the latter functor, this was already done in Lemma \ref{lem:relativetensorasmodules}. More precisely, we identified $\mathcal{A}^{\mathrm{e}}\rightarrow \mathcal{A}^{\mathrm{e}}_{\mathcal{E}}$ with the free $T^R_{\mathcal{E}}(\mathbbm{1})$-module functor, but $T^R_{\mathcal{E}}(\mathbbm{1})$ is separable as $\mathcal{E}$ is separable by assumption.

We claim that the tensor functor $\mathcal{A}^{\mathrm{e}}_{\mathcal{E}}\boxtimes_{\mathcal{A}^{\mathrm{e}}}\!(\mathcal{A}^{\mathrm{e}}_{\mathcal{E}})^{\mathrm{mop}}\rightarrow\mathcal{A}^{\mathrm{e}}_{\mathcal{E}}$ can be identified with the free module functor $$\mathrm{Mod}_{\mathcal{A}^{\mathrm{e}}}(T^R_{\mathcal{E}}(\mathbbm{1})\otimes T^R_{\mathcal{E}}(\mathbbm{1}))\rightarrow \mathrm{Mod}_{\mathcal{A}^{\mathrm{e}}}(T^R_{\mathcal{E}}(\mathbbm{1}))$$ induced by the map of commutative algebras $T^R_{\mathcal{E}}(\mathbbm{1})\otimes T^R_{\mathcal{E}}(\mathbbm{1})\rightarrow T^R_{\mathcal{E}}(\mathbbm{1})$. To see this, we will use that $\mathcal{A}^{\mathrm{e}}_{\mathcal{E}}\simeq \mathrm{Mod}_{\mathcal{A}^{\mathrm{e}}_{\mathcal{E}}}(T^R_{\mathcal{E}}(\mathbbm{1}))$ as tensor categories. It then follows from the universal property of the relative Deligne tensor product that 
$$\mathcal{A}^{\mathrm{e}}_{\mathcal{E}}\boxtimes_{\mathcal{A}^{\mathrm{e}}}\mathcal{A}^{\mathrm{e}}_{\mathcal{E}}\simeq \mathrm{LMod}_{\mathcal{A}^{\mathrm{e}}}(T^R_{\mathcal{E}}(\mathbbm{1}))\boxtimes_{\mathcal{A}^{\mathrm{e}}}\mathrm{Mod}_{\mathcal{A}^{\mathrm{e}}}(T^R_{\mathcal{E}}(\mathbbm{1}))\simeq\mathrm{Mod}_{\mathcal{A}^{\mathrm{e}}}(T^R_{\mathcal{E}}(\mathbbm{1})\otimes T^R_{\mathcal{E}}(\mathbbm{1}))$$
as tensor categories using that $T^R_{\mathcal{E}}(\mathbbm{1})$ is a commutative algebra in $\mathcal{E}\boxtimes\mathcal{E} = \mathcal{Z}_{(2)}(\mathcal{A}^{\mathrm{e}})$. In order to identify the tensor functor $\mathcal{A}^{\mathrm{e}}_{\mathcal{E}}\boxtimes_{\mathcal{A}^{\mathrm{e}}}\mathcal{A}^{\mathrm{e}}_{\mathcal{E}}\rightarrow \mathcal{A}^{\mathrm{e}}_{\mathcal{E}}$, observe that $T^R_{\mathcal{E}}(\mathbbm{1})$ is the monoidal unit of $\mathrm{Mod}_{\mathcal{A}^{\mathrm{e}}}(T^R_{\mathcal{E}}(\mathbbm{1}))\simeq \mathcal{A}^{\mathrm{e}}_{\mathcal{E}}$. In particular, we have that $T^R_{\mathcal{E}}(\mathbbm{1})\otimes T^R_{\mathcal{E}}(\mathbbm{1})\mapsto T^R_{\mathcal{E}}(\mathbbm{1})$ functorially under $\mathcal{A}^{\mathrm{e}}_{\mathcal{E}}\boxtimes_{\mathcal{A}^{\mathrm{e}}}\mathcal{A}^{\mathrm{e}}_{\mathcal{E}}\rightarrow \mathcal{A}^{\mathrm{e}}_{\mathcal{E}}$. This establishes the desired identification. Finally, it only remains to argue that the commutative algebra $T^R_{\mathcal{E}}(\mathbbm{1})$ is separable in $\mathrm{Mod}_{\mathcal{A}^{\mathrm{e}}}(T^R_{\mathcal{E}}(\mathbbm{1})\otimes T^R_{\mathcal{E}}(\mathbbm{1}))$. But, the canonical map $T^R_{\mathcal{E}}(\mathbbm{1})\otimes T^R_{\mathcal{E}}(\mathbbm{1})\rightarrow T^R_{\mathcal{E}}(\mathbbm{1})$ admits a section as a map of bimodules by separability. This shows that $T^R_{\mathcal{E}}(\mathbbm{1})$ is a direct summand of $T^R_{\mathcal{E}}(\mathbbm{1})\otimes T^R_{\mathcal{E}}(\mathbbm{1})$. In particular, the map $T^R_{\mathcal{E}}(\mathbbm{1})\otimes T^R_{\mathcal{E}}(\mathbbm{1})\rightarrow T^R_{\mathcal{E}}(\mathbbm{1})$ is the projection onto a summand. It follows that $T^R_{\mathcal{E}}(\mathbbm{1})$ is separable as an algebra in $\mathrm{Mod}_{\mathcal{A}^{\mathrm{e}}}(T^R_{\mathcal{E}}(\mathbbm{1})\otimes T^R_{\mathcal{E}}(\mathbbm{1}))$, thereby concluding the proof.
\end{proof}

We have made use of the following technical result in the course of the proof of the theorem above.

\begin{Lemma}\label{lem:separablebimodule}
Let $\mathcal{A}$ be a finite braided tensor category, and let $A$ be a commutative separable algebra in $\mathcal{A}$. Let us write $F:\mathcal{A}\twoheadrightarrow \mathrm{Mod}_{\mathcal{A}}(A)=:\mathcal{B}$ for the free $A$-module tensor functor. Then, the 2-morphism $\mathcal{B}:\mathcal{A}\nRightarrow\mathcal{B}$ in $\mathrm{Mor}_2^{\mathrm{pre}}(\mathbf{Pr})$ has all possible adjoints.
\end{Lemma}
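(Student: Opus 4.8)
The plan is to first remove the braided/$E_2$-structure and reduce to a statement about plain bimodule categories. Since $A$ is a compact object of $\mathcal{A}$, the free module functor $F=(-)\otimes A$ preserves compact objects, and hence so will its adjoints; consequently \cite[Corollary 4.3]{BJS} guarantees that any adjoint we produce at the level of bimodule categories automatically upgrades to a $2$-morphism respecting the centered structure. It therefore suffices to exhibit a left and a right adjoint for the underlying $\mathcal{A}$-$\mathcal{B}$-bimodule category $\mathcal{B}$---with $\mathcal{A}$ acting through $F$ and $\mathcal{B}$ acting regularly---as a $1$-morphism in the Morita $2$-category of $E_1$-algebras in $\mathbf{Pr}$.

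My candidate for \emph{both} the left and the right adjoint is the reverse bimodule, namely $\mathcal{B}$ regarded as a $\mathcal{B}$-$\mathcal{A}$-bimodule category with $\mathcal{B}$ acting regularly and $\mathcal{A}$ acting through $F$. The adjunction data will come from the biadjunction of $F$ with the forgetful functor $U\colon\mathcal{B}\to\mathcal{A}$. On the one hand, $F$ is always left adjoint to $U$, which supplies one of the two adjunctions. On the other hand, because $A$ is commutative and separable it is in particular a Frobenius algebra---the separability section $s\colon A\to A\otimes A$ of the multiplication $m$, being a map of $A$-$A$-bimodules, furnishes a nondegenerate pairing---so that $U$ is \emph{also} left adjoint to $F$. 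This ambidextrous adjunction $F\dashv U\dashv F$ is exactly the input that makes the reverse bimodule serve simultaneously as left and right adjoint.

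To turn these functor-level adjunctions into adjunctions of bimodule categories I would use the standard identifications of composites via relative tensor products: $\mathcal{B}\boxtimes_{\mathcal{B}}\mathcal{B}\simeq\mathcal{B}$ as $\mathcal{A}$-$\mathcal{A}$-bimodules, and, by the relative Eilenberg--Watts theorem \cite[Theorem 3.3]{DSPS:balanced}, $\mathcal{B}\boxtimes_{\mathcal{A}}\mathcal{B}\simeq\mathrm{Mod}_{\mathcal{A}}(A\otimes A)$. Under these equivalences the evaluation and coevaluation bimodule functors are identified with the free module functor along the unit $\mathbbm{1}\to A$ and with the free module functor along the multiplication $m\colon A\otimes A\to A$, while the units and counits in the opposite direction are read off from $s$ together with the duality morphisms of $A$ in the rigid category $\mathcal{A}$.

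The main obstacle I anticipate lies in this last translation: verifying that the two snake equations for the $1$-morphism adjunctions hold after passing through the relative tensor products. The conceptual point is light, since separability is precisely what promotes the one-sided free/forgetful adjunction to an ambidextrous one; but making the triangle identities literally hold requires careful tracking of the interacting left- and right-module structures, and of the Frobenius relation between $m$ and $s$. Once these are checked, the reduction of the first paragraph transports the resulting adjoints to the required adjoints of the $2$-morphism in $\mathrm{Mor}_2^{\mathrm{pre}}(\mathbf{Pr})$, completing the argument.
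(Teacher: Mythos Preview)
Your argument stops one level too soon. In the $4$-category $\mathrm{Mor}_2^{\mathrm{pre}}(\mathbf{Pr})$, a $2$-morphism having ``all possible adjoints'' means not only that it admits left and right adjoint $2$-morphisms, but also that the resulting unit and counit $3$-morphisms (bimodule functors) themselves admit adjoints. The first level---existence of adjoint bimodules---is already automatic by Proposition~\ref{prop:3dualizability}, so your Frobenius/ambidexterity argument, while correct, reproves something that comes for free. The substantive content of the lemma is the second level, which you do not address: one must show that the four unit/counit bimodule functors have left and right adjoints.

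The paper handles this by writing out those four functors explicitly (the forgetful functor $\mathcal{B}\to\mathcal{A}$, the free module functor $\mathcal{A}\to\mathcal{B}$, the functor $\mathcal{B}\to\mathrm{Bimod}_{\mathcal{A}}(A)$, and the functor $\mathrm{Mod}_{\mathcal{A}}(A\otimes A)\to\mathcal{B}$ induced by the $A\otimes A$-$A$-bimodule $A$) and checking that each is exact, hence has both adjoints as a plain functor; compatibility with the bimodule structures then follows from rigidity via \cite[Corollary~2.13]{DSPS:balanced}. Separability is used precisely here, not at the level of the bimodule adjunction: it ensures that $A$ is projective (hence flat) as a left $A\otimes A$-module, which is what makes the last of the four functors exact. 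Your proposal uses separability only to obtain the Frobenius structure and never reaches the step where it is actually needed.
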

\begin{proof}
That this bimodule category has left and right adjoints was proven in \cite[Theorem 5.13]{BJS}, see also \cite[Proposition 3.2.1]{DSPS:dualizable}. The left adjoint is given by $\mathrm{LMod}_{\mathcal{A}}(A):\mathcal{B}\nrightarrow \mathcal{A}$ whereas the right adjoint is $\mathrm{Mod}_{\mathcal{A}}(A):\mathcal{B}\nrightarrow \mathcal{A}$ by \cite[Theorem 5.8]{BJS}. The units and counits of these two adjunctions are given by the bimodule functors
$$\mathrm{Mod}_{\mathcal{A}}(A)\boxtimes_{\mathcal{B}}\mathrm{LMod}_{\mathcal{A}}(A)\simeq\mathcal{B}\rightarrow \mathcal{A},\, \mathrm{Mod}_{\mathcal{A}}(A)\boxtimes_{\mathcal{A}}\mathrm{Mod}_{\mathcal{A}}(A)\simeq\mathrm{Mod}_{\mathcal{A}}(A\otimes A)\rightarrow \mathcal{B},$$
given by the forgetful functor and the functor induced by the $A\otimes A$-$A$-bimodule $A$, as well as
$$\mathcal{B}\rightarrow\mathrm{LMod}_{\mathcal{A}}(A)\boxtimes_{\mathcal{A}}\mathrm{Mod}_{\mathcal{A}}(A)\simeq\mathrm{Bimod}_{\mathcal{A}}(A),\, \mathcal{A}\rightarrow\mathrm{Mod}_{\mathcal{A}}(A)\boxtimes_{\mathcal{B}}\mathrm{Mod}_{\mathcal{A}}(A)\simeq\mathcal{B}$$
given by endowing an $A$-module with the natural $A$-$A$-bimodule structure, and the free $A$-module functor. All of the four functors above are compact preserving functors between finite categories. In particular, they have left and right adjoints as plain functors if and only if they are exact. Moreover, as both $\mathcal{A}$ and $\mathcal{B}$ have duals for compact objects, it follows from a standard argument recorded for instance in \cite[Corollary 2.13]{DSPS:balanced} that, provided that these adjoints exist, they are automatically compatible with the bimodule structures.

We therefore only have left to show that the four functor above are exact. For the forgetful functor $\mathcal{B}\rightarrow \mathcal{A}$, this is immediate. In fact, the forgetful functor reflects exactness. Using that the forgetful functor $\mathrm{Bimod}_{\mathcal{A}}(A)\rightarrow\mathcal{A}$ also reflects exactness, we find that $\mathcal{B}\rightarrow \mathrm{Bimod}_{\mathcal{A}}(A)$ is also exact. Then, exactness of the free $A$-module functor $\mathcal{A}\rightarrow\mathcal{B}$ holds by exactness of the tensor product of $\mathcal{A}$. Finally, note that that $A$ is projective, whence flat, as a left $A\otimes A$-module because $A$ is separable. It follows that the functor $\mathrm{Mod}_{\mathcal{A}}(A\otimes A)\rightarrow \mathcal{B}$ is exact.
\end{proof}

\begin{Example}
We give concrete examples of finite braided tensor categories for which Theorem \ref{thm:separablesymmetriccenter4dualizable} applies, but \cite[Theorem 1.10]{BJS} and \cite[Theorem 1.1]{BJSS} do not. Setting $\mathbbm{k} = \mathbb{C}$, and letting $G$ be a simple algebraic group and $q$ a root of unity, we have recalled in Example \ref{ex:smallquantumgroup} above the construction of the finite braided tensor categories $\mathrm{Rep}(G_q)\boxtimes_{\mathrm{Tann}_q}\!\!\mathrm{Vec}$ introduced in \cite{Neg:arbitrary}. Taking $G=\mathrm{SL}_2$ and $q$ a root of unity of odd order, it was shown in \cite[Example 10.2]{Neg:arbitrary} that $$\mathrm{Rep}(G_q)\boxtimes_{\mathrm{Tann}_q}\!\!\mathrm{Vec}\simeq \mathrm{sVec}.$$ It therefore follows from Theorem \ref{thm:separablesymmetriccenter4dualizable} that these finite braided tensor categories are fully dualizable objects of $\mathrm{Mor}_2^{\mathrm{pre}}(\mathbf{Pr})$. More examples are given in \cite[Section 10.2]{Neg:arbitrary}.
\end{Example}

In characteristic zero, the converse of Theorem \ref{thm:separablesymmetriccenter4dualizable} holds. In fact, we have the following observation in any characteristic.

\begin{Proposition}\label{prop:symmetriccentersemisimple}
Over an algebraically closed field, a finite braided tensor category is a fully dualizable object of $\mathrm{Mor}_2^{\mathrm{pre}}(\mathbf{Pr})$ only if its symmetric center is finite semisimple.
\end{Proposition}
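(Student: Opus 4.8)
The plan is to realize $\mathcal{Z}_{(2)}(\mathcal{A})$ inside the \emph{dimension} of $\mathcal{A}$ and then push full dualizability down to the looped $3$-category. Recall that the dual of $\mathcal{A}$ is $\mathcal{A}^{\mathrm{rev}}$, with evaluation and coevaluation $1$-morphisms both given by $\mathcal{A}$ carrying the canonical braided tensor functor $\mathcal{A}\boxtimes\mathcal{A}^{\mathrm{rev}}\rightarrow\mathcal{Z}(\mathcal{A})$. Composing them, the dimension $1$-morphism $\dim(\mathcal{A})=\mathrm{ev}_{\mathcal{A}}\circ\mathrm{coev}_{\mathcal{A}}\colon\mathrm{Vec}\nrightarrow\mathrm{Vec}$ is the Harish-Chandra category $\mathrm{HC}(\mathcal{A})=\mathcal{A}\boxtimes_{\mathcal{A}\boxtimes\mathcal{A}^{\mathrm{rev}}}\mathcal{A}^{\mathrm{mop}}$, which by \cite{BJSS} is equivalent to $\mathrm{Mod}_{\mathcal{A}}(\mathbb{F}_{\mathcal{A}})$ and is therefore a finite tensor category.

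By hypothesis $\mathcal{A}$ lies in a fully dualizable symmetric monoidal sub-$4$-category $\mathcal{S}\subseteq\mathrm{Mor}_2^{\mathrm{pre}}(\mathbf{Pr})$. Looping $\mathcal{S}$ at its unit $\mathrm{Vec}$ produces a symmetric monoidal $3$-category $\Omega\mathcal{S}$ which again has all duals and adjoints, i.e.\ is fully dualizable, and $\dim(\mathcal{A})$ is an object of $\Omega\mathcal{S}$. By the additivity of higher Morita categories \cite{GS,Hau}, the ambient loop space $\Omega\,\mathrm{Mor}_2^{\mathrm{pre}}(\mathbf{Pr})$ is equivalent to the Morita $3$-category $\mathrm{Mor}_1^{\mathrm{pre}}(\mathbf{Pr})$ of pre-tensor categories, under which $\dim(\mathcal{A})$ corresponds to $\mathrm{HC}(\mathcal{A})$. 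Since $\Omega\mathcal{S}$ is a fully dualizable sub-$3$-category of $\mathrm{Mor}_1^{\mathrm{pre}}(\mathbf{Pr})$, I would conclude that $\mathrm{HC}(\mathcal{A})$ is a fully dualizable object of $\mathrm{Mor}_1^{\mathrm{pre}}(\mathbf{Pr})$.

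Next I would invoke the analysis of \cite{DSPS:dualizable}: a finite tensor category that is fully dualizable in the Morita $3$-category is separable, and in particular finite semisimple. The obstructions in question amount to exactness of the (co)unit bimodule functors attached to the regular bimodule, which are computed intrinsically and are unaffected by working in the pre-tensor ambient rather than among finite tensor categories. Thus $\mathrm{HC}(\mathcal{A})$ is a finite semisimple tensor category.

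Finally, Lemma \ref{lem:HCsymmetriccenter} identifies $\mathcal{Z}_{(2)}(\mathcal{A})\simeq\mathrm{End}_{\mathrm{HC}(\mathcal{A})}(\mathcal{A})$, the category of $\mathrm{HC}(\mathcal{A})$-module endofunctors of the nonzero finite module category $\mathcal{A}$. As $\mathrm{HC}(\mathcal{A})$ is fusion, every finite module category over it is semisimple and its dual category is multifusion \cite{EGNO}; hence $\mathcal{Z}_{(2)}(\mathcal{A})$ is finite semisimple, as desired. I expect the delooping and the module-category computation to be routine, and the genuine difficulty to lie in the third step: certifying that full dualizability of the finite tensor category $\mathrm{HC}(\mathcal{A})$ survives in the pre-tensor Morita $3$-category and still forces semisimplicity through \cite{DSPS:dualizable}.
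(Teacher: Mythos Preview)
Your approach breaks at the third step. Full dualizability of a finite pre-tensor category in $\mathrm{Mor}_1^{\mathrm{pre}}(\mathbf{Pr})$ does \emph{not} force semisimplicity, so the necessity direction of \cite{DSPS:dualizable} does not transport. Take any non-degenerate but non-semisimple finite braided tensor category $\mathcal{A}$. By \cite{BJSS} it is invertible in $\mathrm{Mor}_2^{\mathrm{pre}}(\mathbf{Pr})$, so its evaluation and coevaluation $1$-morphisms are equivalences, whence $\mathrm{HC}(\mathcal{A})=\dim(\mathcal{A})$ is an invertible---hence fully dualizable---object of $\mathrm{Mor}_1^{\mathrm{pre}}(\mathbf{Pr})$. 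Yet cofactorizability gives $\mathrm{HC}(\mathcal{A})\simeq\mathrm{End}(\mathcal{A})$, whose underlying category $\mathcal{A}\boxtimes\mathcal{A}^{\mathrm{mop}}$ is not semisimple. The argument of \cite{DSPS:dualizable} is calibrated to the Morita $3$-category of finite tensor categories; in the pre-tensor ambient there are strictly more bimodules and bimodule functors available, and the obstruction you want to invoke can be circumvented. Your fourth step also has a gap: it is not true that every finite module category over a fusion category is semisimple (any non-semisimple finite $\mathbbm{k}$-linear category, viewed as a $\mathrm{Vec}$-module, is a counterexample); you would still need to know that $\mathcal{A}$ is exact as an $\mathrm{HC}(\mathcal{A})$-module, which is not automatic.

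The paper's proof avoids $\mathrm{HC}(\mathcal{A})$ entirely. Rather than stopping at one loop, it invokes \cite[Proposition~3.7]{BJSS}, which identifies $\mathcal{Z}_{(2)}(\mathcal{A})\simeq\mathrm{End}_{\mathrm{HC}(\mathcal{A})}(\mathcal{A})$ and exhibits it directly as a $2$-dualizable object of $\mathbf{Pr}$ whenever $\mathcal{A}$ is $4$-dualizable; then the classification of fully dualizable objects of $\mathbf{Pr}$ from \cite{Til,BDSPV} yields semisimplicity. Two loops down one lands in $\mathbf{Pr}$, where the dualizability criterion is sharp; one loop down, in $\mathrm{Mor}_1^{\mathrm{pre}}(\mathbf{Pr})$, it is not.
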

\begin{proof}
Let $\mathcal{A}$ be a finite braided tensor category that is 4-dualizable as an object of $\mathrm{Mor}_2^{\mathrm{pre}}(\mathbf{Pr})$. Then, the underlying category of $\mathcal{Z}_{(2)}(\mathcal{A})$ is not only finite, but also a 2-dualizable object of $\mathbf{Pr}$ by \cite[Proposition 3.7]{BJSS}. Thanks to \cite[Theorem A.22]{BDSPV}, which builds on \cite[Theorem 2.5]{Til}, it follows that $\mathcal{Z}_{(2)}(\mathcal{A})$ is finite semisimple as claimed.
\end{proof}

\noindent Recalling that, over an algebraically closed field of characteristic zero, every finite semisimple tensor category is separable thanks to \cite{ENO1}, we obtain:

\begin{Corollary}
Over an algebraically closed field of characteristic zero, a finite braided tensor category is a fully dualizable object of $\mathrm{Mor}_2^{\mathrm{pre}}(\mathbf{Pr})$ if and only if its symmetric center is finite semisimple.
\end{Corollary}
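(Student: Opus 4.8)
The plan is to establish the two implications separately, drawing almost entirely on results already in hand. For the ``only if'' direction, I would simply invoke Proposition~\ref{prop:symmetriccentersemisimple}: if a finite braided tensor category $\mathcal{A}$ is fully dualizable in $\mathrm{Mor}_2^{\mathrm{pre}}(\mathbf{Pr})$, then its symmetric center $\mathcal{Z}_{(2)}(\mathcal{A})$ is finite semisimple. Since that proposition holds over any algebraically closed field, this half of the corollary requires no hypothesis on the characteristic, and nothing further is needed.

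For the ``if'' direction, the task is to bridge the gap between finite semisimplicity of $\mathcal{Z}_{(2)}(\mathcal{A})$ and its separability, since it is the latter that Theorem~\ref{thm:separablesymmetriccenter4dualizable} takes as input. Recall that a finite tensor category is separable when it is semisimple and its Drinfeld center is also semisimple. The key ingredient is the theorem of \cite{ENO1}, recalled in the discussion preceding Proposition~\ref{prop:symmetriccentersemisimple}: over an algebraically closed field of characteristic zero, every finite semisimple tensor category is automatically separable. Assuming $\mathcal{Z}_{(2)}(\mathcal{A})$ is finite semisimple, it is therefore separable, and Theorem~\ref{thm:separablesymmetriccenter4dualizable} at once yields that $\mathcal{A}$ is fully dualizable in $\mathrm{Mor}_2^{\mathrm{pre}}(\mathbf{Pr})$.

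Because both implications reduce directly to cited results, the argument is essentially formal and I anticipate no serious obstacle. The only genuine content beyond bookkeeping is the identification of finite semisimplicity with separability for the symmetric center, and this is precisely the point at which the characteristic-zero hypothesis is used, being supplied by \cite{ENO1}. In positive characteristic the two notions diverge, which is exactly why only the forward implication of Proposition~\ref{prop:symmetriccentersemisimple} persists there and why the biconditional is stated as a corollary restricted to characteristic zero.
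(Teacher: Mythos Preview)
Your proposal is correct and matches the paper's own argument exactly: the ``only if'' direction is Proposition~\ref{prop:symmetriccentersemisimple}, and the ``if'' direction follows from Theorem~\ref{thm:separablesymmetriccenter4dualizable} once one uses \cite{ENO1} to upgrade finite semisimplicity of the symmetric center to separability in characteristic zero.
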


We conjecture that the converse of Theorem \ref{thm:separablesymmetriccenter4dualizable} holds in any characteristic, and, more generally, over any perfect field.

\begin{Conjecture}
Over any algebraically closed field, a finite braided tensor category is a fully dualizable object of the Morita 4-category $\mathrm{Mor}_2^{\mathrm{pre}}(\mathbf{Pr})$ of braided pre-tensor categories only if its symmetric center is separable.
\end{Conjecture}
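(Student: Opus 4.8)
The plan is to run the argument of Theorem \ref{thm:separablesymmetriccenter4dualizable} in reverse. Since Proposition \ref{prop:symmetriccentersemisimple} already shows that full dualizability of $\mathcal{A}$ forces $\mathcal{E}:=\mathcal{Z}_{(2)}(\mathcal{A})$ to be finite semisimple, the entire content of the conjecture is the promotion of semisimplicity to separability in positive characteristic. Over an algebraically closed field this is equivalent to asking that $\mathcal{E}$ have non-zero categorical dimension, or equivalently --- by \cite[Corollary 2.5.9]{DSPS:dualizable} --- that the connected exact commutative algebra $T^R_{\mathcal{E}}(\mathbbm{1})$ of Lemma \ref{lem:relativetensorasmodules} be separable, or equivalently again that $\mathcal{E}$ be a fully dualizable symmetric tensor category via \cite[Theorem 5.16]{BJS}. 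First I would fix these reformulations, so that the target becomes a statement about the algebra $T^R_{\mathcal{E}}(\mathbbm{1})$.

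The core strategy is to localise the separability obstruction inside the coevaluation $1$-morphism. Writing $\mathcal{A}^{\mathrm{e}} := \mathcal{A}\boxtimes\mathcal{A}^{\mathrm{rev}}$ and $\mathcal{A}^{\mathrm{e}}_{\mathcal{E}} := \mathcal{A}\boxtimes_{\mathcal{E}}\mathcal{A}^{\mathrm{rev}}$ as in the proof of Theorem \ref{thm:separablesymmetriccenter4dualizable}, recall the factorisation
\[
\mathcal{A}^{\mathrm{e}} \xrightarrow{c_1} \mathcal{A}^{\mathrm{e}}_{\mathcal{E}} \xrightarrow{c_2} \mathcal{E} \xrightarrow{c_3} \mathrm{Vec}
\]
of the coevaluation, in which $c_2$ is invertible by Theorem \ref{thm:relativeinvertibility} (as $\mathcal{E}=\mathcal{Z}_{(2)}(\mathcal{A})$) and $c_1$ is the free $T^R_{\mathcal{E}}(\mathbbm{1})$-module $1$-morphism. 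The analysis underlying Lemma \ref{lem:separablebimodule} shows that $c_1$ admits a complete tower of adjoints if and only if $T^R_{\mathcal{E}}(\mathbbm{1})$ is separable. Since full dualizability of $\mathcal{A}$ supplies, by definition, a complete tower of adjoints for the composite coevaluation, and since $c_2$ is invertible and may be absorbed, the conjecture reduces to the implication: if $c_3\circ(c_2\circ c_1)$ is fully adjointable, then so is $c_1$, equivalently the cap $c_3:\mathcal{E}\nrightarrow\mathrm{Vec}$ is fully adjointable. I would then invoke \cite[Theorem 5.16]{BJS} in the form that $c_3$ is fully adjointable exactly when $\mathcal{E}$ is separable, closing the loop.

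The main obstacle is precisely this last implication: adjointability of a $1$-morphism does not in general descend through a composite, so the separability class of $\mathcal{E}$ carried by $c_1$ could \emph{a priori} be cancelled against the cap $c_3$. Resolving this is the heart of the matter, and I see two plausible routes. The conceptual route is to construct a (non-monoidal) functor extracting the symmetric center, refining the forgetful functor $\mathrm{Mor}_2(\mathbf{Pr}_{\mathcal{E}})\to\mathrm{Mor}_2(\mathbf{Pr})$ used in Theorem \ref{thm:separablesymmetriccenter4dualizable}, which sends the full dualizability data of $\mathcal{A}$ --- together with the relative invertibility data of $\mathcal{A}$ over $\mathcal{E}$ from Theorem \ref{thm:relativeinvertibility} --- to full dualizability data for $\mathcal{E}$ as a symmetric tensor category; the difficulty is that the symmetric center is a lax construction whose interaction with the higher adjoints is delicate. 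The quantitative route is to show directly that the top adjoint data compute a categorical trace which, after dividing out the invertible $c_2$, is a non-zero multiple of $\dim(\mathcal{E})$; the obstruction there is that $\mathcal{A}$ is genuinely non-semisimple, so every trace must be taken relative to $\mathcal{E}$, and controlling the relevant relative dimensions appears to require exactly the separability of $\mathcal{E}$ one is trying to establish --- the same circularity that keeps the statement a conjecture. In characteristic zero both routes are unnecessary, since $\mathcal{E}$ semisimple already implies separable by \cite{ENO1}, which is why the result is only a corollary there.
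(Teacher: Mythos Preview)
The statement you are attempting is a \emph{conjecture}: the paper does not prove it, and indeed flags it as open. So there is no ``paper's proof'' to compare against. Your proposal is, correspondingly, not a proof either---and to your credit you say so explicitly, identifying the central obstruction (adjointability does not cancel through composites) and conceding that both of your suggested routes run into circularity. As a diagnosis of \emph{why} the conjecture is hard, your write-up is accurate and well-organised.

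Two specific remarks. First, your assertion that ``the analysis underlying Lemma~\ref{lem:separablebimodule} shows that $c_1$ admits a complete tower of adjoints \emph{if and only if} $T^R_{\mathcal{E}}(\mathbbm{1})$ is separable'' overstates what is actually proven: the lemma establishes only the \emph{if} direction. The converse---that failure of separability genuinely obstructs the tower of adjoints for $c_1$---is itself part of what would need to be shown, and is not obvious (the relevant unit/counit functors could in principle be exact for reasons other than separability). Second, the paper does settle one non-trivial special case you do not mention: Proposition~\ref{prop:fulldualizbailityseparabilitysymmetric} proves the conjecture when $\mathcal{A}$ is itself symmetric, i.e.\ $\mathcal{A}=\mathcal{E}$. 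The argument there does \emph{not} proceed via your factorisation of the coevaluation; instead it exploits that $\mathrm{HC}(\mathcal{E})$ is a fully dualizable object of $\mathrm{Mor}_1^{\mathrm{pre}}(\mathbf{Pr})$, forces $\mathcal{Z}(\mathrm{HC}(\mathcal{E}))$ to be finite semisimple, and then uses the symmetric structure to embed $\mathrm{HC}(\mathcal{E})\simeq\mathcal{Z}(\mathcal{E})$ into its own Drinfeld center. That method depends essentially on $\mathcal{A}$ being symmetric and does not obviously globalise, but it suggests that extracting dualizability information from $\mathrm{HC}(\mathcal{A})$ or $\mathrm{HC}_{\mathcal{E}}(\mathcal{A})$ may be a more tractable line than trying to split the coevaluation.
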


In positive characteristic, the separability hypothesis on the symmetric center cannot be dropped.

\begin{Proposition}\label{prop:fulldualizbailityseparabilitysymmetric}
Over an algebraically closed field, a finite symmetric tensor category $\mathcal{E}$ is a fully dualizable object of $\mathrm{Mor}_2^{\mathrm{pre}}(\mathbf{Pr})$ if and only if it is separable.
\end{Proposition}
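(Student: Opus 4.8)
The plan is to prove both implications by exploiting the defining feature of a symmetric tensor category: since its double braiding is trivial, $\mathcal{Z}_{(2)}(\mathcal{E})=\mathcal{E}$. Sufficiency is then immediate. If $\mathcal{E}$ is separable, then its symmetric center $\mathcal{Z}_{(2)}(\mathcal{E})=\mathcal{E}$ is separable, so $\mathcal{E}$ is a fully dualizable object of $\mathrm{Mor}_2^{\mathrm{pre}}(\mathbf{Pr})$ by Theorem \ref{thm:separablesymmetriccenter4dualizable} (equivalently, directly by \cite[Theorem 1.10]{BJS}).

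For necessity, suppose $\mathcal{E}$ is fully dualizable. Applying Proposition \ref{prop:symmetriccentersemisimple} to $\mathcal{E}$ regarded as a braided tensor category, and again using $\mathcal{Z}_{(2)}(\mathcal{E})=\mathcal{E}$, we find that $\mathcal{E}$ is finite semisimple. In characteristic zero this already suffices, since by \cite{ENO1} every finite semisimple tensor category is separable. The substance of the statement therefore lies in positive characteristic, where semisimplicity does not imply separability; here I would argue by reversing the sufficiency argument of Theorem \ref{thm:separablesymmetriccenter4dualizable}.

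Concretely, full dualizability means $4$-dualizability, and since Proposition \ref{prop:3dualizability} already provides $3$-dualizability in $\mathrm{Mor}_2^{\mathrm{pre}}(\mathbf{Pr})$, the only additional requirement is that the $3$-morphisms occurring in the canonical evaluation–coevaluation tower of $\mathcal{E}$ admit left and right adjoints. For $\mathcal{E}$ symmetric the coevaluation factors exactly as in the proof of Theorem \ref{thm:separablesymmetriccenter4dualizable} with $\mathcal{A}=\mathcal{E}$ and $\mathcal{Z}_{(2)}(\mathcal{E})=\mathcal{E}$, namely through the multiplication $\mathcal{E}\boxtimes\mathcal{E}\to\mathcal{E}$, which by Lemma \ref{lem:relativetensorasmodules} is the free-module functor over the commutative algebra $A:=T^R_{\mathcal{E}}(\mathbbm{1})$ in $\mathcal{E}\boxtimes\mathcal{E}$, followed by the invertible $1$-morphism $\mathcal{E}\boxtimes_{\mathcal{E}}\mathcal{E}\to\mathcal{E}$. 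The associated unit and counit $3$-morphisms are precisely the bimodule functors of Lemma \ref{lem:separablebimodule}, and among them is the functor $\mathrm{Mod}_{\mathcal{E}\boxtimes\mathcal{E}}(A\otimes A)\to\mathcal{E}$ induced by the multiplication $A\otimes A\to A$, that is, $(-)\otimes_{A\otimes A}A$. A cocontinuous functor admitting a left adjoint preserves limits and is therefore exact, so the required adjointability forces $A$ to be flat — and hence, being compact, projective — over $A\otimes A$, which is exactly the statement that $A$ is separable. Finally, \cite[Corollary 2.5.9]{DSPS:dualizable} identifies separability of $A=T^R_{\mathcal{E}}(\mathbbm{1})$ with separability of $\mathcal{E}$, concluding the argument.

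The main obstacle is the reversal itself: one must check that $4$-dualizability of the object $\mathcal{E}$ genuinely forces these particular separability-detecting $3$-morphisms to be adjointable. This rests on three points: (i) the standard fact that an object is fully dualizable if and only if its canonical evaluation–coevaluation tower is fully adjointable, so that adjointability may be tested on the explicit data above rather than on an abstract fully dualizable subcategory; (ii) isolating the separability-detecting piece from the composite tower, which is harmless here because the intermediate $1$-morphism $\mathcal{E}\boxtimes_{\mathcal{E}}\mathcal{E}\to\mathcal{E}$ is invertible by relative non-degeneracy; and (iii) the passage from flatness to projectivity of the compact algebra $A$ over $A\otimes A$. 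Once these are in place, the computation is the one in Lemma \ref{lem:separablebimodule} read backwards.
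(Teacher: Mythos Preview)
Your sufficiency argument and the reduction to finite semisimplicity via Proposition~\ref{prop:symmetriccentersemisimple} are fine. The necessity argument in positive characteristic, however, has a real gap in the factorization step.

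You write that the coevaluation $\mathcal{E}:\mathcal{E}\boxtimes\mathcal{E}\nrightarrow\mathrm{Vec}$ factors as the multiplication $\mathcal{E}\boxtimes\mathcal{E}\nrightarrow\mathcal{E}$ followed by the invertible $1$-morphism $\mathcal{E}\boxtimes_{\mathcal{E}}\mathcal{E}\nrightarrow\mathcal{E}$. But this lands in $\mathcal{E}$, not in $\mathrm{Vec}$: you have omitted the third piece of the factorization from Theorem~\ref{thm:separablesymmetriccenter4dualizable}, namely $\mathcal{E}:\mathcal{E}\nrightarrow\mathrm{Vec}$. With $\mathcal{A}=\mathcal{E}$ the three-term factorization reads $\mathcal{E}\boxtimes\mathcal{E}\nrightarrow\mathcal{E}\nrightarrow\mathcal{E}\nrightarrow\mathrm{Vec}$; only the \emph{middle} arrow is invertible, while the last one is not (it is invertible precisely when $\mathcal{E}$ is non-degenerate, i.e.\ $\mathcal{E}\simeq\mathrm{Vec}$). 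Consequently, full adjointability of the composite coevaluation does not let you isolate full adjointability of the first piece $\mathcal{E}:\mathcal{E}\boxtimes\mathcal{E}\nrightarrow\mathcal{E}$, and the $3$-morphism $(-)\otimes_{A\otimes A}A$ you wish to test lives in the adjunction tower of that first piece, not of the coevaluation itself. Your point~(ii) addresses the wrong intermediate morphism.

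The paper sidesteps this isolation problem entirely. Rather than trying to decompose the coevaluation, it observes that $\mathrm{HC}(\mathcal{E})=\mathcal{E}\boxtimes_{\mathcal{E}\boxtimes\mathcal{E}}\mathcal{E}^{\mathrm{mop}}$ is the composite of the evaluation and coevaluation $1$-morphisms, hence automatically has all adjoints as a $1$-morphism $\mathrm{Vec}\nrightarrow\mathrm{Vec}$. From this one extracts that $\mathcal{Z}(\mathrm{HC}(\mathcal{E}))$ is finite semisimple; since $\mathcal{E}$ is symmetric, so is $\mathrm{HC}(\mathcal{E})$, whence $\mathrm{HC}(\mathcal{E})\hookrightarrow\mathcal{Z}(\mathrm{HC}(\mathcal{E}))$ is fully faithful and $\mathrm{HC}(\mathcal{E})$ is itself finite semisimple. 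The punchline is the identification $\mathrm{HC}(\mathcal{E})\simeq\mathrm{Mod}_{\mathcal{E}}(\mathbb{F}_{\mathcal{E}})\simeq\mathcal{Z}(\mathcal{E})$ of underlying categories, giving semisimplicity of $\mathcal{Z}(\mathcal{E})$ directly. This avoids any need to decompose the coevaluation or to run Lemma~\ref{lem:separablebimodule} backwards.
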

\begin{proof}
The backward direction is \cite[Theorem 1.10]{BJS} and the comment thereafter. For the forward direction, we already have seen that $\mathcal{E}$ is finite semisimple in Proposition \ref{prop:symmetriccentersemisimple}. Then, observe that the Harish-Chandra category $\mathrm{HC}(\mathcal{E})$ yields a 1-morphisms $\mathrm{HC}(\mathcal{E}):\mathrm{Vec}\nrightarrow\mathrm{Vec}$ that has all adjoints. Said differently, the finite pre-tensor category $\mathrm{HC}(\mathcal{E})$ is a fully dualizable object of the symmetric monoidal Morita 3-category $\mathrm{Mor}_1^{\mathrm{pre}}(\mathbf{Pr})$. In particular, it follows that the underlying category of its Drinfeld center $\mathcal{Z}(\mathrm{HC}(\mathcal{E}))$ is a fully dualizable object of $\mathbf{Pr}$. We therefore find from \cite[Theorem A.22]{BDSPV} that $\mathcal{Z}(\mathrm{HC}(\mathcal{E}))$ is finite semisimple. But, as $\mathcal{E}$ is symmetric monoidal, so is $\mathrm{HC}(\mathcal{E})$. As a consequence, there is a fully faithful braided tensor functor $\mathrm{HC}(\mathcal{E})\hookrightarrow \mathcal{Z}(\mathrm{HC}(\mathcal{E}))$, from which we deduce that $\mathrm{HC}(\mathcal{E})$ is finite semisimple. But there are equivalences of plain categories $$\mathrm{HC}(\mathcal{E})\simeq \mathrm{Mod}_{\mathcal{E}}(\mathbb{F}_{\mathcal{E}})\simeq \mathcal{Z}(\mathcal{E}),$$ so that $\mathcal{Z}(\mathcal{E})$ is also finite semisimple, concluding the proof.
\end{proof}

\addcontentsline{toc}{section}{Bibliography}
\bibliography{bibliography.bib}

\newcommand{\etalchar}[1]{$^{#1}$}
\begin{thebibliography}{DHJF{\etalchar{+}}24}

\bibitem[BCJF15]{BCJF}
Martin Brandenburg, Alexandru Chirvasitu, and Theo Johnson-Freyd.
\newblock Reflexivity and dualizability in categorified linear algebra.
\newblock {\em Theory Appl. Cat.}, 30(23):808--835, 2015.
\newblock arXiv:1409.5934.

\bibitem[BD95]{BD}
John Baez and James Dolan.
\newblock Higher‐dimensional algebra and topological quantum field theory.
\newblock {\em J. Math. Phys.}, 36(11):6073–6105, 1995.
\newblock arXiv:q-alg/9503002.

\bibitem[BDR24]{BDR}
Anna Beliakova and Marco De~Renzi.
\newblock {K}erler-{L}yubashenko functors on 4-dimensional 2-handlebodies.
\newblock {\em Int. Math. Res. Not.}, 2024(13):10005–10080, 2024.
\newblock arXiv:2105.02789.

\bibitem[BDSPV15]{BDSPV}
Bruce Bartlett, Christopher~L. Douglas, Christopher~J. Schommer-Pries, and Jamie Vicary.
\newblock Modular categories as representations of the 3-dimensional bordism 2-category, 2015.
\newblock arXiv:1509.06811.

\bibitem[BEO23]{BEO}
Dave Benson, Pavel Etingof, and Victor Ostrik.
\newblock New incompressible symmetric tensor categories in positive characteristic.
\newblock {\em Duke Math. J.}, 172(1):105--200, 2023.
\newblock arXiv:2003.10499.

\bibitem[BJS21]{BJS}
Adrien Brochier, David Jordan, and Noah Snyder.
\newblock On dualizability of braided tensor categories.
\newblock {\em Compositio Math.}, 3:435--483, 2021.
\newblock arXiv:1804.07538.

\bibitem[BJSS21]{BJSS}
Adrien Brochier, David Jordan, Pavel Safronov, and Noah Snyder.
\newblock Invertible braided tensor categories.
\newblock {\em Algebr. Geom. Topol.}, 12:2107--2140, 2021.
\newblock arXiv:2003.13812.

\bibitem[BN11]{BN}
Alain Brugui{\`e}res and Sonia Natale.
\newblock Exact sequences of tensor categories.
\newblock {\em Int. Math. Res. Not.}, 2011(24):5644–5705, 2011.
\newblock arXiv:1006.0569.

\bibitem[Bru00]{Bru}
Alain Brugui{\`e}res.
\newblock Cat{\'e}gories pr{\'e}modulaires, modularisations et invariants des vari{\'e}t{\'e}s de dimension 3.
\newblock {\em Mathematishe Annalen}, 316:215–236, 200.

\bibitem[BV07]{BV:Hopf}
Alain Brugui{\`e}res and Alexis Virelizier.
\newblock Hopf monads.
\newblock {\em Advances in Mathematics}, 215(2):679–733, 2007.
\newblock arXiv:math/0604180.

\bibitem[BZBJ18a]{BZBJ}
David Ben-Zvi, Adrien Brochier, and David Jordan.
\newblock Integrating quantum groups over surfaces.
\newblock {\em J. Topol.}, 11(4):874--917, 2018.
\newblock arXiv:1501.04652.

\bibitem[BZBJ18b]{BZBJ2}
David Ben-Zvi, Adrien Brochier, and David Jordan.
\newblock Quantum character varieties and braided module categories.
\newblock {\em Selecta Mathematica}, 24(5):4711–4748, 2018.
\newblock arXiv:1606.04769.

\bibitem[CGHP23]{CGHPM}
Francesco Costantino, Nathan Geer, Benjamin Ha\"{i}oun, and Bertrand {Patureau-Mirand}.
\newblock Skein (3+1)-{TQFTs} from non-semisimple ribbon categories, 2023.
\newblock arXiv:2306.03225.

\bibitem[CKY97]{CKY}
Louis Crane, Louis Kauffman, and David Yetter.
\newblock State-sum invariants of 4-manifolds {I}.
\newblock {\em J. Knot Theory Ramif.}, 6(2):177--234, 1997.
\newblock arXiv:hep-th/9409167.

\bibitem[Cou21]{C:monoidal}
Kevin Coulembier.
\newblock Monoidal abelian envelopes.
\newblock {\em Compositio Math.}, 157(7):1584--1609, 2021.
\newblock arXiv:2003.10105.

\bibitem[CSZ25]{CSZ}
Kevin Coulembier, Mateusz Stroi{\'n}ski, and Tony Zorman.
\newblock Simple algebras and exact module categories, 2025.
\newblock arXiv:2501.06629.

\bibitem[D{\'e}c24]{D:Verlinde}
Thibault~D. D{\'e}coppet.
\newblock Higher {V}erlinde categories: The mixed case, 2024.
\newblock arXiv:2407.20211.

\bibitem[Del90]{Del:Tannakian}
Pierre Deligne.
\newblock Cat{\'e}gories {T}annakienne.
\newblock In {\em The Grothendieck Festschrift, Vol. II}, volume~87 of {\em Prog. Math.}, pages 111--195. Birkh{\"a}user Boston, 1990.

\bibitem[Del02]{Del:fiber}
Pierre Deligne.
\newblock Cat{\'e}gories tensorielles.
\newblock {\em Mosc. Math. J.}, 2(2):227--248, 2002.

\bibitem[DHJF{\etalchar{+}}24]{DHJFNPPRY}
Thibault~D. D{\'e}coppet, Peter Houston, Theo Johnson-Freyd, Dmitri Nikshych, Dave Penneys, Julia Plavnik, David Reutter, and Matthew Yu.
\newblock The classification of fusion 2-categories, 2024.
\newblock arXiv:2411.05907.

\bibitem[DMNO13]{DMNO}
Alexei Davydov, Michael M{\"u}ger, Dmitri Nikshych, and Victor Ostrik.
\newblock The {W}itt group of non-degenerate braided fusion categories.
\newblock {\em J. Reine Angew. Math.}, 2013(667), 2013.
\newblock arXiv:1009.2117.

\bibitem[DNO13]{DNO}
Alexei Davydov, Dmitri Nikshych, and Victor Ostrik.
\newblock On the structure of the {W}itt group of braided fusion categories.
\newblock {\em Selecta Mathematica}, 19:237–269, 2013.
\newblock arXiv:1109.5558.

\bibitem[DS25]{DS:compact}
Thibault~D. D{\'e}coppet and Sean Sanford.
\newblock Compact semisimple tensor 2-categories are {M}orita connected.
\newblock {\em Int. Math. Res. Not.}, 2025(11):rnaf13, 2025.
\newblock arXiv:2412.15019.

\bibitem[DSPS19]{DSPS:balanced}
Christopher~L. Douglas, Christopher Schommer-Pries, and Noah Snyder.
\newblock The balanced tensor product of module categories.
\newblock {\em Kyoto J. Math.}, 59:167--179, 2019.
\newblock arXiv:1406.4204.

\bibitem[DSPS21]{DSPS:dualizable}
Christopher~L. Douglas, Christopher Schommer-Pries, and Noah Snyder.
\newblock {\em Dualizable tensor categories}.
\newblock Mem. Amer. Math. Soc. AMS, 2021.
\newblock arXiv:1312.7188.

\bibitem[EG12]{EG:descent}
Pavel Etingof and Shlomo Gelaki.
\newblock Descent and forms of tensor categories.
\newblock {\em Int. Math. Res. Not.}, 2012(13):3040–3063, 2012.
\newblock arXiv:1102.065.

\bibitem[EGNO15]{EGNO}
Pavel Etingof, Shlomo Gelaki, Dmitri Nikshych, and Victor Ostrik.
\newblock {\em Tensor Categories}.
\newblock Mathematical Surveys and Monographs. AMS, 2015.

\bibitem[ENO04]{ENO0}
Pavel Etingof, Dmitri Nikshych, and Victor Ostrik.
\newblock An analogue of {R}adford’s {$S^4$} formula for finite tensor categories.
\newblock {\em Int. Math. Res. Not.}, 54:2915–2933, 2004.
\newblock arXiv:math.QA/0404504.

\bibitem[ENO05]{ENO1}
Pavel Etingof, Dmitri Nikshych, and Victor Ostrik.
\newblock On fusion categories.
\newblock {\em Annals of Mathematics}, 162:581--642, 2005.
\newblock arXiv: math/0203060.

\bibitem[Fra13]{Lor}
Ignacio~Lopez Franco.
\newblock Tensor products of finitely cocomplete and abelian categories.
\newblock {\em J. Algebra}, 396:207--219, 2013.
\newblock arXiv:1212.1545.

\bibitem[GS18]{GS}
Owen Gwilliam and Claudia Scheimbauer.
\newblock Duals and adjoints in higher {M}orita categories, 2018.
\newblock arXiv:1804.10924.

\bibitem[Ha{\"\i}23]{H}
Benjamin Ha{\"\i}oun.
\newblock Unit inclusion in a non-semisimple braided tensor category and non-compact relative {TQFTs}, 2023.
\newblock arXiv:2304.12167, To appear in \textit{Geometry \& Topology}.

\bibitem[Hau17]{Hau}
Rune Haugseng.
\newblock The higher {M}orita category of {$E_n$}–algebras.
\newblock {\em Geometry \& Topology}, 21(1):1631–1730, 2017.
\newblock arXiv:1412.8459.

\bibitem[Hau23]{Hau:remarks}
Rune Haugseng.
\newblock Some remarks on higher {M}orita categories, 2023.
\newblock arXiv:2309.09761.

\bibitem[JFS17]{JFS}
Theo Johnson-Freyd and Claudia Scheimbauer.
\newblock ({O}p)lax natural transformations, twisted quantum field theories, and {``}even higher{"} {M}orita categories.
\newblock {\em Advances in Mathematics}, 307(2):147–223, 2017.
\newblock arXiv:1502.06526v3.

\bibitem[Kin24]{Kin}
Patrick Kinnear.
\newblock Non-semisimple {C}rane-{Y}etter theory varying over the character stack, 2024.
\newblock arXiv:2404.19667.

\bibitem[KL03]{KL}
Thomas Kerler and Volodymyr~V. Lyubashenko.
\newblock {\em Non-Semisimple Topological Quantum Field Theories for 3-Manifolds with Corners}.
\newblock Lecture Notes in Mathematics. Springer, 2003.

\bibitem[Lau20]{Lau}
Robert Laugwitz.
\newblock The relative monoidal center and tensor products of monoidal categories.
\newblock {\em Commun. Contemp. Math.}, 22(8), 2020.
\newblock arXiv:1803.04403.

\bibitem[LM24]{LM}
Simon Lentner and Mart{\'i}n Mombelli.
\newblock Fibre functors and reconstruction of {H}opf algebras.
\newblock {\em Canadian J. Math.}, 2024:1--44, 2024.
\newblock arXiv:2311.14221.

\bibitem[Lur10]{L}
Jacob Lurie.
\newblock On the classification of topological field theories.
\newblock {\em Curr. Dev. Math.}, 1:129--280, 2010.

\bibitem[Lyu95a]{Lyu:invariant}
Volodymyr~V. Lyubashenko.
\newblock Invariants of 3-manifolds and projective representations of mapping class groups via quantum groups at roots of unity.
\newblock {\em Commun. Math. Phys.}, 172:467--516, 1995.
\newblock arXiv:hep-th/9405167.

\bibitem[Lyu95b]{Lyu:ribbon}
Volodymyr~V. Lyubashenko.
\newblock Modular properties of ribbon abelian categories.
\newblock In {\em Proceedings of the 2nd Gauss symposium. Conference A:\ Mathematics and Theoretical Physics}, pages 529--579. De Gruyter, 1995.
\newblock arXiv:hep-th/9405168.

\bibitem[Lyu95c]{Lyu:modular}
Volodymyr~V. Lyubashenko.
\newblock Modular transformations for tensor categories.
\newblock {\em J. Pure Appl. Algebra}, 98:279--327, 1995.

\bibitem[Lyu99]{Lyu:squared}
Volodymyr~V. Lyubashenko.
\newblock {\em Squared {H}opf algebras}.
\newblock Mem. Amer. Math. Soc. AMS, 1999.

\bibitem[Maj93]{Maj}
Shahn Majid.
\newblock Braided groups.
\newblock {\em J. Pure Appl. Algebra}, 86:187--221, 1993.

\bibitem[MPP18]{MPP}
Scott Morrison, David Penneys, and Julia Plavnik.
\newblock Completion for braided enriched monoidal categories, 2018.
\newblock arXiv:1809.09782.

\bibitem[M{\"u}g03]{Mu}
Michael M{\"u}ger.
\newblock On the structure of modular categories.
\newblock {\em Proc. Lond. Math. Soc.}, 87:291--308, 2003.
\newblock arXiv:math/0201017.

\bibitem[Neg23]{Neg:arbitrary}
Cris Negron.
\newblock Quantum {F}robenius and modularity for quantum groups at arbitrary roots of 1, 2023.
\newblock arXiv:2311.13797.

\bibitem[Reu23]{Reu}
David Reutter.
\newblock Semisimple four-dimensional topological field theories cannot detect exotic smooth structure.
\newblock {\em J. Topol.}, 16(2):542--566, 2023.
\newblock arXiv:2001.02288.

\bibitem[Sch93]{Sch}
Hans-J{\"u}rgen Schneider.
\newblock Some remarks on exact sequences of quantum groups.
\newblock {\em Commun. Algebra}, 21:3337--3357, 1993.

\bibitem[Shi17]{Shi:unimodular}
Kenichi Shimizu.
\newblock On unimodular finite tensor categories.
\newblock {\em Int. Math. Res. Not.}, 2017(1):277–322, 2017.
\newblock arXiv:1402.3482.

\bibitem[Shi19]{Shi:nondeg}
Kenichi Shimizu.
\newblock Non-degeneracy conditions for braided finite tensor categories.
\newblock {\em Advances in Mathematics}, 355:106778, 2019.
\newblock arXiv:1602.06534.

\bibitem[STWZ23]{STWZ}
Louise Sutton, Daniel Tubbenhauer, Paul Wedrich, and Jieru Zhu.
\newblock {$\mathrm{SL}_2$} tilting modules in the mixed case.
\newblock {\em Selecta Math. (N.S.)}, 29(3):39, 2023.
\newblock arXiv:2105.07724.

\bibitem[SY24]{SY:MTC}
Kenichi Shimizu and Harshit Yadav.
\newblock Commutative exact algebras and modular tensor categories, 2024.
\newblock arXiv:2408.06314.

\bibitem[Tak79]{Tak}
Mitsuhiro Takeuchi.
\newblock Relative {H}opf modules-{E}quivalences and freeness criteria.
\newblock {\em J. Algebra}, 60(2):452--471, 1979.

\bibitem[Til98]{Til}
Ulrike Tillmann.
\newblock {$\mathscr{S}$-Structures for $k$-linear categories and the definition of a modular functor}.
\newblock {\em J. Lond. Math. Soc.}, 58:208–228, 1998.
\newblock arXiv:math/9802089.

\end{thebibliography}

\end{document}